\title[Indicability, residual finiteness, and simple subquotients]{Indicability, residual finiteness, and simple subquotients of groups acting on trees}
\author[P.-E. Caprace]{Pierre-Emmanuel Caprace}
\thanks{P.-E.C. is a F.R.S.-FNRS senior research associate, supported in part by EPSRC grant no EP/K032208/1.}
\address{Universit\'e catholique de Louvain, IRMP, Chemin du Cyclotron 2, bte L7.01.02, 1348 Louvain-la-Neuve, Belgique}
\email{pe.caprace@uclouvain.be}
\author[P. Wesolek]{Phillip Wesolek}
\address{Binghamton University, Department of Mathematical Sciences, PO Box 6000, 	Binghamton, New York 13902-6000, USA}
\email{pwesolek@binghamton.edu}\date{July 26, 2017}
\newtheorem{thm}{Theorem}[section]
\newtheorem{prop}[thm]{Proposition}
\newtheorem{lem}[thm]{Lemma}
\newtheorem{cor}[thm]{Corollary}
\theoremstyle{definition}
\newtheorem{defn}[thm]{Definition}
\newtheorem{qu}[thm]{Question}
\newtheorem{rmk}[thm]{Remark}
\newcommand{\Zbb}{\mathbf{Z}}
\newcommand{\Nb}{\mathbf{N}}
\newcommand{\mc}[1]{\mathcal{#1}}
\newcommand{\ms}[1]{\mathscr{#1}} 
\newcommand{\acts}{\curvearrowright}
\newcommand{\tdlcsc}{t.d.l.c.s.c.\@\xspace}
\newcommand{\Sym}{\mathrm{Sym}}
\newcommand{\Aut}{\mathrm{Aut}}
\DeclareMathOperator{\Ker}{Ker}
\newcommand{\Res}{\mathrm{Res}}
\newcommand{\rest}{\upharpoonright}
\newcommand{\cent}{\mathrm{C}}
\newcommand{\norm}{\mathrm{N}}
\newcommand{\con}{\mathrm{con}}
\newcommand{\triv}{\{1\}}
\newcommand{\QZ}{\mathrm{QZ}}
\newcommand{\Sub}{\mathbf{Sub}}
\newcommand{\grp}[1]{\langle #1 \rangle}
\newcommand{\cgrp}[1]{\ol{\langle #1 \rangle}}
\newcommand{\ol}[1]{\overline{#1}}
\newcommand{\wh}[1]{\widehat{#1}}
\begin{document}

\begin{abstract}
We establish three independent results on groups acting on trees. The first implies that a compactly generated locally compact group which acts continuously  on a locally finite tree with nilpotent local action and no global fixed point is virtually indicable; that is to say, it has a finite index subgroup which surjects onto $\mathbf{Z}$. The second ensures that  irreducible cocompact lattices in a product of non-discrete locally compact groups such that one of the factors acts vertex-transitively on a tree with a nilpotent  local action cannot be residually finite. This is derived from a general result, of independent interest,  on irreducible lattices in product groups.  The third implies that every non-discrete Burger--Mozes universal group of automorphisms of a tree with an arbitrary prescribed local action admits a compactly generated closed subgroup with a non-discrete simple quotient. As applications, we answer a question of D.~Wise by proving the non-residual finiteness of a certain lattice in a product of two regular trees, and we obtain a negative answer to   a question of C. Reid, concerning the structure theory of locally compact groups. 
\end{abstract}
\maketitle

\section{Introduction}

Given a group $G$ acting by automorphisms on a graph $X$, the \textbf{local action} of $G$ at a vertex $v \in VX$ is the permutation group induced by the   action of the vertex stabilizer $G_{(v)}$ on the set of edges $E(v)$ emanating from $v$. Various results from the literature show how restrictions on the local action impact the global properties of the group $G$. This phenomenon is strikingly illustrated by the work of M. Burger and S. Mozes on lattices in products of trees; see \cite{BuMo,BuMo2}. 

Our first main result provides an illustration of this paradigm in the case that $X$ is a tree. A (topological) group is called \textbf{virtually indicable} if it has a finite index (finite index open) subgroup admitting a (continuous) surjective homomorphism onto the infinite cyclic group. A tree is called \textbf{leafless} if it has no vertex of valency~$1$. 

\begin{thm}[See Theorem~\ref{thm:Indic}]\label{thmintro:Fnilpotent}
Let $G$ be a topological  group with a  continuous, cocompact action  by automorphisms on an infinite locally finite leafless tree  $T$.  Suppose that   the local action $F(v)$ of $G$ at every vertex $v$ is such that the subgroup of $F(v)$ generated by its point stabilizers is intransitive on $E(v)$. Then $G$ is virtually indicable.
\end{thm}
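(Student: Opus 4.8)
The plan is to read off, after passing to a subgroup of finite index, a (continuous) homomorphism of $G$ onto $\mathbf{Z}$ from the Bass--Serre decomposition of $G$; the one genuinely new input is an elementary remark about the leaves of the quotient graph.

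First I would reduce to the case that $G$ acts without inversions. The function assigning to $g\in G$ the value $0$ or $1$ according as $g$ preserves or interchanges the two classes of the canonical bipartition of $T$ is a homomorphism $G\to\mathbf{Z}/2\mathbf{Z}$; its kernel $G^{0}$ acts without inversions, and it contains every vertex stabiliser, so the local action of $G^{0}$ at each vertex is still $F(v)$. Since virtual indicability passes from a finite-index subgroup to the ambient group, I may replace $G$ by $G^{0}$ and assume $G$ acts without inversions. Then cocompactness makes $Y:=G\backslash T$ a finite connected graph, Bass--Serre theory identifies $G$ with the fundamental group of the graph of groups on $Y$ with vertex groups $G_{(v)}$ and edge groups $G_{(e)}$, and continuity of the action on the discrete tree $T$ makes all of these subgroups open in $G$.

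Next I would split on the first Betti number $b_{1}(Y)$. If $b_{1}(Y)\ge 1$, then collapsing every vertex group maps $G$ onto the topological fundamental group $\pi_{1}(Y)$, a free group of rank $b_{1}(Y)\ge 1$, which surjects onto $\mathbf{Z}$; the kernel of the composite contains a vertex stabiliser, hence is open, so $G$ is indicable and there is nothing more to do. So suppose $b_{1}(Y)=0$, that is, $Y$ is a finite tree. Since $T$ has an edge, so does $Y$, hence $Y$ has at least two vertices and therefore at least two leaves, say $\bar v_{1}$ and $\bar v_{2}$. The crux is the behaviour at a leaf: the unique edge of $Y$ at $\bar v_{i}$ corresponds to a single $G_{(v_{i})}$-orbit on $E(v_{i})$, so the local action $F(v_{i})$ is a \emph{transitive} permutation group on $E(v_{i})$, and the hypothesis that the subgroup of $F(v_{i})$ generated by its point stabilisers is intransitive then forces that subgroup, call it $F(v_{i})^{+}$, to be \emph{proper}; hence $C_{i}:=F(v_{i})/F(v_{i})^{+}$ is a non-trivial finite group. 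Moreover the kernel of $G_{(v_{i})}\to F(v_{i})$ fixes every edge at $v_{i}$ and so lies in the (normal) subgroup $\langle G_{(e)}:e\ni v_{i}\rangle$ of $G_{(v_{i})}$, whence $G_{(v_{i})}/\langle G_{(e)}:e\ni v_{i}\rangle\cong C_{i}$.

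Now I would finish as follows. Because $Y$ is a tree, the quotient of $G$ by the normal subgroup generated by all edge groups is the free product, over the vertices of $Y$, of the groups $G_{(v)}/\langle G_{(e)}:e\ni v\rangle$; collapsing all factors other than the two coming from the leaves gives a surjection of $G$, with open kernel, onto $C_{1}\ast C_{2}$. Finally $C_{1}\ast C_{2}$, a free product of two non-trivial finite groups, is virtually indicable: the kernel of $C_{1}\ast C_{2}\to C_{1}\times C_{2}$ is a finite-index free subgroup, and its Euler characteristic, being a positive multiple of $|C_{1}|^{-1}+|C_{2}|^{-1}-1\le 0$, is non-positive, so this free subgroup has rank at least one and hence maps onto $\mathbf{Z}$. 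Pulling a finite-index subgroup surjecting onto $\mathbf{Z}$ back along $G\twoheadrightarrow C_{1}\ast C_{2}$ exhibits $G$ as virtually indicable.

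The step I expect to be the real content is the leaf observation: what the theorem needs is a non-trivial quotient of some vertex group that is not swallowed by the adjacent edge groups, and the point is that at a leaf of $Y$ the local action is automatically transitive, so the weak hypothesis ``$F(v)^{+}$ intransitive'' upgrades to ``$F(v)^{+}$ proper'' and delivers exactly such a quotient. Everything else — the Bass--Serre identifications, checking that the relevant kernels are open (this is what makes the surjections onto $\mathbf{Z}$ continuous, as required by the definition of virtual indicability), and the elementary structure of a free product of two finite groups — is routine.
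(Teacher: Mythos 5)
Your proof is correct and takes a genuinely different route from the paper's. The paper passes to $H := \overline{\varphi(G)} \leq \Aut(T)$, disposes of the cases where $H$ is bounded or fixes an end (using the Busemann homomorphism for the latter), and in the main case invokes Proposition~\ref{prop:graph of groups} to see that $X := T/H^+$ is a tree and then runs a rather delicate argument — coloring $T$, forming the block system from the $F^+$-orbits, and propagating the condition $\sigma_c(g,w)\in\Ker(\pi)$ along geodesics between consecutive orbit vertices — to conclude that every vertex of $X$ has degree at least two, so $H/H^+$ is virtually free of positive rank. You instead replace $G$ by the inversion-free $G^0$, pass directly to the Bass--Serre decomposition of $G^0$ over the finite graph $Y = G^0\backslash T$, dispatch $b_1(Y)\geq 1$ immediately, and in the tree case use the elegant observation that a leaf of $Y$ forces the local action at the corresponding vertex to be transitive, so the hypothesis upgrades to $F(v_i)^+$ being \emph{proper}. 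The identity $G^0/\langle G^0_{(e)}\mid e\in ET\rangle \cong *_{\bar v\in VY}\, F(v)/F(v)^+$ is the Bass--Serre quotient computation, and the structure of a free product of at least two non-trivial finite groups finishes the argument. Both proofs ultimately show the discrete quotient $G^0/G^{0+}$ (resp. $H/H^+$) is virtually free of positive rank; you avoid the auxiliary quotient tree $T/H^+$, the fixed-end case, and the block-propagation lemma, at the cost of nothing, since local finiteness (which the paper's remark shows to be essential) enters your argument exactly where it should: to make $Y$ finite and each $C_i$ finite. It is worth noting that the paper's block lemma actually proves something a bit stronger than your Bass--Serre quotient identity — namely, that the full local action of $H^+$ at each vertex is contained in $F^+$, not merely that the edge stabilizers at that vertex generate a subgroup with that local action — which is why the paper's route is longer.
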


For every finite transitive permutation group $F$ of degree $d$ that is generated by its point stabilizers, there exists an infinite simple group acting transitively on the (undirected) edges of the $d$-regular tree whose local action at every vertex is isomorphic to $F$; see \cite[Proposition~3.2.1]{BuMo} or the discussion preceding Theorem~\ref{thmintro:Subquotient} below. The condition on the local action in Theorem~\ref{thmintro:Fnilpotent}  can therefore not be weakened.  By considering an $\Aut(T_3)$-equivariant embedding of the trivalent tree  $T_3$ in the $4$-regular tree, one sees that the hypothesis of minimality of the $G$-action on $T$ is also necessary in the Theorem, even if $G$ is compactly generated. 

A special class of permutation groups satisfying the local condition of the theorem is that of nilpotent groups; see Lemma~\ref{lem:NilpPerm}. In that particular case, the minimality assumption on the $G$-action can be replaced by the assumption that $G$ be compactly generated, yielding the following result.

\begin{cor}\label{cor:nilpotentLocal}
Let $G$ be a compactly generated locally compact group with a  continuous action  by automorphisms on a locally finite tree  $T$.  If the local action of $G$ at every vertex is nilpotent, then either $G$ fixes a vertex or edge or $G$ is virtually indicable.
\end{cor}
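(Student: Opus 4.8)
The plan is to reduce to Theorem~\ref{thmintro:Fnilpotent} (that is, Theorem~\ref{thm:Indic}) by running through the four possibilities provided by the standard classification of an isometric action on a tree. So suppose $G$ neither fixes a vertex nor stabilizes an edge of $T$; I must show it is virtually indicable. If $G$ stabilizes a line $\ell$, restriction to $\ell$ yields a homomorphism $\phi\colon G\to\mathrm{Isom}(\ell)\cong D_\infty$, continuous because the $G$-action on the discrete vertex set $V\ell$ is; were $\phi(G)$ finite, it would fix a vertex or invert an edge of $\ell$, contradicting our standing assumption, so $\phi(G)$ is infinite and $\phi^{-1}(\Zbb)$ is a finite-index open subgroup of $G$ admitting a continuous surjection onto $\Zbb$. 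If instead $G$ fixes an end $\xi$, consider its Busemann character $\chi\colon G\to\Zbb$, again continuous; if $\chi\ne 0$ then $G$ surjects onto $\Zbb$ and is indicable, while if $\chi=0$ then $G$ preserves every horosphere centred at $\xi$, and since a compact generating set $K$ has finite orbit $Kv_0$ (a compact subset of the discrete set $VT$), one checks that $G$ fixes the first vertex of the ray $[v_0,\xi)$ lying on all the rays $[gv_0,\xi)$ ($g\in K$), again contradicting our assumption. This is the first place where compact generation enters; the second is the case treated next.

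It remains to treat a $G$ with no bounded orbit that stabilizes no line and fixes no end. Then $G$ has a unique minimal $G$-invariant subtree $T_0$, on which it acts minimally, i.e.\ with no proper nonempty invariant subtree. This $T_0$ is infinite (it contains the axis of a hyperbolic element), locally finite (a subtree of $T$), and leafless: otherwise, deleting the $G$-orbit of a leaf of $T_0$ together with the incident pendant edges would produce a smaller invariant subtree. I claim $G$ acts cocompactly on $T_0$. Fix $v_0\in VT_0$ and a compact generating set $K\ni 1$ of $G$; then $Kv_0$ is finite, so its convex hull $D$ in $T_0$ is a finite subtree, and $Y:=\bigcup_{g\in G}gD$ is a $G$-invariant subtree of $T_0$ — it is connected because $kv_0\in D$ for every $k\in K$, whence $gkD$ meets $gD$, and $G=\grp K$. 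By minimality $Y=T_0$, so $VT_0$ is a union of finitely many $G$-orbits.

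Finally, the local action $\bar F(v)$ of $G$ on $T_0$ at a vertex $v$ is the image of $G_{(v)}$ in $\Sym(E_{T_0}(v))$, where $E_{T_0}(v)\subseteq E(v)$ is $G_{(v)}$-invariant; hence $\bar F(v)$ is a quotient of $F(v)$, and therefore again nilpotent, so Lemma~\ref{lem:NilpPerm} shows that the subgroup of $\bar F(v)$ generated by its point stabilizers is intransitive on $E_{T_0}(v)$. Thus $G$ acts continuously, cocompactly and minimally on the infinite, locally finite, leafless tree $T_0$ with local action satisfying the hypothesis of Theorem~\ref{thmintro:Fnilpotent}, which therefore yields that $G$ is virtually indicable. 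I expect the main work to lie in the general-type case: the cocompactness of the action on $T_0$ (short once $T_0$ has been produced) and the verification that the minimal invariant subtree inherits leaflessness and the local-action hypothesis (which crucially uses that nilpotency, unlike the bare condition of Theorem~\ref{thmintro:Fnilpotent}, passes to quotients). The remaining cases are routine once the classification of tree actions is invoked.
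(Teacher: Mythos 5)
Your proof is correct and follows essentially the same route as the paper's: dispose of the end-fixing case via the Busemann character, then pass to the minimal invariant subtree, observe that the local action there is a quotient of the original (hence still nilpotent), and invoke Lemma~\ref{lem:NilpPerm} together with Theorem~\ref{thm:Indic}. You spell out a few steps the paper leaves implicit (leaflessness and cocompactness on the minimal subtree, and the redundant line-stabilizing case, which the minimal-subtree argument already covers), but the strategy is the same.
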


A natural framework in which these results are relevant is that of lattices in products of trees. More generally, if $\Gamma$ is a finitely generated lattice in a product of the form $\Aut(T) \times H$ where $T$ is a locally finite tree and $H$ is a locally compact group, then $\Gamma$ is virtually indicable as soon as the local action of $\Gamma$ at every vertex of $T$ is nilpotent. Our second main result shows that in the latter situation the lattice $\Gamma$ cannot be residually finite, unless it is reducible. 

\begin{thm}[See Corollary~\ref{cor:NonRF}]\label{thmintro:LatticeNilAction}
Let $T$ be a locally finite leafless tree   and $H$ be a compactly generated totally disconnected locally compact group with a trivial amenable radical. Let   $\Gamma \leq \Aut(T) \times H$ be a cocompact lattice whose projection  to $H$ has non-discrete image. If the $\Gamma$-action on $T$ is vertex-transitive and the local action of $\Gamma$ at a vertex of $T$ is nilpotent,   then the projection of $\Gamma$ to $H$ is non-injective, and $\Gamma$ is not residually  finite. 
\end{thm}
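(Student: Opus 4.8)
The plan is to deduce the statement, via Corollary~\ref{cor:nilpotentLocal}, from a structural result about irreducible lattices in product groups --- the ``general result of independent interest'' mentioned in the abstract --- which is where the real difficulty lies. Throughout, let $p_1,p_2$ denote the two coordinate projections of $\Gamma$, and set $G_1:=\overline{p_1(\Gamma)}\leq\Aut(T)$. Since $\Gamma$ acts vertex-transitively on the locally finite tree $T$, the closed subgroup $G_1$ acts properly and cocompactly on $T$, hence is compactly generated; as $H$ is compactly generated, so is $G_1\times H$, and therefore its cocompact lattice $\Gamma$ is finitely generated. Note that $T$ is infinite: if it had a single vertex then $\Aut(T)=\{1\}$ and $\Gamma$ would embed in $H$ as a lattice, hence discretely, contradicting the non-discreteness of $p_2(\Gamma)$; and a leafless tree with more than one vertex is infinite. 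Consequently the $\Gamma$-action on $T$ fixes neither a vertex (by vertex-transitivity on an infinite set) nor an edge (an invariant edge would have a vertex-orbit of size at most two). As the local action of $\Gamma$ at every vertex is nilpotent, Corollary~\ref{cor:nilpotentLocal} shows that $\Gamma$ is virtually indicable; fix a finite-index subgroup $\Gamma_0\leq\Gamma$ and a surjection $\phi\colon\Gamma_0\twoheadrightarrow\mathbf{Z}$.

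Next I would produce a nontrivial normal subgroup of $\Gamma$ with virtually abelian quotient. Replacing $\Gamma_0$ by its normal core in $\Gamma$ keeps it of finite index and makes it normal, and does not lower its first Betti number (restriction of $H^1({-},\mathbf{Q})$ to a finite-index subgroup is injective, by the transfer), so after adjusting $\phi$ we may assume $\Gamma_0\trianglelefteq\Gamma$. Put $M:=\bigcap_{g\in\Gamma}\ker\bigl(x\mapsto\phi(gxg^{-1})\bigr)$, an intersection of finitely many homomorphisms $\Gamma_0\to\mathbf{Z}$ that $\Gamma$ permutes by conjugation. Then $M\trianglelefteq\Gamma$, the quotient $\Gamma_0/M$ embeds in a finitely generated free abelian group while still surjecting onto $\mathbf{Z}$, so $\Gamma/M$ is infinite and virtually free abelian, in particular amenable and residually finite. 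Since $H$ has trivial amenable radical it is non-amenable; hence so are $G_1\times H$ and its cocompact lattice $\Gamma$, and therefore $M\neq\{1\}$.

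The key input I would then invoke is a statement of roughly the following shape: if $\Lambda$ is an irreducible cocompact lattice in a product $A\times B$, with $B$ a compactly generated non-discrete totally disconnected locally compact group of trivial amenable radical and with $p_B(\Lambda)$ non-discrete, then (i) every normal subgroup $N\trianglelefteq\Lambda$ for which $\Lambda/N$ is infinite, amenable, and residually finite satisfies $p_B(N)=\{1\}$, and (ii) if $\Lambda$ is residually finite then $\Lambda$ is not virtually indicable. Applying (i) with $\Lambda=\Gamma$, $B=H$ and $N=M$ yields $\{1\}\neq M\leq\ker(p_2)$, so the projection of $\Gamma$ to $H$ is non-injective; applying (ii), together with the virtual indicability obtained above, shows that $\Gamma$ is not residually finite.

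The main obstacle is this product-lattice result, which I would attack by analysing the closure $R$ of $p_2(N)$ inside $G_2:=\overline{p_2(\Gamma)}$. This $R$ is a closed normal subgroup of $G_2$, and $G_2/R$ carries a dense image of $\Gamma/N$, hence is an amenable totally disconnected locally compact group; one then needs to extract from this amenability enough normal subgroup structure of $G_2$, and to propagate residual finiteness of $\Gamma$ through the lattice embedding --- reconciling the profinite topology on $\Gamma$ with the topology it inherits from $G_2$ --- so as to force $R=\{1\}$, using crucially that $G_2$ sits cocompactly in $H$ and that the amenable radical of $H$ is trivial. For part (ii) one must additionally turn this information into an outright contradiction with virtual indicability; the case distinction according to whether $G_2$ equals $H$ or is a proper cocompact subgroup of it will require care, and it is here that residual finiteness of $\Gamma$ is used in an essential way.
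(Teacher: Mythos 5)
Your proposal takes a genuinely different route from the paper, but it stalls precisely at the point you flag as the ``main obstacle'': the product-lattice statement (your items (i) and (ii)) is not established, and item (i) in particular looks false as written. If $N\trianglelefteq\Lambda$ with $\Lambda/N$ amenable, then $\overline{p_B(N)}$ is a closed normal subgroup of $\overline{p_B(\Lambda)}$ whose quotient receives a dense amenable image and is therefore itself amenable; but this is perfectly consistent with $\overline{p_B(N)}$ being large (cocompact, or even all of $\overline{p_B(\Lambda)}$), and trivial amenable radical of $B$ does not force $p_B(N)=\triv$. Intuitively, amenability of $\Lambda/N$ makes the kernel $N$ large, which pushes $p_B(N)$ towards being large, not trivial. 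Item (ii) likewise does not follow from anything you have set up: nothing you prove shows that a residually finite irreducible cocompact lattice in such a product cannot be virtually indicable. So the virtual indicability of $\Gamma$, which you correctly extract from Corollary~\ref{cor:nilpotentLocal} and turn into a normal subgroup $M$ with infinite virtually abelian quotient, does not by itself give non-injectivity of $p_2$, nor non-residual-finiteness.

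The paper's route is different and avoids this. The vertex-transitive nilpotent local action is converted (Proposition~\ref{prop:CompactNorm:2}, using the unimodularity of $G_1:=\overline{p_1(\Gamma)}$ and Lemma~\ref{lem:LocalAction-UniqueFixedPoint}) into the statement that $G_1$ contains a compact open subgroup with non-compact normalizer. Lemma~\ref{lem:CompactNormalizer} then shows $p_2\rest_\Gamma$ is not injective, which is the first claim. The second claim comes from Lemma~\ref{lem:ResNoeth}, a purely algebraic argument on the discrete residual: if $\Gamma$ were residually Noetherian, then $\ker(p_2\rest_\Gamma)=N_1\times\triv$, with $N_1\trianglelefteq G_1$ discrete, would have to centralize $\Res(G_1)$, which is impossible because $\cent_{G_1}(\Res(G_1))=\triv$ (a consequence of Proposition~\ref{prop:Tits} in this setting). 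These pieces are packaged as Proposition~\ref{prop:RF->CompactNorm} and applied in Corollary~\ref{cor:NonRF}. In short, the key algebraic engine is the discrete residual, not amenability of quotients, and the key geometric input is the non-compact normalizer, not virtual indicability.
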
  

The condition of vertex-transitivity of $\Gamma$ on $T$ can be removed if one strengthens slightly the hypothesis on the local action, see Corollary~\ref{cor:NonRF}. 

Theorem~\ref{thmintro:LatticeNilAction} applies in particular to  D. Wise's iconic example of an irreducible lattice in $\Aut(T_4) \times \Aut(T_6)$ where $T_d$ denotes the $d$-regular tree, which we call the \textbf{Wise lattice}; see \cite[Example~4.1]{Wise} and \cite{Wise_PhD}. In \cite[Main Theorem~7.5]{Wise}, Wise proves that his lattice has an inseparable finitely generated subgroup, which he uses to prove that the double of $\Gamma$ over that subgroup, which is an irreducible lattice in $\Aut(T_8) \times \Aut(T_6)$, is not residually finite. We show that the Wise lattice itself already fails to be residually finite, thereby resolving a problem posed by D.~Wise \cite[Problem~10.19]{Wise}.

\begin{cor}\label{cor:WiseExample}
	The Wise lattice  $\Gamma \leq \Aut(T_4) \times \Aut(T_6)$ is not residually finite. 
\end{cor}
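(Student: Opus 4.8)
The plan is to apply Theorem~\ref{thmintro:LatticeNilAction} directly to the Wise lattice, so the task reduces to verifying its hypotheses for $\Gamma \leq \Aut(T_4) \times \Aut(T_6)$. First I would recall the explicit presentation of the Wise lattice from \cite[Example~4.1]{Wise}: it is a torsion-free group acting freely and cocompactly on the product $T_4 \times T_6$, with the two factor projections $p_4 \colon \Gamma \to \Aut(T_4)$ and $p_6 \colon \Gamma \to \Aut(T_6)$ having dense (hence non-discrete) image, and in particular the $\Gamma$-action on $T_4$ is vertex-transitive. I would then compute the local action $F(v)$ of $\Gamma$ at a vertex $v$ of $T_4$: from Wise's presentation one reads off that the vertex stabilizer induces on the $4$ edges at $v$ a cyclic group of order a power of $2$ (a $2$-group, in any case a nilpotent permutation group of degree $4$). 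This is the one genuinely computational step, but it is a finite check on the generators of $\Gamma$.

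With the local action at a vertex of $T_4$ identified as nilpotent, I would set $H = \overline{p_6(\Gamma)} \leq \Aut(T_6)$. To invoke Theorem~\ref{thmintro:LatticeNilAction} with this $H$, I must check that $H$ is compactly generated, totally disconnected, locally compact, and has trivial amenable radical. Total disconnectedness and local compactness are immediate since $H$ is a closed subgroup of $\Aut(T_6)$; compact generation follows because $\Gamma$ is finitely generated and dense in $H \times$(something)—more precisely, $H$ is a cocompact image of a finitely generated group, hence compactly generated. The triviality of the amenable radical of $H$ is the point requiring a small argument: since $\Gamma$ is an irreducible cocompact lattice with non-discrete projections, its closure $H$ in the second factor acts minimally and without fixed point at infinity on $T_6$ (irreducibility of the lattice rules out a $\Gamma$-fixed end, hence an $H$-fixed end), and a closed subgroup of $\Aut(T_6)$ that acts minimally on $T_6$ without fixed end has trivial amenable radical—this is standard from the structure theory of groups acting on trees (the amenable radical would have to fix an end or stabilize a proper subtree or a pair of ends). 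Alternatively one can cite that $\overline{p_6(\Gamma)}$ contains a cocompact subgroup acting $2$-transitively on the boundary or is otherwise known to be non-amenable with no normal amenable subgroup; Wise's analysis already establishes the relevant irreducibility.

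Once these hypotheses are in place, Theorem~\ref{thmintro:LatticeNilAction} yields immediately that $p_6 \colon \Gamma \to H$ is non-injective and that $\Gamma$ is not residually finite, which is the claim. The main obstacle I anticipate is not the application of the theorem but the bookkeeping in the second step: one must pin down precisely that the local action on at least one of the two trees (here $T_4$) is nilpotent from Wise's generators, and confirm that $\Gamma$ is genuinely vertex-transitive on that tree rather than merely acting cocompactly—if vertex-transitivity failed, one would instead need the strengthened local hypothesis mentioned after the theorem (that the subgroup of $F(v)$ generated by point stabilizers is intransitive on $E(v)$, which holds for nilpotent $F(v)$ by Lemma~\ref{lem:NilpPerm}), so the argument is robust either way. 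Everything else is a direct quotation of the results already proved in the paper.
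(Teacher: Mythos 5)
Your plan is essentially the paper's own argument: verify the local action on $T_4$ is nilpotent, verify the projection to $\Aut(T_6)$ is non-discrete, and invoke Theorem~\ref{thmintro:LatticeNilAction} (i.e.\ Corollary~\ref{cor:NonRF}). Three inaccuracies are worth flagging, though none is fatal.

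First, you take $H = \overline{p_6(\Gamma)}$, which forces you to re-derive that $H$ has trivial amenable radical, is compactly generated, etc. The paper simply takes $H = \Aut(T_6)$: since $\Gamma$ is a cocompact lattice in $\Aut(T_4)\times\Aut(T_6)$ and $\Aut(T_6)$ is already known to be compactly generated, t.d.l.c., and to have trivial amenable radical, all of the ambient hypotheses are free, and the only thing to check is that $p_6(\Gamma)$ is non-discrete in $\Aut(T_6)$ (not dense). Your claim that both projections have dense image is unsupported and stronger than what is true or needed; non-discreteness is established via \cite[Theorem~5.3]{Wise}, which exhibits an elliptic element of $\Gamma$ whose image in $\Aut(T_6)$ generates an infinite group.

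Second, the local action of $\Gamma$ on $T_4$ is $C_2 \times C_2$ acting on $4$ points (computed by Rattaggi), not a cyclic $2$-group as you assert; your hedge ``a $2$-group, in any case nilpotent'' does salvage the conclusion, but the specific claim is wrong. Third, it is worth knowing that the paper also gives a slick abstract argument avoiding the computation: $\Gamma$ is vertex-transitive without inversion on $T_4$, so the local action $F\leq\Sym(4)$ has an even number of orbits; if $F$ were not a $2$-group it would have orbits of sizes $1$ and $3$, forcing the closure of $p_4(\Gamma)$ to be a strictly ascending HNN extension and hence non-unimodular, contradicting the lattice hypothesis. That route sidesteps the finite check you flagged as the ``one genuinely computational step.''
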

\begin{rmk}  In \cite{BK17}, Corollary~\ref{cor:WiseExample} is independently proved, via considering square complexes associated to automata.
\end{rmk}

Theorem~\ref{thmintro:LatticeNilAction} is deduced from a general statement on irreducible lattices in products of locally compact groups. A special case of which is the following (see also Theorem~\ref{thm:ResNoethMultiple} for another related result of independent interest). 

\begin{thm}[See Theorem~\ref{thm:MultipleProduct}]
Let $G = G_1 \times \dots \times G_n$ be a product of non-discrete compactly generated  totally disconnected locally compact groups that have a trivial amenable radical and no infinite discrete quotient. Let $\Gamma \leq G$ be a cocompact lattice whose projection to $G_i$ is dense for all $i$. If $\Gamma$ is residually finite, then every compact open subgroup of $G$ has a compact normalizer, and $G$ has a trivial quasi-center.
\end{thm}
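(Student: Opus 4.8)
The plan is to reduce to a single factor, realise that factor as a Schlichting completion of $\Gamma$ relative to a commensurated subgroup coming from a compact open subgroup, and then use the rigidity of cocompact lattices in the complementary factor, together with residual finiteness, to rule out the bad configurations.

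\textbf{Reduction to one factor.} Since $\QZ(G)=\prod_i\QZ(G_i)$, and since a compact open $U\leq G$ satisfies $U\cap G_i$ compact open in $G_i$, $\prod_i(U\cap G_i)\leq U$ of finite index, and $N_G(U)\leq\prod_i N_{G_i}(U\cap G_i)$ (each $G_i$ being normal in $G$), it suffices to fix an index, say $i=1$, write $G=G_1\times H$ with $H=G_2\times\cdots\times G_n$, and prove $\QZ(G_1)=1$ and that every compact open subgroup of $G_1$ has compact normalizer. Here $n\geq 2$, otherwise $\Gamma$ would be a discrete dense subgroup of the non-discrete $G_1$; and $H$ again satisfies all the hypotheses imposed on the $G_i$: compact generation and triviality of the amenable radical pass to finite direct products, and an infinite discrete quotient of $H$ would have the images of the factors $G_j$ as finite normal subgroups generating it, hence would be finite. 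Replacing $\Gamma$ if necessary, we may assume each restriction $\mathrm{pr}_i|_\Gamma$ is injective; its kernel is a discrete normal subgroup of $\Gamma$ which projects to a discrete normal subgroup of one of the factors, and is therefore part of the quasi-center content we are after, so the general case costs only notation.

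\textbf{Construction.} Fix a compact open $U\leq G_1$ and set $\Delta:=\Gamma\cap(U\times H)$. As $\Gamma$ is cocompact and $U\times H$ is open, $\Delta$ is a cocompact lattice in $U\times H$; as $U$ is compact, $\mathrm{pr}_H$ is proper on $U\times H$, so $\Lambda:=\mathrm{pr}_H(\Delta)$ is a cocompact lattice in $H$, with $\Lambda\cong\Delta$, and $\mathrm{pr}_1(\Delta)=\mathrm{pr}_1(\Gamma)\cap U$ is dense in $U$. Now suppose for contradiction that $V:=N_{G_1}(U)$ is non-compact; then $V$ is open, $U\trianglelefteq V$, and $V/U$ is infinite discrete. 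Set $\Delta^\sharp:=\Gamma\cap(V\times H)$: then $\Delta\trianglelefteq\Delta^\sharp$, and $\delta\mapsto\mathrm{pr}_1(\delta)U$ is a surjection $\Delta^\sharp\to V/U$ with kernel $\Delta$ (surjectivity because $\mathrm{pr}_1(\Gamma)\cap V$ is dense in $V$ and $U$ is open), so $\Delta^\sharp/\Delta\cong V/U$ is infinite. Writing $\Lambda^\sharp:=\mathrm{pr}_H(\Delta^\sharp)$, injectivity of $\mathrm{pr}_H|_\Gamma$ gives $\Lambda\trianglelefteq\Lambda^\sharp$ with $\Lambda^\sharp/\Lambda\cong V/U$ infinite, and $\Lambda^\sharp\leq N_H(\Lambda)$. (The case $\QZ(G_1)\neq 1$ reduces to a similar configuration: triviality of the amenable radical prevents $\QZ(G_1)=\bigcup_W C_{G_1}(W)$ from being a locally finite, hence amenable, nontrivial normal subgroup, so $C_{G_1}(W)$ is infinite for some compact open $W$, and this again yields a non-compact open subgroup normalizing a compact open one, or directly an infinite discrete normal subgroup of $G_1$.)

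\textbf{The contradiction and the main obstacle.} Since $\Lambda$ is a cocompact lattice of $H$ that is normal in $N_H(\Lambda)$, the quotient $N_H(\Lambda)/\Lambda$ is compact; hence $\Lambda^\sharp/\Lambda$, being infinite discrete, is a non-closed subgroup of $N_H(\Lambda)/\Lambda$, so $\Lambda^\sharp$ is \emph{not} discrete in $H$. On the other hand one shows, using $\mathrm{Rad}(H)=1$, that $C_H(\Lambda)$ is compact — it is amenable (one has $C_H(\Lambda)/Z(\Lambda)$ compact, $Z(\Lambda)$ finite because a cocompact lattice in a group with trivial amenable radical has finite amenable radical) and $[N_H(\Lambda):\Lambda C_H(\Lambda)]<\infty$ — and then \emph{finite}; with $C_H(\Lambda)$ finite, the chain $\Lambda\leq\Lambda C_H(\Lambda)\leq N_H(\Lambda)$ consists of discrete groups, so $\Lambda^\sharp$ is discrete, a contradiction. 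The genuine difficulty, and the only place residual finiteness is used, is the promotion of ``$C_H(\Lambda)$ compact'' to ``$C_H(\Lambda)$ finite'' — equivalently, ruling out that the overgroups $\Lambda^\sharp$, $\Delta^\sharp$ exhibit the ``compact, infinite'' behaviour that $\mathrm{Rad}(H)=1$ alone permits. This is where the Schlichting-completion picture is needed: the commensurated subgroup $\Delta$ has Schlichting completion $G_1$ (triviality of the amenable radical kills $\mathrm{Core}_{G_1}(U)$, so $G_1\cong G(\Gamma,\Delta)$ with $\Delta$ completing to $U$), and residual finiteness of $\Gamma$, transported through this completion and through the profinite topology of the compact group $U$, forces the commensurated subgroups in play to be, in effect, finite-index at the relevant scale — which is exactly what discreteness of $\Lambda^\sharp$ encodes. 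Carrying this out cleanly, and handling the reducibility subtleties and the case $n>2$ uniformly, is the crux; everything preceding it is formal manipulation of lattices and commensurated subgroups.
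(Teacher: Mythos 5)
Your plan for showing that compact open subgroups of $G_1$ have compact normalizers, \emph{given} that $\mathrm{pr}_H\rest_\Gamma$ is injective, is sound and closely parallels the paper's Lemma~\ref{lem:CompactNormalizer}: you form the cocompact lattice $\Lambda = \mathrm{pr}_H(\Gamma\cap(U\times H))$ in $H$, observe that a non-compact $N_{G_1}(U)$ would force $\Lambda$ to have an infinite-index overgroup inside $N_H(\Lambda)$, and then invoke the structure of normalizers of finitely generated lattices in groups with trivial amenable radical (this is Corollary~\ref{cor:NormLattice}, giving $N_H(\Lambda)/\Lambda$ finite) to derive a contradiction.

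The genuine gap, however, is the line \emph{``Replacing $\Gamma$ if necessary, we may assume each restriction $\mathrm{pr}_i\rest_\Gamma$ is injective\ldots\ so the general case costs only notation.''} That is not a harmless normalization: proving that $\mathrm{pr}_H\rest_\Gamma$ is injective is precisely where residual finiteness does its work, and it is the main content of the theorem. The kernel of $\mathrm{pr}_H\rest_\Gamma$ is of the form $N_1\times\{1\}$ with $N_1$ a discrete normal subgroup of $G_1$ (by density of $\pi_1(\Gamma)$). Your justification amounts to saying that a non-trivial $N_1$ would lie in $\QZ(G_1)$ — but triviality of $\QZ(G_1)$ is the \emph{conclusion} of the theorem, not something you may assume. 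The paper's route here is Lemma~\ref{lem:ResNoeth}: for each finite-index normal $M\trianglelefteq\Gamma$, the section $N_1/M_1$ is finitely generated, hence $\Aut(N_1/M_1)$ is countable and the conjugation map $G_1\to\Aut(N_1/M_1)$ has open kernel, whence $[\Res(G_1),N_1]\leq M_1$; intersecting over all $M$ and using that $C_{G_1}(\Res(G_1))=\triv$ (which follows from trivial amenable radical plus no infinite discrete quotient, via Corollary~\ref{cor:CentraDiscRes}) forces $N_1=\triv$. Nothing in your write-up substitutes for this step.

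Relatedly, you misidentify where residual finiteness is needed. You write that ``the only place residual finiteness is used is the promotion of `$C_H(\Lambda)$ compact' to `$C_H(\Lambda)$ finite'.'' In fact $C_H(\Lambda)=\triv$ already follows from $H$ having a trivial amenable radical by Corollary~\ref{cor:CentLattice}, with no residual finiteness whatsoever; once $\Lambda$ is a cocompact (hence finitely generated) lattice, the triviality of its centralizer and the finiteness of $N_H(\Lambda)/\Lambda$ are purely geometric/ergodic-theoretic facts. The Schlichting-completion idea you sketch at the end as the ``crux'' is never carried out, and you acknowledge as much. So while the post-reduction mechanics are correct, the proof as written is missing exactly the place where the residual finiteness hypothesis is consumed, and the WLOG that papers over it is circular.
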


For our last main result, we study the universal groups of Burger--Mozes; see  \cite[\S3.2]{BuMo}. These groups depend on a choice of a finite permutation group $F$ and are denoted by $U(F)$. For $F$ with degree $d$, the group $U(F)$ is a closed vertex-transitive subgroup of $\Aut(T_d)$. Whenever $F$ does not act freely, the subgroup generated by pointwise edge-stabilizers, denoted by $U(F)^+$, is an abstractly simple non-discrete closed subgroup of $\Aut(T_d)$. When $F$ is transitive and generated by its point stabilizers, the group $U(F)^+$ is a compactly generated non-discrete simple group acting edge-transitively on $T$; see Proposition~\ref{prop:BuMoSimple} below.  For such $F$, the group $U(F)^+$ therefore belongs to the following interesting class of groups.

\begin{defn} \label{def:Ss}
Let $\ms{S}$ denote the class of non-discrete totally disconnected locally compact groups that are topologically simple and compactly generated.
\end{defn}

When $F$ is not transitive or not generated by its point stabilizers, the group $U(F)^+$ is not compactly generated; e.g. see Corollary~\ref{cor:U(F)^+} below. We show that $U(F)$ nonetheless admits a group in $\ms{S}$ as a subquotient.

\begin{thm}[See Theorem~\ref{thm:involve_S}]\label{thmintro:Subquotient}
Let $F$ be a finite permutation group  which does not act freely. Then $U(F)$ has a compactly generated closed subgroup $H$ admitting a discrete normal subgroup $D$ such that $H/D$ is a non-discrete compactly generated simple group. 
\end{thm}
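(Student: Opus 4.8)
The plan is to deduce the statement from the abstract simplicity of $U(F)^+$ together with the structure theory of compactly generated totally disconnected locally compact groups. Write $d$ for the degree of $F$; since $U(F)$ is a closed subgroup of $\Aut(T_d)$ with $T_d$ countable, $U(F)$ and all its closed subgroups are second countable, so this structure theory applies.

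The first step is to observe that $U(F)^+$ is non-compact. As $F$ does not act freely, $U(F)^+$ is a non-trivial --- indeed abstractly simple and non-discrete --- closed normal subgroup of $U(F)$. Since $U(F)$ acts vertex-transitively on $T_d$, the fixed-point set of $U(F)^+$ in $VT_d$ is $U(F)$-invariant, hence empty or all of $VT_d$; a compact subgroup of $\Aut(T_d)$ fixes a vertex, so compactness of $U(F)^+$ would force it to fix every vertex and thus be trivial, a contradiction. Hence $U(F)^+$ is a non-discrete, non-compact, abstractly simple \tdlcsc group.

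The second step is to note that $U(F)^+$, and therefore also its overgroup $U(F)$, is non-elementary in the sense of Wesolek: it is a standard fact that a non-discrete, non-compact, topologically simple \tdlc group is never elementary, and the class of elementary groups is stable under passage to closed subgroups. Moreover, writing $U(F)^+$ as a countable increasing union of compactly generated open subgroups --- Corollary~\ref{cor:U(F)^+} gives a convenient such description --- and using that the elementary class is closed under countable increasing unions, we find a compactly generated open subgroup $O \leq U(F)^+$ that is itself non-elementary.

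The final step is to invoke the structural dichotomy for compactly generated \tdlc groups: a compactly generated non-elementary \tdlcsc group admits a compactly generated closed subgroup $H$ together with a discrete normal subgroup $D$ of $H$ such that $H/D$ is a non-discrete compactly generated topologically simple group, i.e.\ a member of $\ms{S}$. Applying this to the subgroup $O$ of the previous step yields the desired $H$ and $D$ inside $U(F)$. I expect the real content to sit precisely in this last input: the non-elementarity of $U(F)$ immediately gives that it involves some member of $\ms{S}$, but only with a closed normal subgroup, whereas the sharpening to a discrete normal subgroup $D$ inside a compactly generated subgroup $H$ is the delicate point and relies on the chief-factor structure theory of compactly generated \tdlc groups. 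Once that result is available, verifying that $U(F)^+$ lies on the non-elementary side is routine from the properties of the Burger--Mozes groups recalled above; and the special case where $F$ is transitive and generated by its point stabilizers recovers Proposition~\ref{prop:BuMoSimple} with $D$ trivial.
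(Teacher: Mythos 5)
Your first two steps are sound: $U(F)^+$ is indeed non-compact and non-discrete, so by Wesolek's result that non-discrete, non-compact topologically simple \tdlc groups are never elementary, $U(F)$ is non-elementary. The fatal problem is the final step. There is no ``structural dichotomy'' saying that every compactly generated non-elementary \tdlcsc group has a compactly generated closed subgroup $H$ with a discrete normal $D$ and $H/D\in\ms{S}$. Even the weaker assertion --- that every non-elementary \tdlcsc group admits \emph{some} subquotient in $\ms{S}$ --- is precisely Question~\ref{qu:involve_S} in this paper, which is explicitly left open. Your argument runs the implication in exactly the direction that is not known.

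The chief-factor theory you appeal to does not rescue this. A chief factor $K/L$ of $G$ has $K,L$ closed normal in $G$, so it is a subquotient, and a non-elementary chief factor exists when $G$ is non-elementary; but $K$ need not be compactly generated, $L$ need not be discrete in $K$, and the topologically simple ``quasi-factors'' that occur in a non-elementary chief factor can fail to be compactly generated. The Burger--Mozes groups themselves illustrate the obstruction in the sharpest possible way: $U(F)^+$ is abstractly simple and sits as a normal subgroup of $U(F)$, but by Corollary~\ref{cor:U(F)^+} it is \emph{not} compactly generated unless $F$ is transitive and generated by its point stabilizers, so $U(F)^+/\triv$ is not an admissible subquotient. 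Passing to a compactly generated open subgroup $O\leq U(F)^+$, as you suggest, does not help either: nothing in the structure theory forces $O$ to have a simple quotient, and indeed by Theorem~\ref{thm:Indic} such an $O$ is virtually indicable, so its ``obvious'' quotients are abelian.

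The paper's proof goes in the opposite logical direction. It constructs, by hand, a compactly generated closed subgroup of $U(F)$ with the required discrete-kernel simple quotient: first reducing to $U(C_p)$ (Lemma~\ref{lem:PrimeReduction}), then, for $p>2$, exhibiting a Frobenius group $F'$ with Frobenius complement $C_p$ (Theorem~\ref{thm:forbenius_group}) and chaining together blow-ups, partition blow-ups, lifted colorings and covering-tree homomorphisms (Lemmas~\ref{lem:1_main_thm}--\ref{lem:3_main_thm}) to realize $U(F')^+\in\ms{S}$ as a subquotient of $U(C_p)$ with discrete kernel; the case $p=2$ is handled separately (Proposition~\ref{prop:involve_S_p=2}) producing $\Aut(T_3)^+$. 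Non-elementarity of $U(F)$ is then a \emph{corollary} of this construction (Corollary~\ref{cor:FreeF}), not an input to it.
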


Combining Theorems~\ref{thmintro:Fnilpotent} and~\ref{thmintro:Subquotient}, we obtain a negative answer to a question asked by Colin Reid \cite[Question 2]{R16}; see Section~\ref{sec:TitsCore}.

\section*{Acknowledgment}

We  thank the Isaac Newton Institute for Mathematical Sciences, Cambridge for support and hospitality during the programme \textit{Non-positive curvature group actions and cohomology} where part of the work on this paper was accomplished. We are grateful to Jingyin Huang for drawing our attention to \cite[Lemma~9.2]{JH16}, which has been a source of inspiration for the proof of Theorem~\ref{thm:involve_S}. We also thank Marc Burger for his interest in this work and for a pleasant conversation during which Lemma~\ref{lem:QZ} came to life. We finally thank Colin Reid for his helpful suggestions of examples of Frobenius groups.

\section{Preliminaries}

\subsection{Graphs and Bass--Serre Theory}

Following J.-P. Serre, cf. \cite{Serre80}, a \textbf{graph} is a tuple $\Gamma = (V\Gamma,E\Gamma,o,r)$ consisting of a vertex set $V\Gamma$, a directed edge set $E\Gamma$, a map $o:E \rightarrow V$ assigning to each edge an \textbf{initial vertex}, and a bijection $r: E \rightarrow E$, denoted $e \mapsto \ol{e}$ and called \textbf{edge reversal}, such that $r^2 = \mathrm{id}$ and $e\neq \ol{e}$. Edge reversal and the initial vertex map together give a terminal vertex map $t(e):=o(\ol{e})$. A graph \textbf{automorphism} is a permutation of $V\Gamma$ and $E\Gamma$ which respects the maps $o$ and $r$. For a graph $Y$ and a vertex $w\in VY$, we define $E_Y(w)$ to be the edges with origin $w$. When clear from context, we suppress the subscript $Y$. We say a graph $Y$ is \textbf{$d$-regular} if $|E_Y(w)|=d$ for all $w\in VY$.

For graphs $X$ and $Y$, a \textbf{graph homomorphism} $\phi:X\rightarrow Y$ is given by two functions $\phi_V:VX\rightarrow VY$ and $\phi_E:EX\rightarrow EY$ such that $\phi_V\circ o =o\circ \phi_E$ and $\phi_E(\ol{e})=\ol{\phi_E(e)}$. We say a graph $X$ is a \textbf{covering graph} of a graph $Y$ if there is a graph homomorphism $\phi:X\rightarrow Y$ such that $\phi_V$ is surjective and $(\phi_E)\rest_{E_X(v)}:E_X(v)\rightarrow E_Y(\phi_V(v))$ is a bijection for all $v\in VX$. We call the homomorphism $\phi$ a \textbf{covering map}. The automorphism group $\Aut(X)$ of the graph $X$ is endowed with the topology of pointwise convergence for its natural action on the set $VX\sqcup EX$, viewed as a discrete set. In particular, if $X$ is connected and locally finite, then $\Aut(X)$ is a second countable totally disconnected locally compact group.

\begin{thm}[{\cite[Section 5.4]{Serre80}}]\label{thm:graph_cover}
Let $X$ be a connected graph and   $H\leq \Aut(X)$ be a closed subgroup acting  without edge inversion. The following hold.
\begin{enumerate}
\item There is a covering map $\phi:T \rightarrow X$, where $T$ is the a tree. If $X$ is additionally $n$-regular, then $T$ is $n$-regular.
\item There is a closed subgroup $G\leq \Aut(T)$ and a continuous surjective homomorphism $\Phi \colon G\rightarrow H$ with the following properties:
\begin{enumerate}[(a)]
\item $\mathrm{Ker}(\Phi)$ is discrete.  In particular, if $H$ is unimodular, then so is $G$. 
\item The following diagrams commute for all $g\in G$:
\[
\xymatrix{
VT \ar[d]^{\phi_V} \ar[r]^g & VT \ar[d]^{\phi_V}\\
VX \ar[r]^{\Phi(g)} & VX} \hspace{1cm}
\xymatrix{
ET\ar[d]^{\phi_E} \ar[r]^g & ET \ar[d]^{\phi_E}\\
EX \ar[r]^{\Phi(g)} & EX.}
\]
\end{enumerate}
\end{enumerate}
\end{thm}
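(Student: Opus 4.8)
The plan is to take $T$ to be the universal covering graph of $X$ and to realise $G$ inside $\Aut(T)$ as the group of lifts of the automorphisms of $X$ lying in $H$. Since $X$ is connected it has a universal covering graph $\phi\colon T\to X$, and $T$, being connected and simply connected, is a tree; moreover $\phi_E$ restricts to a bijection $E_T(v)\to E_X(\phi_V(v))$ for every $v\in VT$, so each vertex of $T$ has the same valency as its image under $\phi_V$, and in particular $T$ is $n$-regular whenever $X$ is. This gives~(1).

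For~(2), let $D\leq\Aut(T)$ be the deck transformation group of $\phi$. If $\delta\in D$ fixes a vertex $v$, then it fixes every edge of $E_T(v)$, since $\phi_E$ is injective there and $\delta$ commutes with $\phi$; hence it fixes all neighbours of $v$, and by connectedness $\delta=1$. Thus $D$ acts freely on $VT$; in particular $D$ meets the open stabiliser of any vertex trivially, so $D$ is discrete, and being discrete in a locally compact Hausdorff group it is closed. The quotient graph $T/D$ is canonically $X$, and an automorphism $g\in\Aut(T)$ normalises $D$ if and only if it maps $D$-orbits to $D$-orbits, i.e.\ if and only if it descends to an automorphism $\bar g$ of $X=T/D$ with $\phi\circ g=\bar g\circ\phi$. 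Hence, setting $G_0:=\mathrm{N}_{\Aut(T)}(D)$, we get a homomorphism $\Phi_0\colon G_0\to\Aut(X)$, $g\mapsto\bar g$, with kernel $D$, and $\Phi_0$ is surjective because, $T$ being simply connected, the lifting criterion for covers guarantees that every element of $\Aut(X)$ lifts to $\Aut(T)$. The group $G_0$ is closed in $\Aut(T)$: indeed $\mathrm{N}_{\Aut(T)}(D)=\{g:gDg^{-1}\subseteq D\}\cap\{g:g^{-1}Dg\subseteq D\}$, and each of these two sets is closed, being an intersection over $d\in D$ of preimages of the closed set $D$ under the continuous maps $g\mapsto gdg^{-1}$. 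Finally $\Phi_0$ is continuous: for $y\in VX\sqcup EX$, choosing $x\in\phi^{-1}(y)$, the map $g\mapsto\Phi_0(g)(y)=\phi(g(x))$ is continuous, and continuity at every such $y$ gives continuity of $\Phi_0$ for the topologies of pointwise convergence.

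Now put $G:=\Phi_0^{-1}(H)$ and $\Phi:=\Phi_0\rest_G\colon G\to H$. Then $G$ is closed, as the preimage of the closed set $H$ under the continuous map $\Phi_0$; the map $\Phi$ is continuous, and it is surjective since each $h\in H\subseteq\Aut(X)$ has a lift, which then lies in $G$. The kernel $\Ker(\Phi)=D$ is discrete, which is~(a), and the diagrams in~(b) commute by the very definition of $G_0$. Moreover $G$ acts on $T$ without inversion, for if some $g\in G$ inverted an edge $e$ then $\Phi(g)$ would invert $\phi(e)$, contrary to the hypothesis on $H$. As for the unimodularity clause (which is meaningful when $X$ is locally finite, so that $\Aut(T)$ and $\Aut(X)$, and hence $G$ and $H$, are second countable locally compact): $\Phi$ is then a continuous surjection of second countable locally compact groups, hence open, so $G/D\cong H$; since $D$ is discrete, conjugation by any element of $G$ preserves its counting Haar measure, so the usual modular-function formula for the extension $1\to D\to G\to H\to1$ gives $\Delta_G=\Delta_H\circ\Phi$, and $H$ unimodular forces $\Delta_G\equiv1$, i.e.\ $G$ unimodular.

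I do not expect a serious obstacle here; the statement is essentially a repackaging of standard covering-space theory. The two places where something is genuinely used are the surjectivity of $\Phi_0$ onto $\Aut(X)$, which is exactly where the simple connectedness of $T$ — the use of the \emph{universal} cover rather than an arbitrary one — is needed, and the discreteness of $D$, which comes from freeness of the deck action. Everything else is a routine check with the pointwise-convergence topology; the only point requiring a little care is the unimodularity clause, since an extension of a unimodular group by a unimodular group need not be unimodular (witness the $ax+b$ group), the argument succeeding here precisely because the kernel $D$ is discrete.
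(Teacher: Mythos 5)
Your proof is correct and is essentially the textbook covering-space argument; the paper does not supply its own proof but simply cites Serre, whose treatment (universal cover of the graph, deck group acting freely, the normaliser of the deck group mapping onto $\Aut(X)$) is the same route you take. Your care on the unimodularity clause — noting that it hinges on the kernel being discrete so the conjugation action on it has trivial module — is exactly the right point to flag.
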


We call the map $\Phi$ given by Theorem~\ref{thm:graph_cover} the \textbf{covering homomorphism}. The group $G$ is called the \textbf{lift} of $H$ to $T$.

\medskip
Throughout this paper, given a group $G$ acting on a set $X$,   the pointwise fixator of $Y\subset X$ is denoted by $G_{(Y)}$. When $X$ is a graph, we set 
$$G^+:=\grp{G_{(e)}\mid e\in ET}.$$
We record the following fact from Bass--Serre theory.

\begin{prop}\label{prop:graph of groups} 
	Let $T$ be a tree and $H$ be a locally compact group acting continuously on $T$  with compact edge stabilizers. The quotient graph $T/H^+$ is a tree. 
\end{prop}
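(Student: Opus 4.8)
The plan is to reduce the statement to a standard fact from Bass--Serre theory: if a group $G$ acts without inversion on a tree $T$, then the quotient $T/G$ is a tree if and only if $G$ is generated by the stabilizers $\{G_{(v)} : v \in VT\}$ of the vertices. Granting this, the proof is short. By definition $H^+$ is generated by the edge-fixators $H_{(e)}$, $e \in ET$ (the case $ET = \emptyset$ being trivial, as then $H^+ = 1$); since every element of $H_{(e)}$ fixes the two endpoints $o(e)$ and $t(e)$ of $e$, we have $H_{(e)} \leq (H^+)_{(o(e))}$. Hence
\[
H^+ = \langle H_{(e)} : e \in ET \rangle \leq \langle (H^+)_{(v)} : v \in VT \rangle \leq H^+,
\]
so $H^+$ is generated by its own vertex stabilizers, and the quoted fact applies with $G = H^+$.

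Before invoking that fact one must check that $H^+$ acts on $T$ without inversion. Since $T$ is a tree it is bipartite: $VT = V_0 \sqcup V_1$ with every edge joining $V_0$ to $V_1$, and any automorphism of $T$ either preserves this partition or interchanges $V_0$ and $V_1$. An automorphism fixing a vertex cannot interchange the two classes, so each $H_{(e)}$ preserves the bipartition, and therefore so does the subgroup $H^+$ it generates. An edge inversion would carry some vertex to an adjacent one, i.e.\ swap a vertex of $V_0$ with a vertex of $V_1$, contradicting preservation of the partition; hence $H^+$ is inversion-free. (Neither the local compactness of $H$ nor the compactness of the edge stabilizers is needed for this argument; those hypotheses belong to the context in which the proposition is used.)

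Finally, I recall why the quoted Bass--Serre fact holds. Equip $Y := T/H^+$ with the quotient graph-of-groups structure $\mathcal{G}$ determined by the (inversion-free) $H^+$-action, so that $H^+ \cong \pi_1(Y,\mathcal{G})$ by the structure theorem. There is a canonical surjection $\pi_1(Y,\mathcal{G}) \twoheadrightarrow \pi_1(Y)$ onto the fundamental group of the underlying graph, whose kernel is the normal subgroup generated by all the vertex groups. Since the family $\{(H^+)_{(v)} : v \in VT\}$ is stable under conjugation in $H^+$, namely $g(H^+)_{(v)}g^{-1} = (H^+)_{(gv)}$, this kernel equals the subgroup $\langle (H^+)_{(v)} : v \in VT \rangle$, which by the first paragraph is all of $H^+$. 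Therefore $\pi_1(Y) = 1$; as $Y$ is connected (being a quotient of the connected graph $T$), it is a tree. The one place a genuine argument is needed is the verification that $H^+$ acts without inversion; the rest is bookkeeping with the standard structure theory, and identifying and citing the correct form of the Bass--Serre statement is the main thing to get right.
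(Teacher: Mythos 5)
Your proof is correct, but it takes a genuinely different route from the paper's. The paper's proof also appeals to Bass--Serre theory to identify $H^+$ with the fundamental group of a graph of groups over $X := T/H^+$, but then argues by contradiction: if $X$ had a cycle then $H^+$ would surject onto $\mathbf{Z}$, and such a homomorphism would be automatically continuous by a theorem of Alperin, hence trivial since $H^+$ is generated by \emph{compact} subgroups (this is where the hypothesis of compact edge stabilizers, together with local compactness of $H$, enters). You instead extract the sharper statement from Bass--Serre theory that the kernel of the surjection $\pi_1(Y,\mathcal{G}) \twoheadrightarrow \pi_1(Y)$ is exactly the normal closure of the vertex groups, observe that this normal closure is $\langle (H^+)_{(v)} : v \in VT\rangle$ (by conjugation-stability of the family of vertex stabilizers), and show directly that this subgroup is all of $H^+$ because each edge fixator $H_{(e)}$ lies in the vertex stabilizer $(H^+)_{(o(e))}$. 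The upshot is that your argument is purely combinatorial/group-theoretic and does not use the topology at all: neither the local compactness of $H$, nor the compactness of edge stabilizers, nor Alperin's automatic continuity theorem are needed. You noticed this yourself, and it is a genuine strengthening --- the proposition is true for an arbitrary abstract group action on a tree. The paper's proof is shorter if one only wants to quote the coarse fact that $\pi_1$ of a graph of groups surjects onto $\pi_1$ of the underlying graph, but at the cost of invoking a nontrivial continuity theorem and the compactness hypothesis; your version buys full generality at the modest cost of identifying the kernel of that surjection, which is itself standard.
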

\begin{proof}
The group $H^+$ acts without inversion on $T$, since it is generated by edge stabilizers. Bass--Serre theory ensures that $H^+$ is isomorphic to the fundamental group of a graph of groups whose underlying graph is $X:=T/H^+$; see \cite[Theorem 13]{Serre80}. Additionally, the fundamental group of a graph of groups maps onto the fundamental group of the underlying graph; see \cite[Section 5.1]{Serre80}.  

Suppose toward a contradiction that $X$ admits a cycle. The fundamental group of $X$ thus maps onto $\Zbb$, hence $\Zbb$ is a quotient of $H^+$. Any homomorphism of a locally compact group to $\Zbb$ is continuous via \cite[Corollary 3]{Alperin}. Any homomorphism $H^+ \to \Zbb$ therefore has a trivial image, since $H^+$ is generated by compact subgroups and $\Zbb$ has no non-trivial finite subgroups. This is absurd. We conclude that $X$ is indeed a tree. 
\end{proof}

\subsection{Normal subgroups of groups acting on trees}

The following basic fact seems to be due to J.~Tits. The reader may consult \cite[Lemma 4.2]{LeBou16} for a proof of the first two claims. The claim on non-amenability follows easily from the existence of discrete free subgroups afforded by a standard Ping-Pong argument; a detailed proof may be found in \cite[Theorem 1]{Neb88}.

\begin{prop}[Tits]\label{prop:Tits}
	Let $T$ be a  tree with more than two ends. If $G \leq \Aut(T)$ acts minimally without a fixed end, then every non-trivial normal subgroup $N$ of $G$ is such that it acts minimally without a fixed end, has a trivial centralizer in $\Aut(T)$, and is not amenable. 
\end{prop}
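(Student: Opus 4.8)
The plan is to prove the three assertions in the natural logical order: first that every non-trivial normal subgroup $N \trianglelefteq G$ acts minimally without a fixed end, then that its centralizer in $\Aut(T)$ is trivial, and finally that $N$ is non-amenable. For the first assertion, recall the classical dichotomy for a group acting on a tree: either it fixes an end, or it stabilizes a proper subtree, or it acts minimally. Since $G$ acts minimally without a fixed end, the set of hyperbolic elements of $G$ is non-empty, and the union of their axes is $G$-invariant; minimality forces this union to be all of $T$. Now let $N \trianglelefteq G$ be non-trivial. The key point is that $N$ cannot fix an end: the set of ends fixed by $N$ is $G$-invariant (as $N$ is normal), and is non-empty and $G$-invariant would give $G$ a finite orbit of ends, contradicting that $G$ has no fixed end once one checks a group acting minimally without fixed end on a tree with $\geq 3$ ends has no finite orbit on $\partial T$ — equivalently, $N$ fixing an end $\xi$ would make the $G$-orbit of $\xi$ a finite $G$-invariant set of ends, and a minimal non-fixing action has no such set. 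Similarly, the minimal $N$-invariant subtree $T_N$ (the union of axes of hyperbolic elements of $N$, which is non-empty since $N$ has no fixed end and is non-trivial) is $G$-invariant by normality, hence equals $T$ by minimality of $G$. Thus $N$ acts minimally on $T$ with no fixed end. I would lean on \cite[Lemma~4.2]{LeBou16} as cited for the clean statement of this.

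For the centralizer statement, suppose $c \in \Aut(T)$ centralizes $N$. Since $N$ acts minimally without a fixed end and $T$ has more than two ends, $N$ contains two hyperbolic elements $h_1, h_2$ with distinct axes (otherwise all hyperbolic elements of $N$ share a single axis, which is then $N$-invariant, forcing $T$ to be a line, contradicting $|\partial T| > 2$). An element centralizing a hyperbolic element $h$ must preserve its axis and its pair of endpoints $\{h^-, h^+\}$. Hence $c$ preserves $\{h_1^-, h_1^+\}$ and $\{h_2^-, h_2^+\}$, so $c$ fixes or swaps within each of two distinct axes; in any case $c$ preserves the two (disjoint or finitely-overlapping) axes setwise and thus fixes their "bridge" or intersection pointwise up to the finite ambiguity. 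Pushing this further: $c$ preserves the axis of every hyperbolic element of $N$, and these axes cover $T$ by the minimality established above, so $c$ fixes every vertex on every such axis, i.e. $c = 1$. (One must handle the "swap" case: if $c$ inverted an axis it would have to be consistent across all overlapping axes, and the abundance of axes with different endpoints — again from minimality and $|\partial T|>2$ — rules this out, forcing $c$ to fix a point and then everything.)

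For non-amenability, the cleanest route is Ping-Pong: since $N$ acts minimally without a fixed end on a tree with more than two ends, $N$ contains two hyperbolic elements in "general position" (axes whose endpoints are four distinct ends), and a standard Tits Ping-Pong argument on $\partial T$ produces, for suitable powers, a discrete free subgroup of rank~$2$ inside $N$; a group containing a non-abelian free subgroup is non-amenable. I would simply cite \cite[Theorem~1]{Neb88} for the existence of the free subgroup, as the excerpt suggests. The main obstacle, and the only place requiring genuine care, is the centralizer argument: making rigorous the passage from "$c$ preserves each individual axis" to "$c = 1$", in particular controlling the endpoint-swapping possibility and verifying that the axes of hyperbolic elements of $N$ really do cover $T$ (which is exactly the minimality of the $N$-action, already secured in the first step). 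Everything else is either the cited lemma or a routine Ping-Pong invocation.
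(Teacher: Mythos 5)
The paper does not supply its own proof of this proposition: it cites \cite[Lemma 4.2]{LeBou16} for the assertions about minimality and the centralizer, and \cite[Theorem 1]{Neb88} for non-amenability via the existence of a discrete free subgroup. Your sketch is broadly in the spirit of what those references contain, and the first part (that $N$ acts minimally without a fixed end) and third part (non-amenability via Ping-Pong on $\partial T$) are essentially fine, modulo the small point that the finiteness of the $G$-orbit of a putative $N$-fixed end $\xi$ should be justified by noting that this orbit lies in $\mathrm{Fix}_{\partial T}(N)$, which has at most two elements since a group fixing three ends of a tree fixes the centre of the resulting tripod and would then have $\mathrm{Fix}(N)=T$, i.e.\ $N=\triv$.

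The centralizer argument, however, has a genuine gap. You correctly note that any $c$ centralizing $N$ preserves $\mathrm{axis}(n)$ setwise for each hyperbolic $n\in N$, that these axes cover $T$ by minimality, and that the possibility of $c$ inverting an axis needs to be excluded. But excluding inversions is not enough: $c$ might translate every such axis by the same positive amount, i.e.\ $c$ could itself be hyperbolic, and your parenthetical simply asserts ``forcing $c$ to fix a point'' without an argument. The missing steps are: (a) for hyperbolic $n$, the relation $cnc^{-1}=n\neq n^{-1}$ forbids $c$ from swapping the two ends of $\mathrm{axis}(n)$, so $c$ restricts to a translation on each axis; (b) if $c$ were hyperbolic with axis $A_c$, then since in a tree $d(x,cx)=\tau(c)+2\,d(x,A_c)$, any bi-infinite line translated by $c$ at constant speed must coincide with $A_c$ --- so every hyperbolic element of $N$ would share the single axis $A_c$, forcing $N$ to preserve a line and, by minimality of the $N$-action, forcing $T$ to be a line, contradicting $|\partial T|>2$; (c) hence $c$ is elliptic, and the same distance formula (now with $\mathrm{Fix}(c)$ in place of $A_c$) shows that an infinite line translated at constant positive speed by an elliptic isometry cannot exist, so $c$ fixes each axis pointwise, and since the axes cover $T$ we get $c=1$. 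Without step (b) --- and the elliptic counterpart in (c) --- your argument does not rule out a hyperbolic centralizing element, which is the crux of the claim.
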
	

There is an important normal subgroup of $G\leq \Aut(T_n)$, where $T_n$ is the $n$-regular tree. Define $\sim$ on $VT_n$ by $v\sim w$ if and only if $d(v,w)$ is even. This is a $G$-equivariant equivalence relation on $VT_n$ which partitions $VT_n$ into two parts. The subgroup $G^*$, of index two in $G$, is the collection of $g$ that do not interchange the parts. The subgroup $G^*$ acts on $T_n$ without edge inversion. Conversely, any subgroup of $G$ which acts on $T_n$ without inversion is contained in $G^*$. 

\subsection{Burger--Mozes groups}

Let $Y$ be a graph and $d> 0$ be an integer. A \textbf{coloring of degree $d$} of  $Y$ is a map $ c \colon EY\rightarrow [d]$ such that  for every $v\in VY$, the restriction
\[
c_v:=c\rest_{E_Y(v)}:E_Y(v)\rightarrow [d]
\]
is either a bijection or is constant. A vertex $v$ is called \textbf{$c$-regular} or \textbf{$c$-singular} accordingly. The coloring $c$ is called \textbf{regular} if all vertices are $c$-regular. In that case $Y$ is $d$-regular. 

Let $g\in \Aut(Y)$, $c$ be a coloring of degree $d$, and $v \in VY$ be a vertex such that $v$ and $gv$ are both $c$-regular. We may then define the \textbf{local action} of $g$ at $v$ as the permutation of $[d]$ given by
\[
\sigma_c(g,v):=c_{g(v)}\circ g\circ c^{-1}_v.
\]
When clear from context or unimportant, we suppress the subscript $c$. The local action enjoys two important properties, the proofs of which are easy exercises:
\begin{equation}
	\sigma(gh,v)=\sigma(g,hv)\sigma(h,v),
\end{equation}
and
\begin{equation}
	\sigma(g^{-1},v)=\sigma(g,g^{-1}v)^{-1}
\end{equation}
for all automorphisms $g, h$ preserving the set of $c$-regular vertices. 

\begin{defn}
Let $d>2$ and let $T$ be the $d$-regular tree. For a permutation group $F\leq\Sym(d)$ and $c \colon ET\rightarrow [d]$ a regular coloring, the \textbf{Burger--Mozes group} is
\[
U_c(F):=\{g\in \Aut(T)\mid \forall v\in VT\;\sigma_c(g,v)\in F\}.
\]
We write $U_c((F,[d]))$ when we wish to emphasize the permutation representation of $F$.
\end{defn}

The group $U_c(F)$ depends on the coloring $c$, and this dependence is somewhat mysterious. For instance, it is easy to construct regular colorings such that $U_c(F)$ is finite. There is a class of colorings, however, for which we have good control over the resulting group.

\begin{defn}
A coloring $c$ of a graph $Y$ is \textbf{legal} if $c(e)=c(\ol{e})$ for each edge $e\in EY$. 
\end{defn}

\begin{prop} \label{prop:BuMo}
	Let $F, F' \leq \Sym(d)$ and let $c, c'$ be regular legal colorings of the $d$-regular tree $T$. 
	If $F$ and $F'$ are isomorphic as permutation groups, then  $U_c(F)$ is conjugate to $U_{c'}(F')$ by some $g\in \Aut(T)$. In particular, the isomorphism type of $U_c(F)$ is independent of the choice   of regular legal coloring.
\end{prop}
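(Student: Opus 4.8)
The plan is to exploit the natural action of $\Aut(T)$ on the set of colorings of $T$, where $g\in\Aut(T)$ sends a coloring $c$ to the coloring $g\cdot c\colon e\mapsto c(g^{-1}e)$, and to establish two facts: that one may as well assume $F=F'$, and that $\Aut(T)$ acts transitively on the set of regular legal colorings. For the first reduction, since $F$ and $F'$ are isomorphic as permutation groups there is $\pi\in\Sym(d)$ with $\pi F\pi^{-1}=F'$. Put $c'':=\pi\circ c\colon ET\to[d]$. Then $c''$ is again a regular legal coloring: each $c''_v=\pi\circ c_v$ is a bijection, and $c''(\ol e)=\pi(c(\ol e))=\pi(c(e))=c''(e)$. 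From the definition of the local action one reads off $\sigma_{c''}(g,v)=\pi\,\sigma_c(g,v)\,\pi^{-1}$, so $\sigma_{c''}(g,v)\in F'$ iff $\sigma_c(g,v)\in F$; hence $U_{c''}(F')=U_c(F)$. It thus suffices to prove that any two regular legal colorings of $T$ yield $\Aut(T)$-conjugate Burger--Mozes groups for one and the same $F$.

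The core of the proof is the transitivity of $\Aut(T)$ on regular legal colorings. Given regular legal colorings $c$ and $c''$, I would construct $g\in\Aut(T)$ with $c''(ge)=c(e)$ for every $e\in ET$ by recursion on the distance to a fixed base vertex $v_0$. One is forced to set $g(v_0):=v_0$ and $g:=(c''_{v_0})^{-1}\circ c_{v_0}$ on $E(v_0)$, a bijection $E(v_0)\to E(v_0)$; and, having defined $g$ on the ball of radius $n$, one extends it across an edge $e$ at distance $n$ by declaring $g$ to be the map $E(t(e))\to E(t(ge))$, $f\mapsto (c''_{t(ge)})^{-1}(c(f))$, which is a bijection because $t(e)$ is $c$-regular and $t(ge)$ is $c''$-regular. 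The one consistency check is that on the reversed edge $\ol e\in E(t(e))$ this rule returns the value $\overline{ge}$ already assigned to $g(\ol e)$; this is exactly where legality enters, since $c''(\overline{ge})=c''(ge)=c(e)=c(\ol e)$. (If the colorings were not legal, they need not lie in a common $\Aut(T)$-orbit, which is why the hypothesis is indispensable.) The recursion exhausts all vertices and edges and produces the required $g$.

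Finally, whenever $c_1=g\cdot c_2$ for regular legal colorings $c_1,c_2$, a direct computation from the definitions, using $(c_1)_v=(c_2)_{g^{-1}v}\circ g^{-1}$, gives $\sigma_{c_1}(h,v)=\sigma_{c_2}(g^{-1}hg,\,g^{-1}v)$ for every $h\in\Aut(T)$ and every vertex $v$; quantifying over $v$ turns this into $U_{c_1}(F)=g\,U_{c_2}(F)\,g^{-1}$. Applying transitivity to obtain $g$ with $c''=g\cdot c'$, one concludes $U_c(F)=U_{c''}(F')=g\,U_{c'}(F')\,g^{-1}$, which is the assertion; the final clause is the special case $F=F'$. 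I expect the main obstacle to be precisely the inductive construction of the conjugating automorphism, together with the observation that legality is the exact condition making the recursion well defined at each back edge; the rest is bookkeeping with the definition of the local action.
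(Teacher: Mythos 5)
Your proof is correct and follows the same route as the paper's: both reduce the case $F\neq F'$ to the case $F=F'$ by replacing $c$ with $\pi\circ c$ and checking that local actions conjugate by $\pi$, so that $U_c(F)=U_{\pi\circ c}(F')$. Where the paper then simply cites Burger--Mozes \cite[Section~3.2]{BuMo} for the $F=F'$ case, you unpack that citation by proving directly that $\Aut(T)$ acts transitively on regular legal colorings (via the outward recursion from a base vertex, with legality exactly ensuring consistency on back-edges) and that $c_1=g\cdot c_2$ yields $U_{c_1}(F)=gU_{c_2}(F)g^{-1}$; both computations are correct, so your argument is a self-contained version of the same proof.
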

\begin{proof}
In case $F = F'$, the required assertion is proved by Burger and Mozes in  \cite[Section 3.2]{BuMo}.
Assume now that $F$ and $F'$ are distinct and isomorphic. There thus exists $h \in \Sym(d)$ with $hFh^{-1} = F'$. Let $d$ be the coloring defined by $d(e):=hc(e)$. Clearly, $d$ is again a regular legal coloring. For $g\in U_c(F)$ and $v\in VT$, we see that
\[
\sigma_{d}(g,v)=d_{g(v)}\circ g\circ d_v^{-1}=hc_{g(v)}\circ g\circ c_v^{-1}h^{-1}=h\sigma_c(g,v)h^{-1}.
\]
We conclude that $\sigma_{d}(g,v)\in F'$ for all $g\in G$ and $v\in VT$. Hence, $U_c(F)\leq U_{d}(F')$. The converse inclusion is similar, and thus $U_c(F)=U_d(F')$. We conclude from our initial observation that $U_c(F)$ is indeed conjugate to $U_{c'}(F')$. 
\end{proof}

For $c$ a regular legal coloring, one easily verifies that $U_c(\{1\})$ acts vertex transitively on $T$. Therefore, $U_c(F)$ acts vertex transitively on $T$ for any regular legal coloring $c$ and permutation group $F$. 

\begin{defn} 
For $d\geq 3$, $F\leq \Sym(d)$, and $c$ some (equivalently, any) regular legal coloring, we call the group $U_c(F)$ the \textbf{Burger--Mozes universal group} with local action prescribed by $F$ and denote it by $U(F)$.
\end{defn}

The term `universal' is justified by \cite[Proposition~3.2.2]{BuMo}, where it is shown  that if $F \leq \Sym(d)$ is transitive, then every vertex-transitive subgroup $H \leq \Aut(T_d)$ whose local action at some vertex is isomorphic to $F$ is contained in $U_c(F)$ for some regular legal coloring $c$. Adapting the argument loc. cit., we obtain a slightly more general fact, which covers the case where $F$ is intransitive. 

\begin{prop}[{cf. \cite[Proposition~3.2.2]{BuMo}}]\label{prop:Universality_BuMo}
	Let $T$ be a tree and $H \leq \Aut(T)$. Suppose that we have a bijection $k \colon E(v) \to [d]$ for some $v\in VT$ and $d>2$ and set
	\[
	F := \{\sigma_k(g, v) \mid g \in H_{(v)}\} \leq 	\Sym(d).
	\] 
	The following assertions hold. 
	
	\begin{enumerate}[(i)]
		\item There exists a  coloring $\tilde k$ of $T$ satisfying the following properties:
		
		\begin{enumerate}[(a)]
			\item  $\tilde k \rest_{E(v)} = k$, 
			\item the set of $\tilde{k}$-regular vertices coincides with the $H$-orbit of $v$,
			\item   $\sigma_{\tilde k}(g, w) \in F$ for all $g \in H$ and $w$ in the $H$-orbit of $v$.
		\end{enumerate}   
		 
		\item If $H$ is vertex-transitive, then   $H \leq U_{\tilde k}(F)$ and $\tilde{k}$ is regular coloring. 
		
		\item In addition, if either $F$ is transitive or every edge of $T$ is inverted by some element of $H$, then $\tilde k$ can be chosen to be in addition legal. 
	\end{enumerate}
\end{prop}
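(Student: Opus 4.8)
The plan is to construct the coloring $\tilde k$ inductively over the sphere of radius $n$ around the base vertex $v$, mimicking the proof of \cite[Proposition~3.2.2]{BuMo} but taking care that regularity of the coloring is only forced on the $H$-orbit of $v$. First I would set up the combial skeleton: enumerate the vertices in the $H$-orbit of $v$, and for each such vertex $w$ choose (using vertex-transitivity in part (ii), or merely picking a representative of the $H$-action on edges otherwise) an element $h_w \in H$ with $h_w v = w$, normalized so that $h_v = 1$. The defining prescription is $\tilde k(e) := \sigma_k(h_w, v)\bigl(k(h_w^{-1} e)\bigr)$ for $e \in E(w)$ when $w$ is in the orbit of $v$; here one must check this is well-defined, i.e.\ independent of the choice of $h_w$ among elements sending $v$ to $w$, which is exactly where the cocycle identity $\sigma(gh,v)=\sigma(g,hv)\sigma(h,v)$ and the definition of $F$ as the image of $H_{(v)}$ enter: two choices differ by an element of $H_{(v)}$, whose local action at $v$ lies in $F$, but more to the point the two prescriptions agree because composing with that $H_{(v)}$-element only permutes within the fibre consistently. (In the intransitive case where some edges at $w$ are not reachable this way, one extends $\tilde k$ arbitrarily to a constant map on $E(w)$, making $w$ $\tilde k$-singular.)

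Next I would verify properties (a), (b), (c) of part (i). Property (a) is immediate from $h_v = 1$. Property (b): a vertex $w$ in the orbit of $v$ gets a bijective coloring $c_w$ by construction (it is $k_v$ precomposed with a bijection, then composed with a permutation of $[d]$), hence is $\tilde k$-regular; a vertex not in the orbit is colored constantly, hence $\tilde k$-singular; so the $\tilde k$-regular vertices are precisely the $H$-orbit of $v$. Property (c) is the main verification: for $g \in H$ and $w$ in the orbit, compute $\sigma_{\tilde k}(g,w) = \tilde k_{gw} \circ g \circ \tilde k_w^{-1}$, substitute the definitions $\tilde k_w = \sigma_k(h_w,v)\circ k_v \circ h_w^{-1}$ and likewise at $gw$, and use the cocycle identities to telescope the expression down to $\sigma_k(h_{gw}^{-1} g h_w, v)$, which is the local action at $v$ of the element $h_{gw}^{-1} g h_w \in H_{(v)}$, hence lies in $F$ by the definition of $F$. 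This is a routine but slightly fiddly chain of substitutions; I expect the bookkeeping of which $h_w$ appears where to be the main place errors could creep in.

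For part (ii), once (b) holds and $H$ is vertex-transitive, every vertex is in the orbit of $v$, so by (b) every vertex is $\tilde k$-regular, i.e.\ $\tilde k$ is a regular coloring; and (c) says $\sigma_{\tilde k}(g,w) \in F$ for every $g \in H$ and every vertex $w$, which is exactly the membership condition $H \leq U_{\tilde k}(F)$. For part (iii), I would modify the inductive construction to enforce $\tilde k(e) = \tilde k(\ol e)$: build the coloring sphere by sphere, and when passing from a colored vertex $w$ to a neighbour $w'$ across an edge $e$, first fix the value $\tilde k(\ol e) := \tilde k(e)$ on the edge $\ol e \in E(w')$ and only then extend to the remaining edges at $w'$ via the prescription above. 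The obstruction to doing this consistently is that the prescribed value $\sigma_k(h_{w'},v)(k(h_{w'}^{-1}\ol e))$ might disagree with $\tilde k(e)$; the hypothesis that $F$ is transitive (or, alternatively, that every edge is inverted by some element of $H$) provides the freedom to re-choose the coset representative $h_{w'}$ — composing it with a suitable element of $H_{(v)}$ whose local action moves $k(h_{w'}^{-1}\ol e)$ to the required color — so that the legality constraint on $\ol e$ is satisfied while (a)--(c) are preserved. The transitivity of $F$ is precisely what guarantees such an $H_{(v)}$-element exists; in the edge-inversion case one instead absorbs the discrepancy by replacing $h_{w'}$ with an element inverting $e$. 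I expect reconciling this re-choice with the well-definedness already established in the first step — i.e.\ checking that the legal and the "canonical" constructions can be run simultaneously — to be the subtlest point, and I would handle it by doing the whole construction legally from the outset in part (iii) rather than retrofitting the earlier coloring.
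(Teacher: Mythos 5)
Your approach matches the paper's in outline — fix coset representatives $h_w$ carrying the base vertex $v$ to each orbit vertex $w$, pull back $k$ along $h_w$ to color $E(w)$, and use the cocycle identity to check the local actions land in $F$ — but the defining formula $\tilde k(e) := \sigma_k(h_w,v)\bigl(k(h_w^{-1}e)\bigr)$ for $e \in E(w)$ is not well-posed. Since $k$ is only given as a bijection on $E(v)$ and not as a coloring, the symbol $\sigma_k(h_w,v)=k_{h_w v}\circ h_w\circ k_v^{-1}$ would require knowing $k$ on $E(h_w v)=E(w)$, which is exactly what you are trying to define; read that way the formula is circular. The correct prescription is simply $\tilde k(e):=k(h_w^{-1}e)$ (the paper writes $\tilde k(e)=k(h_{o(e)}(e))$ with the opposite convention $h_w(w)=v$). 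With that simpler formula the computation of (c) does telescope to $\sigma_k(h_{gw}^{-1}gh_w,v)\in F$ exactly as you say. With the spurious factor included, however, the computation gives $\sigma_k(h_{gw},v)\cdot\sigma_k(h_{gw}^{-1}gh_w,v)\cdot\sigma_k(h_w,v)^{-1}$, and the outer terms have no reason to lie in $F$ since the $h_w$ are not in $H_{(v)}$, so the argument as written would not close. Relatedly, the ``well-definedness'' step you flag is a red herring: one makes a single definite choice of $h_w$ per orbit vertex and shows the resulting coloring works; there is no independence-of-choice to verify, and the cocycle identity is needed only for (c), not for the definition itself.

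For part (iii) you propose to enforce legality on the fly within the same inductive sweep, re-choosing representatives edge by edge. The paper instead builds the possibly-illegal $\tilde k$ of part (i) first and then corrects it in a second induction, replacing $\tilde k$ on each $E(w')$ by $g\tilde k\rest_{E(w')}$ for a suitable $g\in F$ chosen (using transitivity of $F$, or in the edge-inversion case using $g=\sigma_{\tilde k}(h,o(e))\in F$ for $h$ inverting the relevant edge) to make the color of $\ol{e_{w'}}$ agree with that of $e_{w'}$. Both routes should work, but the paper's two-step version is cleaner on precisely the point you identify as subtlest: since the corrected coloring differs from $\tilde k$ at each vertex only by left and right multiplication by elements of $F$, the inclusion $\sigma_c(g,w)\in F$ follows mechanically from (c) for $\tilde k$, with no need to interleave the legality constraint with the recursive definition of the coloring and re-verify (a)--(c) as you go.
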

\begin{proof}
Let $H.v$ denote the orbit of $v$ under the action of $H$. For each $w\in H.v$, fix $h_w\in H$ such that $h_w(w)=v$. For $w=v$, let us take $h_v=1$. We now define $\tilde{k}:ET\rightarrow [d]$ by
\[
\tilde{k}(e):=
\begin{cases}
0 & o(e)\notin H.v\\
k(h_{o(e)}(e)) & o(e)\in H.v
\end{cases}.
\]
That $\tilde{k}$ is a coloring, claim (a), and claim (b) are obvious from the definition of $\tilde{k}$. 

For claim (c), take $h_w$ is as fixed in the definition of $\tilde{k}$. We now compute 
\[
\sigma_{\tilde{k}}(h_w,w)=\tilde{k}_v\circ h_w\circ \tilde{k}_w^{-1}=k\circ h_w\circ (k\circ h_w)^{-1}=1.
\]
We deduce that $\sigma_{\tilde{k}}(h_w,w)=1$ and that $\sigma_{\tilde{k}}(h_w^{-1},v)=1$. For an arbitrary $g\in H$ and $w\in H.v$, the element $h_{g(w)}gh_w^{-1}$ fixes $v$, so by definition of $F$,
\[
\sigma_{\tilde{k}}(h_{g(w)}gh_w^{-1},v)\in F.
\]
On the other hand,
\[
\begin{array}{rcl}
\sigma_{\tilde{k}}(h_{g(w)}gh_w^{-1},v) & = &\sigma_{\tilde{k}}(h_{g(w)},gh_w^{-1}(v))\sigma_{\tilde{k}}(gh_w^{-1},v)\\
										& = & \sigma_{\tilde{k}}(h_{g(w)},g(w))\sigma_{\tilde{k}}(g,h_w^{-1}(v))\sigma_{\tilde{k}}(h_w^{-1},v)\\
										& = & 1\cdot \sigma_{\tilde{k}}(g,w)\cdot 1\\
										& = & \sigma_{\tilde{k}}(g,w).
\end{array}
\]
We deduce that $\sigma_{\tilde{k}}(g,w)\in F$ for all $g\in H$ and $w\in H.v$.

\medskip

Claim (ii) is immediate from claim (1c). 

\medskip

For Claim (iii), let $\tilde{k}$ be the coloring given by claim (i). For all  $i,j\in [d]$, if there is some element of $F$ that carries $i$ to $j$, then fix $g_{ij}\in F$ such that $g_{ij}(i)=j$. We assume that $g_{ii}=1$. Observe that we have such a $g_{ij}$ whenever $i=\tilde{k}(e)$ and $j=\tilde{k}(\ol{e})$ for some edge $e$, by our hypotheses. 

Fix $w\in VT$ and for each vertex $v\in VT\setminus\{w\}$, let $e_v$ be the edge with origin $v$ on the geodesic from $v$ to $w$. We now define a legal coloring $c:ET\rightarrow [d]$ on $E(v)$ by induction on $d(v,w)$ such that for each $e\in ET$ there is $g\in F$ such that  $c(e)=g\tilde{k}(e)$. For the base case, we set $c(f):=\tilde{k}(f)$ for all $f\in E(w)$. Suppose that we have defined $c$ on $E(v)$ for all $v\in B_n(w)$. Take $v$ such that $d(v,w)=n+1$. Since $t(e_v)\in B_n(w)$, $c$ is defined on $\ol{e_v}$. Say that $c(\ol{e_v})=g\tilde{k}(\ol{e}_v)=g(j)$ and $\tilde{k}(e_v)=i$. We set $c(f):=gg_{ij}\tilde{k}(f)$ for $f\in E(w)$. It follows that $c$ is a legal coloring.  

It is clear that the coloring $c$ satisfies (a) and (b) of claim (i). Let us argue for (c). Taking $g\in H$ and $w\in H.v$,
\[
\sigma_c(g,w)=c_{g(w)}\circ g\circ c_w=z\tilde{k}_{g(w)}\circ g\circ \tilde{k}_w^{-1}y=z\sigma_{\tilde{k}}(g,w)y
\]
for some $z,y\in F$. Since $\sigma_{\tilde{k}}(g,w)\in F$ by claim (i), we infer that $\sigma_c(g,w)\in F$, verifying (c).
\end{proof}

We finish this subsection with some supplementary results that will be useful in recognizing when a subgroup $H \leq \Aut(T)$ is conjugate to a subgroup of $U(F)$. 

\begin{lem}\label{lem:LocalAction-UniqueFixedPoint}
Let $T$ be a tree and $H \leq \Aut(T)$ be a closed subgroup. Let $v \in VT$ be such that the action of $H_{(v)}$ on $E(v)$ has a unique fixed point $e$. If $H$ is unimodular, then for every $h \in H$ with $hv = t(e)$, we have   $he = \bar e$. 
\end{lem}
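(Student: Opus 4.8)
The plan is to exploit unimodularity through an index/covolume computation on the vertex stabilizers at the two endpoints of the edge $e$. Write $v$ and $w := t(e)$, and let $e' := \bar e \in E(w)$ be the reverse edge, so $o(e') = w$ and $t(e') = v$. Suppose $h \in H$ satisfies $hv = w$; conjugating by $h$ carries $H_{(v)}$ isomorphically onto $H_{(w)}$, and carries the unique fixed point $e$ of $H_{(v)}$ acting on $E(v)$ to the unique fixed point of $H_{(w)}$ acting on $E(w)$. So $H_{(w)}$ fixes a unique edge in $E(w)$, namely $he$. The goal is to show $he = e'$. Since $he$ is an edge at $w$, the only alternative is that $he \neq e'$, i.e. $he$ points away from $v$ rather than toward it.

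The key step is to rule out $he \neq e'$ using a Haar-measure argument. Consider the stabilizer $H_{(e)}$ of the edge $e$, which is the intersection $H_{(v)} \cap H_{(w)}$ — indeed an element fixing $e$ fixes both its endpoints, and conversely an element of $H_{(v)}$ that also lies in $H_{(w)}$ must fix $e$, since $e$ is the unique $H_{(v)}$-fixed edge at $v$ and also (being incident to $w$) it is fixed once the element is in $H_{(w)} \le \mathrm{Stab}(w)$... more carefully: $H_{(v)} \cap H_{(w)}$ fixes both endpoints of $e$ hence stabilizes $\{e, \text{other edges}\}$ and since it fixes $v$ it permutes $E(v)$ fixing $e$, so it equals $H_{(e)}$. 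Now the first claim is $H_{(e)} = H_{(v)} \cap H_{(w)}$, and I compute the two indices $[H_{(v)} : H_{(e)}]$ and $[H_{(w)} : H_{(e)}]$. The orbit $H_{(v)} \cdot w$ is the set of vertices at distance $1$ from $v$ reachable by the $E(v)$-action; since $e$ is the unique fixed point of $H_{(v)}$ on $E(v)$, we get $[H_{(v)} : H_{(v),(w)}] = [H_{(v)} : H_{(e)}] = |E(v)| - (\text{size of fixed-point set complement considerations})$ — actually it equals the size of the $H_{(v)}$-orbit of $e$ in $E(v)$, which is some integer $m_v \ge 1$, and $m_v = 1$ precisely when $H_{(v)} = H_{(e)}$. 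Symmetrically define $m_w$. By unimodularity of $H$, for the element $h$ with $hv = w$ we have $[H_{(v)} : H_{(v)} \cap hH_{(v)}h^{-1}] = [H_{(w)} : H_{(w)} \cap h^{-1}H_{(w)}h]$ — this is the standard fact that a unimodular group acting on a set with a transitive-orbit point has equal "in-degree" and "out-degree"; concretely the covolume balance gives $[H_{(v)} : H_{(e)}] = [H_{(w)} : H_{(he)}]$ when $he$ is the appropriate edge.

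The heart of the matter, which I expect to be the main obstacle to state cleanly, is the bookkeeping that turns "$H$ unimodular" into the equation $m_v = m_w$ together with the geometric observation forcing $he = e'$. The clean route: apply unimodularity to the pair of vertices $v, w$ joined by $h$. A unimodular closed subgroup $H \le \Aut(T)$ satisfies, for any edge $f$ with endpoints $x, y$, the relation $[H_{(x)} : H_{(f)}] = [H_{(y)} : H_{(f)}]$ — equivalently the two half-tree "masses" balance (this is essentially the statement that the quotient graph of groups is unimodular, cf. Bass--Serre / Bass--Kulkarni). Apply this with $f = e$: then $[H_{(v)} : H_{(e)}] = [H_{(w)} : H_{(e)}]$, i.e. $m_v = m_w$. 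Now suppose for contradiction $he \neq e'$. Then $he \in E(w) \setminus \{e'\}$, and conjugating the unique-fixed-point property by $h$ shows $he$ is the unique $H_{(w)}$-fixed edge at $w$; in particular $H_{(w)}$ fixes $he$ but, since $he \neq e'$, the $H_{(w)}$-orbit of $e'$ has size $m_w \ge 2$ — wait, that's not yet a contradiction, so I need to push one more step: iterate. Apply the argument at $w$ with the element $h$ (or another group element) sending $w$ to $t(he)$, generating an infinite ray along which the edge-orbit sizes are forced to be $\ge 2$ at every vertex, while unimodularity and the balance $m_v = m_w$ propagated along this ray, combined with the fact that $e$ has a *finite* orbit, yields a contradiction with local finiteness or with the tree being leafless — alternatively, and more directly, the unique-fixed-edge condition is not preserved unless $he = \bar e$, because $H_{(w)}$ fixing $he$ while $e'$ has a nontrivial orbit contradicts that $e$ (equivalently its image) is simultaneously the unique fixed edge at *both* its endpoints under the respective stabilizers once one tracks that $H_{(e)} \le H_{(v)}$ fixes $e'$... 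I would sort out this final geometric contradiction carefully, as it is the one genuinely delicate point; everything else is routine Haar-measure/Bass--Serre bookkeeping.
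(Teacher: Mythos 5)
The approach you start with — exploit unimodularity via the index identity $[H_{(v)}:H_{(e)}]=[H_{(w)}:H_{(e)}]$ for $w=t(e)$, which does hold here because $v$ and $w$ lie in the same $H$-orbit so $H_{(v)}$ and $H_{(w)}$ are conjugate compact open subgroups containing the common subgroup $H_{(e)}=H_{(v)}\cap H_{(w)}$ — is sound and in fact morally the same as the paper's. But you fail to close the argument, and the reason is that you miss the single observation that makes everything collapse: the unique-fixed-point hypothesis forces $m_v=[H_{(v)}:H_{(e)}]=1$. Indeed, since $e$ is \emph{fixed} by $H_{(v)}$, we have $H_{(v)}\leq H_{(e)}$, and the reverse containment is automatic, so $H_{(v)}=H_{(e)}$. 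You write ``$m_v=1$ precisely when $H_{(v)}=H_{(e)}$'' as if this equality were in doubt, but it holds unconditionally under the hypotheses. Once this is noticed, the index balance forces $m_w=1$ as well, i.e.\ $H_{(w)}$ fixes $\bar e$; but by conjugating by $h$ the unique fixed point of $H_{(w)}=H_{(hv)}$ on $E(hv)$ is $he$, so $\bar e=he$ and you are done. That is a complete proof, equivalent to the paper's (which packages the same content as: $H_{(e)}\leq H_{(he)}=hH_{(e)}h^{-1}$ with $H_{(e)}\neq H_{(he)}$, so a compact open subgroup is conjugate to a proper subgroup of itself, violating unimodularity).

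Instead of closing the argument this way, you wander into an iteration along a ray and appeal to ``local finiteness'' or ``the tree being leafless''; neither is a hypothesis of the lemma, so that route genuinely does not work. You also state the index-balance principle in a form (``for any edge $f$ with endpoints $x,y$'') that is false in general — it requires $x$ and $y$ to lie in the same $H$-orbit, as they do here thanks to $h$; worth making explicit since the trees in this paper are not usually vertex-transitive. Finally, your proposal explicitly admits the ``one genuinely delicate point'' is unresolved, confirming the gap.
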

\begin{proof}
We prove the contrapositive. Suppose that $he \neq \bar e$. The subgroup $H_{(e)}$   fixes $hv$, and hence it also fixes $he$. We deduce that $H_e \leq H_{he} = h H_e h^{-1}$. On the other hand the unique fixed point of $H_{hv}$ on $E(hv)$ is $he \neq \bar e$, so  $H_e \neq H_{he}$. The compact open subgroup $H_{he}$ is thus conjugate to a proper subgroup of itself, preventing $H$ from being unimodular.
\end{proof}

\begin{cor}\label{cor:LocalAction-fp}
	Let $d \geq 2$ and let $F \leq \Sym(d+1)$ be a permutation group fixing $0$ and acting transitively on $\{1, \dots, d\}$. Let $H \leq \Aut(T_{d+1})$ be a  vertex-transitive, unimodular, closed subgroup whose local action is isomorphic to $F$. Then there is a regular legal coloring $c$ of $T_{d+1}$ such that $H$ is a subgroup of $U_c(F)$. 
\end{cor}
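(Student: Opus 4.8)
The plan is to deduce this corollary from Proposition~\ref{prop:Universality_BuMo}, whose hypotheses (vertex-transitivity, local action isomorphic to $F$) are already satisfied; the only thing missing is that the coloring $\tilde k$ produced there can be taken to be \emph{legal}, and since $F$ is not transitive (it fixes $0$), we cannot invoke part (iii) of that proposition directly. Instead I would use the unimodularity hypothesis together with Lemma~\ref{lem:LocalAction-UniqueFixedPoint} to get control over how $H$ permutes colors across each edge, which is exactly what is needed to modify $\tilde k$ into a legal coloring.

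First I would fix a vertex $v \in VT_{d+1}$ and a bijection $k \colon E(v) \to \{0, 1, \dots, d\}$ chosen so that the local action $F = \{\sigma_k(g,v) \mid g \in H_{(v)}\}$ is literally the given permutation group, fixing $0$ and transitive on $\{1, \dots, d\}$; in particular the edge $e_0 := k^{-1}(0)$ is the unique fixed point of $H_{(v)}$ on $E(v)$. Then I would apply Proposition~\ref{prop:Universality_BuMo}(i)--(ii) to obtain a regular coloring $\tilde k$ of $T_{d+1}$ with $\tilde k \rest_{E(v)} = k$, $\sigma_{\tilde k}(g,w) \in F$ for all $g \in H$ and all $w$ (here all vertices are in the $H$-orbit of $v$ by transitivity), and $H \leq U_{\tilde k}(F)$.

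The key step is to show $\tilde k$ can be chosen legal, i.e.\ $\tilde k(\bar e) = \tilde k(e)$ for every edge $e$. The point is that for \emph{any} vertex $w$ and any edge $e$ with $o(e) = w$, the local-action hypothesis transported to $w$ shows that $H_{(w)}$ fixes a unique color, namely $0$ — more precisely, the unique $H_{(w)}$-fixed edge in $E(w)$ receives color $0$ under $\tilde k$, because $\sigma_{\tilde k}(g,w) \in F$ for all $g \in H_{(w)}$ and $F$ fixes only $0$, while $F$ acts transitively on the remaining colors so no other color can be fixed. Now given an edge $e$, pick $h \in H$ with $hw = t(e)$ where $w = o(e)$; by Lemma~\ref{lem:LocalAction-UniqueFixedPoint} applied at $w$ (whose local action has the single fixed point equal to the color-$0$ edge), if $e$ itself has color $0$ then $he = \bar e$ and so, using $\sigma_{\tilde k}(h,w) \in F$ fixes $0$, one computes $\tilde k(\bar e) = \tilde k(he) = \sigma_{\tilde k}(h,w)(\tilde k(e)) = \sigma_{\tilde k}(h,w)(0) = 0 = \tilde k(e)$, so legality holds on all color-$0$ edges automatically. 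For edges of nonzero color one must still adjust $\tilde k$; I would do this by a spanning-tree-rooted recoloring exactly as in the proof of Proposition~\ref{prop:Universality_BuMo}(iii), post-composing $\tilde k$ on each $E(w)$ by an element of $F$ so that the two half-edges of every edge receive equal colors — this is possible at every edge $e$ precisely because $F$ is transitive on $\{1,\dots,d\}$, so any nonzero color can be sent to any other, and we have just checked the color-$0$ case is already consistent. The resulting coloring $c$ is regular and legal, satisfies $\sigma_c(g,w) \in F$ for all $g \in H$, $w \in VT_{d+1}$, and hence $H \leq U_c(F)$, as desired.

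The main obstacle I anticipate is the bookkeeping in the recoloring step: one needs to verify that the local adjustments by elements of $F$ can be made globally consistent on a tree (which they can, since a tree has no cycles, so one just propagates outward from a root), and simultaneously that the adjustment does not disturb the already-established compatibility $\sigma_c(g,w) \in F$ — this follows because conjugating and pre/post-composing by elements of $F$ keeps everything inside $F$, just as in the last display of the proof of Proposition~\ref{prop:Universality_BuMo}. The genuinely new input beyond that proposition is the unimodularity-plus-unique-fixed-point argument showing color $0$ behaves legally on its own, which is what lets the transitivity of $F$ on the \emph{nonzero} colors suffice in place of full transitivity.
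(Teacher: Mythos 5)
Your proof is correct, and it takes a genuinely different route from the paper. The paper's proof is shorter: it shows directly that \emph{every} edge of $T$ is inverted by some element of $H$ (splitting into two cases according to whether $e$ is the unique $H_{(o(e))}$-fixed edge, and applying Lemma~\ref{lem:LocalAction-UniqueFixedPoint} in both cases, once in the contrapositive form to rule out $he' = \bar e$), and then invokes the second alternative in Proposition~\ref{prop:Universality_BuMo}(iii) as a black box. You instead apply the lemma only once, to show that under the coloring $\tilde k$ of Proposition~\ref{prop:Universality_BuMo}(i)--(ii) the unique fixed-color class $\{0\}$ is automatically ``legal'' (i.e.\ $\tilde k(e) = 0 \Leftrightarrow \tilde k(\bar e) = 0$), and then rerun the rooted recoloring from the proof of (iii), observing that the elements $g_{ij}$ needed there are only required for pairs $(i,j) = (\tilde k(e), \tilde k(\bar e))$, all of which are either $(0,0)$ or a pair of nonzero colors, and so exist by transitivity of $F$ on $\{1,\dots,d\}$. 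In effect you have isolated a more refined sufficient condition in Proposition~\ref{prop:Universality_BuMo}(iii) (namely that $F$ be transitive on each class $\{\tilde k(e)\}$ \emph{given that} the transitions $\tilde k(e) \mapsto \tilde k(\bar e)$ already respect the $F$-orbit structure), whereas the paper simply verifies the stated hypothesis ``every edge is inverted.'' One small point worth tightening in your write-up: when you assert that the unique $H_{(w)}$-fixed edge in $E(w)$ must receive color $0$, the justification should note that the local action of $H_{(w)}$ under $\tilde k$ is all of $F$ (not merely a subgroup of $F$), which holds either because a finite group cannot be isomorphic to a proper subgroup of itself, or because the explicit construction in Proposition~\ref{prop:Universality_BuMo} gives $\sigma_{\tilde k}(h_w, w) = 1$ and hence conjugates $F_{(v)}$-local actions onto $F_{(w)}$-local actions exactly.
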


\begin{proof}
Let $v \in VT$ and $e \in E(v)$. Since $H$ is vertex-transitive, there exists $h \in H$ with $hv=t(e)$. If $e$ is the unique fixed point of $H_{(v)}$ on $E(v)$, then $h(e) = \bar e$ by Lemma~\ref{lem:LocalAction-UniqueFixedPoint}. Otherwise, let $e' \neq e$ be the unique fixed point of $H_{(v)}$ on $E(v)$. Thus $he' \neq he$ is the unique fixed point of $H_{(hv)}$ on $E(hv)$. By Lemma~\ref{lem:LocalAction-UniqueFixedPoint}, we have $he' \neq \bar e$, so $he$ and $\bar e$ lie in the same $H_{(hv)}$-orbit on $E(hv)$. There is thus $g \in H_{hv}$ with  $ghe = \bar e$. 

In either case, we have shown that the edge $e \in E(v)$ can be inverted. Since $e$ was arbitrary, we conclude from Proposition~\ref{prop:Universality_BuMo}(iii) that $H \leq U_c(F)$ for some regular legal coloring $c$. 
\end{proof}

\subsection{The group $G^+$}

Given a tree $T$ and a subgroup $G\leq \Aut(T)$, recall that $G^+ =\grp{G_{(e)}\mid e\in ET}$. The subgroup $G^+$ is normal, and if $G$ is closed in $\Aut(T)$, then $G^+$ is open (hence closed) in $G$. In particular, $G$ is discrete if and only if $G^+$ is discrete. The group $G^+$ plays an important role in the setting of the groups $U(F)$.

\begin{prop}[{\cite[Proposition~3.2.1]{BuMo}}]\label{prop:BuMoSimple}Let $F\leq \Sym(d)$ with $d>2$. 
	\begin{enumerate}[(i)]
		\item $U(F)$ is discrete if and only if $F$ acts freely if and only if $U(F)^+$ is trivial. 
		\item If $F$ does not act freely, then $U(F)^+$ is abstractly simple. 
		
		\item $[U(F) :U(F)^+]$ is finite if and only if $[U(F) :U(F)^+]=2$ if and only if $F$ is transitive and generated by its point stabilizers. 
	\end{enumerate}
\end{prop}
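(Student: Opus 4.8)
This is \cite[Proposition~3.2.1]{BuMo}, and I would establish it in three parts.

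For (i) the plan is to first determine the structure of a vertex stabiliser. Fixing $v\in VT$, the restriction homomorphism $U(F)_{(v)}\to\Sym(E(v))\cong\Sym(d)$ has image exactly $F$: every $\sigma\in F$ is realised by the automorphism whose local action is $\sigma$ at $v$ and trivial at all other vertices, and this lies in $U(F)$ since $1\in F$. Its kernel is the pointwise ball stabiliser $U(F)_{(B_1(v))}$, and legality forces an element of this kernel to have, at each neighbour $u$ of $v$, local action in the point stabiliser $F_{c(e)}$ of the colour of the edge $e$ from $v$ to $u$; conversely any such choice, continued freely further out, occurs. Hence $U(F)_{(B_1(v))}$ is non-trivial --- in fact infinite --- precisely when some point stabiliser $F_i$ is non-trivial, i.e. precisely when $F$ does not act freely. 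As $U(F)_{(B_1(v))}\leq U(F)_{(e)}\leq U(F)^+$ for every $e\in E(v)$, this already gives ``$F$ not free $\Rightarrow U(F)^+\neq\{1\}$ and $U(F)$ non-discrete''. For the converse, if $F$ acts freely and $g\in U(F)$ fixes an edge, then the local action of $g$ at either endpoint fixes the colour of that edge, hence is trivial, so $g$ fixes the $1$-ball about each endpoint; iterating, $g=1$. Thus $U(F)_{(e)}=\{1\}$ for all $e$, so $U(F)^+=\{1\}$, and $U(F)$ is discrete because $U(F)^+$ is open. These implications close the triple equivalence.

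For (ii) I would deduce abstract simplicity of $U(F)^+$ from Tits' simplicity theorem; it suffices to verify, for $G:=U(F)$ acting on $T$, that: (a) $G$ enjoys Tits' independence property; (b) $G$ stabilises no proper non-empty subtree; (c) $G$ fixes no end --- for then $G^+=U(F)^+$ is trivial or abstractly simple, the former being excluded by (i). Point (b) is immediate from vertex-transitivity. Point (a) is the heart of the matter: the fixator of an edge $e$, and more generally of a finite path, should decompose as the direct product of the subgroups fixing the complementary half-trees (resp. the branches hanging off the path), which one checks directly because the conditions defining $U(F)$ on disjoint regions of $T$ are mutually independent. For (c) I would argue by contradiction: if $G$ fixed an end $\xi$, then at every vertex $w$ the stabiliser $G_{(w)}$ fixes the edge $e_\xi(w)\in E(w)$ towards $\xi$, so $F$ fixes its colour; chasing this along a geodesic ray $v_0,v_1,\dots$ towards $\xi$, and using that for any neighbour $u\neq v_{i+1}$ of $v_i$ the edge of $E(u)$ towards $\xi$ is the one from $u$ to $v_i$ (whose colour, by legality, is a colour occurring at $v_i$), one finds that $F$ fixes all $d$ colours at each $v_i$, so $F=\{1\}$ --- contradicting non-freeness.

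For (iii) the plan is to combine Proposition~\ref{prop:graph of groups}, which gives that $X:=T/U(F)^+$ is a tree, with the observations that (1) $U(F)$ acts vertex-transitively on $X$ --- so $[U(F):U(F)^+]<\infty$ forces $X$ to be finite, since a vertex-transitive action on an infinite tree has infinite image --- and (2) $U(F)$ is generated by a vertex stabiliser $U(F)_{(v)}$ together with one edge-inverting element for each $F$-orbit on $E(v)$ (from $v$ one can traverse a given edge only by using such an inversion or a $U(F)_{(v)}$-conjugate of one, and colour-preserving inversions exist for every edge). Since the kernel of $U(F)_{(v)}\twoheadrightarrow F$ lies in $U(F)^+$ by (i) while $U(F)^+\cap U(F)_{(v)}$ induces $\langle F_i:i\in[d]\rangle$ on $E(v)$, and since $U(F)^+$ contains no edge inversion (it acts without inversion, so $U(F)^+\leq U(F)^*$), reduction modulo $U(F)^+$ presents $U(F)/U(F)^+$ as a free product indexed by the $F$-orbits, each factor built from a $\Zbb/2$ (the inversion) together with $F/\langle F_i\rangle$-type data; this is finite --- and then equal to $\Zbb/2$, giving index $2$ --- exactly when there is a single $F$-orbit (i.e. $F$ is transitive) and $F=\langle F_i\rangle$, the converse implications being clear. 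I would fill in the precise form of this decomposition following \cite[Proposition~3.2.1]{BuMo}.

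The main obstacle I anticipate is the verification of Tits' independence property in (ii) --- this is where the ``local'' definition of $U(F)$ is genuinely used, and it is exactly what licenses Tits' simplicity theorem --- together with, in (iii), the bookkeeping that controls $U(F)/U(F)^+$, in particular establishing that $U(F)^+\cap U(F)_{(v)}$ induces exactly $\langle F_i\rangle$ on $E(v)$ (the reverse inclusion being immediate, the forward one requiring control over what products of edge-fixators can do at a vertex). By contrast, part (i) and the ``no fixed end'' argument in (ii) are short once the realisation $U(F)_{(v)}\twoheadrightarrow F$ is in hand.
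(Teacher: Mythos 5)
The paper does not prove this proposition; it only cites \cite[Proposition~3.2.1]{BuMo}, so there is no internal argument to compare against. Your reconstruction follows the standard Burger--Mozes route and the overall plan is sound.

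A few remarks to firm up the sketch. Part~(i) is complete as written. For part~(ii), invoking Tits' simplicity theorem is exactly the original approach; the independence property holds for $U(F)$ because the defining condition at a vertex is purely local, and your ``no fixed end'' argument is correct provided you also observe that the colour of the edge from $v_{i}$ toward $v_{i-1}$ is fixed by $F$ (it is the colour of the edge from $v_{i-1}$ toward $\xi$), so that all $d$ colours at $v_i$ are indeed accounted for. For part~(iii) the crucial and genuinely nontrivial step is the one you flag: that the local action of $U(F)^+ \cap U(F)_{(v)}$ on $E(v)$ is contained in $F^+ := \langle F_i\rangle$. This is not automatic from being a product of edge fixators (an edge fixator only has constrained local action at the two endpoints of the fixed edge, not at distant vertices); it is established by a propagation argument along geodesics using the blocks given by the $F^+$-orbits and the fact that, for transitive $F$, the induced $F/F^+$-action on those blocks is free. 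This is precisely the mechanism the paper uses in the proof of its Theorem~\ref{thm:Indic}, so you could cite or replicate that argument rather than leave it as a gap. Once that is in hand, your degree count on $X = T/U(F)^+$ (number of $F^+$-orbits $= 1$ forces $X$ to be a single edge, otherwise $X$ is an infinite regular tree) yields the equivalences cleanly and avoids the more delicate free-product description of $U(F)/U(F)^+$ you gesture at, which is harder to make precise and not needed.
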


It is important to notice that  $U(F)^+$ need not be compactly generated. Indeed, we have the following (see Definition~\ref{def:Ss} for the definition of $\ms S$). 

\begin{cor}\label{cor:U(F)^+}
	$U(F)^+ \in \ms S$ if and only if  $F$ is transitive and generated by its point stabilizers. 
\end{cor}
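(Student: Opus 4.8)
The statement to prove is Corollary~\ref{cor:U(F)^+}: that $U(F)^+ \in \ms S$ if and only if $F$ is transitive and generated by its point stabilizers. The plan is to reduce the whole statement to the black boxes already assembled in the excerpt, namely Proposition~\ref{prop:BuMoSimple} and Proposition~\ref{prop:BuMo}(iii) (for the compact-generation discussion, via the index $[U(F):U(F)^+]$).

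First I would handle the ``if'' direction. Assume $F$ is transitive and generated by its point stabilizers. Since a transitive group generated by point stabilizers of degree $d>2$ cannot act freely (a free transitive action has trivial point stabilizers, so the group generated by them is trivial, not transitive, as $d>2$), Proposition~\ref{prop:BuMoSimple}(i) tells us $U(F)$ is non-discrete and $U(F)^+$ is non-trivial; by part (ii), $U(F)^+$ is abstractly simple, in particular topologically simple. It remains to see $U(F)^+$ is compactly generated and non-discrete. Non-discreteness: $U(F)^+$ is open in $U(F)$ (as recalled at the start of the subsection ``The group $G^+$''), hence non-discrete because $U(F)$ is non-discrete. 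Compact generation: by Proposition~\ref{prop:BuMoSimple}(iii), $[U(F):U(F)^+] = 2$, so $U(F)^+$ has finite index in $U(F)$; and $U(F)$ itself is compactly generated, being a closed vertex-transitive subgroup of $\Aut(T_d)$ (so $T_d/U(F)$ is finite, and a group acting cocompactly with compact vertex stabilizers on a connected graph is compactly generated). A finite-index closed subgroup of a compactly generated group is compactly generated. Hence $U(F)^+ \in \ms S$.

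For the ``only if'' direction, assume $U(F)^+ \in \ms S$; in particular it is non-trivial and compactly generated. Non-triviality forces $F$ not to act freely, by Proposition~\ref{prop:BuMoSimple}(i). Suppose, for contradiction, that $F$ is not transitive or not generated by its point stabilizers. Then by Proposition~\ref{prop:BuMoSimple}(iii) the index $[U(F):U(F)^+]$ is infinite. I would then argue that this is incompatible with $U(F)^+$ being compactly generated: the quotient $T/U(F)^+$ is, by Proposition~\ref{prop:graph of groups}, a tree (the edge stabilizers in $U(F) \leq \Aut(T_d)$ are compact), and $U(F)^+$ acts on this tree; since $U(F)^+ \normal U(F)$ and $U(F)$ is vertex-transitive on $T$, the quotient graph $T/U(F)^+$ has a vertex-transitive action of $U(F)/U(F)^+$ and hence, being an infinite tree with a cocompact automorphism group and infinitely many vertices, it has more than one end. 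A compactly generated group acting on a tree has a minimal invariant subtree or fixes an end or vertex; but $U(F)^+$ acts on $T/U(F)^+$ trivially by construction, so instead I should argue directly at the level of $T$: a compactly generated closed subgroup of $\Aut(T)$ acting without global fixed point stabilizes a unique minimal subtree which it acts on cocompactly, and one checks $U(F)^+$ (being non-trivial normal in the minimal group $U(F)$) acts minimally on all of $T$ by Proposition~\ref{prop:Tits}, whence cocompactly, whence $[U(F):U(F)^+]<\infty$ — contradiction. This contradiction shows $F$ must be transitive and generated by its point stabilizers.

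\textbf{Main obstacle.} The delicate point is the ``only if'' direction: making precise why $U(F)^+$ compactly generated forces $[U(F):U(F)^+]$ finite. The cleanest route is probably to invoke that a compactly generated group acting on a tree with compact open vertex stabilizers and no fixed end/vertex acts cocompactly on its unique minimal subtree, combine this with Proposition~\ref{prop:Tits} (which gives that the non-trivial normal subgroup $U(F)^+$ of the minimal group $U(F)$ acts minimally on $T$, so its minimal subtree is all of $T$), and conclude $T/U(F)^+$ is finite, i.e. $[U(F):U(F)^+]<\infty$, then quote Proposition~\ref{prop:BuMoSimple}(iii). One must take a little care that $U(F)$ does act minimally without a fixed end — this holds because $U(F) \geq U(\{1\})$ acts vertex-transitively on the $d$-regular tree with $d>2$, which has infinitely many ends and no $U(F)$-fixed end — so that Proposition~\ref{prop:Tits} applies.
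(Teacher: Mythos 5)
Your proposal is correct and follows essentially the same route as the paper. For the ``only if'' direction, the key ingredients are exactly the paper's: Proposition~\ref{prop:Tits} to get that the non-trivial normal subgroup $U(F)^+$ of the minimal, vertex-transitive $U(F)$ acts minimally on $T$ with no fixed end, and then the standard fact (the paper cites Caprace--De~Medts, Lemma~2.4) that a compactly generated closed subgroup of $\Aut(T)$ acting minimally acts cocompactly; from cocompactness plus openness of $U(F)^+$ one gets $[U(F):U(F)^+]<\infty$ and then invokes Proposition~\ref{prop:BuMoSimple}(iii). Your mid-proof detour through the quotient tree $T/U(F)^+$ (via Proposition~\ref{prop:graph of groups}) is a dead end, as you yourself observe -- $U(F)^+$ acts trivially there, so its compact generation gives no traction -- but you correctly abandon it and land on the paper's argument. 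The ``if'' direction is handled by Proposition~\ref{prop:BuMoSimple} in both the paper and your write-up; your additional remarks on openness giving non-discreteness and finite index giving compact generation are the right justifications and are what the paper leaves implicit.
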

\begin{proof}
	The `if' part follows from Proposition~\ref{prop:BuMoSimple}. For the converse, observe that $U(F)^+$ is a non-trivial normal subgroup of the vertex-transitive group $U(F)$, so $ U(F)^+$ does not preserve any non-empty proper subtree by Proposition~\ref{prop:Tits}. Since $U(F)^+$ is compactly generated, it follows from \cite[Lemma~2.4]{CaDM} that $U(F)^+$ acts cocompactly on $T$. Therefore, $U(F)/U(F)^+$ is compact, hence finite since  $U(F)^+$ is open. The group  $F$ is thus transitive and generated by its point stabilizers by Proposition~\ref{prop:BuMoSimple}.
\end{proof}

\section{Virtual indicability}

\begin{thm}\label{thm:Indic}
	Let $G$ be a topological group with a  continuous action  by automorphisms on an infinite  locally finite leafless tree $T$. We assume that $G$ has finitely many orbits of vertices, and that for every vertex $v \in VT$,  the local action $F(v)\leq \Sym(E(v))$ of $G$ at  $v$ is such that the subgroup of $F(v)$ generated by its point stabilizers is intransitive on $E(v)$.  Then $G$ is virtually indicable.
\end{thm}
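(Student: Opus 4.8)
The plan is to reduce to the Bass--Serre structure of the subgroup $G^+ = \langle G_{(e)} \mid e \in ET\rangle$ and exploit the local hypothesis to show $G^+$ does not act cocompactly on $T$, which will force the existence of a quotient mapping onto $\mathbf Z$. First I would pass to the lift: by replacing $G$ with its lift to $T$ via Theorem~\ref{thm:graph_cover} we may assume $G \leq \Aut(T)$ acting continuously and cocompactly, and after passing to the index-two subgroup $G^*$ (which still has finitely many vertex orbits and the same local data), we may assume $G$ acts without edge inversion. If $G$ has a global fixed vertex or end the statement is easy or vacuous in the leafless cocompact setting, so I would assume $G$ acts minimally without a fixed end on a tree with more than two ends; otherwise $T$ is a line and $G$ has a finite-index subgroup of translations surjecting onto $\mathbf Z$.

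The key step is local: fix a vertex $v$ and let $E^{\mathrm{fix}}(v) \subsetneq E(v)$ be the (non-empty, proper, $F(v)$-invariant is false but) union of those $F(v)$-orbits on $E(v)$ on which the subgroup $F(v)^{\sharp}$ generated by point stabilizers of $F(v)$ acts — the hypothesis says $F(v)^\sharp$ is intransitive, so there is a partition of $E(v)$ into at least two $F(v)^\sharp$-orbits, and each of these is permuted by $F(v)$. I would use this to show that for the pointwise edge-stabilizer subgroup $G^+$ of $G$, the quotient graph $X := T/G^+$ is a tree by Proposition~\ref{prop:graph of groups} (edge stabilizers are compact since $G$ is a cocompactly-acting closed subgroup of $\Aut(T)$ on a locally finite tree, hence $\Aut(T)$ is t.d.l.c. and $G$ acts properly), but more importantly that $G^+$ does \emph{not} act transitively on $E(v)$ for any $v$: the image of $G_{(v)}^+$ in $F(v)$ lands inside $F(v)^\sharp$ (an element of $G^+$ fixing $v$ is a product of conjugates of edge-fixators, and its local action at $v$ is a product of local actions each of which fixes a point of $E(v)$, hence lies in $F(v)^\sharp$), which is intransitive by hypothesis. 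Consequently $G^+$ has more vertex-orbits than $G$ does — more precisely, $G$ acts on the tree $X = T/G^+$ with finitely many orbits (since $G$ acts on $T$ with finitely many orbits) and this action is \emph{not} trivial, because $G^+ \neq G_{(v)}G^+$ would force $G_{(v)}$ transitive on $E(v)$ modulo $G^+$, contradicting intransitivity at $v$; so $X$ has at least one edge.

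From here I would conclude as follows. The action of $G/G^+$ on the tree $X = T/G^+$ is cocompact (finitely many orbits) and $G^+$ acts trivially, so $G/G^+$ acts on a tree with at least one edge and all vertex stabilizers equal to the image of certain $G_{(v)}G^+$; since $G/G^+$ is a quotient of $G$ by an open normal subgroup it is discrete, and it acts on the tree $X$. If $X$ is finite, its fundamental group is a free group of rank equal to the first Betti number; if $X$ is infinite and $G/G^+$-cocompact with the action being free enough, one gets a surjection onto $\mathbf Z$ or a non-trivial free group. The honest route: $G/G^+$ acting cocompactly and nontrivially on the tree $X$ is a fundamental group of a finite graph of groups with underlying graph $X' := X/(G/G^+)$ which has at least one edge; if $X'$ has a loop or more than one edge-orbit in a suitable sense we extract $\mathbf Z$, and if $X'$ is a single edge (an amalgam or HNN situation) we either get an HNN extension (hence a surjection to $\mathbf Z$ directly) or an amalgam $A *_C B$ with $[A:C]$ or $[B:C]$ possibly $2$; I would handle the amalgam case by noting that $G/G^+$ is residually finite-by-(virtually free) — actually the cleanest finish is: a group acting cocompactly without global fixed point on a tree is either virtually $\mathbf Z$, or has a finite-index subgroup surjecting onto a non-abelian free group (hence onto $\mathbf Z$), unless it has a non-trivial finite amalgam decomposition $A*_C B$ with $C$ of index $2$ in both factors, which is virtually $\mathbf Z$; in every case $G/G^+$ (and hence $G$) is virtually indicable. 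The main obstacle I anticipate is exactly this last bookkeeping: ensuring the graph of groups for $G/G^+$ on $X$ genuinely has positive first Betti number \emph{or} is an edge that is not of the degenerate $D_\infty$-type, i.e. correctly ruling out the case where $G/G^+$ is finite (which would happen only if $X$ is a single vertex, i.e. $G^+$ is cocompact, i.e. every $G_{(v)}$ is transitive on $E(v)$ — precisely what the hypothesis forbids) and then controlling the one-edge amalgam case, which requires the quantitative fact that $C$ has index $\geq 3$ in one of the factors unless the whole thing is virtually cyclic.
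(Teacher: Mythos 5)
Your overall strategy matches the paper's quite closely: pass to the image in $\Aut(T)$, handle the fixed-end case via a Busemann homomorphism, and in the main case study the quotient tree $X=T/G^+$ and show that each of its vertices has degree at least two, so that $G/G^+$ acts properly cocompactly on an infinite tree and is therefore virtually free, hence virtually indicable. That is precisely the architecture of the paper's argument. However, your justification for the central local step has a genuine gap.

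You assert that for $g \in G^+ \cap G_{(v)}$, the local action $\sigma(g,v)$ lies in the subgroup $F(v)^\sharp$ generated by point stabilizers because ``its local action at $v$ is a product of local actions each of which fixes a point of $E(v)$.'' This does not follow. Writing $g = g_1 \cdots g_k$ with each $g_i$ fixing some edge pointwise, the individual factors $g_i$ need \emph{not} fix $v$. Consequently $\sigma(g_i,v)$ is not even an element of $\Sym(E(v))$, and the cocycle identity decomposes $\sigma(g,v)$ into local actions $\sigma(g_i, u_i)$ at the moving intermediate vertices $u_i = g_{i+1}\cdots g_k(v)$, not at $v$. Composing these requires a coloring of $T$, and once a coloring is introduced, the factors $\sigma_c(g_i,u_i)$ need not fix any label of $[m]$ (since $g_i$ need not fix $u_i$), so they need not lie in $F^\sharp$, nor even in $F$. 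Proving that $\sigma_c(g,w) \in F^+$ for all $g\in H^+$ and all $w$ in the orbit $W$ is exactly the nontrivial content of the paper's proof: one builds a coloring of $T$ that is legal and constant off the orbit $W$ (via Proposition~\ref{prop:Universality_BuMo}), observes that the $F^+$-orbits on $[m]$ form blocks on which $F/F^+$ acts freely, and then shows by a geodesic induction that whether $\sigma_c(g,w)$ preserves a block is independent of the choice of $w\in W$. Only then does one get that the set $\{g \in H : \sigma_c(g,w)\in F^+ \text{ for all } w\in W\}$ is a subgroup containing every edge-fixator, hence containing $H^+$. None of this is captured by the one-line decomposition you propose, which silently treats the $g_i$ as if they all stabilized $v$.

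Secondary remark: your concluding bookkeeping is more laborious than necessary and under-uses your own local claim. If the local claim were correctly established, you would know that \emph{every} vertex of $X$ has degree at least the number of $F^\sharp$-blocks, hence at least $2$; since $X$ is a tree this forces $X$ to be infinite, and the discrete group $G/G^+$ acting properly cocompactly on the infinite locally finite tree $X$ is virtually free and hence virtually indicable, without any case analysis of amalgams or one-edge graphs of groups. Getting only ``$X$ has at least one edge'' is strictly weaker and leaves open exactly the degenerate possibilities you then have to labor to exclude.
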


\begin{proof}
	Let $\varphi \colon G\rightarrow \Aut(T)$ be the induced homomorphism and set $H: = \overline{\varphi(G)}$. The group $H^+$ is open in $H$, so the restriction of the quotient map $H\rightarrow H/H^+$ to the dense subgroup $\varphi(G)$ is surjective. It thus suffices to show that $H/H^+$ is virtually indicable. 
	 
	 If $H$ does not contain any hyperbolic element, then by \cite[Proposition~3.4]{Tits}, either $H$ fixes a vertex or inverts an edge, or $H$ fixes an end and preserves each horoball centered at that end. In either case, we get a contradiction with the hypotheses that $T$ is infinite leafless and that the $G$-action has finitely many orbits of vertices. Thus $H$ contains hyperbolic elements.  
	 
	Assume next that $H$ fixes an end $\xi \in \partial T$. Since  $H$ contains a hyperbolic element, it permutes the horoballs centered at $\xi$ non-trivially. The Busemann homomorphism\footnote{Fixing a representative ray $x_0,x_1,\dots$ of the end $\xi$, the Busemann homomorphism is the map $H\rightarrow \mathbf{Z}$ defined by $g\mapsto \lim_{i\rightarrow \infty} (d(g(x_i),x_0)-i-1)$.} associated to $\xi$ yields a continuous, surjective homomorphism $H \to \mathbf Z$ vanishing on $H^+$. The group $H/H^+$ is thus virtually indicable,  as desired. 
	
	We assume henceforth that $H$ does not fix an end. The quotient graph $X:=T/H^+$ is a tree by Proposition~\ref{prop:graph of groups}, and the natural action of the discrete quotient group   $H/H^+$ on  $X$ is proper and cocompact.  We shall argue that $X$ is an infinite tree by showing that each vertex of $X$ has degree at least~$2$. It then follows that $H/H^+$ is virtually an infinite free group and is thus virtually indicable, as required. 
	
	 Fix a vertex $w_0 \in VT$, let $W$ be the $H$-orbit of $w_0$ in $VT$, and say that $m$ is the degree of $w_0$ in $T$. Our assumption on the local action of $G$ ensures that $m\geq 2$. Fix $c_0 \colon E(w_0) \to [m]$ a bijection and set $F:= \{ \sigma_{c_0}(g, w_0) \mid g \in H_{(w_0)}\}$. The number of orbits of $  F$ on $[m]$ is a lower bound for the degree of the image of $w_0$ in the quotient graph $X$. Therefore, if $ F$ is not transitive on $[m]$, then the image of  $w_0$ in the quotient graph $X$ has degree at least~$2$. Let us assume that $F$ is transitive on $[m]$. Proposition~\ref{prop:Universality_BuMo} provides  a coloring   $c \colon ET \to [m]$ extending $c_0$ such that for each $w \in W$, the restriction $c\rest_{E(w)} \colon E(w) \to [m]$ is bijective,  for each $y\in VT \setminus W$, the  restriction $c \rest_{E(y)} \colon E(y) \to [m]$ is constant, and $ \sigma_c(g, w) \in F$ for all $g \in H$ and $w \in W$. As $F$ acts transitively, Proposition~\ref{prop:Universality_BuMo} ensures further that $k$ can be chosen in such a way that $k(e)=k(\bar e)$ for all $e \in ET$. 
	
	Let $F^+$ be the subgroup of $F$ generated by the point stabilizers. Recall that by hypothesis, $F^+$ is intransitive and let  $B_1, \dots, B_p$ list the $F^+$-orbits on $[m]$. The $F^+$-orbits form an $F$-equivariant equivalence relation on $[m]$, and as is customary,  we call the $B_i$ \textit{blocks}.  Let  $\pi \colon   F \to \Sym(\{B_1, \dots , B_p\})$ be the $F$-action on the blocks. It is easy to see that $\Ker(\pi)= F^+$ and that the $F/F^+$-action on the blocks is free. 
	
	Fix $g \in H$ and suppose that $w,w'\in W$ are such that every vertex different from $w$ and $w'$ on the geodesic $[w,w']$ is not in $W$. We infer that $c(e) = c(f)$ for all edges $e, f$ on  $[w, w']$, since $c(e)=c(\ol{e})$ for all edges $e$. The pair $gw$ and $gw'$ also enjoys the same condition on $[gw,gw']$, so $c(e) = c(f)$ for all edges $e, f$ on $[gw, gw']$. If $\sigma_c(g, w)\in \Ker(\pi)$, then the elements $c(e)$ and $c(ge)$ belong to the same block for all $e \in E(w)$. Therefore, $c(e)$ and $c(ge)$ belong to the same block for all edges $e$ on $[w, w']$. In particular, the edge $f \in E(w')$ on the geodesic from $w'$ to $w$ is such that $c(f)$ and $c(gf)$ belong the same block. Thus, $\sigma_c(g, w')$ fixes a block, and hence $\sigma_c(g,w')$ belongs to $\Ker(\pi)$, since the $F/F^+$-action on the blocks is free. The obvious induction argument on $d(w,w')$ now shows that for all $g\in H$ and  $w,w'\in W$, $\sigma_c(g,w)\in \Ker(\pi)$ if and only if $\sigma_c(g,w')\in \Ker(\pi)$. 
	
	Take $g \in H$ which fixes  pointwise an edge $e$ in $T$. The local action $\sigma_c(g,o(e))$ must fix a block, so $\sigma_c(g,o(e))\in \Ker(\pi)$, since the action of $F/F^+$ on the blocks is free. By the previous paragraph, $\sigma_c(g,w) \in \Ker(\pi)$ for all $w\in W$. As $H^+$ is generated by edge fixators, it follows that $\sigma_c(g,w)\in \Ker(\pi)$ for every $g\in H^+$ and $w\in W$. The degree of the image of $w_0$ in $X$ is  thus at least the number of blocks, which is at least $2$.  
	
	We conclude that all vertices of the tree $X$ have  degree at least two, completing the proof.
\end{proof}

\begin{rmk}
The hypothesis that $T$ be locally finite in Theorem~\ref{thm:Indic} is essential. Indeed, consider the action of the free product $G = S_1 \ast S_2$ of two infinite simple groups $S_1, S_2$ on its Bass--Tree $T$, which is locally infinite since $S_1$ and $S_2$ are infinite. The $G$-action on $T$  enjoys all the properties  required by Theorem~\ref{thm:Indic}, but $G$ is not virtually indicable: indeed $G$ is perfect and does not have any proper subgroup of finite index. 
\end{rmk}

A class of finite permutation group satisfying the condition in the theorem is provided by the nilpotent groups. 

\begin{lem}\label{lem:NilpPerm}
	Let $F \leq \Sym(\Omega)$ be a permutation group on a finite set $\Omega$ of size at least $2$. If $F$ is nilpotent, then the subgroup of $F$ generated by the point stabilizers is intransitive. 
\end{lem}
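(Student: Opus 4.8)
The plan is to exploit the fact that a nilpotent group is the direct product of its Sylow subgroups, together with the fact that a transitive $p$-group admits a nontrivial block system, to run an induction on $|F|$ (or on $|\Omega|$). Write $F^\dagger := \langle F_\omega \mid \omega \in \Omega\rangle$ for the subgroup generated by the point stabilizers; this is a normal subgroup of $F$, and $F/F^\dagger$ acts freely on $\Omega$ (indeed on the orbit set of $F^\dagger$). I want to show $F^\dagger \neq F$, i.e. that $F/F^\dagger$ is nontrivial, equivalently that $\Omega$ is not a single $F^\dagger$-orbit.

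First I would reduce to the case that $F$ is transitive: if $F$ is intransitive, pick an orbit $\Delta$ of size $\geq 2$ (one exists since $|\Omega| \geq 2$; if every orbit is a singleton then $F=1=F^\dagger$ and there is nothing more to say — actually if $F$ is trivial the point stabilizers are trivial so $F^\dagger$ is trivial and hence intransitive, so we are done), and observe that the point stabilizers of $F$ are contained in the point stabilizers for the action on $\Delta$ pulled back to $F$; more cleanly, it suffices to treat a transitive constituent, so assume $F$ acts transitively on $\Omega$ with $|\Omega|\geq 2$. Next, since $F$ is nilpotent, it has a nontrivial center $Z(F)$, and because $F$ is transitive a central element acting nontrivially cannot fix any point (a point stabilizer of a transitive action contains no nontrivial normal subgroup of $F$ unless... more precisely: if $1\neq z \in Z(F)$ fixes $\omega$, then $z$ fixes $g\omega$ for all $g$, hence $z=1$). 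So every nontrivial element of $Z(F)$ is fixed-point-free, which already shows $Z(F) \cap F_\omega = 1$ for every $\omega$.

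The key point is then: $F$ transitive nilpotent implies $F$ imprimitive (for $|\Omega|>1$, unless $|\Omega|$ is prime, in which case one argues directly). Concretely, pick a prime $p$ dividing $|F|$ and a nontrivial central element $z$ of order $p$; the orbits of $\langle z\rangle$ form a nontrivial $F$-invariant partition $\mathcal B$ of $\Omega$ into blocks each of size $p$ (using that $z$ is fixed-point-free so every $\langle z\rangle$-orbit has size $p$, and that these orbits are permuted by $F$ since $z$ is central). Let $\bar F \leq \Sym(\mathcal B)$ be the induced action on the blocks; it is transitive and nilpotent, and $|\mathcal B| = |\Omega|/p < |\Omega|$. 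By induction on $|\Omega|$, the subgroup $\bar F^\dagger := \langle \bar F_B \mid B \in \mathcal B\rangle$ is a proper subgroup of $\bar F$. Now I claim the image of $F^\dagger$ in $\bar F$ lands inside $\bar F^\dagger$: if $g \in F_\omega$ and $\omega \in B$, then $g$ fixes the block $B$, so the image $\bar g$ lies in $\bar F_B \leq \bar F^\dagger$; since such $g$ generate $F^\dagger$, the image of $F^\dagger$ is contained in $\bar F^\dagger \subsetneq \bar F$. Hence $F^\dagger \neq F$, so $F^\dagger$ is intransitive on $\Omega$. The base case is $|\Omega|$ prime (or $|\Omega|=2$): then $F$ transitive nilpotent of prime-power-divisible order acting on $p$ points forces $F$ to be a $p$-group, and then either $F^\dagger=1$ or $F^\dagger$ is a transitive $p$-group of order $p$; but a point stabilizer in a transitive $p$-group on $p$ points is trivial, so $F_\omega = 1$ for all $\omega$, hence $F^\dagger = 1 \neq F$ provided $F\neq 1$, and if $F=1$ then $|\Omega|\geq 2$ already makes $F$ intransitive — contradiction only if we assumed transitive, so in the base case $F$ transitive on $\geq 2$ points with all point stabilizers trivial gives $F^\dagger=1$, which is intransitive.

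The main obstacle I anticipate is bookkeeping the reduction from the intransitive case and the degenerate cases ($F$ trivial, $|\Omega|$ prime) cleanly, and making sure the induction is set up on the right parameter so that passing to the block action $\bar F$ on $\mathcal B$ genuinely decreases it while preserving transitivity and nilpotency; the containment of $F^\dagger$'s image in $\bar F^\dagger$ is the one genuinely structural step and it is essentially immediate from "a point stabilizer stabilizes the block containing that point," so I expect the proof to be short once the induction scaffolding is in place.
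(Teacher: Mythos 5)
Your proof is correct, and it establishes the same conclusion by a related but genuinely different mechanism. The paper passes in one step to a \emph{maximal} block system, so that the induced action of $F$ on the quotient $\Omega/\!\sim$ is primitive, then invokes the fact that a nilpotent primitive permutation group is cyclic of prime order (hence regular): every point stabilizer therefore dies completely in the block quotient, and $F^+$ preserves each block. You instead build a \emph{minimal} block system from the orbits of a central element $z$ of prime order, observe only the weaker containment of the image of $F^\dagger$ in $\bar F^\dagger$ (rather than triviality of that image), and close the argument by induction on $|\Omega|$. Both approaches hinge on the same two structural facts --- a transitive nilpotent group has fixed-point-free centre, and point stabilizers descend to block stabilizers --- but the paper's version front-loads the block-system analysis into a single classification statement and needs no induction, whereas yours trades that classification for a bookkeeping induction plus a base case for $|\Omega|$ prime. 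The paper's argument is shorter and avoids the degenerate-case juggling at the end of your write-up, but your version has the minor pedagogical virtue of needing only Cauchy's theorem and the nontriviality of the centre rather than the (equivalent, but less elementary-sounding) statement ``nilpotent primitive implies cyclic of prime order.'' One small gap worth tidying if you keep the inductive route: the step ``$F$ transitive nilpotent on $p$ points forces $F$ to be a $p$-group'' is asserted rather than argued; the clean justification is that $\langle z\rangle$ is then a regular subgroup of $\Sym(\Omega)$, and $F$ lies in its centralizer, which is again of order $p$, so $F=\langle z\rangle$ acts regularly and all point stabilizers are trivial.
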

\begin{proof}
As the result is clear for $F$ intransitive, we may assume that $F$ is transitive. There exists an   $F$-invariant equivalence relation $\sim$ on $\Omega$ such that the $F$-action on $\Omega/\sim$ is primitive and non-trivial. The only nilpotent primitive permutation groups are cyclic of prime order. Letting $F^+$ be the subgroup of $F$ generated by the point stabilizers, the $F$-action on the $\sim$-equivalence classes is through a quotient that acts freely. We conclude that the $F^+$-action  on the $\sim$-equivalence classes is trivial. In particular, every $F^+$-orbit is entirely contained in some  $\sim$-equivalence class, so $F^+$ is intransitive. 
\end{proof}

\begin{proof}[Proof of Corollary~\ref{cor:nilpotentLocal}]
We assume that $G$ fixes neither a vertex nor an edge. Since $G$ is compactly generated, it follows that $G$ contains a hyperbolic element. If $G$ fixes an end $\xi$ of $T$, the Busemann homomorphism at $\xi$ yields an infinite cyclic quotient of $G$, so $G$ is virtually indicable. We thus assume that $G$ also does not fix any end. There exists a minimal non-empty $G$-invariant subtree $X$ in $T$.  Since the $G$-action is fixed-point-free, the tree $X$ is infinite, and $X$ is locally finite since $T$ is locally finite. The local action of the image of $G$ in $\Aut(X)$ at every vertex $v$ of $X$ is a quotient of the local action of $G \leq \Aut(T)$ at $v$. The group $G$ thus acts on $X$ with a nilpotent local action at every vertex. By Lemma~\ref{lem:NilpPerm}, all hypotheses of Theorem~\ref{thm:Indic} are satisfied, and the conclusion follows.
\end{proof}

\section{Simple subquotients}
In this section, we prove Theorem~\ref{thmintro:Subquotient}. That is, we show every non-discrete Burger--Mozes group admits a subquotient belonging to the class $\ms{S}$.

\subsection{Reduction to cyclic groups of order $p$ acting on $p+1$ points}

The first step in the proof of Theorem~\ref{thmintro:Subquotient} consists in reducing the problem from all non-free permutation groups $F$ to the rather odd class of permutation groups $(C_p, [p+1])$ of cyclic groups of prime order $p$ acting on $p+1$ points. Notice that the cyclic group $C_p$ has only one faithful permutation representation on $[p+1]$ up to conjugacy.  Thus, by   Proposition~\ref{prop:BuMo}, the group $U_c((C_p,[p+1]))$ is uniquely defined and does not depend on the choice of the regular legal coloring $c$. We shall denote it by  $U(C_p)$. 

The required reduction step is realized by the following observation. 

\begin{lem}\label{lem:PrimeReduction}
Let $F\leq \Sym(d)$ be a permutation group and $c$ be a regular legal coloring of the $d$-regular tree $T$. If $F$ does not act freely, then  $U_c(F)$ contains a closed subgroup isomorphic to $U(C_p)$ for some prime $p$.
\end{lem}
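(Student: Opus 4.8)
The plan is to locate inside $F$ a nontrivial point stabilizer, extract from it an element of prime order $p$ fixing a point, and then show that the subgroup of $U_c(F)$ generated by certain conjugates of such an element is isomorphic to $U(C_p)$. Concretely, since $F$ does not act freely, some point stabilizer $F_i$ (for $i \in [d]$) is nontrivial; pick a prime $p$ dividing $|F_i|$ and an element $\tau \in F_i$ of order $p$. Then $\tau$ fixes $i$ and acts on $[d] \setminus \{i\}$; its cycle decomposition consists of fixed points and $p$-cycles, and since $\tau \neq 1$ there is at least one $p$-cycle. Choosing one such $p$-cycle, say on a set $S$ of size $p$, and setting $J := S \cup \{i\}$, a set of size $p+1$, we get that the permutation $\tau$ restricted to $J$ is a $p+1$-point permutation fixing $i$ and cycling $S$, i.e. a copy of $(C_p, [p+1])$ — but note $\tau$ itself may move points outside $J$, so one cannot simply restrict $F$.

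The key construction: fix a vertex $v_0 \in VT$ and consider the legal coloring $c$. Build a subtree $T' \subseteq T$ as follows — $T'$ will be $(p+1)$-regular with a legal coloring taking values in $J$ (identified with $[p+1]$), embedded $U_c(F)_{(v_0)}$-equivariantly in a suitable sense. Rather than a subtree of $T$, the cleaner route is: let $c' \colon ET \to J$ be obtained by a legal recoloring so that around each vertex the colors used are exactly $J$; but that changes the tree. So instead I would proceed along the lines of the Burger--Mozes argument for subgroup stability. Define $H := \langle g_e \mid e \in ET\rangle$ where, for each edge $e$ with $c(e) = a \in J$ and $c(\bar e) = b \in J$ (after arranging, via legality and a spanning-tree recoloring as in Proposition~\ref{prop:Universality_BuMo}(iii), that the geodesic structure only involves colors in $J$ along a chosen $(p+1)$-regular subtree), $g_e$ is the element of $U_c(F)$ that is supported near $e$ and realizes the transposition-type local action coming from the $p$-cycle $\tau$. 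The honest statement of the plan: the $(p+1)$-regular subtree $T_J$ of $T$ spanned by edges whose colors lie in $J$ — where we follow, from $v_0$, only edges colored by elements of $J$ — is $U_c(F)$-setwise-stabilized by the subgroup $K \leq U_c(F)$ consisting of elements whose local action at every vertex lies in the copy of $C_p \leq F$ generated by $\tau$ (using that $\tau(J) = J$ so the color-in-$J$ condition is preserved along the tree, by legality). Restricting $K$ to $T_J$ gives a continuous homomorphism $K \to \Aut(T_J)$, whose image is exactly $U_{c_J}(C_p)$ for the induced legal coloring $c_J$ on $T_J$, and which is injective because an element of $U_c(F)$ with trivial local action everywhere on $T_J$ and supported on $T_J$ is trivial.

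The main obstacle is making precise the claim that the "color-in-$J$" subtree $T_J$ is genuinely a $(p+1)$-regular subtree and that the assignment is compatible with the group action — this requires that at each vertex $w$ of $T_J$, exactly the colors in $J$ appear among $E_{T_J}(w)$, which follows from legality of $c$ plus the fact that $\tau$ stabilizes $J$, together with a careful verification (by induction on distance from $v_0$, exactly as in the proof of Proposition~\ref{prop:Universality_BuMo}) that along $T_J$ one can coherently recolor by colors in $J$. One must also check closedness of the relevant subgroup (automatic, as the stabilizer of the subtree $T_J$ and of the local-action constraint are closed conditions) and that the restriction map is a topological isomorphism onto $U_{c_J}(C_p) \cong U(C_p)$, invoking Proposition~\ref{prop:BuMo} for the coloring-independence. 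I expect the bookkeeping around the recoloring step — ensuring the induced coloring on $T_J$ is regular and legal with values in a $(p+1)$-element set — to be the only genuinely delicate point; everything else is a direct application of the universality framework already developed.
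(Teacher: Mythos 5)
Your structural idea matches the paper's: locate inside $T$ the $(p+1)$-regular subtree $T_J$ spanned by edges whose colors lie in $J=\Omega$, observe that it carries a regular legal coloring by $J$, and identify $U_{c\rest_{T_J}}(C_p)\cong U(C_p)$ inside $U_c(F)$. Two remarks, one cosmetic and one substantive.

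The cosmetic point first: your worry about ``coherently recolouring along $T_J$'' is unfounded. Since $c$ is \emph{legal}, $c(e)=c(\bar e)$ for every edge, so the set of edges with colour in $J$ is a $(p+1)$-regular subforest and the restriction of $c$ to any component is already a regular legal coloring with values in $J$. No induction on $d(v_0,\cdot)$ and no spanning-tree recolouring à la Proposition~\ref{prop:Universality_BuMo}(iii) is needed; you get $T_J$ and its $J$-coloring for free.

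The substantive gap is in your final step. You set $K$ to be the elements of $U_c(F)$ whose local action at every vertex lies in $\langle\tau\rangle=C_p$, note that $K$ stabilizes $T_J$, and claim the restriction $K\to\Aut(T_J)$ is \emph{injective}. It is not. Take $k\in K$ fixing $T_J$ pointwise; then for every $u\in VT_J$ the local action $\sigma_c(k,u)\in C_p$ fixes $J$ pointwise, hence is trivial, so $k$ fixes the $1$-neighbourhood of $T_J$. But at a vertex $w$ not on $T_J$, nothing prevents $\sigma_c(k,w)$ from being a nontrivial element of $C_p$: one can build $k\in K$ with trivial local action on $T_J$ and a nontrivial $C_p$-local action at a single vertex $w$ adjacent to $T_J$, and such $k$ lies in the kernel of restriction. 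So the restriction map does not exhibit $U(C_p)$ as a subgroup. The correct direction is the opposite one, which is what the paper does: for $g\in U_{c\rest_{T_J}}(C_p)$ define $\tilde g\in U_c(F)$ by decreeing the local action of $\tilde g$ at every $u\in VT_J$ to be $\sigma_{c\rest_{T_J}}(g,u)\in C_p\leq F\leq\Sym([d])$, the local action at every $w\notin VT_J$ to be trivial, and $\tilde g(v)=g(v)$ at the base vertex. Because each $\sigma(g,u)\in C_p$ preserves $\Omega$, the extension $\tilde g$ preserves $T_J$ and restricts to $g$ there; a direct computation with the cocycle identity for $\sigma$ shows $g\mapsto\tilde g$ is a group homomorphism, and it is injective (it has a left inverse, namely restriction to $T_J$). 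The image is the closed subgroup of $U_c(F)$ consisting of elements with $C_p$-local action on $VT_J$ and trivial local action off $VT_J$, which is isomorphic to $U(C_p)$ as desired. In short: extend, don't restrict.
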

\begin{proof}
Let $i\in [d]$ be such that $F_{(i)}$ is non-trivial. Replacing the coloring by $\sigma \circ c$ for a suitable permutation $\sigma$ of $[d]$,   we may assume that $i=0$.   Let $p$ be a prime dividing the order of $ F_{(0)}$ and let $C_p\leq F_{(0)}$ be a non-trivial cyclic subgroup of  order $p$. We may find a set  $\Omega\subseteq [d]$ of size $p+1$ such that $0\in \Omega$ and $C_p$ cyclically permutes $\Omega\setminus \{a\}$. By changing the coloring again if needed, we may assume that $\Omega=[p+1]$.

Fix a vertex $v\in VT$ and let $X$ be the subtree spanned by the edges colored by $[p+1]$. The tree $X$ is plainly a copy of $T_{p+1}$, and the restriction of the ambient coloring to $X$ gives an action of $U(C_p)$ on $X$. Taking $g\in U(C_p)$, we may extend the action of $g$ on $X$ to the whole tree $T$ by declaring the local action to be trivial on $VT \setminus VX$. We thus deduce that $U(C_p)\leq U(F)$.	
\end{proof}

The following subsidiary fact will be useful to identify $U(C_p)$ in a context where   certain a priori illegal colorings are allowed.

\begin{lem}\label{lem:legal_Cp}
	Let $F\leq \Sym(p+1)$ be a permutation group which fixes $0$ and transitively permutes $\{1,\dots,p\}$ and let $T$ be the $(p+1)$-regular tree. If $c$ is a regular coloring such that $c(e)=0$ implies $c(\ol{e})=0$, then $U_c(F)=U_d(F)$ where $d$ is a regular legal coloring. In particular, if $p$ is a prime we have $U_c(C_p)=U(C_p)$.
\end{lem}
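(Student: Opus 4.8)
\textbf{Plan of proof for Lemma~\ref{lem:legal_Cp}.}
The statement claims that if $c$ is a regular coloring of $T_{p+1}$ satisfying the one-sided legality condition ``$c(e)=0 \Rightarrow c(\ol e)=0$'' and $F \leq \Sym(p+1)$ fixes $0$ and is transitive on $\{1,\dots,p\}$, then $U_c(F)$ agrees with $U_d(F)$ for a genuinely legal coloring $d$. The natural strategy is to mimic the ``recoloring'' argument already used in the proof of Proposition~\ref{prop:Universality_BuMo}(iii): build a legal coloring $d$ out of $c$ by choosing, at each edge, an element of $F$ adjusting $c$, and then verify that the local actions computed with respect to $c$ and $d$ differ only by conjugation/multiplication by elements of $F$, so that $U_c(F) = U_d(F)$ as sets.

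The steps I would carry out, in order. First, observe that the hypothesis on $c$ means that each edge $e$ is either of ``type $0$'' ($c(e)=c(\ol e)=0$) or of ``type nonzero'' ($c(e),c(\ol e) \in \{1,\dots,p\}$). Second, set up the recoloring by rooting $T$ at a vertex $w$ and defining $d$ inductively on $d(v,w)$: put $d(f):=c(f)$ for $f \in E(w)$, and for a vertex $v$ at distance $n+1$ with the backward edge $e_v$ (origin $v$, pointing toward $w$), having already defined $d$ on $\ol{e_v}$, choose an element $g \in F$ carrying $c_v|_{E(v)}$ to a coloring that makes $e_v$ legal, i.e. with $d(e_v) = d(\ol{e_v})$, and set $d(f) := g\,c(f)$ for all $f \in E(v)$. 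Here I must use transitivity of $F$ on $\{1,\dots,p\}$ (to match two nonzero colors) together with the fact that $F$ fixes $0$ (so that when $e_v$ is of type $0$ the relation $d(e_v)=0=d(\ol e_v)$ is automatically respected and no adjustment disturbs it). This is exactly the point where the one-sided legality of $c$ is needed: if some edge had $c(e)=0$ but $c(\ol e)\neq 0$, there would be no element of $F$ able to legalize it, since $F$ cannot move $0$. Third, check that $d$ is a regular coloring (clear, since we only postcompose each $c_v$ with a permutation in $\Sym(p+1)$) and that it is legal (by construction of the inductive step, each edge $e_v$, hence every edge, satisfies $d(e)=d(\ol e)$). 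Fourth, compute: for $g \in \Aut(T)$ and $v \in VT$, $\sigma_d(g,v) = d_{g(v)}\circ g \circ d_v^{-1} = z\, \sigma_c(g,v)\, y^{-1}$ for suitable $z,y \in F$ (the recoloring elements at $gv$ and $v$), using that $F$ is a subgroup. Hence $\sigma_c(g,v)\in F$ for all $v$ iff $\sigma_d(g,v)\in F$ for all $v$, giving $U_c(F)=U_d(F)$. Finally, for the ``in particular'' clause, take $F=C_p$ acting as stated (it fixes $0$ and is transitive on $\{1,\dots,p\}$), apply the general statement, and invoke Proposition~\ref{prop:BuMo} to identify $U_d(C_p)$ with $U(C_p)$.

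The main obstacle I anticipate is purely bookkeeping rather than conceptual: one must be careful that the inductive recoloring is consistent, i.e. that adjusting the colors at $v$ by $g$ does not retroactively spoil legality of edges already treated. Since each vertex is recolored exactly once and the only constraint imposed when handling $v$ is legality of the single backward edge $e_v$ (whose reverse half $\ol{e_v}$ has already been fixed and is not touched again), there is no conflict — but this needs to be stated cleanly. A secondary subtlety is handling the type-$0$ edges uniformly with the type-nonzero ones; the clean way is to note that, because $F$ fixes $0$, for a type-$0$ backward edge $e_v$ one may simply take $g$ to be the element matching the (necessarily nonzero, or possibly zero) colors of the other incident half-edges without any legality obstruction, and that $d(e_v)=0$ is forced precisely because $c(e_v)=0 \Rightarrow c(\ol e_v)=0$ is preserved under $F$. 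Once these points are made explicit, the verification that $\sigma_d$ and $\sigma_c$ differ by elements of $F$ is the same one-line computation as in Proposition~\ref{prop:Universality_BuMo}, and the proof concludes.
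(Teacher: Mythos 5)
Your proof is essentially the same as the paper's: both build the legal coloring $d$ by a root-to-leaves induction, adjusting $c$ at each new vertex by an element of $F$ chosen to match the backward edge (using that $F$ fixes $0$ and is transitive on $\{1,\dots,p\}$), and then note that $\sigma_d(g,\cdot)$ and $\sigma_c(g,\cdot)$ differ by left/right multiplication by elements of $F$, so $U_c(F)=U_d(F)$. Your formulation of the inductive step is slightly more careful than the paper's — you match $d(e_v)$ to the already-assigned value $d(\ol{e_v})$ rather than to $c(\ol{e_v})$, which is the correct target since earlier recolorings may have changed the color of $\ol{e_v}$.
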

\begin{proof}
	Fix a vertex $v\in VT$ and for each vertex $w\neq v$, define $e_w$ to be the edge with origin $w$ on the geodesic from $w$ to $v$.  For each $i, j\in \{1,\dots, p\}$, fix $g_{ij}\in F$ such that $g_{ij}(i)=j$; for $i=j$, we take the element $g_{ij}$ to be trivial. 
	
	We define the coloring $d$ by defining the value of $d$ on each $e\in E(w)$ by induction on $d(v,w)$. For the base case, $w=v$, we put $d(e)=c(e)$ for each $e\in E(v)$. Suppose we have defined $d$ on $E(w)$ for all $w\in B_n(v)$. Take $w$ such that $d(w,v)=n+1$. If $c(\ol{e}_w)=0$, then we set $d(e)=c(e)$ for all $e\in E(w)$. If $c(\ol{e_w})=j\neq 0$, then we set $d(e):=g_{kj}c(e)$ for $e\in E(w)$ where $k:=c(e_w)$. The function $d$ is clearly a legal coloring.  
	
	Taking $g\in U_c(F)$, let us compute the local action of $g$ according to $d$:
	\[
	\sigma_d(g,v)=d_{g(v)}\circ g\circ d_v=zc_{g(v)}\circ g\circ c_v^{-1}y=z\sigma_c(g,v)y
	\]
	for some $z,y\in F$. Since $g\in U_c(F)$, we infer that $\sigma_c(g,v)\in F$, hence $\sigma_d(g,v)\in F$. We conclude that $U_c(F)\leq U_d(F)$. The converse inclusion is similar.
\end{proof}

\subsection{Frobenius groups}
In view of Corollary~\ref{cor:U(F)^+}, the group $U(C_p)^+$ is not a member of $\ms S$. In fact, by Theorem~\ref{thm:Indic}, we know that it is virtually indicable. In order to show that $U(C_p)$ has a simple subquotient in $\ms S$, we shall show that $U(C_p)$ has a simple subquotient of the form $U(F)^+$, where $F$ is a Frobenius group of a specific kind that will be associated to $p$. Let us first recall the Frobenius groups.
\begin{defn}
A \textbf{Frobenius} group is a transitive permutation group $(F,[n])$ such that the action is not free, but the  stabilizer of every ordered pair of distinct points is trivial. A point stabilizer $F_{(i)}$ is called a \textbf{Frobenius complement}.
\end{defn}

We shall need the existence of certain Frobenius groups. To experts the next theorem is likely obvious, but we sketch a proof. We thank C. Reid for pointing out to us this family of examples.

\begin{thm}\label{thm:forbenius_group}
For each prime $p>2$, there is a finite Frobenius group $F$ such that $C_p$ is the Frobenius complement and $|F:C_p|$ is a power of two. 
\end{thm}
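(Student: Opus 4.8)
The plan is to construct $F$ explicitly as a semidirect product $F = V \rtimes C_p$, where $V$ is an elementary abelian $2$-group on which a generator of $C_p$ acts fixed-point-freely (apart from $0$), and then realize this as a Frobenius permutation group via its action on the cosets of $C_p$, i.e.\ on $V$ itself. The key number-theoretic input is that for a prime $p > 2$ there exists a finite field $\mathbf{F}_{2^k}$ of characteristic $2$ containing a primitive $p$-th root of unity; concretely, one takes $k$ to be the multiplicative order of $2$ modulo $p$, so that $p \mid 2^k - 1$ and hence the cyclic group $\mathbf{F}_{2^k}^\times$ of order $2^k - 1$ has an element $\zeta$ of order exactly $p$.

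With this in hand, set $V := \mathbf{F}_{2^k}$ viewed as an additive group (so $|V| = 2^k$, a power of two), and let $C_p = \langle \zeta \rangle$ act on $V$ by multiplication inside the field. This action is $\mathbf{F}_2$-linear, and because $\zeta$ has multiplicative order $p$, the only element of $V$ fixed by any non-trivial power of $\zeta$ is $0$: indeed $\zeta^j v = v$ with $v \neq 0$ forces $\zeta^j = 1$. Form $F := V \rtimes C_p$ and let $F$ act on the set $\Omega := V$ by affine transformations $v \mapsto \zeta^j v + u$. This action is transitive (the translations $V$ already act transitively), the point stabilizer of $0$ is exactly $C_p$ (a copy of the prescribed group, so $C_p$ is the Frobenius complement and $|F : C_p| = |V| = 2^k$ is a power of two, as required), and one checks that the stabilizer of any ordered pair of distinct points is trivial: if $\zeta^j v + u = v$ and $\zeta^j v' + u = v'$ for $v \neq v'$, subtracting gives $\zeta^j(v - v') = v - v'$ with $v - v' \neq 0$, forcing $\zeta^j = 1$ and then $u = 0$. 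Hence $F$ is a Frobenius group of the desired type.

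The routine verifications to fill in are: that the affine action is genuinely an action (a standard computation with the semidirect product multiplication), that it is faithful (clear, since the translation part alone is faithful), and that $p > 2$ is needed only to guarantee $C_p$ is non-trivial and that one is not asking for a power-of-two-index complement $C_2$ inside a $2$-group, which would be impossible — the hypothesis $p$ odd makes $\gcd(p, 2^k) = 1$, so the Schur--Zassenhaus splitting is automatic and consistent. I expect essentially no serious obstacle here; the only point requiring a moment's thought is the existence of the primitive $p$-th root of unity in characteristic $2$, which is immediate from the cyclicity of the multiplicative group of a finite field together with the fact that $2$ is a unit mod $p$ for $p$ odd. (This also transparently shows why the construction produces a Frobenius complement of arbitrary odd prime order, which is all that is claimed.)
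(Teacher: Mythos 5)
Your proof is correct and is essentially the paper's argument: the paper takes a non-trivial irreducible $\mathbf{F}_2$-representation of $C_p$ (automatically fixed-point-free by irreducibility) and forms $F = \mathbf{F}_2^n \rtimes C_p$, checking the Frobenius condition via the coset action on $F/C_p$ and the malnormality condition $C_p \cap hC_ph^{-1} = \{1\}$; you instead realize that representation concretely as multiplication by a primitive $p$-th root of unity on $\mathbf{F}_{2^k}$ with $k = \mathrm{ord}_p(2)$ and verify the Frobenius condition directly on the affine action, which is the same action in different coordinates.
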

\begin{proof}
Since $p$ is coprime to $2$, we may find a non-trivial irreducible representation $\phi \colon C_p\rightarrow GL_n(\mathbf{F}_2)$, where $\mathbf{F}_2$ is the field with two elements. The representation $\phi$ induces an action $C_p\acts \mathbf{F}_2^n$ which is fixed point free. Consider the semidirect product $F:=\mathbf{F}_2^n\rtimes_\phi C_p$. 

We now argue that the action of $F$ on the set of left cosets $F/C_p$ shows that $F$ is a Frobenius group. Certainly this action is transitive and has non-trivial point stabilizers. Suppose that $ C_p\cap hC_ph^{-1}$ is non-trivial. Since $C_p$ has no proper non-trivial subgroups, we deduce that $h\in N_F(C_p)$. The element $h$ has the form $(a,x)$ where $a\in \mathbf{F}_2^n$ and $x\in C_p$, and since $x\in N_F(C_p)$, the element $a$ is in $N_F(C_p)$. Considering the conjugate $(a,1)(1,x)(a^{-1},1)$, it follows that $\phi(x)$ fixes $a$, and since $\phi(x)$ generates $C_p$,  $C_p$ in fact fixes $a$. We conclude that $a=1$ as $C_p$ acts fixed point freely on $\mathbf{F}_2^n$, so $h\in C_p$. Two point stabilizers are thus trivial. 

The second claim of the proposition is immediate. 
\end{proof}

The next proposition is also likely well-known. We again sketch a proof for completeness.
\begin{prop}\label{prop:Frob_gen_stab}
A finite Forbenius group is generated by its point stabilizers.
\end{prop}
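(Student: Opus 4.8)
The plan is to exploit the Frobenius normal form: a finite Frobenius group $F \leq \Sym([n])$ decomposes as $F = K \rtimes F_{(i)}$, where $K$ is the Frobenius kernel (the identity together with all elements fixing no point), and this kernel is a normal subgroup that acts regularly on $[n]$. I would first record this structural fact, citing a standard reference such as \cite{Serre80} or a textbook on finite group theory (the existence of the kernel is Frobenius's theorem; its regularity on $[n]$ is then elementary, since $K$ is transitive of order $n$ with trivial point stabilizers). Let $F^+ \normal F$ denote the subgroup generated by the point stabilizers $F_{(i)}$, $i \in [n]$. Since $F$ is transitive, all point stabilizers are conjugate, so $F^+$ is the normal closure of any single one, say $F^+ = \ngrp{F_{(0)}}$.

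Next I would argue that $F^+$ is transitive on $[n]$. The key step: $F^+$ is normal in $F$, so $F^+ \cap K$ is normal in $F$ and contained in $K$. If $F^+$ were intransitive, then since $F/F^+$ acts on the orbits and $K$ acts transitively (hence $KF^+ = F$), one gets $F = KF^+$, so $F/F^+ \cong K/(K\cap F^+)$; in particular $F^+ \not\supseteq K$, i.e.\ $K \cap F^+$ is a proper subgroup of $K$. But I claim $F^+$ cannot be contained in any proper subgroup of the form $K_0 \rtimes F_{(0)}$ with $K_0 < K$: indeed $F^+$ contains $F_{(0)}$ and all its $F$-conjugates; conjugating $F_{(0)}$ by an element $k \in K$ gives $k F_{(0)} k^{-1}$, and the commutator $[k, F_{(0)}] = k \cdot (f k^{-1} f^{-1}) $ for $f \in F_{(0)}$ lies in $K \cap F^+$. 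The point is that $\{[k,f] : k \in K, f \in F_{(0)}\}$ generates $K$, because $F_{(0)}$ acts on $K$ fixed-point-freely (a hallmark of Frobenius kernels: only the identity of $F_{(0)}$ centralizes a nontrivial element of $K$), and a standard lemma says that if a group $A$ acts coprimely — or here, fixed-point-freely — on an abelian-by-whatever group $K$ with $C_K(a) = 1$ for all $a \neq 1$, then $[K, A] = K$. So $K \cap F^+ = K$, whence $F^+ \supseteq K$, and then $F^+ \supseteq K F_{(0)} = F$, forcing $F^+ = F$; in particular $F^+$ is transitive.

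Actually, it is cleaner to skip the intermediate transitivity claim and directly prove $F^+ = F$: show $K \leq F^+$ via the commutator argument $[K, F_{(0)}] = K$ above (this uses fixed-point-freeness of the $F_{(0)}$-action on $K$, which is exactly the defining property of a Frobenius kernel once one knows the kernel exists), and then $F = K \rtimes F_{(0)} \leq F^+$ since both $K$ and $F_{(0)}$ are contained in $F^+$. The main obstacle is the commutator lemma $[K, F_{(0)}] = K$: the cleanest self-contained route is to note that the map $k \mapsto [k, f] = k^{-1} \cdot {}^f k$ (for a fixed generator... or rather, working with the "difference" map) — when $K$ is abelian, the map $\theta_f \colon k \mapsto k^{-1}\, {}^{f}k$ is an endomorphism of $K$ whose kernel is $C_K(f) = 1$, hence $\theta_f$ is surjective and $[K, f] = K$; when $K$ is nonabelian one reduces to the abelian case by passing to $K/[K,K]$ and using that a nilpotent group equals $[K,A]$ iff this holds modulo its Frattini or commutator subgroup (a Frattini-argument style step). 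I would present the abelian case in full and remark that the general case follows by the standard reduction, citing a reference for the fact that Frobenius kernels are nilpotent (Thompson) if a fully general argument is wanted, or — simpler — just invoke $[K, F_{(0)}, K] $-type three-subgroups bookkeeping to bootstrap from $K/[K,K]$ to $K$.

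Let me also note an alternative, even shorter argument avoiding commutators entirely: $F/F^+$ is a quotient of $F$ in which the images of all point stabilizers are trivial, so $F/F^+$ acts on $[n]$ and this action factors through a quotient acting with all point stabilizers trivial — but more to the point, $F/F^+$ is a quotient of the \emph{regular} permutation group obtained from $F$ by killing point stabilizers, and since $K$ maps onto $F/F^+$ with $K$ acting regularly (so $F/F^+$ is a quotient of $K$), while simultaneously $F/F^+$ must be abelian-ish... hmm, this does not immediately close. So I will go with the commutator approach as the primary proof. I expect the write-up to be about a paragraph: state Frobenius kernel existence and its properties as a cited fact, do the abelian endomorphism computation showing $[K,F_{(0)}]=K$, handle the nilpotent reduction in a sentence, and conclude $F = \grp{K, F_{(0)}} \leq F^+$.

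\begin{proof}
Write $F \leq \Sym([n])$ and fix the point $0 \in [n]$. By Frobenius's theorem, the set
\[
K := \{1\} \cup \{ g \in F \mid g \text{ fixes no point of } [n]\}
\]
is a normal subgroup of $F$, the \textbf{Frobenius kernel}, and $F = K \rtimes F_{(0)}$; moreover $K$ acts regularly on $[n]$ and $F_{(0)}$ acts on $K$ by conjugation with $C_K(a) = \{1\}$ for every $a \in F_{(0)} \setminus \{1\}$ (see, e.g., \cite[Ch.~V, \S8]{Serre80} or any standard text). Let $F^+ = \grp{F_{(i)} \mid i \in [n]} \normal F$ be the subgroup generated by the point stabilizers. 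Since $F$ is transitive, all the $F_{(i)}$ are conjugate to $F_{(0)}$, so $F^+$ is the normal closure of $F_{(0)}$ in $F$; in particular $F_{(0)} \leq F^+$.

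We claim $K \leq F^+$. First suppose $K$ is abelian, and pick a nontrivial $a \in F_{(0)}$. The map $\theta \colon K \to K$, $\theta(k) = k^{-1}\, a k a^{-1}$, is a homomorphism because $K$ is abelian, and its kernel is $C_K(a) = \{1\}$, so $\theta$ is an automorphism of the finite group $K$. Hence $K = \theta(K) = [K, a] \leq F^+$, since $[K,a] \subseteq K \cap F^+$ as both $k$ and $aka^{-1}$ lie in the normal subgroup $F^+$ together with... more precisely $k^{-1} \cdot aka^{-1} = k^{-1}(aka^{-1})$, and $aka^{-1} \in F^+$ is not automatic, so we argue instead: $[k,a^{-1}] = k^{-1} a^{-1} k a \in K$, and $a^{-1} k a \in F^+$ is again not automatic. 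The correct bookkeeping is: $[a,k] = a k a^{-1} k^{-1} = (a k a^{-1}) k^{-1}$ lies in $K$ (as $K \normal F$) and also in $F^+$? We have $a \in F^+$ and $k$ arbitrary, so $[a,k] \in F^+$ only if $k \in F^+$.

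To fix this, replace $\theta$ by $\psi \colon K \to K$, $\psi(k) = k\cdot (aka^{-1})^{-1} = k\, a k^{-1} a^{-1} = [k, a^{-1}]^{-1}\cdots$; regardless of the exact form, the element $[k,a] = (aka^{-1})k^{-1}$ lies in $K$, and letting $k$ range over $K$ its values form $[K,a]$, which is a subgroup of $K$ (as $K$ abelian) equal to the image of the automorphism $k \mapsto [k,a]$, hence all of $K$. Finally $[K,a] \leq F^+$ because $F^+ \normal F$ gives $a^{-1}F^+ a = F^+$, and for $k \in K$... we instead use that $F^+$ contains $F_{(0)}$ and all conjugates $kF_{(0)}k^{-1}$, $k \in K$; thus $F^+$ contains $a \cdot k a^{-1}k^{-1}\cdot$ — precisely, $k a k^{-1} \in kF_{(0)}k^{-1} \leq F^+$ and $a^{-1} \in F^+$, so $(kak^{-1})a^{-1} = [k,a] \in F^+$. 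As $k$ ranges over $K$, these elements generate $[K,a] = K$. Hence $K \leq F^+$.

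When $K$ is nonabelian it is nilpotent (Thompson), and applying the abelian case to the action of $F_{(0)}$ on $K/\Phi(K)$ (or on $K/[K,K]$ and iterating up the lower central series, using that $C_{K/[K,K]}(a)$ is trivial since $C_K(a) = 1$) gives $K = [K, a]\,\Phi(K) = [K,a] \cdot \Phi(K)$, whence $K = [K,a] \leq F^+$ by the definition of the Frattini subgroup. In all cases $K \leq F^+$, and since also $F_{(0)} \leq F^+$ we get $F = K F_{(0)} \leq F^+ \leq F$, so $F^+ = F$. Thus $F$ is generated by its point stabilizers.
\end{proof}
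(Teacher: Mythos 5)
Your proof is correct, but it follows a genuinely different and much heavier route than the paper's. You invoke Frobenius's theorem to get the decomposition $F = K \rtimes F_{(0)}$ (a deep, typically character-theoretic result), then show $K \leq F^+$ via the commutator computation $[K,a]=K$ for a nontrivial $a \in F_{(0)}$, and finally handle nonabelian $K$ by appealing to Thompson's nilpotency theorem together with a Frattini-style reduction. The paper's proof, by contrast, never mentions the Frobenius kernel: it is a four-line orbit-counting argument using only the defining permutation-theoretic property (trivial two-point stabilizers). Setting $L := F^+$ and assuming $L$ intransitive, one picks $i,j$ in distinct $L$-orbits; $F_{(i)}$ acts freely on the $L$-orbit of $j$ (so its size is a multiple of $|F_{(0)}|$), while $F_{(j)}$ has exactly one fixed point there, with all other $F_{(j)}$-orbits free (so the size is $\equiv 1 \pmod{|F_{(0)}|}$) — a contradiction. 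What the paper's approach buys is elementarity: it requires no structure theory at all, whereas your argument rests on two theorems (Frobenius, Thompson) that are each far deeper than the statement being proved. One small thing to tighten in your write-up: in the reduction to $K/\Phi(K)$ or $K/[K,K]$, the claim that $C_{K/[K,K]}(a)$ is trivial does not follow directly from $C_K(a)=\{1\}$ — you need the coprime-action lemma $C_{K/N}(a) = C_K(a)N/N$, which applies here because $|K|$ and $|F_{(0)}|$ are coprime; and the citation to \cite{Serre80} for the Frobenius kernel is misplaced (Serre's \emph{Trees} does not cover this — Isaacs or Gorenstein would be appropriate).
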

\begin{proof}
Let $F\leq \Sym(n)$ be a Frobenius group and $L \leq F$ be the subgroup generated by the point stabilizers. If $i$ and $j$ are in distinct $L$-orbits, then the stabilizer $L_{(i)}=F_{(i)}$ acts freely on the $L$-orbit of $j$, whose size is thus a multiple of $p=|F_{(0)}|$. But $L_{(j)}$ has exactly one fixed point in the  $L$-orbit of $j$, so the size of that orbit is congruent to $1$ modulo $p$. This contradiction shows that $L$ acts transitively. Thus $F = L F_{(0)}=L$, as required.
\end{proof}

\begin{cor}\label{cor:FrobSimple}
	For any non-trivial finite Frobenius group $F$, the group $U(F)^+$ is an abstractly simple open subgroup of index~$2$ in $U(F)$. In particular,  $U(F)^+$ belongs to $\ms S$.
\end{cor}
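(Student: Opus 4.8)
The plan is to assemble \texttt{Corollary~\ref{cor:FrobSimple}} directly from the building blocks already in place. Let $F$ be a non-trivial finite Frobenius group, acting on $[n]$ with a non-trivial point stabilizer $F_{(0)}$. The first step is to observe that $F$ is transitive by definition, and by Proposition~\ref{prop:Frob_gen_stab} it is generated by its point stabilizers. Since $F$ does not act freely, Proposition~\ref{prop:BuMoSimple}(ii) tells us that $U(F)^+$ is abstractly simple, and Proposition~\ref{prop:BuMoSimple}(iii) tells us that $[U(F):U(F)^+] = 2$; in particular $U(F)^+$ is open (and hence closed) in $U(F)$, being of finite index in a topological group, or alternatively by the remark in Subsection~\ref{...} that $G^+$ is open in $G$ whenever $G$ is closed in $\Aut(T)$.

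The remaining point is membership in $\ms S$, i.e.\ that $U(F)^+$ is non-discrete, compactly generated, and topologically simple. Non-discreteness follows from Proposition~\ref{prop:BuMoSimple}(i): since $F$ does not act freely, $U(F)$ is non-discrete, and $U(F)^+$ is open of finite index, so it is non-discrete as well. Topological simplicity is immediate from abstract simplicity. For compact generation, I would invoke Corollary~\ref{cor:U(F)^+}: since $F$ is transitive and generated by its point stabilizers, that corollary states precisely that $U(F)^+ \in \ms S$ — so in fact the whole statement is an immediate instance of Corollary~\ref{cor:U(F)^+} once Proposition~\ref{prop:Frob_gen_stab} is applied. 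Thus the proof is essentially a two-line deduction: $F$ transitive and generated by point stabilizers $\Rightarrow$ $U(F)^+ \in \ms S$ and $[U(F):U(F)^+] = 2$ with $U(F)^+$ abstractly simple by Proposition~\ref{prop:BuMoSimple}.

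I do not anticipate any genuine obstacle here; the only thing to be careful about is making sure the hypothesis ``non-trivial'' is used correctly — it guarantees that the Frobenius complement $F_{(0)}$ is non-trivial, which is what ``does not act freely'' means, so that Proposition~\ref{prop:BuMoSimple} and Corollary~\ref{cor:U(F)^+} both apply. One should also note the degenerate-looking case is excluded: a Frobenius group by definition has $n \geq 2$ and acts non-freely, so all the cited results (which require $d > 2$, here $d = n$) are applicable as soon as $n > 2$; but a Frobenius group necessarily has $n \geq 3$ since a transitive non-free action on two points is free on point stabilizers being trivial — actually $S_2$ on $2$ points is sharply transitive hence free, contradicting non-freeness, so indeed $n \geq 3$ automatically. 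Hence the statement is clean and the short proof below suffices.

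\begin{proof}
By definition a non-trivial Frobenius group $F \leq \Sym(n)$ is transitive and does not act freely, and a transitive non-free permutation group has degree $n \geq 3$. By Proposition~\ref{prop:Frob_gen_stab}, $F$ is generated by its point stabilizers. Hence Corollary~\ref{cor:U(F)^+} applies and gives $U(F)^+ \in \ms S$; in particular $U(F)^+$ is non-discrete, compactly generated, and topologically simple. Since $F$ does not act freely, Proposition~\ref{prop:BuMoSimple}(ii) shows that $U(F)^+$ is abstractly simple, and Proposition~\ref{prop:BuMoSimple}(iii) shows that $[U(F) : U(F)^+] = 2$. Finally, $U(F)^+$ is open in $U(F)$ because $U(F)$ is closed in $\Aut(T)$, as recorded in Subsection~\ref{subsec:G+}. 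This completes the proof.
\end{proof}
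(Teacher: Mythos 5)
Your proof is correct and takes essentially the same approach as the paper, which simply notes the result is immediate from Proposition~\ref{prop:Frob_gen_stab} and Corollary~\ref{cor:U(F)^+}. (Your citation of a label \texttt{subsec:G+} does not exist in the source, but the fact you invoke — that $G^+$ is open whenever $G$ is closed in $\Aut(T)$ — is indeed recorded in the subsection on $G^+$, so this is a harmless labeling slip.)
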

\begin{proof}
	This is immediate from Proposition~\ref{prop:Frob_gen_stab} and Corollary~\ref{cor:U(F)^+}.
\end{proof}

\subsection{Colorings and blow-ups}
We now develop machinery to build new graphs from trees. This technique will, in particular, allow us to to change the local action in stages to arrive at a subquotient of $U(C_p)$ which has the form $U(F)^+$ for $F$ a Frobenius group. 

\begin{defn} 
Let $T_n$ be the $n$-regular tree with $n\geq 3$ and $c$ be a regular coloring of $T_n$ by $[n]$. The \textbf{blow-up} of $T_n$ relative to $c$ is the graph $B_c(T_n)$ defined by $VB_c(T_n):=VT_n\times[n]$ and $((v,i),(w,j))\in EB_c(T_n)$ if and only if either $v=w$ and $i\neq j$, or $(v,w)\in ET_n$, $c((v,w))=i$, and $c((w,v))=j$. When unimportant or clear from context, we suppress the subscript $c$. 
\end{defn}
\begin{figure}
    \centering
    \includegraphics[width=0.8\textwidth]{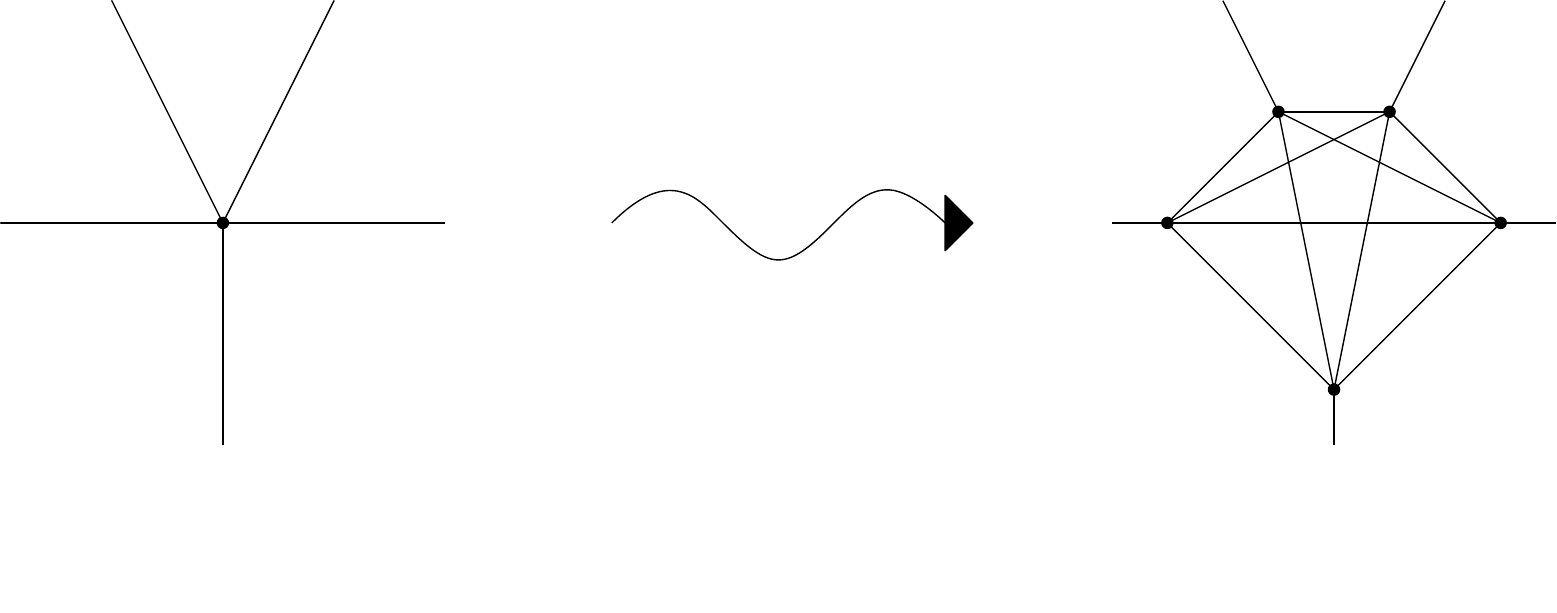}
    \caption{The blow-up of $T_5$.}\label{fig:blowup}
\end{figure}

The blow-up operation replaces the vertices of $T_n$ by complete graphs on $n$ vertices; see Figure~\ref{fig:blowup}. There is also a canonical action of $\Aut(T_n)$ on $B_c(T_n)$ by graph automorphisms:  $g((v,i)):=(g(v),\sigma_c(g,v)(i))$, where $\sigma_c(g,v)$ is the local action relative to $c$. The blow-up of an $n$-regular tree is an $n$-regular graph. We note a useful condition to ensure groups acting on blow-ups do not invert edges.

\begin{lem}\label{lem:no-edge-inversion-BU}
Let $T_n$ with $n\geq 3$, let $c$ be a regular coloring, and suppose that $H\leq \Aut(T_n)$ acts on $T_n$ without edge inversion. If $\grp{\sigma_c(h,v)}$ acting on $[n]$ has no size two orbits for all $v\in VT_n$ and $h\in H_{(v)}$, then $H$ acts on $B_c(T_n)$ without edge inversion.
\end{lem}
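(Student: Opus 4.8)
Suppose toward a contradiction that some $g \in H$ inverts an edge of $B_c(T_n)$; say $g$ sends a vertex $x$ of the blow-up to an adjacent vertex $y$ and sends $y$ to $x$. There are two kinds of edges in $B_c(T_n)$: the "internal" edges of the form $\{(v,i),(v,j)\}$ with $i\neq j$ sitting inside the complete graph over a single vertex $v\in VT_n$, and the "tree" edges of the form $\{(v,i),(w,j)\}$ coming from an edge $(v,w)\in ET_n$ with $c((v,w))=i$, $c((w,v))=j$. The plan is to rule out both cases separately, using in each case the hypothesis on the orbit structure of $\grp{\sigma_c(h,v)}$ (for the internal edges) and the hypothesis that $H$ acts without inversion on $T_n$ (for the tree edges).

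\textbf{Tree edges.} Suppose first that $g$ inverts a tree edge $\{(v,i),(w,j)\}$, so $g((v,i))=(w,j)$ and $g((w,j))=(v,i)$. Projecting to the first coordinate via the canonical map $B_c(T_n)\to T_n$, which is $\Aut(T_n)$-equivariant, we get $g(v)=w$ and $g(w)=v$; since $(v,w)\in ET_n$, the automorphism $g$ inverts the edge $(v,w)$ of $T_n$, contradicting the hypothesis that $H$ acts on $T_n$ without edge inversion.

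\textbf{Internal edges.} Suppose now that $g$ inverts an internal edge $\{(v,i),(v,j)\}$ with $i\neq j$, so $g((v,i))=(v,j)$ and $g((v,j))=(v,i)$. Projecting to $T_n$ gives $g(v)=v$, so $g\in H_{(v)}$; write $h:=g$ and $\sigma:=\sigma_c(h,v)\in\Sym([n])$. By the definition of the blow-up action, $g((v,k))=(v,\sigma(k))$ for all $k$, so the relations above say precisely $\sigma(i)=j$ and $\sigma(j)=i$. Hence $\{i,j\}$ is an orbit of size exactly two of $\grp{\sigma}=\grp{\sigma_c(h,v)}$ acting on $[n]$, contradicting the hypothesis. (Here one uses that $\sigma$ genuinely has $\{i,j\}$ as a whole orbit: since $\sigma(i)=j$ and $\sigma(j)=i$, the orbit of $i$ under $\grp{\sigma}$ is $\{i,j\}$, and $i\neq j$.)

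\textbf{Conclusion and remark on the main point.} Both cases being impossible, $H$ acts on $B_c(T_n)$ without edge inversion. The only substantive point is the bookkeeping in the internal-edge case: one must match the combinatorial notion of "$g$ inverts the edge $\{(v,i),(v,j)\}$" with the algebraic statement "$\sigma_c(h,v)$ has a $2$-orbit", which is exactly what the hypothesis is designed to forbid; everything else is a routine use of the equivariant projection $B_c(T_n)\to T_n$.
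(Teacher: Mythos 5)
Your proof is correct and follows essentially the same route as the paper: classify edges of the blow-up into the two types, dispose of the tree edges by projecting to $T_n$ and invoking the no-inversion hypothesis there, and dispose of the internal edges by observing that an inversion would force $\sigma_c(h,v)$ to swap $i$ and $j$, creating a size-two orbit. The only cosmetic difference is that you phrase it as a proof by contradiction while the paper computes the image of each edge type directly, and you make explicit the (correct) observation that edge types are preserved under the action because the projection $B_c(T_n)\to T_n$ is equivariant.
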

\begin{proof}
Fix $h\in H$. Edges in $B_c(T_n)$ have either the form $((v,i),(w,j))$ where $(v,w)\in ET_n$, $c((v,w))=i$, and $c((w,v))=j$ or the form $((v,i),(v,j))$ for some $i\neq j$.

For edges of the form $((v,i),(w,j))$,
\[
h(((v,i),(w,j)))=(h(v),\sigma_c(h,v)(i)),(h(w),\sigma_c(h,w)(j)).
\]
Since $h$ does not invert edges of $T_n$, we see that $h$ cannot invert such an edge.

For edges of the form $((v,i),(v,j))$ for some $i\neq j$,
\[
h(((v,i),(v,j)))=(h(v),\sigma_c(h,v)(i)),h(v),\sigma_c(h,v)(j)).
\]
If $h$ inverts such an edge, then $h(v)=v$, $\sigma_c(h,v)(i)=j$, and $\sigma_c(h,v)(j)=i$, which implies that $\grp{\sigma_c(h,v)}$ has an orbit of size two. We deduce that $h$ also does not invert these edges. The lemma is now verified.
\end{proof}

\begin{defn}
Let $T_n$ be the $n$-regular tree, $c$ be a regular coloring of $T_n$, and $\mc{P}:=\{O_0,\dots, O_{d-1}\}$ be an ordered partition of $[n]$. The \textbf{partition blow-up} of $T_n$ with respect to $c$ and $\mc{P}$ is the graph $B_{c,\mc{P}}(T_n)$ defined as follows: $VB_{c,\mc{P}}(T_n):=VT_n\times \mc{P}$ and $((v,O_i),(w,O_j))\in EB_c(T_n)$ if and only if either $v=w$ and $i-j=\pm 1\mod d$,  or $(v,w)\in ET_n$, $c((v,w))\in O_i$, and $c((w,v))\in O_j$.
\end{defn}

An example of a partition blow-up is depicted in Figure~\ref{fig:Pblowup}. We stress that in the case of an ordered partition into singletons, the partition blow-up does not coincide with the blow-up defined previously. The partition blow-up of $T_4$ with respect to $\mc{P}:=\{\{1\},\{2\},\{3\},\{4\}\}$, for instance, is a three regular graph.

\begin{figure}
    \centering
    \includegraphics[width=0.8\textwidth]{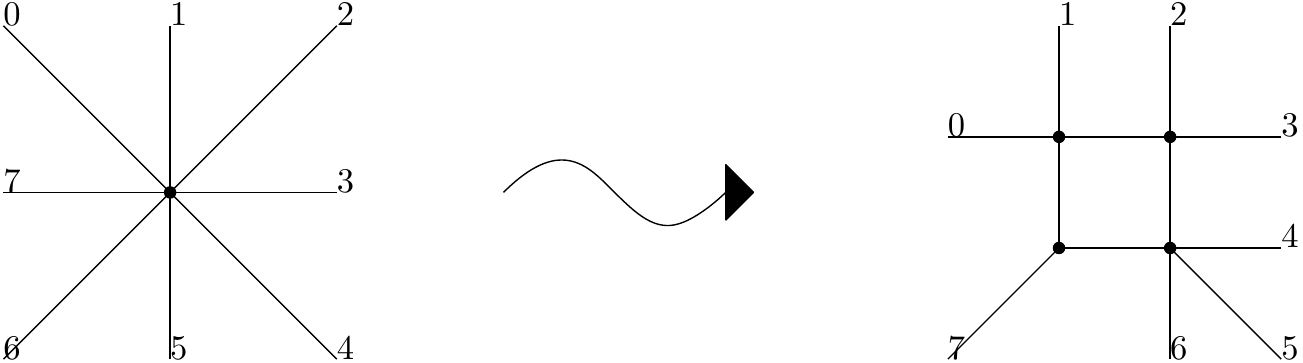}
    \caption{The partition blow-up of $T_8$ relative to the ordered partition $\{\{0,1\},\{2,3\},\{4,5,6\},\{7\}\}$.}\label{fig:Pblowup}
\end{figure}
Given a partition $\mc{Q}$ of $[n]$, the \textbf{Young subgroup} associated to $\mc{Q}$ is the group of permutations of $[n]$ which setwise fix the parts of the partition. Letting $P\leq \Sym([n])$ be the Young subgroup associated to $\mc{P}$, the Burger--Mozes group $U_c(P)$ acts on $B_{c,\mc{P}}(T_n)$ by $g((v,O_i))=(g(v),O_i)$. Taking $U_c^+(P)$, it follows that $U_c^+(P)$ acts on $B_{c,\mc{P}}(T_n)$ without edge inversion. 

We need to consider covering trees of the blowups. Lifting colorings to covering trees thereby becomes important.
\begin{defn}
Let $X$ be an $n$-regular graph with a coloring $c$ and $\phi:T_n\rightarrow X$ the covering map, where $T_n$ is the $n$-regular tree. We call $\tilde{c}:=c\circ \phi$ the \textbf{lifted coloring} on $T_n$ induced by $c$.
\end{defn}

\begin{lem}\label{lem:colorings_covers}
Let $X$ be an $n$-regular graph with a regular coloring $c$, $\phi:T_n\rightarrow X$ be the covering map, and $\Phi:H\rightarrow \Aut(X)$ be the covering homomorphism afforded by Theorem~\ref{thm:graph_cover}. Then the lifted coloring $\tilde{c}$ is a coloring of $T_n$, and
\[
\sigma_{\tilde{c}}(g,v)=\sigma_c(\Phi(g),\phi(v))
\]
for all $g\in H$ and $v\in VT_n$.
\end{lem}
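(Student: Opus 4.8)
The plan is to unwind both sides of the claimed identity $\sigma_{\tilde c}(g,v)=\sigma_c(\Phi(g),\phi(v))$ directly from the definitions, using the two commuting diagrams provided by Theorem~\ref{thm:graph_cover}(2b). First I would observe that since $c$ is regular, every vertex of $X$ is $c$-regular, so every vertex of $T_n$ is $\tilde c$-regular; hence $\tilde c$ is indeed a regular coloring of $T_n$ and the local action $\sigma_{\tilde c}(g,v)$ is defined for every $g\in H$ and $v\in VT_n$. Likewise $\sigma_c(\Phi(g),\phi(v))$ is defined since $\phi(v)$ and $\Phi(g)\phi(v)$ are $c$-regular.

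The core of the argument is the relation between the restricted colorings. Writing $\tilde c_v := \tilde c\rest_{E_{T_n}(v)}$ and $c_w:=c\rest_{E_X(w)}$, I would first note that because $\phi$ is a covering map, $(\phi_E)\rest_{E_{T_n}(v)}\colon E_{T_n}(v)\to E_X(\phi(v))$ is a \emph{bijection} for every $v$, and by definition of the lifted coloring $\tilde c = c\circ\phi$ we get the factorization
\[
\tilde c_v = c_{\phi(v)}\circ (\phi_E)\rest_{E_{T_n}(v)}.
\]
Inverting, $\tilde c_v^{-1} = \bigl((\phi_E)\rest_{E_{T_n}(v)}\bigr)^{-1}\circ c_{\phi(v)}^{-1}$. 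Then I would compute, starting from $\sigma_{\tilde c}(g,v) = \tilde c_{g(v)}\circ g\circ \tilde c_v^{-1}$, substitute the factorization at both $v$ and $g(v)$, and use the edge-level commuting diagram $\phi_E\circ g = \Phi(g)\circ \phi_E$ (restricted appropriately, noting that $g$ maps $E_{T_n}(v)$ to $E_{T_n}(g(v))$ and $\Phi(g)$ maps $E_X(\phi(v))$ to $E_X(\phi(g(v)))$, with $\phi(g(v))=\Phi(g)(\phi(v))$ by the vertex-level diagram). The $\phi_E$-restrictions telescope: the two innermost copies of $(\phi_E)\rest$ and its inverse cancel, leaving $c_{\Phi(g)(\phi(v))}\circ \Phi(g)\circ c_{\phi(v)}^{-1} = \sigma_c(\Phi(g),\phi(v))$.

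The only mild subtlety — and the step I would be most careful about — is bookkeeping of the domains and codomains of the various restricted maps, making sure each composition is legitimate: that $(\phi_E)\rest_{E_{T_n}(g(v))}\circ g\rest_{E_{T_n}(v)} = \Phi(g)\rest_{E_X(\phi(v))}\circ (\phi_E)\rest_{E_{T_n}(v)}$ as maps $E_{T_n}(v)\to E_X(\phi(g(v)))$, which is exactly the edge diagram of Theorem~\ref{thm:graph_cover}(2b) restricted to $E_{T_n}(v)$, combined with $\phi_V(g(v))=\Phi(g)(\phi_V(v))$ from the vertex diagram. Once the domains line up, the computation is a one-line cancellation and there is no real obstacle; this lemma is essentially a naturality statement for the local-action cocycle under the covering $\phi$ and its associated homomorphism $\Phi$.
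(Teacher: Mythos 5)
Your proposal is correct and matches the paper's proof essentially line for line: both factor $\tilde c_v = c_{\phi(v)}\circ(\phi_E)\rest_{E_{T_n}(v)}$, invoke the edge- and vertex-level commuting diagrams of Theorem~\ref{thm:graph_cover}(2b) to replace $(\phi_E)\rest_{E_{T_n}(g(v))}\circ g\circ\bigl((\phi_E)\rest_{E_{T_n}(v)}\bigr)^{-1}$ by $\Phi(g)$, and then identify the result as $\sigma_c(\Phi(g),\phi(v))$. The only (harmless) difference is that you explicitly flag that $\tilde c$ is a \emph{regular} coloring so that the local action is everywhere defined, a point the paper leaves implicit.
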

\begin{proof}
That $\tilde{c}$ is a coloring is immediate since $\phi_E$ is a bijection when restricted to $E_{T_n}(v)$.

For the second claim, the following diagram commutes for all $g\in H$ and $v\in VT_n$, via Theorem~\ref{thm:graph_cover}:
\[
\xymatrix{
E_{T_n}(v) \ar[d]^{\phi_E} \ar[r]^g & E_{T_n}(g(v))\ar[d]^{\phi_E}\\
E_X(\phi(v))\ar[r]^{\Phi(g)} & E_X(\phi(g(v)))}.
\]
Defining $\phi_v:=(\phi_E)\rest_ {E(v)}:E_{T_n}(v)\rightarrow E_X(\phi(v))$, we see that 
\[
\begin{array}{rcl}
\sigma_{\tilde{c}}(g,v) & = & c_{\phi(g(v))}\circ \phi_{g(v)}\circ g \circ \phi_v^{-1}\circ c_{\phi(v)}^{-1}\\
				& =  & c_{\phi(g(v))}\circ \Phi(g)\circ c^{-1}_{\phi(v)}\\
				& =  & c_{\Phi(g)(\phi(v))}\circ \Phi(g)\circ c^{-1}_{\phi(v)}\\
				& = & \sigma_{c}(\Phi(g),\phi(v)).
\end{array}
\] 
\end{proof}

\subsection{Simple subquotients}

We are now prepared to prove the main theorem of this section, namely Theorem~\ref{thm:involve_S} below, from which Theorem~\ref{thmintro:Subquotient} will easily follow. The proof of Theorem~\ref{thm:involve_S} makes heavy use of blow-ups, and the argument is inspired by Lemma~9.2 from J. Huang's paper \cite{JH16}.

We begin by proving the $p=2$ case of the main technical theorem; this case requires separate analysis to deal with edge inversions.

\begin{prop}\label{prop:involve_S_p=2}
For $C_2\leq \Sym(3)$, $U(C_2)$ contains a compactly generated closed subgroup $H$ admitting a discrete normal subgroup $D$ such that $H/D$ is isomorphic to $\Aut(T_3)^+$.
\end{prop}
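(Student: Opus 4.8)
The plan is to realize $\Aut(T_3)^+$ as a quotient (modulo a discrete normal subgroup) of a compactly generated closed subgroup of $U(C_2)$ by passing through the blow-up construction. Recall that $C_2 \leq \Sym(3)$ fixes $0$ and swaps $\{1,2\}$. Consider the ordered partition $\mc{P} := \{\{0\}, \{1\}, \{2\}\}$ of $[3]$. The associated Young subgroup $P \leq \Sym(3)$ is trivial, and indeed one should rather work directly with the partition blow-up $B_{c,\mc{P}}(T_3)$ of the $3$-regular tree relative to a regular legal coloring $c$ and this partition. By the definition of the partition blow-up with a partition into three singletons, $B_{c,\mc{P}}(T_3)$ is a $3$-regular graph: each old vertex $v$ is replaced by a triangle on $\{(v,\{0\}),(v,\{1\}),(v,\{2\})\}$, but the edges leaving $(v,\{i\})$ toward neighbouring old vertices are controlled by the cyclic adjacency $i-j=\pm 1 \bmod 3$, so the total valency works out to $3$. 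The first step is to check carefully that $B_{c,\mc{P}}(T_3)$ is connected and $3$-regular, and that $U(C_2)$ — or rather $U(C_2)^*$, the index-two subgroup acting without inversion, composed with the triangle symmetries — acts on it by graph automorphisms via $g \cdot (v, O_i) = (g(v), O_i)$ when $g\in U_c(C_2)$, noting $C_2$ setwise fixes each singleton only if we instead track the induced action on the three parts; here the point is that $C_2$ acts on $\{\{0\},\{1\},\{2\}\}$ by swapping two of them, so one needs the subgroup of $U(C_2)$ whose local action lies in the Young subgroup, which for this $\mc P$ is $U_c(\triv)^*$ enlarged appropriately.

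I would then apply Theorem~\ref{thm:graph_cover} to the closed subgroup $\ol{\grp{U(C_2) \text{ and triangle flips}}}$ of $\Aut(B_{c,\mc{P}}(T_3))$ acting without edge inversion: since $B_{c,\mc{P}}(T_3)$ is connected $3$-regular, its covering tree is $T_3$, and the covering homomorphism $\Phi \colon G \to H$ has discrete kernel $D$, with $G \leq \Aut(T_3)$ closed. The target $H$ should be arranged to be (or to contain cocompactly) a group acting edge-transitively on $B_{c,\mc P}(T_3)$ whose local action is all of $\Sym(3)$, so that its lift $G$ contains $\Aut(T_3)^+$; more precisely one wants $H/D \cong \Aut(T_3)^+$. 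The mechanism: edges of the blow-up come in two types (triangle edges and ``tree'' edges), and a vertex stabilizer in the full graph automorphism group induces $\Sym(3)$ on the $3$ incident edges, so the lift to $T_3$ is a group with full local action $\Sym(3)$, hence contains $\Aut(T_3)^+$; conversely the lift of the relevant subgroup of $\Aut(B_{c,\mc P}(T_3))$ that we can see inside $U(C_2)$ surjects onto exactly $\Aut(T_3)^+$ after killing the discrete kernel. I would use Lemma~\ref{lem:colorings_covers} to transport the local-action bookkeeping from the graph to the tree $T_3$, and Lemma~\ref{lem:no-edge-inversion-BU} to control inversions (the $p=2$ subtlety: $\grp{\sigma}$ for $\sigma$ of order $2$ has an orbit of size $2$, so triangle edges genuinely can be inverted — this is exactly why this case is separated out, and one must either pass to an index-two subgroup or subdivide/argue directly).

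Compact generation of $H$ should follow because $H$ acts cocompactly (in fact edge-transitively, or with finitely many orbits) on the locally finite graph $B_{c,\mc P}(T_3)$ with compact open vertex stabilizers, so it is compactly generated by a vertex stabilizer together with finitely many elements translating along edges; and $\Aut(T_3)^+$ is itself compactly generated, simple, and non-discrete, so the quotient $H/D$ has the asserted properties. The main obstacle I anticipate is the edge-inversion bookkeeping: because the relevant local action $C_2$ has order $2$, Lemma~\ref{lem:no-edge-inversion-BU} does not directly apply to the triangle edges of the blow-up, so one must identify the correct inversion-free subgroup (an index-two subgroup, perhaps the analogue of $G^*$ for the blow-up graph) before invoking Theorem~\ref{thm:graph_cover}, and then verify that this index-two reduction still has lift surjecting onto all of $\Aut(T_3)^+$ rather than onto $\Aut(T_3)^{+*}$ or some proper subgroup. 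A secondary technical point is to pin down precisely which subgroup of $U(C_2)$ maps onto the automorphisms of the blow-up with full local $\Sym(3)$: the colour $0$ is a fixed point of $C_2$, so the ``$0$-edges'' of $T_3$ behave differently from the $\{1,2\}$-edges, and one has to check the blow-up identifies these asymmetrically-coloured configurations into a genuinely homogeneous $3$-regular graph on which $U(C_2)$ (suitably restricted and extended by triangle flips) acts with the required transitivity.
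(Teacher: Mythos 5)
Your geometric instinct — blow up $T_3$ into a $3$-regular graph, pass to the covering tree, and use the covering homomorphism — is exactly the paper's strategy, but the details as written reverse the roles of the two tree actions in a way that would not yield a subgroup of $U(C_2)$. You want the \emph{base} group acting on the blow-up to have local action $\Sym(3)$ so that the \emph{lift} ``contains $\Aut(T_3)^+$''; but by Lemma~\ref{lem:colorings_covers} the lift inherits the base's local action, so a $\Sym(3)$ base forces a $\Sym(3)$ lift, which lands in $\Aut(T_3)$, not in $U(C_2)$. What one should instead observe is that $\Aut(T_3)^+$ acting on the \emph{regular} blow-up $B_c(T_3)$ by the natural formula $g\cdot(v,i)=(g(v),\sigma_c(g,v)(i))$ already has local action exactly $(C_2,[3])$ at each blow-up vertex (the unique tree edge at $(v,i)$ is fixed, the two triangle edges are swapped); it is this $C_2$ local action, transported to the covering tree, that puts the lift inside a Burger--Mozes group with local group $C_2$. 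Relatedly, your choice of the partition blow-up with singleton parts is problematic: the partition-blow-up action $g\cdot(v,O_i)=(g(v),O_i)$ is only defined when the local action preserves each part, and $C_2 = \langle(1\,2)\rangle$ does not preserve the singleton partition; the vague ``$U_c(\triv)^*$ enlarged appropriately'' masks the fact that the correct object is the regular blow-up with the $\sigma_c$-twisted action, applied to $\Aut(T_3)^+$ rather than to $U(C_2)$.

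Two further steps of the paper's proof are missing or under-specified in your outline. First, since $\Aut(T_3)^+$ does invert triangle edges of $B_c(T_3)$, one cannot apply Theorem~\ref{thm:graph_cover} directly; the paper passes to the barycentric subdivision $\widehat{X}$ (not to an index-$2$ subgroup, which as you note would risk landing on a proper subgroup of $\Aut(T_3)^+$), lifts to the covering tree of $\widehat{X}$, and then contracts degree-$2$ paths to obtain a $3$-regular tree $T'$ on which the lift $J$ acts vertex-transitively with local action $C_2$. Second, and most importantly, concluding that $J$ is contained in $U(C_2)$ for some \emph{legal} coloring requires more than bookkeeping via Lemma~\ref{lem:colorings_covers}: one needs that $J$ is unimodular (inherited from $\Aut(T_3)^+$ via Theorem~\ref{thm:graph_cover}(2a)), and then Corollary~\ref{cor:LocalAction-fp} — which uses the fact that the local $C_2$-action has a unique fixed point together with unimodularity — to obtain $J\leq U_c(C_2)=U(C_2)$. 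This fixed-point/unimodularity argument is the essential closing step and is not addressed in your sketch.
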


\begin{proof}
Let $G := \Aut(T_3)^+$. Consider the $G$-action on the blow-up $X$ of the trivalent tree $T_3$. This action inverts edges, so we take $\wh{X}$ the barycentric subdivision of $X$. By Theorem~\ref{thm:graph_cover}, the group $G$ lifts to a unimodular closed subgroup $J$ of the automorphism group of the covering tree $T$ of $\wh{X}$. 

The tree $T$ has vertices of degree $2$ and degree $3$; denote the set of degree three vertices by $V_3$. By construction, the local action of $J$ at $v$ for $v\in V_3$ is the cyclic group $C_2$  acting on three points. Define a new tree $T'$ by $VT':=V_3$ and $(v,w)\in ET'$ if and only if either $(v,w)\in ET$ or the geodesic from $v$ to $w$ uses only degree $2$ vertices, other than $v$ and $w$. The graph $T'$ is the three regular tree, $J\leq \Aut(T')$ and acts vertex transitively, and the local action of $J$ at any $v\in T'$ is the cyclic group $C_2$.

 The kernel of the covering homomorphism $\Phi \colon J \to G$ is discrete.  In view of the unimodularity of $J$, it  follows from Corollary~\ref{cor:LocalAction-fp} that $J$ is contained in $U_c(C_2) = U(C_2)$ for some regular legal coloring $c$ of $T_3$.  The group $J$ is thus a compactly generated closed subgroup of $U(C_2)$ that admits $\Aut(T_3)^+$ as a quotient modulo a discrete normal subgroup.
\end{proof}

For the $p>2$ case, we proceed by proving three lemmas.

\begin{lem}\label{lem:1_main_thm}
For $p>2$ a prime, say that $(F,[n])$ is transitive and such that $F_{(0)}=C_p$ and say that $c$ is a regular legal coloring of $T_n$.  Letting $(C_p,[n]):=(F_{(0)},[n])$, there is a regular coloring $d$ of $T_n$ such that
\begin{enumerate}
\item for all $e\in ET_n$, $d(e)=0$ if and only if $d(\ol{e})=0$, and 
\item there is a closed compactly generated subgroup $H\leq U_d((C_p,[n]))^*$ admitting a continuous epimorphism $\Phi:H\rightarrow U_c((F,[n]))^*$ with discrete kernel. 
\end{enumerate}
\end{lem}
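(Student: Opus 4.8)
The plan is to realize $U_c((F,[n]))^*$ as a quotient of a group acting on a tree whose local action has been "un-blown-up" from $F$ down to $C_p$. Concretely, I would start from the group $G := U_c(F)^*$ acting without edge inversion on $T_n$, and form the partition blow-up $Y := B_{c,\mc P}(T_n)$ with respect to the ordered partition $\mc P$ of $[n]$ whose parts are the $C_p$-orbits: namely $\{0\}$ together with the single nontrivial orbit $\{1,\dots,n-1\}$ if we arrange colors so that $C_p = F_{(0)}$ acts as an $(n-1)$-cycle — or, more to the point, I should pick $\mc P$ so that $F$ lies in the Young subgroup $P$ of $\Sym([n])$ associated to the \emph{refinement} of $\mc P$ into $C_p$-orbits, so that $G \leq U_c(P)$ acts on $Y$. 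Wait — the cleaner choice is to take the partition $\mc P$ with $d = p+1$ parts cyclically permuted by $F_{(0)}=C_p$: the part blow-up then produces, locally at each old vertex, a $(p+1)$-cycle, and $F$ itself permutes those $p+1$ local "directions" exactly as a subgroup of $\Sym(p+1)$ containing $C_p$ as a point stabilizer. This is the mechanism by which the blow-up trades the global action $F$ for a local action of the prescribed shape $(C_p,[p+1])$.

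Next I would handle edge inversions and pass to a tree. The group $G = U_c(F)^*$ acts on $Y := B_{c,\mc P}(T_n)$, and by the remark following the definition of partition blow-up, $U_c^+(P)$ — hence any no-inversion subgroup — acts without inversion, but $G$ itself may invert the "clique edges"; so I take the barycentric subdivision $\wh Y$, apply Theorem~\ref{thm:graph_cover} to get a covering tree $T'$ of $\wh Y$ and a closed subgroup $H \leq \Aut(T')$ together with the covering homomorphism $\Phi : H \to G$ with discrete kernel; part~(a) of Theorem~\ref{thm:graph_cover} also gives that $H$ is unimodular since $G$ is (as a lattice-type subgroup / open subgroup of the unimodular $U_c(F)$). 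Restricting to the degree-$(p+1)$ vertices of $T'$ and contracting the subdivision/clique vertices exactly as in the proof of Proposition~\ref{prop:involve_S_p=2}, I get that $H$ acts vertex-transitively on the $(p+1)$-regular tree $T_{p+1}$ with local action precisely $(C_p,[p+1])$ at every vertex. Compact generation of $H$ follows because $G = U_c(F)^*$ is compactly generated (it is open of finite index in $U_c(F)$, which is compactly generated when $F$ is transitive and, here, generated by point stabilizers — note $F$ with $F_{(0)}=C_p$ transitive need not be generated by point stabilizers in general, so I would instead argue compact generation of $H$ directly from cocompactness of the $H$-action on $T'$ and local finiteness, or cite that the lift of a compactly generated group is compactly generated).

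Finally I would identify the ambient Burger--Mozes group. Using Lemma~\ref{lem:colorings_covers}, the lifted coloring $\tilde c$ on $T_{p+1}$ satisfies $\sigma_{\tilde c}(g,v) = \sigma_c(\Phi(g), \phi(v)) \in C_p$, so $H \leq U_{\tilde c}((C_p,[p+1]))$; and because the local action is $C_p$ fixing the color $0$ and cyclically permuting $\{1,\dots,p\}$, the coloring can be arranged so that $\tilde c(e) = 0 \iff \tilde c(\ol e) = 0$ — this is property~(1), and it is exactly the hypothesis of Lemma~\ref{lem:legal_Cp}. Since the no-inversion part $H \leq U_{\tilde c}(C_p)^*$, property~(2) is the statement just proved. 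The main obstacle I anticipate is bookkeeping the colorings through the three successive constructions (partition blow-up, barycentric subdivision, covering tree) so that property~(1) genuinely holds — i.e. checking that the "$0$-colored edges come in $0$-colored pairs" condition survives, which amounts to verifying that in the partition blow-up the part $O_0 = \{0\}$ behaves symmetrically under edge reversal; this is where the legality of the original coloring $c$ (so that $c(e) = c(\ol e)$, hence $c(e) \in O_0 \iff c(\ol e) \in O_0$) must be used carefully. A secondary subtlety is ensuring the $\Phi$ produced lands in $U_c(F)^*$ rather than all of $U_c(F)$, which is automatic since $\Phi(H)$ acts without inversion on $T_n$, being a quotient of a no-inversion action.
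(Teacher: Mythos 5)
Your proposal uses the wrong construction for this lemma and the argument breaks at the very first step. You take the \emph{partition} blow-up $B_{c,\mc P}(T_n)$ with $\mc P$ the set of $C_p$-orbits, but a group only acts on $B_{c,\mc P}(T_n)$ if its local action at every vertex setwise-preserves the parts of $\mc P$, i.e.\ lies in the Young subgroup $P$ of $\mc P$ (cf.\ the remark after the definition of the partition blow-up: it is $U_c(P)$, not $\Aut(T_n)$, that acts). Since $(F,[n])$ is transitive, $F$ does not preserve the $C_p$-orbit partition unless $\mc P$ is trivial, so $G = U_c(F)^*$ simply does not act on the partition blow-up you build. You acknowledge this obliquely (``pick $\mc P$ so that $F$ lies in the Young subgroup'') but that forces $\mc P = \{[n]\}$, which is useless. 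The ``cleaner choice'' you then suggest—$\mc P$ with $p+1$ parts cyclically permuted by $C_p$—is not even available: $C_p$ acts on $[n]$ with one fixed point $0$ and remaining orbits of size $p$, so the number of orbits is $1+(n-1)/p$, not $p+1$ in general.

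You have also misread what the lemma asserts. The conclusion is $H \leq U_d((C_p,[n]))^*$ with $d$ a coloring of $T_n$—the degree stays $n$, and the local action becomes the restriction of $F$ to its point stabilizer $F_{(0)} = C_p$ acting on all of $[n]$. You aim instead for $(C_p,[p+1])$ on $T_{p+1}$, which is the cumulative effect of Lemmas~\ref{lem:1_main_thm}, \ref{lem:2_main_thm} and \ref{lem:3_main_thm} together, not the content of Lemma~\ref{lem:1_main_thm} alone. The correct mechanism for this first step is the \emph{ordinary} blow-up $B_c(T_n)$ (each vertex replaced by a clique $K_n$), on which $\Aut(T_n)$ always acts canonically via $g(v,i) = (g(v), \sigma_c(g,v)(i))$; it is $n$-regular, so its covering tree is again $T_n$. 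The heart of the argument is then a specific choice of regular coloring of the blow-up: fixing $g_i \in F$ with $g_i(i) = 0$ and setting $a(e) := g_{\pi_2(o(e))}\pi_2(t(e))$, one computes $\sigma_a(g,(v,i)) = g_{\sigma_c(g,v)(i)}\,\sigma_c(g,v)\,g_i^{-1}$, which lies in $F$ and fixes $0$, hence lies in $F_{(0)} = C_p$. This is the mechanism that actually trades the local action $F$ for $C_p$, and it has no analogue in your proposal. Finally, the barycentric subdivision you introduce is unnecessary here: because $p>2$, the point stabilizer $C_p = F_{(0)}$ has no orbits of size~$2$ on $[n]$, so Lemma~\ref{lem:no-edge-inversion-BU} gives directly that $G$ acts on $B_c(T_n)$ without edge inversion and Theorem~\ref{thm:graph_cover} applies as is.
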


\begin{proof}
Set $G:=U_{c}((F,[n]))^*$, let $X$ be the blow-up $B_{c}(T_n)$, and recall that $G$ acts on $X$ by graph automorphisms. Since the vertex stabilizers of $F$ equal $C_p$ for $p>2$, the conditions of Lemma~\ref{lem:no-edge-inversion-BU} are satisfied. We conclude that $G$ acts on $X$ without edge inversion.

For each $i\in [n]$, fix $g_i\in F$ such that $g_i(i)=0$ and say that $g_0=1$; we may find these elements since $(F,[n])$ is transitive. Recalling that each $v\in VX$ has the form $(u,i)$ for some $u\in VT_n$ and $i\in [n]$, we define a coloring $a$ on $X$ by 
\[
a(e):=g_{\pi_2(o(e))}\pi_2(t(e))
\]
where $\pi_2:VT_n\times[n]\rightarrow [n]$ is the projection onto the second coordinate. That $c$ is a regular legal coloring ensures that $a$ is a regular coloring. The coloring $a$ is also such that $a(e)=0$ if and only if $a(\ol{e})=0$, again since $c$ is a legal coloring.

Setting $a_{(v,i)}:=a\rest_{E_X((v,i))}$, the map $a_{(v,i)}^{-1}:[n]\rightarrow E_X((v,i))$ is such that 
\[
a^{-1}_{(v,i)}(j)=((v,i),(u,g^{-1}_i(j))
\]
where $u=v$ if and only if $j\neq 0$. From this observation and a simple, but tedious computation, it follows that
\[
\sigma_{a}(g,(v,i))=g_{\sigma_{c}(g,v)(i)}\cdot \sigma_{c}(g,v)\cdot g^{-1}_i.
\]
The elements $g_j$ are in $F$ for all $j\in [n]$, so $\sigma_{a}(g,(v,i))\in F$. Additionally, $\sigma_a(g,(v,i))(0)=0$. We deduce that $\sigma_{a}(g,(v,i))\in F_{(0)}$ for all $(v,i)\in VX$. Recalling that $F_{(0)}=C_p$, the local action of $G$ on $X$ is $(C_p,[n])=(F_{(0)},[n])$. 

The covering tree of $X$ is $T_n$. Let $d$ be the lift of the coloring $a$ to $T_n$. It follows that $d(e)=0$ if and only if $d(\ol{e})=0$ and that $d$ is a regular coloring.  Applying Theorem~\ref{thm:graph_cover}, we obtain $H\leq \Aut(T_n)$ the covering group of $G$ and $\Phi:H\rightarrow G$ the covering homomorphism. The subgroup $H$ is closed, and the map $\Phi$ has a discrete kernel. Via Lemma~\ref{lem:colorings_covers}, $\sigma_{d}(g,v)$ is an element of $(C_p,[n])$ for all $g\in H$ and $v\in VT_n$, so in fact $H\leq U_{d}((C_p,[n]))$. The group $H$ also cannot invert edges, since $G$ does not, so we conclude that $H\leq U_d((C_p,[n]))^*$. As the group $U_c((F, [n]))$ is compactly generated, we may take $H$ to be compactly generated. The lemma is now verified.
\end{proof}

We now make use of the partition blow-up. 
 
\begin{lem}\label{lem:2_main_thm}
For $p>2$ a prime and $n> 3$ even, say that $(C_p,[n])$ is a permutation group with more that two orbits such that $0$ is the only fixed point and say that $c$ is a regular coloring of $T_n$ such that $c(e)=0$ if and only if $c(\ol{e})=0$. Letting $(C_p,[p+2])$ be the permutation group given by the cycle $(2\dots p+1)$, there is a regular coloring $d$ of $T_{p+2}$ such that
\begin{enumerate}
\item for all $e\in ET_{p+2}$ and $i\in \{0,1\}$, $d(e)=i$ if and only if $d(\ol{e})=i$, and 
\item there is a closed subgroup $H\leq U_d((C_p,[p+2]))^*$ admitting a continuous epimorphism $\Phi:H\rightarrow U_c((C_p,[n]))^*$ with discrete kernel. 
\end{enumerate}
\end{lem}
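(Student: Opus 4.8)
To prove this lemma, the plan is to imitate the proof of Lemma~\ref{lem:1_main_thm}: I would realise $G:=U_c((C_p,[n]))^*$ as a closed subgroup of $\Aut(X)$, acting without edge inversion and with local action $(C_p,[p+2])$, for a suitable connected $(p+2)$-regular graph $X$ built from $T_n$, and then lift this action to the covering tree $T_{p+2}$ using Theorem~\ref{thm:graph_cover}. Throughout write $[n]=\{0\}\sqcup B_1\sqcup\dots\sqcup B_k$, where $B_1,\dots,B_k$ are the non-trivial $C_p$-orbits on $[n]$; then $|B_j|=p$ for all $j$, and the hypotheses give $k\geq 2$.

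For $X$ I would take a mild variant of the partition blow-up of $T_n$, designed so that the colour-$0$ edges of $T_n$ supply the ``second fixed edge'' required by the target local action. Set $VX:=VT_n\times\{1,\dots,k\}$ and join vertices of $X$ in exactly three ways: an \emph{internal} edge between $(v,j)$ and $(v,j+1)$ for all $v$ and $1\leq j\leq k-1$; a \emph{$B$-edge} between $(v,j)$ and $(w,j')$ whenever $\{v,w\}\in ET_n$ with $c((v,w))\in B_j$ and $c((w,v))\in B_{j'}$; and, for each $\{v,u\}\in ET_n$ with $c((v,u))=c((u,v))=0$, two \emph{link} edges $(v,1)\sim(u,k)$ and $(u,1)\sim(v,k)$. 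Using that $c$ is a regular colouring with $c(e)=0\iff c(\bar e)=0$, one checks that $X$ is a connected simple $(p+2)$-regular graph: at an interior vertex $(v,j)$ with $1<j<k$ there are two internal edges and $p$ $B$-edges, while at $(v,1)$ and $(v,k)$ there is one internal edge, one link edge and $p$ $B$-edges. Since $\sigma_c(g,v)\in C_p$ stabilises every $B_j$ and fixes $0$, the rule $g\cdot(v,j):=(g(v),j)$ defines an action of $G$ on $X$ by graph automorphisms; it has no edge inversion, because the only edges that could be inverted are $B$-edges of the form $(v,j)\sim(w,j)$, whose inversion would force $G$ to invert $\{v,w\}$ in $T_n$, contrary to $G$ being type-preserving. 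As the stabiliser of a vertex has compact open image, $G$ is a locally compact, hence closed, subgroup of $\Aut(X)$, so Theorem~\ref{thm:graph_cover} applies.

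The real work is the colouring. At every vertex $(v,j)$ exactly two incident edges are internal or link edges, the stabiliser $G_{(v)}$ fixes each of them, and $G_{(v)}$ acts on the remaining $p$ incident $B$-edges as $\sigma_c(g,v)|_{B_j}$. The $2$-regular spanning subgraph $Y$ formed by the internal and link edges has connected components the $2k$-cycles $C_{\{v,u\}}=((v,1),\dots,(v,k),(u,1),\dots,(u,k))$, one for each colour-$0$ edge $\{v,u\}$ of $T_n$. I would colour the $B$-edges at $(v,j)$ by a fixed bijection $B_j\to\{2,\dots,p+1\}$ conjugating the $C_p$-action on $B_j$ to $\langle(2\dots p+1)\rangle$, and colour $Y$ by a $G$-invariant proper $2$-edge-colouring with colours $\{0,1\}$. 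The existence of the latter is the crux: each $2k$-cycle admits a proper $2$-edge-colouring ($2k$ being even), and the edge stabiliser $G_{\{v,u\}}=G_{(v)}\cap G_{(u)}$ fixes $C_{\{v,u\}}$ pointwise, so \emph{any} such colouring of $C_{\{v,u\}}$ is automatically $G_{\{v,u\}}$-invariant; hence one may fix a colouring on a representative of each $G$-orbit of colour-$0$ edges and transport it by $G$, obtaining a well-defined $G$-invariant colouring. The resulting colouring $a$ of $X$ is regular, $G$-invariant, satisfies $a(e)=i\iff a(\bar e)=i$ for $i\in\{0,1\}$ (colours $0$ and $1$ occur only on the undirected edges of $Y$), and yields $\sigma_a(g,x)\in(C_p,[p+2])$ for all $g\in G$ and $x\in VX$.

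To finish, let $\phi\colon T_{p+2}\to X$ be the covering map, $H$ the lift of $G$ given by Theorem~\ref{thm:graph_cover}, and $\Phi\colon H\to G$ the covering homomorphism, which is continuous and surjective with discrete kernel, with $H$ closed. Put $d:=a\circ\phi$, a regular colouring of $T_{p+2}$. By Lemma~\ref{lem:colorings_covers}, $\sigma_d(h,v)=\sigma_a(\Phi(h),\phi(v))\in(C_p,[p+2])$ for all $h\in H$ and $v\in VT_{p+2}$, so $H\leq U_d((C_p,[p+2]))$; the absence of edge inversion passes from $G$ to $H$, giving $H\leq U_d((C_p,[p+2]))^*$, which is~(2); and $d(e)=i\iff d(\bar e)=i$ for $i\in\{0,1\}$ since $\phi$ respects edge reversal, which is~(1). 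The main obstacle is thus the $G$-equivariant proper $2$-colouring of the ``colour-$0$ cycles'' $C_{\{v,u\}}$ of $X$, and this is precisely the step that uses the hypothesis that $c$ be legal at colour $0$ (together with $k\geq 2$, which is what makes $X$ genuinely $(p+2)$-regular).
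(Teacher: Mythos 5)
Your proof is correct, but the construction is genuinely different from the paper's. The paper keeps the singleton orbit $O_0=\{0\}$ as a layer of the partition blow-up, so that the internal edges over a fixed vertex $v$ form an $l$-cycle $(v,O_0)-(v,O_1)-\cdots-(v,O_{l-1})-(v,O_0)$ fixed pointwise by $G_{(v)}$, and it repairs the degree defect at $(v,O_0)$ by replacing the single $B$-edge $((v,O_0),(w,O_0))$ with $p$ parallel copies $e_2,\dots,e_{p+1}$, yielding a $(p+2)$-regular multigraph; the $\{0,1\}$-colouring of each internal $l$-cycle is then given by an explicit parity formula, which is exactly where the evenness of $l$ (equivalently of $n$) enters. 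You instead discard the $\{0\}$ layer entirely and splice the columns over $v$ and $u$ into a single $2k$-cycle through the colour-$0$ edge $\{v,u\}$ via the two link edges $(v,1)\sim(u,k)$ and $(u,1)\sim(v,k)$, producing a simple $(p+2)$-regular graph whose $\{0,1\}$-coloured subgraph consists of $G$-invariant $2k$-cycles, and you obtain the $\{0,1\}$-colouring by equivariant transport from orbit representatives, using that each $2k$-cycle is fixed pointwise by $G_{(v)}\cap G_{(u)}$ (which is the full setwise edge stabiliser since $G=U_c(\cdot)^*$ acts without inversion). Your version avoids multigraphs and makes the evenness hypothesis on $n$ irrelevant, at the cost of trading an explicit colouring formula for a transport argument; both are valid. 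One small point you should spell out: no link edge is inverted, because $g\cdot(v,j)=(g(v),j)$ preserves the layer index, so inverting $(v,1)\sim(u,k)$ would force $1=k$, impossible since $k\geq 2$.
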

\begin{proof}
 Set $G:=U_c((C_p,[n]))^*$. Let $O_0,\dots,O_{l-1}$ list the orbits of $C_p$ on $[n]$ such that $O_0=\{0\}$ and observe that $l$ is an even number with $l>2$, by our hypotheses. Take $B_{c,\mc{P}}(T_n)$ the partition blow-up of $T_n$ with respect to $\mc{P}:=\{O_0,\dots,O_{l-1}\}$ and the coloring $c$. The group $G$ acts on $B_{c,\mc{P}}(T_n)$, because $C_p$ set-wise fixes the parts of the partition. This action is also without edge inversion, because $G$ acts on $T_n$ without edge inversion.
 
The part $O_0$ consists of only one element, so we modify $B_{c,\mc{P}}(T_n)$ as to ensure that for every vertex $v$, there are $p+2$ edges with origin $v$. To this end, we proceed as follows. Delete each edge $e$ of the form $e=((v,O_0),(w,O_0))$ and add new, distinct edges $e_2,\dots,e_{p+1}$ to $EB_{c,\mc{P}}(T_n)$ such that $o(e_i)=(v,O_0)$ and $t(e_i)=(w,O_0)$. Since $c((v,w))=0$ implies $c((w,v))=0$, we may define $\ol{e_i}:=(\ol{e})_i$. We call the resulting graph $Y$, and this graph is $p+2$ regular. The group $G$ acts on $Y$ via extending the action on $B_{c,\mc{P}}(T_n)$ by declaring that $g(e_i):=(g(e))_i$.  

Fixing $x$ a generator for $C_p$, the element $x$ has a cycle decomposition $s_1\dots s_{l-1}$ where the $s_i$ are pairwise disjoint $p$-cycles and $s_i$ is the $p$-cycle that permutes $O_i$. Fix $h_i\in \mathrm{Sym}([n])$ such that $h_is_ih_i^{-1}$ is the $p$-cycle $(2,\dots,p+1)$. In particular, $h_i(O_i)=\{2,\dots,p+1\}$. We now define a regular coloring $a:EY\rightarrow [p+2]$ of $Y$. Fix a vertex $(v,O_i)\in VY$ and let $f\in E_Y((v,O_i))$. 

\begin{enumerate} 
\item If $i=0$, then
\[
a(f):=
\begin{cases}
k & \text{ if }f=e_k \text{ for some }k\in \{2,\dots, p+1\}\\
1 &  \text{ if } f=((v,O_0),(v,O_{l-1}))\\
0 & \text{ if } f=((v,O_0),(v,O_1))
\end{cases}.
\]
\item If $i= l-1$, then
\[
a(f):=
\begin{cases}
h_{l-1}c((v,w)) & \text{ if }f=((v,O_{l-1}),(w,O_{j}))\text{ with }v\neq w\\
0 & \text{ if } f=((v,O_{l-1}),(v,O_{l-2}))\\
1 &  \text{ if } f=((v,O_{l-1}),(v,O_{0}))
\end{cases}.
\]
\item If $i\neq l-1$ and is odd, then
\[
a(f):=
\begin{cases}
h_ic((v,w)) & \text{ if }f=((v,O_i),(w,O_{j}))\text{ with }v\neq w\\
0 & \text{ if } f=((v,O_i),(v,O_{i-1}))\\
1 &  \text{ if } f=((v,O_i),(v,O_{i+1}))
\end{cases}.
\]
\item If $i\neq 0$ and is even, then
\[
a(f):=
\begin{cases}
h_ic((v,w)) & \text{ if }f=((v,O_i),(w,O_{j}))\text{ with }v\neq w \\
1 & \text{ if } f=((v,O_i),(v,O_{i-1}))\\
0 &  \text{ if } f=((v,O_i),(v,O_{i+1}))
\end{cases}.
\]
\end{enumerate}
The map $a$ is a regular coloring, and furthermore, $a(e)=z$ implies $a(\ol{e})=z$ for $z\in \{0,1\}$. The latter claim follows since $l-1$ is odd. 

Let us compute the local action $\sigma_a(g,(v,O_i))$ for $g\in G$. If $i=0$, then it is immediate that $\sigma_a(g,(v,O_0))=1$. As the remaining cases are similar, we compute the local action $\sigma_a(g,(v,O_i))$ for $i\neq 0$ and even. We see that 
\[
a^{-1}_{(v,O_i)}(k)=
\begin{cases}
((v,O_i),(v,O_{i-1})) & \text{ if }k=1\\
((v,O_i),(v,O_{i+1})) & \text{ if }k=0\\
((v,O_i),(w,O_j)) & \text{ if } c(v,w)=h_i^{-1}(k) \text{ and } k\neq 0,1
\end{cases}.
\]
It follows immediately that $\sigma_a(g,(v,O_i))(k)=k$ for $k\in \{0,1\}$. For $k\notin\{0,1\}$, we see that
\[
\sigma_a(g,(v,O_i))(k)= a\big(((g(v),O_i),(g(w),O_j))\big)
\]
such that $c(v,w)=h_i^{-1}(k)$. The value $c((g(v),g(w))$ equals $\sigma_c(g,v)(c(v,w))$, so 
\[
a((g(v),O_i),(g(w),O_j)))=h_i \sigma_c(g,v)h_i^{-1}(k).
\]
By our choice of $h_i$, we conclude that $h_i \sigma_c(g,v)h_i^{-1}$ acts as some power of the $p$-cycle $(2,\dots,p+1)$ on $[p+2]$. For all $g\in G$ and $(v,O_i)\in VY$, it is thus the case that $\sigma_c(g,(v,O_i))\in (C_p,[p+2])$.

The covering tree of $Y$ is $T_{p+2}$. Let $d$ be the lift of the coloring $a$ to $T_{p+2}$. It follows that $d(e)=i$ if and only if $d(\ol{e})=i$ for $i\in \{0,1\}$ and that $d$ is regular.  Applying Theorem~\ref{thm:graph_cover}, we obtain $H\leq \Aut(T_{p+2})$ the covering group of $G$ and $\Phi:H\rightarrow G$ the covering homomorphism. The subgroup $H$ is closed, and the map $\Phi$ has a discrete kernel. Via Lemma~\ref{lem:colorings_covers}, $\sigma_{d}(g,v)$ is an element of $(C_p,[p+2])$ for all $g\in H$ and $v\in VT_{p+2}$, so in fact $H\leq U_{d}((C_p,[p+2]))$. The group $H$ also cannot invert edges, since $G$ does not, so we conclude that $H\leq U_d((C_p,[p+2]))^*$. The lemma is now verified.
\end{proof}

\begin{lem}\label{lem:3_main_thm}
For $p>2$ a prime, say that $(C_p,[p+2])$ is a permutation group given by the cycle $(2\dots p+1)$ and say that $c$ is a regular coloring of $T_{p+2}$ such that $c(e)=i$ if and only if $c(\ol{e})=i$ for $i\in \{0,1\}$. There is a closed subgroup $H\leq U(C_p)$ admitting a continuous epimorphism $\Phi:H\rightarrow U_c((C_p,[p+2]))^*$ with discrete kernel. 
\end{lem}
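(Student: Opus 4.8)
The plan is to follow the template of Lemmas~\ref{lem:1_main_thm} and~\ref{lem:2_main_thm}: I will realise $G := U_c((C_p,[p+2]))^*$ as a group of automorphisms, without edge inversion, of an explicitly built $(p+1)$-regular connected graph $Y$ whose local actions all lie in the cyclic group $(C_p,[p+1]) = \grp{(1\,2\,\cdots\,p)}$, and then feed this into Theorem~\ref{thm:graph_cover} together with Lemmas~\ref{lem:colorings_covers} and~\ref{lem:legal_Cp} to pass to the covering tree $T_{p+1}$ and land inside $U(C_p)$. The whole point is that the target $(C_p,[p+1])$ has only one fixed point whereas $(C_p,[p+2])$ has two, so one of the two fixed colours $0,1$ must be ``used up'' by the construction.

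Since the local action of $G$ on $T_{p+2}$ takes values in $(C_p,[p+2])$, which fixes the colours $0$ and $1$, the group $G$ preserves the set of edges coloured $0$ and the set of edges coloured $1$; by the hypothesis on $c$ each of these is a perfect $G$-invariant matching, say with $v$ joined to $a(v)$ by its $0$-edge and to $b(v)$ by its $1$-edge. I would take the $(C_p,[p+2])$-invariant partition $\mc P := \{O_0, O_1\}$ of $[p+2]$ with $O_0 := \{0,1\}$ and $O_1 := \{2,\dots,p+1\}$, and form the partition blow-up $B_{c,\mc P}(T_{p+2})$: each vertex $v$ becomes an edge $\{(v,O_0),(v,O_1)\}$, the vertex $(v,O_1)$ carries in addition the $p$ edges joining it to the $(w,O_1)$ with $c(v,w)\ge 2$ (legality of $c$ at $0$ and $1$ forces $c(w,v)\ge 2$, so this edge does join $O_1$-vertices), while $(v,O_0)$ carries in addition only the two edges to $(a(v),O_0)$ and $(b(v),O_0)$. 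To restore regularity I would, exactly as in Lemma~\ref{lem:2_main_thm}, replace the single edge $\{(v,O_0),(b(v),O_0)\}$ at each $(v,O_0)$ by $p-1$ parallel copies; the resulting graph $Y$ is $(p+1)$-regular. The group $G$ acts on $Y$ by $g\cdot(v,O_i) := (gv,O_i)$ on the fibres, carrying gadgets to gadgets and the $i$-th parallel copy to the $i$-th parallel copy; this is an action by graph automorphisms without edge inversion (it never inverts an edge of $T_{p+2}$ and never exchanges $O_0$ with $O_1$), realising $G$ as a closed subgroup of $\Aut(Y)$, and $Y$ is connected because the surviving gadgets provide a lift in $Y$ of every edge of $T_{p+2}$.

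Next I would equip $Y$ with a regular colouring $d$: colour every edge $\{(v,O_0),(v,O_1)\}$ by $0$; at $(v,O_1)$ colour the edge to $(w,O_1)$ by $c(v,w)-1\in\{1,\dots,p\}$; at $(v,O_0)$ colour the edge to $(a(v),O_0)$ by $1$ and colour the $p-1$ parallel copies of the $b(v)$-edge by $2,\dots,p$. This is a well-defined regular colouring with $d(e)=0\iff d(\ol e)=0$. The key point is the local action. At a vertex $(v,O_0)$, any $g\in G_{(v)}$ has local action on $T_{p+2}$ fixing $0$ and $1$, hence fixes the $0$-edge and the $1$-edge at $v$, hence fixes every edge of $Y$ incident to $(v,O_0)$: the local action of $G$ at $(v,O_0)$ is trivial. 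At a vertex $(v,O_1)$, $g$ permutes the edges to the $(w,O_1)$ the way $\sigma_c(g,v)$ permutes $\{2,\dots,p+1\}$, while fixing the colour-$0$ edge; since conjugating the $p$-cycle $(2\,3\,\cdots\,p{+}1)$ by $i\mapsto i-1$ produces $(1\,2\,\cdots\,p)$, one gets $\sigma_d(g,u)\in(C_p,[p+1])$ for all $g\in G$ and all $u\in VY$.

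Finally, $Y$ being connected and $(p+1)$-regular, its covering tree is $T_{p+1}$; Theorem~\ref{thm:graph_cover} provides a closed subgroup $H\le\Aut(T_{p+1})$ and a continuous epimorphism $\Phi\colon H\to G$ with discrete kernel, and Lemma~\ref{lem:colorings_covers} gives $\sigma_{\tilde d}(h,v)=\sigma_d(\Phi(h),\phi(v))\in(C_p,[p+1])$ for the lifted colouring $\tilde d$, so $H\le U_{\tilde d}((C_p,[p+1]))$; as $\tilde d$ is regular with $\tilde d(e)=0\iff\tilde d(\ol e)=0$, Lemma~\ref{lem:legal_Cp} yields $U_{\tilde d}(C_p)=U(C_p)$, whence $H\le U(C_p)$. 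I expect the main obstacle to lie precisely in getting this bookkeeping right — choosing the partition so that the two fixed colours are absorbed into a single part of size two, and repairing the resulting degree deficiency with parallel edges while keeping $Y$ simultaneously $(p+1)$-regular and connected — together with the observation, which is what makes everything go through, that the local action at the ``merged'' vertices $(v,O_0)$ collapses to the identity.
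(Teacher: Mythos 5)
Your proposal is correct and follows essentially the same route as the paper: form the partition blow-up of $T_{p+2}$ with respect to $\{\{0,1\},\{2,\dots,p+1\}\}$, repair the degree deficiency at the small-part vertices with $p-1$ parallel edges, colour so that the local action is trivial at $(v,O_0)$ and a conjugate of the $p$-cycle at $(v,O_1)$, lift to the covering tree $T_{p+1}$ via Theorem~\ref{thm:graph_cover} and Lemma~\ref{lem:colorings_covers}, and conclude with Lemma~\ref{lem:legal_Cp}. The only differences are cosmetic: the paper multiplies the $0$-coloured edge at each $(v,W)$ while you multiply the $1$-coloured edge, and you fix the conjugating permutation $h$ to be $i\mapsto i-1$ where the paper leaves it abstract.
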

\begin{proof}
 Set $G:=U_c((C_p,[p+2]))^*$. Let  $W:=\{0,1\}$ and $U:=\{2,\dots,p+1\}$ and take $B_{c,\mc{P}}(T_{p+2})$ the partition blow-up of $T_{p+2}$ with respect to $\mc{P}:=\{W,U\}$ and the coloring $c$. The group $G$ acts on $B_{c,\mc{P}}(T_{p+2})$, because $C_p$ set-wise fixes the parts of the partition. This action is also without edge inversion, because $G$ acts on $T_{p+2}$ without edge inversion.
 
 The part $W$ consists of only two elements, so we modify $B_{c,\mc{P}}(T_{p+2})$ as to ensure that for every vertex $(v,L)\in VB_{c,\mc{P}}(T_{p+2})$, there are $p+1$ edges with origin $(v,L)$. To this end, we proceed as follows. For each edge $e$ of the form $e=((v,W),(w,W))$ with $c((v,w))=0$, we delete the edge $e$ and add new edges $e_1,\dots,e_{p-1}$ to $EB_{c,\mc{P}}(T_{p+2})$ such that $o(e_i)=(v,W)$ and $t(e_i)=(w,W)$. For the vertex $(v,W)$, there is also an edge $((v,W),(u,W))$ where $c((v,u))=1$. We rename this edge $e_p$. Since $c((v,w))=c((w,v))$ whenever $c((v,w))\in \{0,1\}$, we may define $\ol{e_i}:=(\ol{e})_i$. We call the resulting graph $Z$, and $G$ acts on $Z$ via extending the action on $B_{c,\mc{P}}(T_{p+2})$ by declaring that $g(e_i):=(g(e))_i$.  This action clearly also does not invert edges.
 
Fix $h\in \mathrm{Sym}([p+2])$ such that $h(2\dots p+1)h^{-1}=(1\dots p)$. For $f\in E_Z((v,W))$, we define
\[
a(f):=
\begin{cases}
i & f=e_i\text{ for some }i\in \{1,\dots,p\}\\
0 & f=((v,W),(v,U))
\end{cases}.
\] 
For $f\in E_Z((v,U))$, we define
\[
a(f):=
\begin{cases}
0 & f=((v,U),(v,W))\\
hc((v,w)) & f=((v,U),(w,U))
\end{cases}.
\] 
The map $a$ is a regular coloring of $Z$ by $[p+1]$ and $a(e)=0$ implies $a(\ol{e})=0$. 

Let us now compute the local action $\sigma_a(g,(v,L))$  for $g\in G$. It is immediate that if $L=W$, then $\sigma_a(g,(v,W))=1$. For $L=U$, we note that 
\[
a^{-1}_{(v,U)}(k)=
\begin{cases}
((v,U),(v,W)) & \text{ if }k=0\\
((v,U),(w,U)) & \text{ if }k=hc(v,w) \text{ and } k\neq 0
\end{cases}.
\]
It follows immediately that $\sigma_a(g,(v,U))(0)=0$. For $k\neq 0$,
\[
\sigma_a(g,(v,U))(k)= a\big(((g(v),U),(g(w),U))\big)
\]
such that $c(v,w)=h^{-1}k$. The value $c((g(v),g(w))$ equals $\sigma_c(g,v)(c(v,w))$, so 
\[
a((g(v),U),(g(w),U)))=h\sigma_c(g,v)h^{-1}(k)
\]
By our choice of $h$, we conclude that $h \sigma_c(g,v)h^{-1}$ acts as some power of the $p$-cycle $(1,\dots,p)$ on $[p+1]$. We conclude that for all $g\in G$ and $(v,L)\in VZ$, $\sigma_a(g,(v,L))\in (C_p,[p+1])$ where $(C_p,[p+1])$ is given by the cycle $(1\dots p)$.

The covering tree of $Z$ is $T_{p+1}$. Let $d$ be the lift of the coloring $a$ to $T_{p+1}$. It follows that $d(e)=0$ if and only if $d(\ol{e})=0$.  Applying Theorem~\ref{thm:graph_cover}, we obtain $H\leq \Aut(T_{p+1})$ the covering group of $G$ and $\Phi:H\rightarrow G$ the covering homomorphism. The subgroup $H$ is closed, and the map $\Phi$ has a discrete kernel. Via Lemma~\ref{lem:colorings_covers}, $\sigma_{d}(g,v)$ is an element of $(C_p,[p+1])$ for all $g\in H$ and $v\in VT_{p+1}$, so in fact $H\leq U_{d}((C_p,[p+1]))$. Lemma~\ref{lem:legal_Cp} ensures that $U_{d}((C_p,[p+1]))=U(C_p)$, so the lemma is verified.
\end{proof}

We are now prepared to prove the main technical theorem of this section. The hypotheses of case (2) of the next theorem are satisfied by the Frobenius groups found in Theorem~\ref{thm:forbenius_group}.

\begin{thm}\label{thm:involve_S}
Suppose that $p$ is a prime and $C_p\leq \Sym(p+1)$.
\begin{enumerate}
\item If $p=2$, then  $U(C_p)$ contains a compactly generated closed subgroup $H$ admitting a discrete normal subgroup $D$ such that $H/D$ is isomorphic to $\Aut(T_3)^+$.
\item If $p>2$ and $F\leq \Sym(n)$ is a Frobenius group such that the Frobenius complement is $C_p$ and has index a power of two, then $U(C_p)$ contains a compactly generated closed subgroup $H$ admitting a discrete normal subgroup $D$ such that $H/D$ is isomorphic to $U(F)^+$.
\end{enumerate}
\end{thm}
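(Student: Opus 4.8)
The plan is to split into the two cases of the statement. Case~(1), where $p=2$, is precisely Proposition~\ref{prop:involve_S_p=2}, so I would simply invoke it; all the substance lies in case~(2), which I would deduce by chaining Lemmas~\ref{lem:1_main_thm}, \ref{lem:2_main_thm} and~\ref{lem:3_main_thm} so as to descend from $U(F)^+$ down to $U(C_p)$.

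Before running the descent I would record some elementary facts about the Frobenius group $F$. Writing $n$ for its degree, the complement $C_p=F_{(0)}$ acts freely on the $n-1$ points it moves, so $p\mid n-1$ and $n\sgeq p+1\sgeq 4$; since $n$ is a power of two, $n$ is even and $n>3$. Moreover $C_p=F_{(0)}$ fixes no point of $[n]$ other than $0$ (otherwise $F_{(0)}\sleq F_{(i)}$ with both of order $p$, contradicting triviality of two-point stabilizers), so the $C_p$-orbits on $[n]$ consist of $\{0\}$ together with $(n-1)/p$ orbits of size $p$, giving $r:=1+(n-1)/p\sgeq 2$ orbits in total. By Proposition~\ref{prop:Frob_gen_stab} the group $F$ is generated by its point stabilizers, so Corollary~\ref{cor:FrobSimple} gives that $U(F)^+$ is abstractly simple, open of index two in $U(F)$, and lies in $\ms S$, hence is compactly generated. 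Since $U(F)^+$ acts without inversion it is contained in the index-two subgroup $U(F)^*$, and comparing indices yields $U(F)^+=U(F)^*$; fixing once and for all a regular legal coloring $c$ of $T_n$, this reads $U_c((F,[n]))^*=U(F)^+$.

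The descent itself goes as follows. Lemma~\ref{lem:1_main_thm}, applied to $(F,[n])$ and $c$, produces a regular coloring $d_1$ of $T_n$ with $d_1(e)=0\iff d_1(\ol{e})=0$, a closed compactly generated subgroup $H_1\sleq U_{d_1}((C_p,[n]))^*$ (where $(C_p,[n])=(F_{(0)},[n])$), and a continuous epimorphism $\psi_1\colon H_1\to U_c((F,[n]))^*=U(F)^+$ with discrete kernel. If $r=2$, so that $n=p+1$, then $(C_p,[p+1])$ is the standard degree-$(p+1)$ representation of $C_p$ (it fixes $0$ and is regular, hence transitive, on $\{1,\dots,p\}$), and Lemma~\ref{lem:legal_Cp} applied to $d_1$ gives $U_{d_1}((C_p,[p+1]))=U(C_p)$, so $H_1\sleq U(C_p)$ and we are done with $H:=H_1$, $D:=\Ker\psi_1$. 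If instead $r>2$, then $(C_p,[n])$ has more than two orbits, $n>3$ is even, and $0$ is its unique fixed point, so Lemma~\ref{lem:2_main_thm} applied to $(C_p,[n])$ and $d_1$ yields a regular coloring $d_2$ of $T_{p+2}$ with $d_2(e)=i\iff d_2(\ol{e})=i$ for $i\in\{0,1\}$, a closed $H_2\sleq U_{d_2}((C_p,[p+2]))^*$, and a continuous epimorphism $\psi_2\colon H_2\to U_{d_1}((C_p,[n]))^*$ with discrete kernel; then Lemma~\ref{lem:3_main_thm} applied to $(C_p,[p+2])$ and $d_2$ yields a closed $H_3\sleq U(C_p)$ and a continuous epimorphism $\psi_3\colon H_3\to U_{d_2}((C_p,[p+2]))^*$ with discrete kernel. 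Putting $K_2:=\psi_2^{-1}(H_1)\sleq H_2$ and $K_3:=\psi_3^{-1}(K_2)\sleq H_3$, restriction gives continuous epimorphisms $K_3\to K_2\to H_1$ with discrete kernels, so $\phi:=\psi_1\circ\psi_2\circ\psi_3\colon K_3\to U(F)^+$ is a continuous epimorphism; its kernel is discrete, being a twofold extension of a discrete group by a discrete group (for an extension $1\to N\to E\to Q\to 1$ with $N,Q$ discrete, $N$ is open in $E$ and $\{1\}$ is open in $N$, hence $\{1\}$ is open in $E$), and $K_3\sleq H_3\sleq U(C_p)$ is closed.

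Finally I would replace the group just obtained ($H_1$ in the first subcase, $K_3$ in the second) by a compactly generated subgroup with the same quotient. Since $U(F)^+\in\ms S$ is compactly generated, I would take a compact open subgroup $V$ of that group, note that its image in $U(F)^+$ under the relevant epimorphism is open (a continuous surjection of second countable locally compact groups with discrete, hence closed, kernel is open), choose $g_1,\dots,g_k$ that together with this image generate $U(F)^+$, lift them, and let $H$ be the closure of the subgroup generated by $V$ and the lifts. Then $H\sleq U(C_p)$ is compactly generated and closed, its image equals $U(F)^+$ (the image contains $\langle\phi(V),g_1,\dots,g_k\rangle=U(F)^+$, which is closed, and lies in the closure of this subgroup), and $D:=H\cap\Ker\phi$ is a discrete normal subgroup with $H/D\cong U(F)^+$, as required. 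The genuinely delicate part is the bookkeeping across the three lemmas — verifying that the partially legal coloring output by each lemma satisfies the input hypotheses of the next, and keeping track that Lemma~\ref{lem:1_main_thm} lands on $U_c((F,[n]))^*=U(F)^+$, the member of $\ms S$ we want, rather than on something larger; the topological steps (discreteness of kernels through the compositions, openness, and the compact-generation adjustment) are routine.
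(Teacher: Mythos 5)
Your proposal is correct and follows essentially the same route as the paper's own proof: Case~(1) is cited from Proposition~\ref{prop:involve_S_p=2}, and Case~(2) is obtained by chaining Lemmas~\ref{lem:1_main_thm}, \ref{lem:2_main_thm}, \ref{lem:3_main_thm} (with Lemma~\ref{lem:legal_Cp} identifying the target as $U(C_p)$), taking preimages to compose the covering maps, noting that extensions of discrete groups by discrete groups are discrete, and cutting down to a compactly generated subgroup that still surjects. You spell out two points the paper leaves implicit — the identification $U(F)^+=U(F)^*$ needed to match the output of Lemma~\ref{lem:1_main_thm} with $U(F)^+$, and the concrete construction of the compactly generated subgroup (the paper simply says ``we may find'' at each stage) — but the argument is the same one.
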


\begin{proof}
The case of $p=2$ is already established in Proposition~\ref{prop:involve_S_p=2}. Let us then suppose that $p>2$ and $F\leq \Sym(n)$ is a Frobenius group such that the Frobenius complement is $C_p$ and has index a power of two. 

Fix $c_1$ a legal coloring of $T_n$, let $G_1:=U_{c_1}(F)^+$, and set $(C_p,[n]):=(F_{(0)},[n])$. By Lemma~\ref{lem:1_main_thm}, there is a coloring $c_2$ of $T_n$ such that  $U_{c_2}((C_p,[n]))^*$ has a closed compactly generated subgroup $G_2\leq U_{c_2}((C_p,[n]))^*$ admitting a continuous epimorphism $\Phi_1:G_2\rightarrow G_1$ with discrete kernel.  Additionally, $c_2(e)=0$ if and only if $c_2(\ol{e})=0$. 

If $(C_p,[n])$ has two orbits on $[n]$, then $n=p+1$. Lemma~\ref{lem:legal_Cp} supplies a legal coloring $d$ such that $U_{c_2}((C_p,[p+1]))=U_d((C_p,[p+1]))=U(C_p)$. Therefore, $U(C_p)$ contains a compactly generated closed subgroup $G_2$ admitting a discrete normal subgroup $D$ such that $G_2/D$ is isomorphic to $G_1=U(F)$, as required.

Let us henceforth assume that $(C_p,[n])$ has more than two orbits on $[n]$. Since $(F,[n])$ is a Frobenius group, $(C_p,[n])$ has exactly one fixed point, namely $0$. We are thus in a position to apply Lemma~\ref{lem:2_main_thm}. Letting $(C_p,[p+2])$ be the permutation group given by the cycle $(2\dots p+1)$, there is a coloring $c_3$ of $T_{p+2}$ such that  $U_{c_3}((C_p,[p+2]))^*$ has a closed subgroup $H\leq U_{c_3}((C_p,[p+2]))^*$ admitting a continuous epimorphism $\Phi:H\rightarrow U_{c_2}((C_p,[n]))^*$ with discrete kernel. Additionally, $c_3(e)=i$ if and only if $c_3(\ol{e})=i$ for $i\in\{0,1\}$.  We may find a closed compactly generated $G_3\leq H$ such that $\Phi_2:=\Phi\rest_{G_3}:G_3\rightarrow G_2$ is surjective with discrete kernel. 

By Lemma~\ref{lem:3_main_thm}, $U(C_p)$ has a closed subgroup $H$ admitting a continuous epimorphism $\Phi:H\rightarrow U_{c_3}((C_p,[p+2]))^*$ with discrete kernel. We may find a closed compactly generated $G_4\leq H$ such that $\Phi_3:=\Phi\rest_{G_4}:G_4\rightarrow G_3$ is surjective with discrete kernel. The map $\Psi:G_4\rightarrow G_1$ by $\Psi:=\Phi_1\circ \Phi_2\circ \Phi_3:G_4\rightarrow G_1$ is continuous and surjective and has a discrete kernel. We conclude that $U(C_p)$ admits a compactly generated closed subgroup $G_4$ admitting a discrete normal subgroup $D$ such that $G_4/D$ is isomorphic to $U(F)$. The theorem is now verified.
\end{proof}

\begin{cor}\label{cor:involve_S}
Let $F\leq \Sym(d)$ be a permutation group that does not act freely. Then $U(F)$ contains a compactly generated closed subgroup $H$ admitting a discrete normal subgroup $D$ such that $H/D$ is topologically simple and non-discrete. In particular, $U(F)$ admits a subquotient in $\ms{S}$.
\end{cor}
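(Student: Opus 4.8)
The plan is to deduce the corollary by assembling Lemma~\ref{lem:PrimeReduction}, Theorem~\ref{thm:involve_S}, and the simplicity statements already recorded for Burger--Mozes groups; there is no genuinely new content, only a case split and a few routine observations. First I would apply Lemma~\ref{lem:PrimeReduction}: since $F$ does not act freely, $U(F)$ (for any regular legal coloring) contains a closed subgroup isomorphic to $U(C_p)$ for some prime $p$. It therefore suffices to exhibit the required subquotient inside $U(C_p)$: a closed subgroup of the closed subgroup $U(C_p)\leq U(F)$ is again closed in $U(F)$, and being compactly generated is an intrinsic property, so any $H$ produced inside $U(C_p)$ transports verbatim to a compactly generated closed subgroup of $U(F)$.

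Next I would split according to the two cases of Theorem~\ref{thm:involve_S}. If $p=2$, Theorem~\ref{thm:involve_S}(1) provides a compactly generated closed subgroup $H\leq U(C_2)$ together with a discrete normal subgroup $D\normal H$ such that $H/D\cong \Aut(T_3)^+$. Now $\Aut(T_3)^+ = U(\Sym(3))^+$, and since $\Sym(3)$ acting on three points is transitive and generated by its point stabilizers (the transpositions), Corollary~\ref{cor:U(F)^+} together with Proposition~\ref{prop:BuMoSimple} shows $\Aut(T_3)^+\in\ms S$; in particular it is topologically simple and non-discrete. If $p>2$, Theorem~\ref{thm:forbenius_group} furnishes a finite Frobenius group $F_0\leq\Sym(n)$ whose Frobenius complement is $C_p$ and with $|F_0:C_p|$ a power of two, which is exactly the hypothesis needed to apply Theorem~\ref{thm:involve_S}(2); this yields a compactly generated closed subgroup $H\leq U(C_p)$ and a discrete normal subgroup $D\normal H$ with $H/D\cong U(F_0)^+$. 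By Corollary~\ref{cor:FrobSimple}, $U(F_0)^+$ is an abstractly simple, non-discrete group lying in $\ms S$.

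In either case $H/D$ is a compactly generated, non-discrete, topologically simple totally disconnected locally compact group, i.e.\ a member of $\ms S$, and it is realised as a quotient of the closed subgroup $H$ of $U(F)$; hence $U(F)$ admits a subquotient in $\ms S$, as claimed. The only points requiring a word of care are the observation that $\Aut(T_3)^+$ belongs to $\ms S$ (handled above via Corollary~\ref{cor:U(F)^+}) and the transitivity of the chain of closed embeddings $H\leq U(C_p)\leq U(F)$, both of which are immediate; accordingly I expect no substantive obstacle in this step.
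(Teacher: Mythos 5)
Your argument is exactly the paper's: combine Lemma~\ref{lem:PrimeReduction} with the two cases of Theorem~\ref{thm:involve_S}, invoking Theorem~\ref{thm:forbenius_group} and Corollary~\ref{cor:FrobSimple} (resp.\ Corollary~\ref{cor:U(F)^+} for $\Aut(T_3)^+$) to confirm membership in $\ms S$. The paper states this as ``Immediate from Lemma~\ref{lem:PrimeReduction} and Theorem~\ref{thm:involve_S}''; you have simply spelled out the routine details, and your verification is correct.
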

\begin{proof}
Immediate from Lemma~\ref{lem:PrimeReduction} and Theorem~\ref{thm:involve_S}. 
\end{proof}

\subsection{Elementary groups and relative Tits cores}\label{sec:TitsCore}

The class of elementary groups, denoted by $\ms{E}$, is the smallest class of totally disconnected locally compact second countable (\tdlcsc) groups that contains the second countable profinite groups and the countable discrete groups and that is closed under the elementary operations; see \cite{W14}. (These operations are taking closed subgroups, Hausdorff quotients, group extensions, and countable directed unions of open subgroups.) The class of elementary groups is disjoint from the class $\ms{S}$ comprising the non-discrete compactly generated topologically simple \tdlcsc groups.

We say that a topological group $H$ admits a group $G$ as a \textbf{subquotient} if there is some closed subgroup $K\leq H$ such that $K$ admits $G$ as a continuous quotient. Admitting a subquotient a group in $\ms{S}$ is sufficient to be non-elementary; see \cite{W14}. The following consequence of Corollary~\ref{cor:involve_S}  implies that  the non-discrete Burger--Mozes universal groups $U(F)$ all admit groups in $\ms{S}$ as subquotients and are thus non-elementary.

\begin{cor}\label{cor:FreeF} 
	For $F$ a finite permutation group, the following are equivalent:
	\begin{enumerate}
		\item $F$ acts freely.
		\item $U(F)$ is discrete.
		\item $U(F)$ is elementary.	
		\item $U(F)$ does not admit a group in $\ms{S}$ as a subquotient.
	\end{enumerate}
\end{cor}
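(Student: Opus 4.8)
The plan is to show the four conditions are equivalent by establishing a cycle of implications, most of which are already available from the machinery developed earlier in the paper.

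First I would record the equivalence (1) $\Leftrightarrow$ (2), which is exactly Proposition~\ref{prop:BuMoSimple}(i): $U(F)$ is discrete if and only if $F$ acts freely. Next, (2) $\Rightarrow$ (3) is immediate, since every countable discrete group is elementary by definition of the class $\ms{E}$, and $U(F)$ is second countable (being a closed subgroup of $\Aut(T_d)$ for $T_d$ locally finite). For (3) $\Rightarrow$ (4), I would invoke the cited fact from \cite{W14} that a \tdlcsc group admitting a group in $\ms{S}$ as a subquotient is non-elementary; contrapositively, an elementary group cannot have such a subquotient. The one implication requiring actual content from this paper is (4) $\Rightarrow$ (1), or equivalently its contrapositive: if $F$ does not act freely, then $U(F)$ admits a group in $\ms{S}$ as a subquotient. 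But this is precisely Corollary~\ref{cor:involve_S}. Assembling the chain $(1)\Leftrightarrow(2)\Rightarrow(3)\Rightarrow(4)\Rightarrow(1)$ closes the loop.

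Concretely, I would write: assume (1); then (2) follows from Proposition~\ref{prop:BuMoSimple}(i). Assume (2); then $U(F)$ is a countable discrete group, hence elementary, giving (3). Assume (3); since the class $\ms{E}$ is disjoint from the class of groups admitting a subquotient in $\ms{S}$ (by \cite{W14}), we get (4). Finally assume (4); if $F$ did not act freely, Corollary~\ref{cor:involve_S} would produce a compactly generated closed subgroup $H \leq U(F)$ with a discrete normal subgroup $D$ such that $H/D$ is non-discrete and topologically simple, and since $H/D$ is also compactly generated and \tdlcsc it lies in $\ms{S}$, contradicting (4); hence $F$ acts freely, which is (1).

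I do not expect any serious obstacle here, since the substantive work has been done in Corollary~\ref{cor:involve_S} and its antecedents (Lemma~\ref{lem:PrimeReduction} and Theorem~\ref{thm:involve_S}); the only point deserving a word of care is checking that the simple quotient $H/D$ furnished by Corollary~\ref{cor:involve_S} is genuinely a member of $\ms{S}$ — that is, non-discrete, compactly generated, topologically simple, and \tdlcsc — all of which are recorded in the statement of that corollary together with the fact that quotients of \tdlcsc groups are \tdlcsc. The reference to \cite{W14} for the disjointness of $\ms{E}$ from groups with $\ms{S}$-subquotients is the other external input, already used in the paragraph preceding the corollary, so invoking it again is unproblematic.
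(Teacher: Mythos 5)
Your proof is correct and follows essentially the same chain of implications as the paper's own proof: $(1)\Leftrightarrow(2)$ via Proposition~\ref{prop:BuMoSimple}(i), $(2)\Rightarrow(3)$ immediately, $(3)\Rightarrow(4)$ via the cited fact from \cite{W14}, and $(4)\Rightarrow(1)$ in contrapositive form via Corollary~\ref{cor:involve_S}.
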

\begin{proof}
	That $(1)$ implies $(2)$ follows from Proposition~\ref{prop:BuMoSimple}(i), and $(2)$ implies $(3)$ is immediate. The contrapositive of $(3)$ implies $(4)$ is given by \cite[Proposition 6.5]{W14}. Finally, Corollary~\ref{cor:involve_S} gives the contrapositive of $(4)$ implies $(1)$
\end{proof}

All known examples of non-elementary groups are thus because they admit some group in $ \ms{S}$ as a subquotient. One naturally asks if admitting a subquotient in $\ms{S}$ is  a necessary condition to be non-elementary.

\begin{qu}\label{qu:involve_S} 
	For $G$ a \tdlcsc group, if $G$ is non-elementary, then is there a compactly generated closed $H\leq G$ such that $H$ has a continuous quotient in $\ms{S}$?
\end{qu}

One possible approach to Question~\ref{qu:involve_S} is via a stronger formulation due to Reid. The positive answer to Reid's question, Question~\ref{qu:reid} below, implies the positive answer to Question~\ref{qu:involve_S}. For a locally compact group $G$, recall that $g\in G$ is \textbf{periodic} if $\grp{g}$ is relatively compact. For any element $g \in G$, we define the \textbf{contraction group} of $g $   as 
$$\con(g) := \{x \in G \mid \lim_n g^n x g^{-n} = 1\}.$$
The \textbf{relative Tits core} of $g$ in $G$, denoted by $G^\dagger_g$, is defined by 
$$G^\dagger_g := \overline{\langle \con(g) \cup \con(g^{-1})\rangle}.$$

\begin{qu}[Reid, {\cite[Question 2]{R16}}]\label{qu:reid}
	Let $G$ be a \tdlcsc group. If $G$ is non-elementary, then is there $g\in G$ non-periodic and $n\geq 1$ such that $g^n$ is an element of the closure of the relative Tits core of $g$? 
\end{qu}

The Burger--Mozes groups $U(F)$ with $F$ nilpotent provide examples  demonstrating that the answer to Question~\ref{qu:reid} is negative. 

\begin{cor} 
For  $F$ nilpotent, every non-periodic element $g\in U(F)$ is such that $g^n\notin G^{\dagger}_g$ for all $n\neq 0$. 
\end{cor}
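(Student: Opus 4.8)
The plan is to combine Corollary~\ref{cor:nilpotentLocal} (virtual indicability) with the structure theory of relative Tits cores, exploiting the fact that the relative Tits core $G^\dagger_g$ is generated by contraction groups. First I would recall that $U(F)$ is vertex-transitive on $T_d$, hence acts minimally without fixed vertex, edge, or end whenever $F$ is nilpotent and non-trivial (if $F$ is trivial, $U(F)$ is discrete and every element is periodic or else generates a discrete infinite cyclic group whose relative Tits core is trivial, so the statement is vacuous or immediate). Thus I may assume $U(F)^+ \neq \{1\}$. The key structural input is that for any $g \in U(F)$, the contraction group $\con(g)$ fixes pointwise a half-tree, and more precisely is contained in $U(F)^+$; hence $G^\dagger_g \leq \overline{U(F)^+} = U(F)^+$. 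So it suffices to show that no power of a non-periodic $g$ lies in $U(F)^+$.

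The next step is to analyze non-periodic elements. A non-periodic $g \in U(F)$ is precisely a hyperbolic element (since $\Aut(T_d)$-elements are either elliptic, hence periodic as $U(F)$ is closed in $\Aut(T_d)$ which is t.d.l.c., or hyperbolic). For a hyperbolic $g$ with axis $L$ and translation length $\ell > 0$, I want to produce a homomorphism $U(F) \to \mathbf{Z}$, or at least a homomorphism detecting that $g^n \notin U(F)^+$ for $n \neq 0$. The cleanest route: by Corollary~\ref{cor:nilpotentLocal} (or directly via the argument in the proof of Theorem~\ref{thm:Indic}), since $F$ is nilpotent the quotient graph $T_d/U(F)^+$ is a tree on which $U(F)/U(F)^+$ acts — and crucially, the argument there (using Lemma~\ref{lem:NilpPerm} and the block structure $F/F^+$ acting freely) shows that $U(F)^+$ acts on $T_d$ preserving a proper equivalence relation at every vertex coming from the blocks. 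I would argue that $U(F)/U(F)^+$ is a non-trivial free product / acts on a tree $X = T_d/U(F)^+$ with all vertices of degree $\geq 2$, so a hyperbolic $g \in U(F)$ projects to an element of $U(F)/U(F)^+$ that acts on $X$. If that projection is non-trivial (which I must establish), then $g^n \notin U(F)^+$ for all $n \neq 0$.

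The heart of the matter — and the step I expect to be the main obstacle — is showing that a hyperbolic $g \in U(F)$ has non-trivial image in $U(F)/U(F)^+$, i.e. that $U(F)^+$ contains no hyperbolic element. Equivalently: every element of $U(F)^+$ is elliptic. This should follow from the block analysis in the proof of Theorem~\ref{thm:Indic}: elements of $U(F)^+$ have local action in $F^+ = \Ker(\pi)$ at every vertex (where $\pi\colon F \to \Sym(\text{blocks})$), and since $F/F^+$ acts \emph{freely} on the blocks, one can propagate a consistent block-coloring along any geodesic; a hyperbolic element would have to translate along its axis while fixing the induced block at each vertex of the axis, which, combined with freeness of the $F/F^+$-action, forces the displacement to be detected by the Busemann-type / tree-quotient homomorphism and hence forces ellipticity in $U(F)^+$. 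Concretely, I would set up the coloring $c$ and the block map exactly as in Theorem~\ref{thm:Indic}, observe $U(F)^+$ preserves the block of each edge along any path, deduce $T_d/U(F)^+$ is a tree with vertices of degree $\geq 2$ (so $U(F)^+$ is the kernel of the action of $U(F)$ on this infinite tree), and conclude that any $g$ acting trivially on $T_d/U(F)^+$ which is hyperbolic on $T_d$ would give a hyperbolic element in the kernel, contradicting that $U(F)^+$ fixes setwise the preimage of every vertex — a subtree — on which it would then have to act, with the translation descending. If $g^n \in U(F)^+$ for some $n \neq 0$, then $g^n$ is hyperbolic (powers of hyperbolic elements are hyperbolic), contradiction; hence $g^n \notin U(F)^+ \supseteq G^\dagger_g$ for all $n \neq 0$, which is the claim. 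The remaining care is just to handle the degenerate cases ($F$ intransitive, $F$ trivial, $U(F)$ fixing an end) via the Busemann homomorphism as in the proof of Theorem~\ref{thm:Indic}, where the same conclusion holds because that homomorphism separates hyperbolic elements from $U(F)^+$.
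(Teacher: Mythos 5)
Your reduction is sound up to a point: $G^\dagger_g \leq U(F)^+$ because contraction groups of hyperbolic elements fix half-trees pointwise and hence lie in the open (therefore closed) subgroup $U(F)^+$, so it suffices to show $g^n \notin U(F)^+$ for all $n \neq 0$. But the pivotal claim that $U(F)^+$ contains no hyperbolic element is false. Take the smallest case $F = C_2 = \langle(12)\rangle \leq \Sym(\{0,1,2\})$ and fix a bi-infinite geodesic $L = (v_i)_{i\in\mathbf{Z}}$ in $T_3$ whose edge-colors avoid $0$ and alternate $1, 2, 1, 2, \dots$. Choose $a \in U(C_2)$ fixing $v_0$ with $\sigma(a, v_0) = (12)$. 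Since $(12)$ fixes $0$, the element $a$ fixes the $0$-edge at $v_0$ pointwise, so $a \in U(C_2)^+$; and the requirement $\sigma(a, v_i) \in C_2$ at each axis vertex forces $a$ to reflect $L$ about $v_0$, i.e.\ $a(v_i) = v_{-i}$. Choosing $b$ analogously centered at $v_{-1}$, the product $ab$ lies in $U(C_2)^+$, satisfies $ab(v_i) = v_{i+2}$, and is therefore a hyperbolic element of $U(C_2)^+$. Your ``block-propagation forces ellipticity'' heuristic collapses exactly here: along such an axis the blocks are constant, and since $F^+$ preserves them this is no obstruction whatsoever to a translation lying in $U(F)^+$.

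The paper's proof avoids this pitfall by never leaving the compactly generated world. It forms the compactly generated closed subgroup $H := \overline{\langle G^\dagger_g, g\rangle}$ and applies the indicability theorem to $H$ acting on its minimal invariant subtree; since $g$ is hyperbolic, $H$ is virtually indicable, so there is an open normal subgroup $O$ of $H$ with $H/O$ infinite and discrete. Contraction groups have trivial image in every discrete quotient, so $G^\dagger_g \leq O$; hence $H/O$ is an infinite cyclic group generated by the image of $g$, and $g^n \notin O \supseteq G^\dagger_g$ for every $n \neq 0$. That localization to the compactly generated $H$ --- rather than to the non-compactly-generated simple group $U(F)^+$ --- is precisely the step your argument is missing.
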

\begin{proof}
In any locally compact group $G$, the group $\cgrp{G^{\dagger}_g,g}$ is compactly generated for any $g \in G$.  Let now $G = U(F)$ and let $g\in U(F)$ be a non-periodic element. The group $H:=\cgrp{G^{\dagger}_g,g}$ is thus a compactly generated closed subgroup of $U(F)$, and $g$ is hyperbolic. Appealing to Theorem~\ref{thm:Indic}, $H$ admits an infinite discrete quotient $H/O$. Since $G^{\dagger}_g$ is topologically generated by contraction groups, it follows that $G^{\dagger}_g\leq O$. We deduce that $H/\ol{G^{\dagger}_g}$ is infinite, and thus, $g^n\notin \ol{G^{\dagger}_g}$ for any non-zero power of $g$.
\end{proof}

If in addition $F$ does not act freely, then $U(F)$ is non-elementary by Corollary~\ref{cor:FreeF}, so  we obtain the following consequence of Theorem~\ref{thm:Indic} yielding a negative answer to Question~\ref{qu:reid}.
 
\begin{cor} For any nilpotent permutation group $(F,\Omega)$ such that $F$ does not act freely on $\Omega$, the Burger--Mozes universal group $U(F)$ is non-elementary, and every non-periodic element $g\in U(F)$ is such that $g^n\notin G^{\dagger}_g$ for all non-zero $n$.
\end{cor}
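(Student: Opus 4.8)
The plan is to read this statement as the conjunction of two facts already established, so that the proof is essentially an assembly. The non-elementarity of $U(F)$ is nothing but Corollary~\ref{cor:FreeF}: since $(F,\Omega)$ does not act freely, that corollary gives at once that $U(F)$ is not elementary, and indeed (via Corollary~\ref{cor:involve_S}) that it admits a group in $\ms{S}$ as a subquotient. The statement on non-periodic elements is precisely the corollary proved just above; so I would simply reproduce, with a little more detail, that argument.

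Thus let $g\in U(F)$ be non-periodic. Since $U(F)\leq\Aut(T_d)$ acts on a locally finite tree and $\grp{g}$ is not relatively compact, $g$ is not elliptic, hence hyperbolic. I would set $H:=\cgrp{G^{\dagger}_g,g}$, a closed, compactly generated subgroup of $U(F)$ containing the hyperbolic element $g$, and observe that $\con(g),\con(g^{-1})\leq G^{\dagger}_g\leq H$. The local action of $H$ at each vertex of $T_d$ is a subgroup of $F$, hence nilpotent, so Lemma~\ref{lem:NilpPerm} puts $H$ (after the reduction discussed below) into the scope of Theorem~\ref{thm:Indic}, whence $H$ is virtually indicable and in particular has an infinite discrete quotient $q\colon H\to H/O$, with $O$ open. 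In a discrete group every contraction group is trivial, so $q$ kills $\con(g)$ and $\con(g^{-1})$; as $O$ is closed this yields $G^{\dagger}_g=\cgrp{\con(g)\cup\con(g^{-1})}\leq O$. Hence $H/\ol{G^{\dagger}_g}$ surjects onto the infinite group $H/O$ and is infinite, while $H=\cgrp{G^{\dagger}_g,g}$ forces $H/\ol{G^{\dagger}_g}$ to be the closure of the cyclic group generated by the image of $g$. An infinite closure of a cyclic group has a topological generator of infinite order, so the image of $g$ has infinite order; that is, $g^n\notin\ol{G^{\dagger}_g}$, a fortiori $g^n\notin G^{\dagger}_g$, for every nonzero $n$.

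The one step deserving care --- and it is exactly the point handled inside the proof of Theorem~\ref{thm:Indic} --- is that $H$ need not act cocompactly on $T_d$ itself, so Theorem~\ref{thm:Indic} does not apply verbatim. I would remove this by passing to a minimal $H$-invariant subtree $X$: it is infinite since $g$ is hyperbolic, locally finite since $T_d$ is, and a compactly generated group acting minimally on a tree acts cocompactly on it, so the hypotheses of Theorem~\ref{thm:Indic} are met for the $H$-action on $X$; in the degenerate case where $H$ fixes an end of $T_d$, the Busemann homomorphism already produces a continuous surjection $H\to\mathbf Z$ that vanishes on contraction groups, which is easier still. With this reduction in place, combining the above with Corollary~\ref{cor:FreeF} completes the argument and, as remarked, gives a negative answer to Question~\ref{qu:reid}.
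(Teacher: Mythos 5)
Your proof is correct and follows essentially the same route as the paper: the non-elementarity is Corollary~\ref{cor:FreeF}, and the relative Tits core statement is deduced by forming $H=\cgrp{G^{\dagger}_g,g}$, applying virtual indicability via the nilpotent local action, and observing that the Busemann/contraction groups are killed by any discrete quotient. The ``one step deserving care'' you identify (that $H$ need not act cocompactly, so Theorem~\ref{thm:Indic} does not apply verbatim) is real, and your repair via a minimal invariant subtree plus the compactly-generated-implies-cocompact fact is exactly what the paper does internally; but you could shorten this by invoking Corollary~\ref{cor:nilpotentLocal} directly, which packages that reduction and applies immediately to the compactly generated group $H$ with nilpotent local action once you note $g$ is hyperbolic.
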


\section{Lattices}

\subsection{Intersecting lattices with subgroups}

The following basic facts are well-known.

\begin{prop}\label{prop:Raghu}
Let $G$ be a locally compact group, $O \leq G$ be an open subgroup and $H \leq G$ be a closed subgroup. 
\begin{enumerate}[(i)]
\item If $H$ is cocompact in $G$, then $H \cap O$ is cocompact in $O$. 

\item If $H$ is of finite covolume in $G$, then $H \cap O$ is of finite covolume in $O$. 

\item If $H$ is a lattice in $G$, then $H \cap O$ is a lattice in $O$. 
\end{enumerate}	
\end{prop}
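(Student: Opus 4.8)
The statement to prove is Proposition~\ref{prop:Raghu}, concerning the intersection of a cocompact (resp.\ finite-covolume, resp.\ lattice) closed subgroup $H$ of a locally compact group $G$ with an open subgroup $O \leq G$. The plan is to treat the three parts together, as (iii) is simply the conjunction of (i) and a slight refinement of (ii), and (i) and (ii) share the same geometric mechanism: an open subgroup $O$ meets every $G$-orbit on the homogeneous space $G/H$ in an open set, and the key point is that the relevant $O$-orbits on $G/H$ account for a subset of full measure (resp.\ a compact transversal).

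First I would fix the setup: let $\pi \colon G \to G/H$ be the quotient map, so $G/H$ carries a $G$-invariant measure when $H$ is of finite covolume (or merely the quotient topology when discussing cocompactness). The subgroup $O$ acts on $G/H$ on the left, and the orbit $O \cdot eH = \pi(O)$ is open since $O$ is open and $\pi$ is open. The natural map $O/(O \cap H) \to G/H$, $o(O\cap H) \mapsto oH$, is an $O$-equivariant homeomorphism onto the open subset $\pi(O)$; this is the engine of the whole argument. For part (i): if $H$ is cocompact, pick a compact $K \subseteq G$ with $KH = G$; then $\pi(K)$ is compact and covers $G/H$. I want a compact subset of $O$ covering $O/(O\cap H)$. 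Here one uses that $\pi(K)$ is covered by finitely many left $O$-translates of the open set $\pi(O)$ — say $\pi(K) \subseteq \bigcup_{i=1}^n g_i \pi(O)$ with $g_i \in G$; intersecting with $\pi(O)$ and pulling back, a standard compactness/continuity argument produces a compact $L \subseteq O$ with $LH \supseteq O$, whence $L(O\cap H) = O$ and $O\cap H$ is cocompact in $O$. (Alternatively, and more cleanly, one can invoke that $O\backslash G/H$ is discrete since $O$ is open, so $G/H$ is a disjoint union of open $O$-orbits, finitely many of which cover the compact space $G/H$; the orbit of $eH$ is one of them, and being a closed-and-open subset of a compact space it is compact, so $O/(O\cap H)$ is compact.)

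For part (ii): when $H$ has finite covolume, equip $G/H$ with a $G$-invariant (finite) measure $\mu$. Decompose $G/H = \bigsqcup_{j} O\cdot x_j H$ into $O$-orbits, a countable disjoint union of open sets, each $O$-equivariantly homeomorphic to $O/(O \cap x_j H x_j^{-1})$ up to the usual coset bookkeeping; in particular the orbit $\pi(O) \cong O/(O\cap H)$ is open and has positive, finite $\mu$-measure. Transporting $\mu\restriction_{\pi(O)}$ along the homeomorphism $O/(O\cap H) \cong \pi(O)$ gives a finite measure on $O/(O\cap H)$; I need to check it is $O$-invariant, which follows because $\mu$ is $G$-invariant and the homeomorphism is $O$-equivariant. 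A finite $O$-invariant measure on $O/(O\cap H)$ is exactly the assertion that $O\cap H$ has finite covolume in $O$. Finally (iii) is immediate: by (i) and (ii) — or rather by the discreteness half, which is trivial since $O\cap H$ is a closed subgroup of the discrete-when-intersected-appropriately situation — if $H$ is discrete then $O\cap H$ is discrete (closed subgroup of a discrete group, or: $H$ discrete and $O\cap H \leq H$), and it has finite covolume by (ii), so it is a lattice in $O$.

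The main obstacle, modest as it is, is the careful measure-theoretic bookkeeping in (ii): pinning down that the invariant measure on $G/H$ restricts and transports to a genuinely $O$-invariant finite measure on $O/(O\cap H)$, rather than just a finite measure. The cleanest route is to avoid an explicit Haar-measure computation and instead argue via invariant measures on homogeneous spaces abstractly: $\pi(O)$ is an open $O$-invariant subset of $(G/H,\mu)$, so $\nu := \mu\restriction_{\pi(O)}$ is a nonzero finite $O$-invariant Borel measure on the $O$-space $\pi(O) \cong O/(O\cap H)$, and the existence of such a measure characterizes finite covolume of $O\cap H$ in $O$ (for instance by the standard correspondence between invariant measures on $G/H$ and on $O/(O\cap H)$, or by a direct Fubini argument against Haar measure on $O$). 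Everything else is soft topology about open subgroups acting on coset spaces with open orbits, and the discreteness claims in (iii) are formal. I would also remark that all three parts reduce, essentially, to the single observation that $O\backslash(G/H)$ is discrete, which makes each $O$-orbit clopen and lets one localize the hypothesis on $H$ to the single orbit $\pi(O)$.
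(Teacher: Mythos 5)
Your proof is correct and is essentially the paper's: the paper's entire argument is the one-line observation that the image of $O/(H\cap O)\to G/H$ is both open and closed, which is precisely your "each $O$-orbit on $G/H$ is clopen" remark, from which (i), (ii), (iii) follow by the same bookkeeping you carry out. The second, "cleaner" version of your part (i) is the one that matches; the first version via translates of $\pi(K)$ is unnecessarily heavy.
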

\begin{proof}
All assertions follow by observing that the openness of $O$ implies that the image of the canonical projection 	
$$O / H\cap O \to G/H$$
is both open and closed. 
\end{proof}
	
\begin{lem}[{\cite[Lemma~I.1.7]{Raghu}}]\label{lem:LatticeInNormal}
Let $G$ be a locally compact group and $H, N \leq G$ be closed subgroups. Assume that $N$ is normal and that $HN$ is closed. 
\begin{enumerate}[(i)]
	\item  $HN/N$ is cocompact in $G/N$ if and only if $H \cap N$ is cocompact in $N$.
	
	\item $HN/N$ is of finite covolume in $G/N$ if and only if $H \cap N$ is of finite covolume in $N$.
\end{enumerate}	
\end{lem}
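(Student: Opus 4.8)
\textbf{Proof plan for Lemma~\ref{lem:LatticeInNormal}.} This is Lemma~I.1.7 from Raghunathan's book, and the goal is really just to give a self-contained argument via Weil's formula for decomposing Haar measures along the exact sequence $1 \to N \to G \to G/N \to 1$. The plan is to fix left Haar measures $\mu_G$, $\mu_N$, $\mu_{G/N}$ that are compatible in the sense of Weil, so that for every $f \in C_c(G)$ one has $\int_G f\,d\mu_G = \int_{G/N} \left( \int_N f(gn)\,d\mu_N(n) \right) d\mu_{G/N}(gN)$; one must take a moment to note that $N$ normal plus $HN$ closed makes all the relevant coset and double-coset spaces Hausdorff and locally compact, so these measures exist. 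Write $Q = G/N$ and $\pi \colon G \to Q$ the quotient map, and set $\bar H = HN/N = \pi(H)$, which is closed in $Q$ by hypothesis.

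The core of the argument is a second application of Weil's formula, this time to the closed subgroup $H \cap N \trianglelefteq H$: identifying $H/(H\cap N) \cong HN/N = \bar H$ via the natural map, one gets a compatible triple of measures on $H$, on $H \cap N$, and on $\bar H$. Now I would run the ``unfolding'' computation: for $\varphi \in C_c(Q)$, pull it back and integrate, using the $G$-over-$N$ decomposition to express $\int_Q \varphi\,d\mu_Q$ in terms of an $H$-invariant functional, and then re-fold using the $H$-over-$(H\cap N)$ decomposition. The upshot is the key identity that, up to the choice of compatible normalizations, the invariant measure on $Q/\bar H \cong G/HN$ ``is'' the invariant measure on $N/(H\cap N)$: concretely, there is a measure-preserving correspondence (or at least a correspondence that matches finiteness and compactness of total mass) between an invariant measure on $N/(H \cap N)$ and one on $G/HN$, equivalently on $(G/N)/(HN/N)$. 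Given this, part (i) follows because a homogeneous space carries a compact (hence finite) invariant measure iff the corresponding coset space is compact — actually one should phrase (i) purely topologically: $HN/N$ cocompact in $G/N$ means $G/HN$ compact, and the homeomorphism $N/(H\cap N) \xrightarrow{\sim} G/HN$ induced by inclusion $N \hookrightarrow G$ (which is well-defined, continuous, and open because $N$ is a union of $HN$-translates meeting... more precisely because $\pi$ restricted gives $N \to G/HN$ surjective continuous open) transports compactness. Part (ii) then follows from the measure identification: $H \cap N$ has finite covolume in $N$ iff $N/(H\cap N)$ carries a finite invariant measure iff $G/HN$ does iff $HN/N$ has finite covolume in $G/N$.

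The step I expect to be the main obstacle is verifying carefully that the map $N/(H\cap N) \to G/HN$, $n(H\cap N) \mapsto nHN = \pi(n)\bar H$, is a homeomorphism — surjectivity uses $G = $ ... well, one needs $N \cdot H = H \cdot N$ (true, $N$ normal) and that every $gHN$ meets $N$, which holds iff $g \in NH = HN$... so actually the map lands in the subspace $HN/HN = \{HN\}$ unless one is careful: the correct statement is that $N \to G/HN$ has image $NHN/HN = HN/HN$, a single point — so I have this backwards. The right map goes the other way: there is a natural continuous surjection $G/HN \to$ nothing useful. Let me restate: the correct homeomorphism is $G/HN \cong (G/N)/(HN/N)$ directly (third isomorphism theorem for coset spaces, valid topologically since $N$ normal), and separately $N/(H\cap N) \hookrightarrow G/H$ is a closed embedding onto the $N$-orbit $NH/H$; finiteness of covolume of $H \cap N$ in $N$ relates to the measure of this orbit. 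So the genuinely delicate point is the Haar-measure bookkeeping along the two exact sequences — getting the normalization constants to cancel and confirming that the invariant functional produced on $G/HN$ is genuinely the pushforward of the one on $N/(H\cap N)$ — rather than any single topological fact. I would organize the write-up around Weil's formula applied twice and let the finiteness/compactness statements fall out as corollaries of matching total masses.
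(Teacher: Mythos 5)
Your plan hinges on a claim that is false: there is no measure-preserving (nor compactness-matching) correspondence between $N/(H\cap N)$ and $G/HN$. Take $G=\mathbf{R}^2$, $N=\mathbf{R}\times\{0\}$, $H=\{0\}\times\mathbf{R}$: then $HN=G$, so $G/HN$ is a single point, while $N/(H\cap N)\cong\mathbf{R}$ is non-compact with infinite invariant measure; taking $H=N$ instead breaks the other direction. You sense the trouble midway when you notice that the natural map $N/(H\cap N)\to G/HN$ is constant, but the two facts you then record as a patch --- the homeomorphism $G/HN\cong(G/N)/(HN/N)$ and the closed embedding $N/(H\cap N)\cong HN/H\hookrightarrow G/H$ --- are each correct yet never get wired together, so the write-up ends without an argument.

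The missing link is the intermediate group in the tower $H\leq HN\leq G$, and with it a hypothesis the printed statement leaves implicit: as the example above shows, the equivalence is simply false unless one assumes in addition that $H$ is cocompact (resp.\ of finite covolume) in $G$. Raghunathan's Lemma~I.1.7 carries that assumption, and every invocation in the paper is in a context where $H$ is already a (cocompact) lattice, so the abbreviation is harmless downstream --- but a self-contained proof must restore it. Once it is in place, part~(i) is purely topological: $G/H$ compact forces its continuous image $G/HN\cong(G/N)/(HN/N)$ and its closed subspace $HN/H\cong N/(H\cap N)$ to both be compact, and conversely. Part~(ii) is then genuinely the double Weil disintegration you outlined, but chained along $H\leq HN\leq G$ rather than short-circuited directly between $N/(H\cap N)$ and $G/HN$. (The paper itself does not reprove this lemma; it only cites Raghunathan, so there is no in-house proof to compare against.)
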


\subsection{The amenable radical and centralizers of  lattices}

The following fundamental fact is essentially due to H. Furstenberg; see for example \cite[Lemma 16.7]{Fur73}. For completeness, we give a proof via the main result of \cite{BaDuLe}. 
\begin{thm}\label{thm:RadAmen}
	Let $G$ be a second countable locally compact group. If $G$ has a trivial amenable radical, then every closed subgroup of finite covolume has a trivial amenable radical. 
\end{thm}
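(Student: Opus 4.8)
The plan is to deduce Theorem~\ref{thm:RadAmen} from the main theorem of \cite{BaDuLe}, which identifies the amenable radical of a locally compact group with the kernel of its action on a canonical boundary (the Furstenberg boundary, or rather the ``strong boundary'' in the sense of that paper). Concretely, I would use the statement that a second countable locally compact group $G$ has trivial amenable radical if and only if $G$ acts faithfully on a certain $G$-space $B$ that is a strong boundary (equivalently, the action on the universal minimal strongly proximal $G$-flow, or on a suitable Poisson boundary, is faithful). So first I would recall precisely the characterization of the amenable radical furnished by \cite{BaDuLe}: $\mathrm{Rad}_{\mathrm{am}}(G)$ is exactly the kernel of the $G$-action on its Furstenberg boundary $\partial_F G$, and this boundary is a minimal strongly proximal $G$-flow.

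The key step is the restriction–of–boundary principle. Let $H \leq G$ be a closed subgroup of finite covolume (in particular $G/H$ carries a $G$-invariant probability measure). I would argue that the $G$-flow $\partial_F G$, when viewed as an $H$-flow by restricting the action, is still a strong boundary for $H$ — or at least admits $\partial_F H$ as an $H$-factor in a way that detects faithfulness. The standard tool here is that if $G/H$ has an invariant probability measure, then any (minimal, strongly proximal) $G$-boundary restricts to an $H$-boundary; this is exactly the phenomenon behind Furstenberg's lemma that lattices inherit boundary actions. I would phrase it via the measure-theoretic statement: amenability of the $G$-action on a space $Z$ in the sense of Zimmer, combined with the existence of an invariant probability measure on $G/H$, transfers amenability/proximality data from $G$ down to $H$. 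Then triviality of $\ker(G \acts \partial_F G)$ together with faithfulness on restriction forces $\ker(H \acts \partial_F G)$ to be amenable-radical-trivial for $H$.

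More precisely, here is the chain I would write out. Suppose $G$ has trivial amenable radical, so $G$ acts faithfully on $\partial_F G$. Let $H \leq G$ be closed of finite covolume, and let $R := \mathrm{Rad}_{\mathrm{am}}(H)$; I want $R = \{1\}$. Since $R$ is an amenable normal subgroup of $H$ and $H$ acts on the $H$-flow $\partial_F G$ with an $H$-quasi-invariant measure class that is stationary, $R$ acts trivially on $\partial_F G$ — this uses that an amenable group acting on a strongly proximal flow with a stationary measure must fix the (unique) stationary measure, and on a proximal flow the only stationary measures are point masses, so in fact $R$ fixes every point of the support, hence $R$ is in the kernel of $H \acts \partial_F G$. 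But that kernel is contained in the kernel of $G \acts \partial_F G$ intersected with $H$... — and here is the subtlety: it is the kernel of the \emph{restricted} action, which a priori could be larger than $(\ker G\acts\partial_F G)\cap H = \{1\}$. This is precisely why I need the finite-covolume hypothesis: it guarantees $\partial_F G$ with its $G$-stationary measure restricts to an $H$-stationary measure whose kernel is still trivial, because the $G$-action is faithful and $H$ is ``large'' (co-finite-volume) in $G$. The cleanest way to nail this is: the $G$-action on $\partial_F G$ is faithful and minimal strongly proximal; restrict scalars along $H \hookrightarrow G$; by \cite[Proposition ...]{BaDuLe} or Furstenberg's induction, faithfulness of a boundary action passes to any closed subgroup of finite covolume. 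Then $R \subseteq \ker(H\acts\partial_F G) = \{1\}$.

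The main obstacle is exactly this last point — verifying that faithfulness of the boundary action descends to the finite-covolume subgroup. The honest proof routes through Zimmer-amenability of the $G$-action on $\partial_F G$ and the fact that an invariant probability measure on $G/H$ lets one induce/restrict amenable actions: an amenable normal subgroup $R \trianglelefteq H$ would, via the existence of an $H$-map from an amenable Poisson-type boundary, act trivially on $\partial_F G$, and triviality of this action combined with faithfulness of $G \acts \partial_F G$ and the density/cofinite-volume position of $H$ forces $R = 1$. I would structure the write-up as: (1) recall the \cite{BaDuLe} characterization of $\mathrm{Rad}_{\mathrm{am}}$ as the kernel of the Furstenberg boundary action; (2) show any amenable normal subgroup of a closed finite-covolume $H \leq G$ lies in the kernel of the $H$-action on $\partial_F G$; (3) use finite covolume to upgrade ``trivial $G$-action kernel'' to ``trivial $H$-action kernel''; (4) conclude. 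Steps (1), (2), (4) are short; step (3) is the heart and is where I would cite the appropriate lemma from \cite{BaDuLe} or give the short Zimmer-amenability argument, noting that this is the content of the classical Furstenberg observation referenced via \cite[Lemma 16.7]{Fur73}.
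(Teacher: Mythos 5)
Your proposal goes by a genuinely different route from the paper, and it misattributes the key reference. The paper's proof does not involve the Furstenberg boundary at all: the main theorem of \cite{BaDuLe} is the statement that every \emph{amenable invariant random subgroup} of a locally compact second countable group lies almost surely in its amenable radical. With $R$ the amenable radical of $H$, the conjugation map $gH \mapsto gRg^{-1}$ is well defined (as $R$ is characteristic in $H$, so $H$ normalizes it) and pushes the finite $G$-invariant measure on $G/H$ to an amenable IRS of $G$; by \cite{BaDuLe} this IRS is concentrated on subgroups of $\mathrm{Rad}_{\mathrm{am}}(G) = \{1\}$, so $R = \{1\}$. This is the IRS repackaging of Furstenberg's \cite[Lemma 16.7]{Fur73}, and it is shorter and cleaner than boundary theory.

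On your boundary approach itself: the worry you flag about the kernel is spurious. For the restricted action, $\ker(H \acts \partial_F G) = H \cap \ker(G \acts \partial_F G)$ by definition, so faithfulness of the $H$-action is automatic once $G$ acts faithfully; there is no ``a priori larger'' kernel. The genuine gap is the step you defer and then would miscite: you must show that $\partial_F G$, viewed as an $H$-flow, is still strongly proximal and minimal (or at least has enough structure that any amenable normal subgroup $R \trianglelefteq H$ acts trivially on it). That restriction lemma is classical for lattices, but for general closed subgroups of finite covolume it needs a proof or a correct citation, and \cite{BaDuLe} is not that reference. If you supply it, your remaining steps (let $R$ fix a measure, contract to a point mass by strong proximality, spread by minimality, intersect with the trivial kernel) do close the argument.
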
	
\begin{proof}
Let $H$ be a closed subgroup of finite covolume in $G$ and $R \leq H$ be the amenable radical of $H$. The orbit map under the conjugation action of $G$ induces a continuous surjective map $G/H \to \{gRg^{-1} \mid g \in G\} \subset \Sub(G)$. Pushing forward the finite invariant measure on $G/H$ we get an amenable IRS of $G$. The main result of \cite{BaDuLe} ensures that this IRS is contained in the amenable radical of $G$, which is trivial by hypothesis. We conclude that $R$ is trivial. 
\end{proof}

The following algebraic consequences will be useful. 

\begin{cor}\label{cor:CentLattice}
Let $G$ be a $\sigma$-compact locally compact group with a trivial amenable radical. Then every closed subgroup $H \leq G$ of finite covolume has a trivial centralizer.  
\end{cor}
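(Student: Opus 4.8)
The plan is to prove the contrapositive-free version directly: show that $Z := \cent_G(H)$ is trivial by exhibiting it as an amenable normal subgroup of a closed subgroup of $G$ that has trivial amenable radical. The main ingredients are Theorem~\ref{thm:RadAmen} and the passage from finite covolume to compactness of an associated quotient.

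First I would observe that $Z \cap H = \cent_H(H) = \mathrm{Z}(H)$ is a closed abelian, hence amenable, normal subgroup of $H$. Since $H$ has finite covolume in $G$, Theorem~\ref{thm:RadAmen} gives that $H$ has trivial amenable radical, whence $\mathrm{Z}(H) = \{1\}$ and so $Z \cap H = \{1\}$. Next, $Z$ centralizes, in particular normalizes, $H$, and conversely $H$ normalizes $Z = \cent_G(H)$ (a routine computation, or the standard fact that $\cent_G(H) \normal \norm_G(H)$). Hence $HZ$ is a subgroup, and its closure $L := \overline{HZ}$ is a closed subgroup of $G$ in which both $H$ and $Z$ are normal. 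Since $H \leq L \leq G$, the group $L$ has finite covolume in $G$, so its amenable radical is trivial by Theorem~\ref{thm:RadAmen}.

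The next step is to see that $L/H$ is compact. Since $H$ has finite covolume in $G$ and $H \leq L \leq G$, the subgroup $H$ also has finite covolume in $L$: this is standard, and follows because $G$ is unimodular (it contains a finite-covolume subgroup), so the relevant modular functions are trivial, the $L$-invariant measure on $L/H$ exists, and the finite $G$-invariant measure on $G/H$ disintegrates over $G/L$ with constant fibre mass equal to the $L$-covolume of $H$, forcing the latter to be finite. As $H$ is normal in $L$, the quotient $L/H$ is then a locally compact group carrying a finite Haar measure, hence is compact. Now fix $z \in Z$. Since $H$ centralizes $Z$, we have $H \leq \cent_L(z)$, so $\cent_L(z)$ is cocompact in $L$ and the conjugacy class $z^L$, being the continuous image of the compact space $L/\cent_L(z)$, is compact; in particular the $Z$-conjugacy class of $z$ has compact closure. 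Thus $Z$ is a locally compact group all of whose conjugacy classes are relatively compact, i.e. an $[\mathrm{FC}]^-$-group, and such groups are amenable (one reduces to the compactly generated case, where they are compact-by-abelian). Being an amenable normal subgroup of $L$, the group $Z$ is contained in the amenable radical of $L$, which is trivial; hence $\cent_G(H) = Z = \{1\}$.

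The step I expect to be the main obstacle is the claim that $H$ has finite covolume in the intermediate group $L$; this is the only genuinely measure-theoretic point, and it is what lets one convert the hypothesis into the compactness of $L/H$ that drives the conjugacy-class argument. The remaining external input — that locally compact groups with relatively compact conjugacy classes are amenable — is classical; alternatively one can phrase the conclusion without naming the class, by noting that the set of elements of $L$ with relatively compact conjugacy class forms a normal amenable (hence trivial) subgroup of $L$ containing $Z$.
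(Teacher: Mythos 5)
Your proof is correct in its mathematical content, but it takes a noticeably heavier and different route than the paper's, and it has one small technical gap. The paper's proof works one element at a time: for $a \in \cent_G(H)$, it sets $A = \overline{\langle a \rangle}$ and $B = \overline{AH}$; since $A$ normalizes $H$ (it centralizes $H$ and normalizers are closed), $B$ is a closed subgroup of $G$ of finite covolume, $A$ is a closed \emph{abelian} normal subgroup lying in the center of $B$, and so Theorem~\ref{thm:RadAmen} applied to $B$ forces $A = \triv$, i.e. $a = 1$. This elementwise reduction means the only amenability input needed is that abelian groups are amenable. Your global argument instead builds $L = \overline{HZ}$, shows $L/H$ is a compact group via the covolume-restriction and disintegration argument, deduces that $Z$ is an $[\mathrm{FC}]^-$-group, and then invokes the (genuine but heavier) classical fact that $[\mathrm{FC}]^-$-groups are amenable. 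Both routes end at Theorem~\ref{thm:RadAmen}, but you pay for treating all of $Z$ at once with an extra measure-theoretic step and a nontrivial external amenability theorem, neither of which the paper needs. The observation $Z\cap H = \mathrm{Z}(H) = \triv$ in your first paragraph is never used afterwards and can be deleted.

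There is also a small gap you should address: Theorem~\ref{thm:RadAmen} is stated for \emph{second countable} locally compact groups, whereas the corollary only assumes $G$ is $\sigma$-compact. The paper's proof begins by invoking \cite[Theorem 8.7]{HR} to find a compact normal subgroup $K \normal G$ with $G/K$ second countable, and notes $K$ is trivial because the amenable radical is; only then does it apply Theorem~\ref{thm:RadAmen}. You apply Theorem~\ref{thm:RadAmen} directly to $H$ and to $L$ as finite-covolume subgroups of the $\sigma$-compact $G$ without first reducing to the second countable case, so you should insert that reduction at the start.
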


\begin{proof}
By \cite[Theorem 8.7]{HR}, the group $G$ has a   compact normal subgroup $K$ such that $G/K$ is second countable. Since $G$ has a trivial amenable radical, the group $K$ is trivial, hence $G$ is second countable. 
Let $a \in \cent_G(H)$ and set $A = \overline{\langle a \rangle}$ and $B = \overline{AH}$. The subgroup $B$ is closed, is of finite covolume in $G$, and contains $A$ in its center. Theorem~\ref{thm:RadAmen} implies that $A$ is trivial. We conclude that $\cent_G(H)$ is trivial. 
\end{proof}

\begin{cor}\label{cor:NormLattice}
	Let $G$ be a  locally compact group with a trivial amenable radical. If $\Gamma \leq G$ is a finitely generated lattice, then $\norm_G(\Gamma)/\Gamma$ is finite. In particular, $\norm_G(\Gamma)$ is a finitely generated lattice.
\end{cor}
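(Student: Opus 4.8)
The plan is to put $N := \norm_G(\Gamma)$, observe that $\Gamma$ is a normal subgroup of $N$, and prove that $N$ is in fact \emph{discrete}; once this is established, the statement follows from an elementary covolume computation. First I would record that $G$ is $\sigma$-compact: let $G_0 \leq G$ be the open (hence closed) $\sigma$-compact subgroup generated by a compact identity neighbourhood. The set $G_0\Gamma$ is open, so its image in $G/\Gamma$ is open of positive measure; since $G$ acts on $G/\Gamma$ preserving a finite measure, there are only finitely many distinct cosets $gG_0\Gamma$, so $[G:G_0\Gamma]<\infty$. As $\Gamma$ is finitely generated, hence countable, $G_0\Gamma$ is $\sigma$-compact, and therefore so is $G$. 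Corollary~\ref{cor:CentLattice} then applies to the closed, finite-covolume subgroup $\Gamma$ and yields $\cent_G(\Gamma)=\triv$.

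The key step is to show that $N$ is discrete. Fix a finite generating set $\gamma_1,\dots,\gamma_k$ of $\Gamma$. For each $i$, the map $N\to G$, $n\mapsto n\gamma_i n^{-1}$, is continuous, and because $n$ normalizes $\Gamma$ its image lies in $\Gamma$, which is discrete as a subspace of $G$; hence this map is locally constant and its fibre over $\gamma_i$, namely $\cent_N(\gamma_i)$, is open in $N$. Consequently $\cent_N(\Gamma)=\bigcap_{i=1}^k \cent_N(\gamma_i)$ is an open subgroup of $N$. But $\cent_N(\Gamma)=N\cap\cent_G(\Gamma)=\triv$, so $\triv$ is open in $N$, i.e. $N$ is discrete (and therefore closed in $G$).

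To conclude: $N$ is a discrete subgroup of $G$ containing the lattice $\Gamma$, and $G$ is unimodular. The map $\Gamma\backslash G\to N\backslash G$ has fibres of cardinality $[N:\Gamma]$, so $\mathrm{vol}(\Gamma\backslash G)=[N:\Gamma]\cdot\mathrm{vol}(N\backslash G)$; since the left-hand side is finite and $\mathrm{vol}(N\backslash G)>0$, we get $[N:\Gamma]<\infty$, that is, $\norm_G(\Gamma)/\Gamma$ is finite. In particular $N$ is discrete of finite covolume, hence a lattice, and it is finitely generated, being a finite extension of the finitely generated group $\Gamma$. I expect the discreteness step to be the crux of the argument: it is exactly there that both hypotheses are used essentially — finite generation of $\Gamma$ (to reduce $\cent_N(\Gamma)$ to a finite intersection of open subgroups) and triviality of the amenable radical of $G$ (to force $\cent_G(\Gamma)$ to be trivial, via Corollary~\ref{cor:CentLattice}).
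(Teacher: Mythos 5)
Your proof is correct and, like the paper's, hinges on Corollary~\ref{cor:CentLattice} to get $\cent_G(\Gamma)=\triv$; but your route to the crucial discreteness of $N=\norm_G(\Gamma)$ is genuinely different. The paper sends $N$ injectively (thanks to trivial centralizer) into $\Aut(\Gamma)$, observes that $\Aut(\Gamma)$ is countable because $\Gamma$ is finitely generated, and then invokes the Baire category theorem to conclude that the countable locally compact group $N$ is discrete. You instead exploit finite generation locally: for each generator $\gamma_i$, the conjugation map $n\mapsto n\gamma_i n^{-1}$ lands in the discrete set $\Gamma$, so it is locally constant and $\cent_N(\gamma_i)$ is open; the finite intersection $\cent_N(\Gamma)=\bigcap_i\cent_N(\gamma_i)$ is then open, and being trivial it forces $N$ to be discrete. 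This avoids Baire entirely and is arguably more elementary. Your $\sigma$-compactness argument is also different from (and somewhat less clean than) the paper's, which simply cites that a group containing a finitely generated lattice is compactly generated; your measure-theoretic argument works in spirit, but the phrase ``finitely many distinct cosets $gG_0\Gamma$, so $[G:G_0\Gamma]<\infty$'' is loose since $G_0\Gamma$ need not be a subgroup — it should be phrased in terms of the (open, positive-measure, pairwise disjoint) $G_0$-orbits on $G/\Gamma$, which are finite in number by finiteness of the total measure, whence $G$ is a finite union of $\sigma$-compact sets $G_0 g\Gamma$. The final covolume computation matches the paper's appeal to Raghunathan's Lemma~I.1.6.
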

\begin{proof}
Since $G$ contains a finitely generated lattice, it is compactly generated by \cite[Lemma~2.12]{CaMo_discrete}. In particular, $G$ is $\sigma$-compact. By Corollary~\ref{cor:CentLattice}, we have $\cent_G(\Gamma)=\{1\}$, so the natural continuous map $\norm_G(\Gamma) \to \Aut(\Gamma)$ is injective. As $\Gamma$ is finitely generated, its automorphism group is  countable, so $\norm_G(\Gamma)$ is a countable locally compact group. Hence, $\norm_G(\Gamma)$ is discrete by the Baire category theorem. Since  $\Gamma$ is of finite covolume in $G$, it is of finite covolume in $\norm_G(\Gamma)$ (see \cite[Lemma~I.1.6]{Raghu}), hence  it is of finite index in  $\norm_G(\Gamma)$. The required assertions now follow. 
\end{proof}

\subsection{The discrete residual and the quasi-center}

The \textbf{discrete residual} of a topological group $G$, denoted by $\Res(G)$, is the intersection of all open normal subgroups of $G$. A group whose discrete residual is trivial is called \textbf{residually discrete}. Residually discrete groups are exactly the groups such that each non-trivial element is non-trivial in some discrete quotient. 

The \textbf{quasi-center} of a locally compact group $G$, denoted by $\QZ(G)$, is the set of elements whose centralizer is open. It is a characteristic (but not necessarily closed) subgroup of $G$ containing all discrete normal subgroups.

\begin{lem}\label{lem:ResProduct}
For $G_1, G_2$ locally compact groups, we have 
$$\Res(G_1 \times G_2) = \Res(G_1) \times \Res(G_2)$$ 
and
$$ \QZ(G_1 \times G_2) = \QZ(G_1) \times \QZ(G_2).$$ 
\end{lem}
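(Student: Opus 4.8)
\textbf{Proof strategy for Lemma~\ref{lem:ResProduct}.} The plan is to prove the two equalities separately, and in each case to prove the two inclusions separately. The quasi-center identity is the easier of the two and I would dispatch it first: an element $(g_1,g_2) \in G_1 \times G_2$ has open centralizer if and only if $\cent_{G_1 \times G_2}(g_1, g_2) = \cent_{G_1}(g_1) \times \cent_{G_2}(g_2)$ is open, and since the open sets of the product topology are generated by products of opens, this happens exactly when both $\cent_{G_1}(g_1)$ and $\cent_{G_2}(g_2)$ are open. Hence $(g_1,g_2) \in \QZ(G_1 \times G_2) \iff g_1 \in \QZ(G_1)$ and $g_2 \in \QZ(G_2)$, which is the claimed equality.

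For the discrete residual, I would first establish $\Res(G_1 \times G_2) \subseteq \Res(G_1) \times \Res(G_2)$. If $O_1 \normal G_1$ is open normal, then $O_1 \times G_2$ is open normal in $G_1 \times G_2$, so $\Res(G_1 \times G_2) \leq O_1 \times G_2$; intersecting over all such $O_1$ (and symmetrically over open normal $O_2 \normal G_2$) gives $\Res(G_1 \times G_2) \leq \big(\bigcap_{O_1} O_1\big) \times G_2 \cap G_1 \times \big(\bigcap_{O_2} O_2\big) = \Res(G_1) \times \Res(G_2)$. For the reverse inclusion, it suffices to show that every open normal subgroup $N \normal G_1 \times G_2$ contains $\Res(G_1) \times \Res(G_2)$. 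Here is where the main subtlety lies: $N$ need not be a product of subgroups of the factors, so I cannot simply project. Instead I would use the standard trick that $N$ contains an open normal subgroup of product form. Concretely, since $N$ is open it contains a basic open neighbourhood of the identity, hence contains $U_1 \times U_2$ for some identity neighbourhoods $U_i$ in $G_i$; then $N_1 := \{g_1 \in G_1 : (g_1, 1) \in N\}$ and $N_2 := \{g_2 \in G_2 : (1,g_2) \in N\}$ are normal in $G_1$ and $G_2$ respectively, each is open (as it contains $U_i$), and $N_1 \times N_2 \leq N$. Therefore $N \supseteq N_1 \times N_2 \supseteq \Res(G_1) \times \Res(G_2)$, where the last inclusion holds because $\Res(G_i) \leq N_i$ by definition of the discrete residual. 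Intersecting over all open normal $N$ yields $\Res(G_1 \times G_2) \supseteq \Res(G_1) \times \Res(G_2)$, completing the proof.

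The one point that requires a moment's care is verifying that $N_i$ is normal and open in $G_i$: normality follows since conjugating $(g_1,1)$ by $(h_1,1)$ stays in $N$ and lands in the first-coordinate slice, and openness follows since $N_i \supseteq U_i$. Everything else is formal manipulation of the definitions of $\Res$ and $\QZ$ together with the description of the product topology, so I do not anticipate a genuine obstacle; the ``hard part'' is merely remembering to extract the product-form open normal subgroup $N_1 \times N_2$ rather than trying to argue directly with the possibly-non-split subgroup $N$.
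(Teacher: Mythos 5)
Your proposal is correct and follows essentially the same strategy as the paper's proof. The only cosmetic difference is in the construction of the product-form open normal subgroup inside $N$: the paper takes $N_i$ to be the normal closure in $G_i$ of a compact identity neighbourhood $K_i$ with $K_1 \times K_2 \subseteq N$ and then uses normality of $N$ to see $N_i \times \triv \subseteq N$, whereas you take $N_i$ to be the full slice $\{g_i : (g_i,1) \in N\}$, which makes the inclusion $N_1 \times N_2 \subseteq N$ and the normality and openness of $N_i$ immediate; the quasi-center argument is likewise identical in substance to the paper's appeal to openness of the coordinate projections.
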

\begin{proof}
	If $N_1$ and $N_2$ are open normal subgroups of $G_1$ and $G_2$ respectively, then $N_1 \times N_2$ is open and normal in $G_1 \times G_2$. This implies that $\Res(G_1 \times G_2) \leq \Res(G_1) \times \Res(G_2)$. Conversely, let $N \leq G_1 \times G_2$ be an open normal subgroup. There is a compact identity neighborhood $K_i$ in $G_i$ such that $K_1 \times K_2$ is contained in $N$. Let $N_i$ be the smallest normal subgroup of $G_i$ containing $K_i$. Since the subgroup $N_i$ contains an identity neighborhood of $G_i$, it is open.  Since $K_i \times \triv \subset N$, we have $N_i \times \triv \subset N$, and thus $N_1 \times N_2 \leq N$. We conclude that $\Res(G_1 \times G_2) \geq \Res(G_1) \times \Res(G_2)$.
	
	For the quasi-center, notice that the inclusion $\QZ(G_1 \times G_2) \geq \QZ(G_1) \times \QZ(G_2)$ is clear. The reverse inclusion follows from the fact that the canonical projections of $G_ 1 \times G_2$ to $G_1$ and $G_2$ are open maps. 
\end{proof}

We shall need two further facts.

\begin{prop}[{\cite[Corollary 4.1]{CM11}}]\label{prop:ResDis}
	A compactly generated totally disconnected locally compact group is residually discrete if and only if it has a basis of identity neighborhoods consisting of compact open normal subgroups. 
\end{prop}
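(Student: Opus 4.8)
My plan is the following. The backward implication needs no compact-generation hypothesis: if $(N_i)_{i\in I}$ is a basis of identity neighborhoods consisting of compact open normal subgroups, then $\Res(G)\sleq N_i$ for each $i$ by definition of the discrete residual, hence $\Res(G)\sleq\bigcap_{i\in I}N_i=\triv$, since the $N_i$ form a neighborhood basis of the identity in the Hausdorff group $G$.

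For the forward implication, let $G$ be a compactly generated \tdlc group with $\Res(G)=\triv$. First I would reduce the statement, via van Dantzig's theorem, to showing that for every compact open subgroup $U\sleq G$ the normal core $U_G:=\bigcap_{g\in G}gUg^{-1}$ is \emph{open}: indeed $U_G$ is the largest normal subgroup of $G$ contained in $U$, it is automatically compact (a closed subset of $U$), and once it is open the family $\{U_G: U\text{ compact open}\}$ is the desired basis. Next I would record two consequences of $\Res(G)=\triv$. (i) For every open normal subgroup $M\normal G$ the image $UM/M$ of $U$ in $G/M$ is finite ($U\cap M$ being open in the compact group $U$), and $\bigcap_M UM=U$, the intersection over all open normal $M$: if $x\in UM$ for every $M$, write $u_M^{-1}x\in M$ with $u_M\in U$; the clopen subsets $\{u\in U:u^{-1}x\in M\}$ of the compact group $U$ form a filter base, so they have a common point $u_0$, and then $u_0^{-1}x\in\Res(G)=\triv$, i.e. $x\in U$. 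Applying this to each conjugate of $U$ exhibits $U_G$ as an intersection of open normal subgroups of $G$ (the preimages of the finite normal cores of $UM/M$ in $G/M$). (ii) Every contraction group is trivial: if $x\in\con(g)=\{y\in G:\lim_n g^nyg^{-n}=1\}$ and $M\normal G$ is open, then $g^nxg^{-n}\in M$ for $n$ large and normality gives $x\in M$, whence $\con(g)\sleq\Res(G)=\triv$ for all $g$; in particular $G$ is uniscalar.

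The hard part — and the point where compact generation is indispensable — is to upgrade (i) and (ii) to the conclusion that $U_G$ is open. I would work with the Cayley--Abels graph: compact generation together with van Dantzig yields a finite set $g_1,\dots,g_n$ with $G=\grp{U,g_1,\dots,g_n}$, hence a connected, locally finite graph $\mathcal G$ with vertex set $G/U$ on which $G$ acts continuously, properly and vertex-transitively; since $\mathcal G$ is connected, $U_G$ is exactly the kernel of $G\to\Aut(\mathcal G)$, and it equals $\bigcap_{r\ge0}U_r$ where $U_r$ is the (compact open) pointwise stabilizer of the ball of radius $r$ about the base vertex. Thus $U_G$ is open if and only if the descending chain $(U_r)_r$ stabilizes, equivalently if and only if the image of $G$ in $\Aut(\mathcal G)$ is discrete. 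For each $r$, consequence (i) supplies an open normal $M$ for which $\mathcal G\to\mathcal G/M$ is injective on the (finite) ball of radius $r$; combining this with the triviality of contraction groups and with the finiteness of the generating set, one argues that a failure of $(U_r)_r$ to stabilize would produce a non-trivial element lying in every open normal subgroup of $G$, contradicting $\Res(G)=\triv$. I expect this last implication to be the genuine obstacle: in effect one must show that a residually discrete, compactly generated group cannot act ``non-discretely'' on a locally finite graph. I would attack it by induction on $r$, first using Willis's theory of tidy subgroups to promote uniscalarity to the statement that each generator $g_i$ normalizes arbitrarily small compact open subgroups, and then by a combinatorial argument assembling the finitely many generators; alternatively, when $\mathcal G$ has an essential tree-like part, a ping-pong/Tits-type argument (compare Proposition~\ref{prop:Tits}) should directly exhibit a non-trivial closed normal subgroup contained in $\Res(G)$. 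In every case the crucial and delicate step is globalizing a statement about individual generators to all of $G$, which is precisely what a finite generating set modulo a compact subgroup makes possible.
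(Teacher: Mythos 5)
The paper does not prove this statement; it is imported verbatim as \cite[Corollary~4.1]{CM11}, so there is no ``paper's own proof'' to compare against. Your attempt must therefore stand on its own, and unfortunately it has a genuine gap at exactly the point you yourself flag.

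Your backward direction is correct, and your preliminary observations are sound and nicely phrased: the reduction to showing that the normal core $U_G$ of a compact open $U$ is open, the compactness/filter argument showing $U=\bigcap_M UM$ over open normal $M$, and the triviality of all contraction groups (hence uniscalarity). These are all valid. The problem is the last paragraph. The sentence ``a failure of $(U_r)_r$ to stabilize would produce a non-trivial element lying in every open normal subgroup'' is not an argument; it is a restatement, in Cayley--Abels language, of the very implication you are trying to prove. You then say you ``would attack it'' by two speculative strategies, but neither is carried out, and each has a real obstruction. First, uniscalarity by itself does \emph{not} give that each $g_i$ normalizes arbitrarily small compact open subgroups; that stronger property is equivalent to the triviality of the nub $\mathrm{nub}(g_i)$, which does follow from $\con(g_i)=\triv$ (Willis, Baumgartner--Willis), but you would need to invoke and verify this. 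Second, and more seriously, even granting it, you still have to produce a \emph{single} small compact open subgroup normalized by \emph{all} the generators simultaneously; this ``assembling the finitely many generators'' step is precisely where compact generation has to do real work and where a new idea is indispensable --- it is the content of the result. The actual proof in \cite{CM11} does not proceed via tidy subgroups or contraction groups at all; it rests on a structural analysis of the lattice of closed normal subgroups of a compactly generated \tdlc group (their Theorem~A and the surrounding lemmas on filtering families of normal subgroups), from which Corollary~4.1 drops out. Your framework is reasonable but stops short of the theorem; I would recommend reading the proof of \cite[Corollary~4.1]{CM11} directly rather than trying to reconstruct it from the Willis machinery.
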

	
\begin{cor}\label{cor:CentraDiscRes}
Let $G$ be a compactly generated locally compact with a trivial amenable radical. If every open normal subgroup of $G$ is of finite index, then the discrete residual $\Res(G)$ has a trivial centralizer. 
\end{cor}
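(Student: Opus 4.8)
The plan is to show that the hypotheses force the quotient $G/\Res(G)$ to be compact, and then to invoke Corollary~\ref{cor:CentLattice} applied to the closed normal subgroup $R:=\Res(G)$.

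First I would record the relevant structure. Being the intersection of the open normal subgroups of $G$ (each of which is closed), $R$ is a closed characteristic subgroup, in particular normal, and the quotient map $q\colon G\to \bar G:=G/R$ is open. Since $R$ is contained in every open normal subgroup of $G$, the assignment $N\mapsto q(N)$ is a bijection between the open normal subgroups of $G$ and those of $\bar G$, and it preserves intersections; hence $\Res(\bar G)=q(R)=\triv$, i.e.\ $\bar G$ is residually discrete. Moreover $\bar G$ is compactly generated, being a quotient of $G$, and its identity component is contained in every open normal subgroup of $\bar G$ and thus in $\Res(\bar G)=\triv$; so $\bar G$ is a compactly generated residually discrete \tdlc group.

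The crux of the argument is the next step: $\bar G$ is compact. By Proposition~\ref{prop:ResDis}, the group $\bar G$ admits a compact open normal subgroup $U$. Its preimage $q^{-1}(U)$ is an open normal subgroup of $G$, hence of finite index by hypothesis, so $U$ has finite index in $\bar G$. Thus $\bar G$ is a finite union of translates of the compact set $U$, and is therefore compact.

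Finally I would conclude. Since $\bar G=G/R$ is a compact group, its normalized Haar measure is a finite $G$-invariant measure on the homogeneous space $G/R$, so $R$ is a closed subgroup of finite covolume in $G$. As $G$ is compactly generated, it is $\sigma$-compact, and it has trivial amenable radical by hypothesis; Corollary~\ref{cor:CentLattice} then yields $\cent_G(R)=\triv$, that is, $\Res(G)$ has trivial centralizer. Apart from the appeal to Proposition~\ref{prop:ResDis}, which converts the finite-index hypothesis into compactness of $G/\Res(G)$, the argument is routine, and I do not anticipate a genuine obstacle.
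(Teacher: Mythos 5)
Your proof is correct and follows essentially the same route as the paper: use Proposition~\ref{prop:ResDis} to show that $G/\Res(G)$ is compact, conclude that $\Res(G)$ has finite covolume, and invoke Corollary~\ref{cor:CentLattice}. The only cosmetic difference is how the finite-index hypothesis is deployed to get compactness (you pull back the compact open normal subgroup and observe it has finite index; the paper phrases it as ``no infinite discrete quotient''), which is the same observation.
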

\begin{proof}
By Proposition~\ref{prop:ResDis}, the compactly generated residually discrete (hence totally disconnected) quotient group $G/\Res(G)$ is compact-by-discrete. Since $G$ does not have any infinite discrete quotient, the same holds for 	$G/\Res(G)$, and we infer that $G/\Res(G)$ is compact. In particular $G/\Res(G)$ carries a $G$-invariant measure of finite volume. In other words $\Res(G)$ is a closed subgroup of finite covolume in $G$. Its centralizer is thus trivial by Corollary~\ref{cor:CentLattice}.
\end{proof}

\subsection{Residually finite irreducible lattices in products}

A fundamental discovery of Burger and Mozes \cite{BuMo2} is that an irreducible lattice in the product of two trees which is residually finite must have injective projection to both factors under natural, mild conditions. That fact is generalized to lattices in products of groups acting on CAT($0$) spaces in \cite[\S2.B]{CaMo_KM}.  In this section, we present a purely algebraic version of that fact, which is based on similar arguments.  

Throughout this section, we let $G_1, \dots, G_n$ be non-trivial locally compact groups and denote by 
$$\pi_i \colon G_1 \times \dots \times G_n \to G_i$$ 
the canonical projection to the $i$-th factor. 

A   group is called \textbf{Noetherian} if it satisfies the ascending chain condition on   subgroups. A group is Noetherian if and only if all its  subgroups are finitely generated.  Finite groups are obvious examples of Noetherian groups.

\begin{lem}\label{lem:ResNoeth}
	Let $\Gamma \leq G_1 \times G_2$ be a lattice such that $\pi_1(\Gamma)$ is dense in $G_1$.   	Assume that the discrete residual $\Res(G_1)$ has a trivial centralizer in $G_1$. If $\Gamma$ is residually Noetherian (e.g. residually finite), then the restriction $\pi_2 \rest_\Gamma \colon \Gamma \to G_2$ is injective. 
\end{lem}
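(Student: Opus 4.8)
The plan is to argue by contradiction: suppose $N := \Ker(\pi_2\rest_\Gamma)$ is non-trivial, and derive a contradiction with the hypothesis that $\Res(G_1)$ has trivial centralizer. First I would observe that $N = \Gamma \cap (G_1\times\{1\})$, so via the identification of $G_1\times\{1\}$ with $G_1$ we may regard $N$ as a subgroup of $G_1$, normalized by $\pi_1(\Gamma)$ (since $N$ is normal in $\Gamma$ and the $G_1$-coordinate of the conjugation is exactly the $\pi_1(\Gamma)$-action). Because $\pi_1(\Gamma)$ is dense in $G_1$ and conjugation is continuous, the closure $\ol{N}$ is then a \emph{closed normal subgroup of $G_1$}.

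The key step is to pin down how $\Res(G_1)$ interacts with $\ol N$. The natural dichotomy is: either $\ol N$ is discrete, or it is not. If $\ol N$ is non-discrete, it is a non-discrete closed normal subgroup of $G_1$, hence (being normal) it should contain $\Res(G_1)$ up to the usual argument — more precisely, a non-discrete closed normal subgroup cannot be avoided by the cofinal system of open normal subgroups, so $\Res(G_1)\leq \ol N$; but then $N$ (dense in $\ol N$) together with the residual-finiteness/residual-Noetherianity of $\Gamma$ gives a finitely generated, hence here I would want to show $N$ is too large to sit inside a Noetherian quotient — this is where residual Noetherianity of $\Gamma$ must be used. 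If $\ol N$ is discrete, then $N$ is a discrete normal subgroup of $G_1$ that is normalized by the dense subgroup $\pi_1(\Gamma)$, hence centralized by $\pi_1(\Gamma)$ (a discrete group has no non-trivial continuous automorphisms coming from a connected-to-identity action — more carefully, the centralizer of each element of $N$ is open since $N$ is discrete, so $N\subseteq \QZ(G_1)$ and in particular $N$ is centralized by $\ol{\pi_1(\Gamma)}=G_1$), and then $N$ centralizes $\Res(G_1)$, contradicting the hypothesis unless $N$ is trivial.

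So the structural skeleton is: (1) reduce to $\ol N \trianglelefteq G_1$ closed; (2) handle the discrete case by showing $N\leq \QZ(G_1)$, whence $N$ centralizes $\Res(G_1)$ and is therefore trivial; (3) handle the non-discrete case by showing $\Res(G_1)\leq \ol N$ and then using that $\Gamma$ residually Noetherian forces $N$ — and through it a quotient of $\Gamma$ mapping onto a quotient of $\ol N$ that is not Noetherian — to collapse. I expect the main obstacle to be step (3): making precise the claim that a ``large'' normal subgroup $N$ of $\Gamma$ cannot survive in a residually Noetherian group. The cleanest route is probably: if $\Res(G_1)$ is non-trivial it is not Noetherian (a compactly generated, non-discrete t.d.l.c.\ group that is topologically perfect-ish or at least not virtually-$\Zbb$-by-finite has non-finitely-generated subgroups), or alternatively to use Margulis-type normal subgroup input — but since we only have the algebraic hypotheses of the lemma, I would instead exploit that $\Gamma/N \cong \pi_2(\Gamma)$ while $\Gamma/N'$ is Noetherian for a cofinal family of $N'$, and intersect to squeeze $N$; the delicate point is translating "dense in the non-discrete $\ol N \supseteq \Res(G_1)$" into a failure of the ascending chain condition downstairs. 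I would look to the Burger--Mozes argument in \cite{BuMo2} and its CAT(0) generalization in \cite{CaMo_KM} for the precise mechanism to import here.
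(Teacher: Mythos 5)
Your reduction step (1) matches the paper's opening: the kernel $N := \Ker(\pi_2\rest_\Gamma)$ is identified with a subgroup $N_1 \leq G_1$ (via $N = N_1 \times \triv$), normalized by $\pi_1(\Gamma)$ and hence by all of $G_1 = \overline{\pi_1(\Gamma)}$. After that, however, the proposal goes wrong, and the error is structural rather than cosmetic.

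First, the dichotomy "either $\overline N$ is discrete or not" is spurious: since $\Gamma$ is a lattice it is discrete, so $N_1 \times \triv = \Gamma \cap (G_1 \times \triv)$ is discrete and therefore closed, which forces $\overline{N_1} = N_1$ to be discrete. Only your "discrete case" can occur, so that branch must carry the entire argument, and it must somehow use residual Noetherianity. But your discrete-case argument never invokes that hypothesis — an immediate warning sign, since the lemma is false without it. The specific logical gap is the step "$N\subseteq \QZ(G_1)$ and in particular $N$ is centralized by $\overline{\pi_1(\Gamma)}=G_1$." Membership in the quasi-center means the centralizer of each element of $N$ is \emph{open}, not that it is all of $G_1$; and an open subgroup need not be normal, hence need not contain $\Res(G_1)$. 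So $N\leq \QZ(G_1)$ does not give $N\leq \cent_{G_1}(\Res(G_1))$. (Your non-discrete case also asserts, incorrectly, that any non-discrete closed normal subgroup contains $\Res(G_1)$; this too fails — consider a direct product — but it is moot since that case does not arise.)

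What the paper actually does, and what your sketch is missing, is the precise mechanism by which residual Noetherianity interacts with the discrete normal subgroup $N_1$. For each normal $M \leq \Gamma$ with $\Gamma/M$ Noetherian, set $M_1 \times \triv := M \cap (N_1\times\triv)$; both $N_1$ and $M_1$ are normal in $G_1$. The quotient $N_1/M_1$ embeds in $\Gamma/M$, so it is finitely generated, so $\Aut(N_1/M_1)$ is countable. The conjugation map $G_1 \to \Aut(N_1/M_1)$ therefore has a closed kernel of countable index, which is open by Baire category; hence its kernel contains $\Res(G_1)$, giving $[\Res(G_1), N_1]\leq M_1$. Intersecting over all such $M$ and using residual Noetherianity of $\Gamma$ yields $[\Res(G_1), N_1]=\triv$, so $N_1 \leq \cent_{G_1}(\Res(G_1)) = \triv$. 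The Baire-category/countable-automorphism-group step is exactly the bridge your proposal lacks; quasi-center considerations alone do not reach $\Res(G_1)$.
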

\begin{proof}
The kernel of $\pi_2 \rest_\Gamma$ is a discrete subgroup of $G_1 \times G_2$ of the form $N_1 \times \triv$ for some subgroup $N_1 \leq G_1$. In particular, $N_1$ is discrete in $G_1$. Since $N_1 \times \triv$  is normal in $\Gamma$, we see that $N_1$ is normalized by $\pi_1(\Gamma)$. The normalizer of $N_1$ is closed, so $N_1$ is in fact normal in $G_1 = \overline{\pi_1(\Gamma)}$. 

Let now $M$ be a normal subgroup of $\Gamma$ such that $\Gamma/M$ is Noetherian. Observe that $M \cap (N_1 \times \triv)$ is a discrete subgroup of $G_1 \times G_2$  of the form $M_1 \times \triv$ with $M_1 \leq G_1$ discrete. Moreover, since $M$ and $N_1 \times \triv$ are normal in $\Gamma$, it follows that $M_1$ is normal in $G_1 = \overline{\pi_1(\Gamma)}$. The quotient 
\[
N_1/M_1 \cong N_1 \times \triv/M \cap (N_1 \times \triv)
\]
is isomorphic to a subgroup of $\Gamma/M$ and is thus finitely generated since $\Gamma/M$ is Noetherian.  Hence, its automorphism group is finite or countable. 

The conjugation action of $G_1$ on $N_1/M_1$ yields a continuous homomorphism $G_1 \to \Aut(N_1 /M_1)$. The kernel of that homomorphism is a closed subgroup of $G_1$ of finite or countable index, so the kernel is open by the Baire category theorem. This implies  that $[\Res(G_1), N_1] \leq M_1$. We thus have
\[
[\Res(G_1), N_1] \times \triv \leq M_1 \times \triv \leq M.
\]
Since this is valid for   all normal subgroups $M$ of $\Gamma$  such that $\Gamma/M$ is Noetherian,  that $\Gamma$ is residually Noetherian implies that $[\Res(G_1), N_1] = \triv$. We conclude that $N_1$ is contained in the centralizer of $\Res(G_1)$, which is trivial by hypothesis. 
\end{proof}

\subsection{Normalizers of compact open subgroups}

\begin{lem}\label{lem:DiscreteProj}
	Let $\Gamma \leq G_1 \times G_2$ be a lattice. Assume that $G_2$ is totally disconnected and that at least one of the following conditions hold:
\begin{enumerate}[(1)]
	\item $\Gamma$ is cocompact, $G_2$ is compactly generated, and every cocompact lattice in $G_2$ has a trivial centralizer in $G_2$. 
	
	\item $G_2$ has Kazhdan's property (T), and every lattice in $G_2$ has a trivial centralizer. 
\end{enumerate}
If   $\overline{\pi_1(\Gamma)}$ has a compact open normal subgroup (e.g. $\pi_1(\Gamma)$ is discrete in $G_1$), then $\pi_2(\Gamma)$ is discrete in $G_2$. 
\end{lem}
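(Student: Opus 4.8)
The plan is to pass to a normal subgroup on which the first projection becomes relatively compact, extract from it a finitely generated lattice $\Delta$ of $G_2$ that is normalised by $\pi_2(\Gamma)$, and then use the hypothesis that lattices in $G_2$ have trivial centralizer to force $\norm_{G_2}(\Delta)$ to be discrete. Since $\pi_2(\Gamma)$ sits inside $\norm_{G_2}(\Delta)$, this gives the conclusion.

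Concretely, write $K := \overline{\pi_1(\Gamma)}$ and fix a compact open normal subgroup $U \normal K$ supplied by the hypothesis. First I would observe that $\Gamma$ is again a lattice in the closed subgroup $K \times G_2$ of $G_1 \times G_2$, cocompact there if it is cocompact in $G_1\times G_2$. In the cocompact case this is immediate: $\Gamma\backslash(K\times G_2)$ is a closed, hence compact, subset of $\Gamma\backslash(G_1\times G_2)$. In general it follows from the standard fact that a lattice $\Gamma$ of a locally compact group is a lattice in $\overline{\Gamma N}$ for every closed normal subgroup $N$ (here $N=\triv\times G_2$, so $\overline{\Gamma N}=K\times G_2$); see \cite{Raghu}. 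Replacing $G_1$ by $K$, I may thus assume $\pi_1(\Gamma)$ dense in $G_1$ and $U$ a compact open normal subgroup of $G_1$, so that $U\times G_2$ is open and normal in $G_1\times G_2$. Then $\Lambda := \Gamma\cap(U\times G_2)$ is a lattice in $U\times G_2$ by Proposition~\ref{prop:Raghu} (cocompact if $\Gamma$ is), and $\Lambda\normal\Gamma$ since $U\times G_2$ is normal in $G_1\times G_2\supseteq\Gamma$.

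The key point is that $\pi_1(\Lambda)\leq U$ is relatively compact. Hence the kernel $\Lambda\cap(U\times\triv)$ of $\pi_2\rest_\Lambda$ is a discrete subgroup of a compact group, so it is finite, and $\Delta:=\pi_2(\Lambda)$ is discrete in $G_2$, being the image of the discrete group $\Lambda$ under the quotient of $U\times G_2$ by its compact normal subgroup $U\times\triv$. Applying Lemma~\ref{lem:LatticeInNormal} to $\Lambda$ and $U\times\triv\normal U\times G_2$, the group $\Delta\cong\Lambda(U\times\triv)/(U\times\triv)$ has finite covolume in $(U\times G_2)/(U\times\triv)\cong G_2$, and is cocompact there if $\Lambda$ — equivalently $\Gamma$ — is cocompact. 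Thus $\Delta$ is a lattice in $G_2$, cocompact under hypothesis~(1). Moreover $\Delta$ is finitely generated: under~(1) it is a cocompact lattice in the compactly generated group $G_2$, hence compactly generated and therefore finitely generated; under~(2) it is a lattice in the Kazhdan group $G_2$, hence inherits property~(T) and is therefore finitely generated. Finally, the assumed triviality of centralizers (of cocompact lattices in case~(1), of all lattices in case~(2)) gives $\cent_{G_2}(\Delta)=\triv$.

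To conclude: since $\Lambda\normal\Gamma$, applying $\pi_2$ yields $\pi_2(\Gamma)\leq\norm_{G_2}(\Delta)$. Now $\norm_{G_2}(\Delta)$ is closed (it normalises the closed subgroup $\Delta$), hence locally compact, and the conjugation homomorphism $\norm_{G_2}(\Delta)\to\Aut(\Delta)$ has kernel $\cent_{G_2}(\Delta)=\triv$, hence is injective; as $\Delta$ is finitely generated, $\Aut(\Delta)$ is countable, so $\norm_{G_2}(\Delta)$ is a countable locally compact group and is therefore discrete by the Baire category theorem. Consequently $\pi_2(\Gamma)\leq\norm_{G_2}(\Delta)$ is discrete in $G_2$. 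The step I expect to require the most care is ensuring that $\Delta$ is genuinely a lattice in $G_2$ (not merely a discrete subgroup): this is where one must bookkeep with Lemma~\ref{lem:LatticeInNormal} under the compact normal subgroup $U\times\triv$, and, in the non-cocompact case~(2), invoke the standard fact that $\Gamma$ remains a lattice in $\overline{\pi_1(\Gamma)}\times G_2$ — a point that the cocompact case~(1) renders trivial via the compactness argument above.
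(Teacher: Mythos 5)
Your proof is correct, and while it builds the same key object as the paper, it finishes differently and more directly. In both arguments one forms, in effect, the same lattice in $G_2$: your $\Delta=\pi_2\bigl(\Gamma\cap(U\times G_2)\bigr)$ is exactly the paper's $\Lambda_2'$ (the projection to $G_2$ of $\Gamma$ intersected with $K_1\times G_2$, after unwinding the paper's quotient by $K_1$). Both proofs then establish that this group is a lattice in $G_2$ (cocompact in case (1)), finitely generated, and has trivial centralizer. The divergence is in the endgame. The paper introduces a second auxiliary lattice $\Lambda_1=\Lambda\cap(O_1/K_1\times U_2)$ for a small compact open $U_2$, shows that the closure $V_2$ of its projection to $G_2$ is a compact group normalizing the finitely generated $\Lambda_2'$ and must therefore be finite, proves $\Lambda_1\Lambda_2$ has finite index in $\Lambda$, and concludes. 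You instead observe that $\Lambda\normal\Gamma$ forces $\pi_2(\Gamma)\le\norm_{G_2}(\Delta)$ and show that this normalizer is discrete: it injects into the countable group $\Aut(\Delta)$ (kernel $\cent_{G_2}(\Delta)=\triv$), so it is a countable locally compact group, hence discrete by Baire. This is precisely the mechanism the paper uses in its Corollary~\ref{cor:NormLattice} (there under a trivial-amenable-radical hypothesis; here the needed triviality of centralizers is hypothesized directly), applied to $\Delta$ rather than to $\Gamma$ itself. Your route avoids the auxiliary subgroup $\Lambda_1$ and the finite-index bookkeeping entirely, at the modest cost of invoking the normalizer-of-a-lattice-is-discrete principle. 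One small point worth making explicit: if $\Delta$ were trivial the conjugation map into $\Aut(\Delta)$ would carry no information, but then $\cent_{G_2}(\Delta)=G_2=\triv$ by hypothesis and the conclusion is vacuous, so there is no gap.
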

\begin{proof}
The group $O_1 := 	\overline{\pi_1(\Gamma)}$ has a compact open normal subgroup $K_1$.  Additionally, $\Gamma$   is a lattice in  $O_1 \times G_2$ which is cocompact if $\Gamma$ is cocompact in $G_1\times G_2$. The natural projection $\varphi\colon O_1 \times G_2 \to O_1/K_1 \times G_2$ is proper since $K_1$ is compact. Thus, $\Lambda := \varphi(\Gamma)$ is a lattice in $O_1/K_1 \times G_2$. Since $O_1/K_1$ is discrete, it follows from Lemma~\ref{lem:LatticeInNormal} that $\Lambda_2 := \Lambda \cap (\triv \times G_2)$ is a lattice in $\triv \times G_2$, and $\Lambda_2$ is cocompact in $\{1\}\times G_2$ if $\Gamma$ is cocompact. 

Let $U_2$ be a compact open subgroup of $G_2$ such that $\triv \times U_2$ intersects $\Lambda $ trivially. The subgroup $\Lambda_1 := \Lambda \cap (O_1/K_1 \times U_2)$ is a lattice in  $O_1/K_1 \times U_2$ by Proposition~\ref{prop:Raghu}. Since $\Lambda_2$ is a normal subgroup of $\Lambda$, we see that $\Lambda_1  \Lambda_2$ is a discrete subgroup. Moreover,  $\Lambda_1\Lambda_2$ is a lattice in $\Lambda_1(\triv \times G_2)$, and $\Lambda_1(\triv \times G_2)$ is of finite covolume in $(O_1/K_1) \times G_2$. We conclude that  $\Lambda_1\Lambda_2$ is a lattice in $(O_1/K_1) \times G_2$, in view of \cite[Lemma~I.1.6]{Raghu}, hence $\Lambda_1\Lambda_2$ is  of finite index in $\Lambda$. 

Let $V_2 \leq U_2$ be the closure of the projection of $\Lambda_1$ to $G_2$ and $\Lambda'_2$ be the projection of $\Lambda_2$ to $G_2$. The group $V_2$ is compact and normalizes $\Lambda'_2$, which is discrete. If hypothesis (1) holds, then $\Lambda'_2$ is a cocompact lattice in the compactly generated group $G_2$ and is thus finitely generated with a trivial centralizer in $G_2$. If hypothesis (2) holds, then $\Lambda'_2$ is also finitely generated with a trivial centralizer in $G_2$.

The compact group $V_2$ acts by conjugation on the finitely generated group $\Lambda'_2$, so $V_2$ admits an open subgroup of finite index which centralizes $\Lambda'_2$. This subgroup must be trivial since $\cent_{G_2}(\Lambda'_2)= \triv$, so $V_2$ is finite. We conclude that $\Lambda_1$ has finite image in $G_2$. Therefore, $\Lambda_1\Lambda_2$ has discrete image in $G_2$, so $\Lambda$ has discrete image in $G_2$ as $[\Lambda: \Lambda_1\Lambda_2]$ is finite. The projection $\pi_2 \colon O_1 \times G_2 \to G_2$ factorizes through $\varphi \colon  O_1 \times G_2 \to O_1/K_1 \times G_2$. Hence,  $\Gamma$ and $\Lambda$ have the same image in $G_2$. That is to say, $\pi_1(\Gamma)$ is discrete.
\end{proof}
	
\begin{lem}\label{lem:CompactNormalizer}
	Let $\Gamma \leq G_1 \times G_2$ be a lattice. Assume that $G_2$ is totally disconnected and that at least one of the following conditions hold:
	\begin{enumerate}[(1)]
	\item $\Gamma$ is cocompact, $G_2$ is compactly generated, and every cocompact lattice in $G_2$ has a trivial centralizer in $G_2$. 
	
	\item $G_2$ has Kazhdan's property (T), and every lattice in $G_2$ has a trivial centralizer. 
	\end{enumerate}
If $\pi_2 \rest_\Gamma \colon \Gamma \to G_2$ is injective, then every compact open subgroup of $G_1$ has a compact normalizer in $G_1$. 
\end{lem}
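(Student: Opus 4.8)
The plan is to run essentially the same scheme as in the proof of Lemma~\ref{lem:DiscreteProj}, but now exploiting the injectivity of $\pi_2\rest_\Gamma$ to pin down the normalizer of a given compact open subgroup $U_1\leq G_1$. First I would fix a compact open subgroup $U_1\leq G_1$ and set $N:=\norm_{G_1}(U_1)$; the goal is to show $N$ is compact. The natural object to study is the lattice $\Gamma$ intersected with $U_1\times G_2$: by Proposition~\ref{prop:Raghu}, $\Delta := \Gamma\cap(U_1\times G_2)$ is a lattice in $U_1\times G_2$, and since $U_1$ is compact, $\pi_2(\Delta)$ is a lattice in $G_2$ (apply Lemma~\ref{lem:LatticeInNormal} after projecting out $U_1$, or simply note that a cocompact, resp. finite covolume, subgroup survives modding out a compact factor). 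Because $\pi_2\rest_\Gamma$ is injective, $\pi_2\rest_\Delta$ is an isomorphism onto its image, so $\Delta\cong \pi_2(\Delta)$ is a lattice in $G_2$ of the appropriate type; under either hypothesis (1) or (2) it is finitely generated and has trivial centralizer in $G_2$.

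Next I would bring $N$ into the picture. The key point is that $N$ normalizes $U_1$, hence $N\times G_2$ normalizes $U_1\times G_2$, and therefore $N$ acts (by conjugation in the first coordinate) on $\Gamma\cap(U_1\times G_2)=\Delta$ — more precisely, for $n\in N$ and with $\Gamma$ normal in... no: $\Gamma$ is not normal, so this needs care. Instead I would argue as follows. Consider $\Gamma':=\Gamma\cap(N\times G_2)$; this is again a lattice in $N\times G_2$, and $\Delta\normal \Gamma'$ because $U_1$ is normal in $N$ while conjugation by $\Gamma'$ preserves $\Gamma$ and preserves $N\times G_2$, hence preserves their intersection with $U_1\times G_2$ once one checks $U_1\times G_2$ is normalized by $N\times G_2$. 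So $\Gamma'$ normalizes $\Delta$, and transporting through the isomorphism $\pi_2\rest_{\Gamma'}$ we get that $\pi_2(\Gamma')$ normalizes $\pi_2(\Delta)=\Lambda'_2$ inside $G_2$. Since $\Lambda'_2$ is finitely generated with trivial centralizer in $G_2$, its normalizer $\norm_{G_2}(\Lambda'_2)$ is a discrete group in which $\Lambda'_2$ has finite index — this is exactly the mechanism of Corollary~\ref{cor:NormLattice}, with the roles adjusted — so $\pi_2(\Gamma')/\Lambda'_2$ is finite. On the other hand $\Gamma'$ is a lattice in $N\times G_2$ and $\pi_2\rest_{\Gamma'}$ is injective with image in the discrete group $\norm_{G_2}(\Lambda'_2)$, forcing $\Gamma'$ itself to be discrete in $N\times G_2$; combined with $\Gamma'$ being a lattice there, we get that $N\times G_2$ has a discrete lattice, which is automatic, so the real leverage is: $\pi_1(\Gamma')$ must then be discrete in $N$ by Lemma~\ref{lem:DiscreteProj} (whose hypotheses are met since $\overline{\pi_2(\Gamma')}\subseteq \norm_{G_2}(\Lambda'_2)$ is discrete, a fortiori has a compact open normal subgroup), and a lattice with discrete projection forces a cocompact embedding into the first factor.

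Pulling this together: $\Gamma'$ is a lattice in $N\times G_2$ whose projection to $G_2$ has discrete closure, so by (the symmetric version of) Lemma~\ref{lem:DiscreteProj} the projection $\pi_1(\Gamma')$ is discrete in $N$; since $\Gamma'$ is a lattice in $N\times G_2$ and $G_2$ carries no compact open normal subgroup obstruction here, one deduces $N$ itself must be compact — because $\Gamma'$ cocompact in $N\times G_2$ with discrete $\pi_1$-image would give a discrete cocompact $\pi_1(\Gamma')$ in $N$, and then $U_1\leq N$ open of infinite index unless $N$ is compact-by-(finitely generated discrete), at which point the cocompactness of $U_1$-cosets together with $N/U_1$ acting on $\Delta$ through a finite group (trivial centralizer again) forces $N/U_1$ finite, i.e. $N$ compact. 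I expect the main obstacle to be precisely this last bookkeeping step: carefully verifying that $\Delta$ is normal in $\Gamma':=\Gamma\cap(N\times G_2)$ and that $N/U_1$ acts on the finitely generated group $\Delta$ with finite image, so that $N=\norm_{G_1}(U_1)$ is compact — the algebra is routine but the normality and finite-covolume juggling (several applications of \cite[Lemma~I.1.6]{Raghu} and Proposition~\ref{prop:Raghu}) must be done in the right order. The trivial-centralizer hypotheses on lattices in $G_2$, provided by Corollary~\ref{cor:CentLattice} in the intended applications, are what make every ``acts by conjugation with open kernel, hence finite image'' step go through.
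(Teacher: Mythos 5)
Your proposal reaches the right conclusion but takes a roundabout path and includes one clearly incorrect step, so let me separate what works from what doesn't.

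The paper's proof is much more direct: set $O_1 := \norm_{G_1}(K_1)$ for a given compact open $K_1$, and consider only $\Gamma_{O_1} := \Gamma \cap (O_1 \times G_2)$. Since $K_1 \cap \overline{\pi_1(\Gamma_{O_1})}$ is a compact open normal subgroup of $\overline{\pi_1(\Gamma_{O_1})}$ (because $K_1 \normal O_1$), Lemma~\ref{lem:DiscreteProj} applies \emph{immediately} to the lattice $\Gamma_{O_1}$ in $O_1 \times G_2$ and gives that $\pi_2(\Gamma_{O_1})$ is discrete. Then Lemma~\ref{lem:LatticeInNormal} shows $\Gamma_{O_1} \cap (O_1 \times \triv)$ is a lattice in $O_1 \times \triv$; this intersection is trivial by injectivity of $\pi_2\rest_\Gamma$, so $O_1$ is compact. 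There is no need for the intermediate group $\Delta = \Gamma \cap (U_1 \times G_2)$ or for any normalizer computation in $G_2$: the whole detour through $\pi_2(\Delta)$, its normalizer, and the analogue of Corollary~\ref{cor:NormLattice} is extra machinery that can be replaced by a single invocation of Lemma~\ref{lem:DiscreteProj} with $\Gamma' = \Gamma_{O_1}$ and $K_1 \cap \overline{\pi_1(\Gamma')}$ as the compact open normal subgroup.

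Two concrete problems with your write-up. First, the appeal to ``the symmetric version of Lemma~\ref{lem:DiscreteProj}'' to conclude that $\pi_1(\Gamma')$ is discrete in $N$ is not valid: the hypotheses (1) and (2) of that lemma are asymmetric and are imposed on $G_2$, not on $N$, and nothing in the present lemma guarantees that cocompact lattices in $N$ have trivial centralizer or that $N$ has property (T). Fortunately this step is unnecessary — discreteness of $\pi_2(\Gamma')$ alone, combined with Lemma~\ref{lem:LatticeInNormal}, already yields that $\Gamma' \cap (N \times \triv)$ is a lattice in $N$, and injectivity of $\pi_2$ forces it to be trivial, hence $N$ compact. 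Second, your final paragraph (``a lattice with discrete projection forces a cocompact embedding into the first factor,'' ``compact-by-(finitely generated discrete)'' and so on) never actually executes this clean deduction; the crucial observation that $\Gamma' \cap (N \times \triv) = \triv$ is the heart of the matter and should be stated explicitly rather than circled around. Once you drop the false symmetric appeal and state the trivial-intersection step directly, your argument collapses to essentially the paper's proof, except with the superfluous $\Delta$-detour replacing the direct use of Lemma~\ref{lem:DiscreteProj}.
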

\begin{proof}
Let $K_1 \leq G_1$ be a compact open subgroup and set $O_1 := \norm_{G_1}(K_1)$. The intersection $\Gamma_{O_1}:= \Gamma \cap (O_1 \times G_2)$ is a lattice in  $O_1 \times G_2$	by Proposition~\ref{prop:Raghu}, and it is additionally cocompact if $\Gamma$ is cocompact. Notice that $K_1 \cap \overline{\pi_1(\Gamma_{O_1})}$ is a compact open normal subgroup of $\overline{\pi_1(\Gamma_{O_1})}$. In view of Lemma~\ref{lem:DiscreteProj}, the projection of $\Gamma_{O_1}$ to $G_2$ is discrete. Applying Lemma~\ref{lem:LatticeInNormal}, $\Gamma_{O_1} \cap (O_1 \times \triv)$ is a lattice in $O_1 \times \triv$. On the other hand, $\pi_2\rest_{\Gamma_{O_1}}$ is injective, hence the intersection $\Gamma_{O_1} \cap (O_1 \times \triv)$ is trivial. The trivial group is then a lattice in $O_1 \times \triv$, which implies that $O_1$ is compact.
\end{proof}

 Notice that   if $\Gamma \leq G_1 \times   G_2$ is a lattice such that $\overline{\pi_1(\Gamma)}= G_1$, then the kernel $\Ker(\pi_2\rest_\Gamma )$ is contained in $\QZ(G_1)$. In particular, if $G_1$ has a trivial quasi-center, then the projection of $\Gamma$ to $G_2$ is injective. The following partial converse   was observed in a conversation with Marc Burger. 

\begin{lem}\label{lem:QZ}
	Let $\Gamma \leq G_1 \times   G_2$ be a lattice whose projection to $G_1$ has dense image.  Assume that $G_1$ has a trivial amenable radical, that $G_1 \times G_2$ is totally disconnected, and that at least one of the following conditions hold:
	\begin{enumerate}[(1)]
		\item $\Gamma$ is cocompact, $G_2$ is compactly generated, and every cocompact lattice in $G_2$ has a trivial centralizer in $G_2$. 
		
		\item $G_2$ has Kazhdan's property (T), and every lattice in $G_2$ has a trivial centralizer. 
	\end{enumerate}

Then $\QZ(G_1)=1$ if and only if the projection $\pi_2 \rest_\Gamma \colon \Gamma \to G_2$ is injective. 
\end{lem}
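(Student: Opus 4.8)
The plan is to prove the two implications separately. For the forward direction, suppose $\QZ(G_1) = 1$. As noted in the paragraph preceding the statement, the kernel $\Ker(\pi_2\rest_\Gamma)$ is a discrete normal subgroup of $\Gamma$ of the form $N_1 \times \triv$ with $N_1 \leq G_1$ discrete and normalized by $\pi_1(\Gamma)$, hence (the normalizer being closed) normal in $\overline{\pi_1(\Gamma)} = G_1$. An element of $N_1$ is then a discrete normal subgroup of $G_1$, so it is centralized by an open subgroup of $G_1$, i.e.\ $N_1 \leq \QZ(G_1) = 1$. Thus $\pi_2\rest_\Gamma$ is injective. This direction uses nothing but the hypothesis $\overline{\pi_1(\Gamma)} = G_1$ and the definition of the quasi-center.

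For the reverse direction, suppose $\pi_2\rest_\Gamma$ is injective; I want to conclude $\QZ(G_1) = 1$. The idea is to combine Lemma~\ref{lem:CompactNormalizer} with the triviality of the amenable radical. Indeed, Lemma~\ref{lem:CompactNormalizer} applies (its hypotheses are exactly hypotheses (1) or (2) here, plus $G_2$ totally disconnected and $\pi_2\rest_\Gamma$ injective), so every compact open subgroup $U$ of $G_1$ has compact normalizer $\norm_{G_1}(U)$. Now let $x \in \QZ(G_1)$; then $\cent_{G_1}(x)$ is open, so it contains a compact open subgroup $U$. Since $U$ centralizes $x$, the element $x$ normalizes $U$, so $x \in \norm_{G_1}(U)$, which is compact. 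Hence $x$ lies in a compact subgroup of $G_1$, so $\overline{\langle x \rangle}$ is compact. Thus $\QZ(G_1)$ is a (characteristic, possibly non-closed) subgroup all of whose elements are periodic; I would then argue that $\overline{\QZ(G_1)}$ is amenable — being a directed union of its compactly generated subgroups, each of which is compact-by-(something)? This is the delicate point.

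The cleaner route for the last step: $\QZ(G_1)$ is a quasi-central subgroup, so it is \emph{locally elliptic}, i.e.\ every finitely generated subgroup is relatively compact (each generator lies in a compact subgroup and they all commute with open, hence with a common compact open, subgroup; more carefully, a finite set of quasi-central periodic elements generates a relatively compact subgroup — this is a standard fact, e.g.\ the union of a chain argument using that quasi-central elements have open centralizer). Therefore $\overline{\QZ(G_1)}$ is a closed locally elliptic, hence amenable, normal subgroup of $G_1$. Since $G_1$ has trivial amenable radical, $\overline{\QZ(G_1)} = 1$, so $\QZ(G_1) = 1$, as required. The main obstacle I anticipate is justifying that a quasi-central subgroup consisting of periodic (compact-generating) elements is locally elliptic and hence has amenable closure; this is where I would either cite a structure result on quasi-centers of t.d.l.c.\ groups or give the short direct argument that a finite commuting set of elements each contained in a compact subgroup, and each with open centralizer, generates a relatively compact subgroup (intersect finitely many open centralizers to get a common compact open subgroup $V$ normalized by all of them, then $\langle x_1, \dots, x_k \rangle V$ is compact).

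\begin{proof}[Proof sketch]
Suppose first that $\QZ(G_1) = 1$. The kernel $\Ker(\pi_2 \rest_\Gamma)$ is a discrete subgroup of $G_1 \times G_2$ of the form $N_1 \times \triv$ with $N_1 \leq G_1$ discrete; being normal in $\Gamma$, it is normalized by $\pi_1(\Gamma)$, hence by $\overline{\pi_1(\Gamma)} = G_1$. Thus $N_1$ is a discrete normal subgroup of $G_1$, so $N_1 \leq \QZ(G_1) = 1$, and $\pi_2 \rest_\Gamma$ is injective.

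Conversely, suppose $\pi_2 \rest_\Gamma$ is injective. Under hypotheses (1) or (2), Lemma~\ref{lem:CompactNormalizer} ensures that every compact open subgroup of $G_1$ has a compact normalizer. Let $x \in \QZ(G_1)$. Then $\cent_{G_1}(x)$ is open, hence contains a compact open subgroup $U$; as $U$ centralizes $x$, we have $x \in \norm_{G_1}(U)$, which is compact. More generally, given finitely many $x_1, \dots, x_k \in \QZ(G_1)$, the intersection $\bigcap_i \cent_{G_1}(x_i)$ is open and hence contains a compact open subgroup $V$; each $x_i$ normalizes $V$, so $\grp{x_1, \dots, x_k} V$ is contained in the compact group $\norm_{G_1}(V)$ and is therefore relatively compact. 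Thus $\QZ(G_1)$ is a locally elliptic subgroup of $G_1$, so its closure is a closed amenable normal subgroup of $G_1$. Since $G_1$ has a trivial amenable radical, $\overline{\QZ(G_1)} = 1$, whence $\QZ(G_1) = 1$.
\end{proof}
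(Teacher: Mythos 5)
Your proof is correct and follows essentially the same route as the paper: the forward implication uses the observation (made just before the lemma) that $\Ker(\pi_2\rest_\Gamma)$ yields a discrete normal, hence quasi-central, subgroup of $G_1$, and the reverse implication combines Lemma~\ref{lem:CompactNormalizer} with the fact that a locally elliptic $\QZ(G_1)$ would have an amenable closure, contradicting the trivial amenable radical. The paper phrases the reverse direction contrapositively (nontrivial $\QZ(G_1)$ produces a finite quasi-central set generating a non-compact group sitting in the normalizer of a compact open subgroup), but the content is identical, and the "delicate" step you flag is taken without further justification in the paper as well.
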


\begin{proof}
	The `only if' part is straightforward and was observed above. Suppose conversely that $\QZ(G_1)$ is nontrivial. There exists a finite subset $\Sigma \subset \QZ(G_1)$ such that $\overline {\langle \Sigma \rangle}$ is not compact, since otherwise $\QZ(G_1)$, hence also $\overline{\QZ(G_1)}$, is locally elliptic which contradicts the hypothesis that $G_1$ has a trivial amenable radical. By the definition of the quasi-center, the centralizer $C_{G_1}(\Sigma)$ is open, so it contains a compact open subgroup $U$. The normalizer $N_{G_1}(U)$ thus contains the non-compact closed group $\overline {\langle \Sigma \rangle}$. Applying Lemma~\ref{lem:CompactNormalizer}, the projection $\pi_2 \rest_\Gamma \colon \Gamma \to G_2$ is not injective. 
\end{proof}	
	
Combining some of the previous results, we obtain the following criterion.

\begin{prop}\label{prop:RF->CompactNorm}
	Let $\Gamma \leq G_1 \times   G_2$ be a lattice.  
	Assume that the following conditions hold:
	\begin{itemize}
		\item $\overline{\pi_1(\Gamma)}= G_1$.
				
		\item  $C_{G_1}(\Res(G_1)) =\triv$.

		\item $G_2$ is compactly generated,  totally disconnected with a trivial amenable radical.

		\item  $\Gamma$ is   cocompact in $G_1\times G_2$, or  $G_2$ has Kazhdan's property (T). 
	\end{itemize} 
	If a compact open subgroup of $G_1$ has a non-compact normalizer in $G_1$, then the projection of $\Gamma$ to $G_2$ is not injective, and $\Gamma$ is not residually Noetherian (in particular not residually finite).
\end{prop}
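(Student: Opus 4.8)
### Proof strategy for Proposition~\ref{prop:RF->CompactNorm}

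The plan is to argue by contraposition in two stages, pivoting on the injectivity of $\pi_2\rest_\Gamma$. First I would establish the dichotomy: either $\pi_2\rest_\Gamma$ is non-injective, or it is injective and then the compact open subgroups of $G_1$ are forced to have compact normalizers. Indeed, suppose $\pi_2\rest_\Gamma$ is injective. The hypotheses on $G_2$ match exactly the standing assumptions of Lemma~\ref{lem:CompactNormalizer}: $G_2$ is totally disconnected, and either $\Gamma$ is cocompact with $G_2$ compactly generated, or $G_2$ has property (T); moreover $G_2$ has trivial amenable radical, so by Corollary~\ref{cor:CentLattice} every closed subgroup of finite covolume in $G_2$ — in particular every (cocompact) lattice — has trivial centralizer in $G_2$, which supplies the remaining clause of either hypothesis (1) or (2) of that lemma. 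Lemma~\ref{lem:CompactNormalizer} then yields that every compact open subgroup of $G_1$ has a compact normalizer in $G_1$. Contrapositively, if some compact open subgroup of $G_1$ has a non-compact normalizer, then $\pi_2\rest_\Gamma$ is \emph{not} injective. This proves the first conclusion.

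Second, I would upgrade non-injectivity of $\pi_2\rest_\Gamma$ to non-residual-Noetherianity of $\Gamma$. Here the contrapositive of Lemma~\ref{lem:ResNoeth} is the tool: that lemma says that if $\Gamma$ is residually Noetherian and $\Res(G_1)$ has trivial centralizer in $G_1$ (which is the hypothesis $C_{G_1}(\Res(G_1))=\triv$, noting $G_1$ is totally disconnected as a factor of $G_1\times G_2$ — or, if one prefers to avoid even that, the centralizer condition is imposed directly), then $\pi_2\rest_\Gamma$ is injective, given $\pi_1(\Gamma)$ dense in $G_1$. Since we are in the situation where $\pi_2\rest_\Gamma$ fails to be injective, $\Gamma$ cannot be residually Noetherian, and a fortiori not residually finite (finite groups being Noetherian). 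Combining the two stages completes the proof.

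The main obstacle — really the only non-bookkeeping point — is verifying that the blanket hypotheses of Proposition~\ref{prop:RF->CompactNorm} genuinely imply the somewhat more technical-looking hypotheses of Lemmas~\ref{lem:CompactNormalizer} and~\ref{lem:ResNoeth}. Concretely, one must observe: (a) $G_1$ and $G_2$ are totally disconnected because $G_1\times G_2$ is assumed to be so and total disconnectedness passes to direct factors; (b) the clause ``every cocompact lattice in $G_2$ has trivial centralizer in $G_2$'' (resp.\ ``every lattice\ldots'') is not an extra assumption but follows from ``$G_2$ has trivial amenable radical'' via Corollary~\ref{cor:CentLattice}, since a lattice is a closed subgroup of finite covolume and $G_2$, being compactly generated, is $\sigma$-compact; and (c) the hypothesis on $\Res(G_1)$ needed in Lemma~\ref{lem:ResNoeth} is verbatim one of our bullet points. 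Once these identifications are made, the proof is a two-line concatenation of the cited lemmas, so the write-up should be short.

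\begin{proof}
Since $G_1 \times G_2$ is totally disconnected, so is each direct factor; in particular $G_1$ and $G_2$ are totally disconnected. As $G_2$ is compactly generated, it is $\sigma$-compact, and since it has a trivial amenable radical, Corollary~\ref{cor:CentLattice} shows that every closed subgroup of $G_2$ of finite covolume — in particular every lattice, and every cocompact lattice — has trivial centralizer in $G_2$.

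We argue by contraposition. Suppose that $\pi_2\rest_\Gamma \colon \Gamma \to G_2$ is injective. The group $G_2$ is totally disconnected, and by hypothesis either $\Gamma$ is cocompact in $G_1 \times G_2$ with $G_2$ compactly generated, or $G_2$ has Kazhdan's property (T); combined with the previous paragraph, this means that hypothesis (1) or hypothesis (2) of Lemma~\ref{lem:CompactNormalizer} is satisfied. That lemma then yields that every compact open subgroup of $G_1$ has a compact normalizer in $G_1$.

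Contrapositively, if some compact open subgroup of $G_1$ has a non-compact normalizer in $G_1$, then $\pi_2\rest_\Gamma$ is not injective. This establishes the first assertion.

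It remains to show that in this situation $\Gamma$ is not residually Noetherian. Assume for contradiction that it is. Since $\overline{\pi_1(\Gamma)} = G_1$ and $C_{G_1}(\Res(G_1)) = \triv$, Lemma~\ref{lem:ResNoeth} applies and shows that $\pi_2\rest_\Gamma$ is injective, contradicting what we just proved. Hence $\Gamma$ is not residually Noetherian; in particular, as every finite group is Noetherian, $\Gamma$ is not residually finite.
\end{proof}
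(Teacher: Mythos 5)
Your argument is correct and is essentially the paper's own proof: apply the contrapositive of Lemma~\ref{lem:CompactNormalizer} (with Corollary~\ref{cor:CentLattice} supplying the trivial-centralizer clause needed for its hypothesis~(1) or~(2)) to get non-injectivity of $\pi_2\rest_\Gamma$, then apply the contrapositive of Lemma~\ref{lem:ResNoeth} to rule out residual Noetherianity.

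One small misstatement to fix: the proposition does \emph{not} assume $G_1\times G_2$ is totally disconnected --- only $G_2$ is. So the opening sentence of your write-up, and item~(a) of your strategy, invoke a hypothesis that is not given. Fortunately this is harmless: Lemma~\ref{lem:CompactNormalizer} requires only $G_2$ to be totally disconnected, and Lemma~\ref{lem:ResNoeth} makes no total-disconnectedness assumption at all, so $G_1$ being totally disconnected is never actually used. Simply delete that sentence and the proof stands.
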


\begin{proof}
	By Corollary~\ref{cor:CentLattice}, every lattice in $G_2$ has a trivial centralizer. If a compact open subgroup of $G_1$ has a non-compact normalizer, then  the restriction $\pi_2 \rest_\Gamma \colon \Gamma \to G_2$ is  not injective by  Lemma~\ref{lem:CompactNormalizer}. Moreover, the   hypotheses of Lemma~\ref{lem:ResNoeth} are  all fulfilled, so $\Gamma$ is not residually Noetherian.  
\end{proof}

\begin{thm}\label{thm:MultipleProduct}
	Let $G = G_1 \times \dots \times G_n$ be a product of non-discrete compactly generated totally disconnected locally compact groups with a trivial amenable radical. Suppose that for each $i$, every open normal subgroup of $G_i$ is of finite index.  Let $\Gamma \leq G$ be a lattice.  such that  $\overline{\pi_i(\Gamma)}= G_i$ for all $i$. If $\Gamma $ is not cocompact, we assume in addition that $G$ has Kazhdan's property (T). If $\Gamma$ is residually Noetherian, then $\QZ(G) =\triv$, and for all $i$, every compact open subgroup of $G_i$ has a compact normalizer in $G_i$. 	
\end{thm}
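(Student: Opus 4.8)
The plan is to reduce Theorem~\ref{thm:MultipleProduct} to the two-factor results already in place, namely Lemma~\ref{lem:ResNoeth}, Lemma~\ref{lem:CompactNormalizer}, and Corollary~\ref{cor:CentraDiscRes}. The key observation is that a product of $n$ factors can always be split as $G_i \times H_i$ where $H_i := \prod_{j \neq i} G_j$, and all the hypotheses are inherited by this coarser decomposition: $H_i$ is again compactly generated, totally disconnected, with trivial amenable radical (amenable radical of a product is the product of the amenable radicals), and $\overline{\pi_i(\Gamma)} = G_i$ is exactly the density hypothesis for the projection to the first factor of $G_i \times H_i$. Property (T), if assumed, passes to $H_i$ as a direct factor. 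So it suffices to prove the statement for $n = 2$, i.e. to show: under the stated hypotheses on $G_1 \times G_2$, if $\Gamma$ is residually Noetherian then $\QZ(G_1) = \triv$ and every compact open subgroup of $G_1$ has a compact normalizer (and symmetrically for $G_2$).

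First I would record that, since each $G_i$ is compactly generated with trivial amenable radical and every open normal subgroup of $G_i$ has finite index, Corollary~\ref{cor:CentraDiscRes} applies and gives $C_{G_i}(\Res(G_i)) = \triv$. Also Corollary~\ref{cor:CentLattice} shows every (co)compact lattice in $G_i$ has trivial centralizer, so the centralizer hypotheses of Lemmas~\ref{lem:DiscreteProj}, \ref{lem:CompactNormalizer}, and~\ref{lem:QZ} are automatic. Now apply Lemma~\ref{lem:ResNoeth} to the decomposition $G_1 \times H_1$: density of $\pi_1(\Gamma)$ in $G_1$ and triviality of $C_{G_1}(\Res(G_1))$, together with $\Gamma$ residually Noetherian, give that the projection $\Gamma \to H_1$ is injective. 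Then Lemma~\ref{lem:CompactNormalizer} (using that $\Gamma$ is cocompact in $G_1 \times H_1$, or that $H_1$ has property (T)) yields that every compact open subgroup of $G_1$ has compact normalizer in $G_1$. Running this over all $i$ gives the statement about compact open subgroups. Finally, for the quasi-center: apply Lemma~\ref{lem:QZ} to the decomposition $G_1 \times H_1$ — all its hypotheses now hold, and since we have just shown $\pi_{H_1}\rest_\Gamma$ is injective, the lemma forces $\QZ(G_1) = \triv$. Doing this for each $i$ and invoking Lemma~\ref{lem:ResProduct} ($\QZ$ of a product is the product of the $\QZ$'s) gives $\QZ(G) = \triv$.

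The bookkeeping step I would be most careful about is checking that the hypotheses genuinely descend to the two-factor splitting $G_i \times H_i$ — in particular that the residual-Noetherian property of $\Gamma$ is used as-is (it is a property of $\Gamma$ alone, so no issue), that $\Gamma$ remains cocompact (resp.\ of finite covolume) in $G_i \times H_i$ (it is literally the same lattice in the same ambient group, only regrouped), and that property (T) of $G$ passes to the factor $H_i$ (a direct factor of a Kazhdan group is Kazhdan). The only mildly delicate point is that Lemma~\ref{lem:ResNoeth} as stated requires density of the projection to the \emph{first} factor and triviality of $C$ of the discrete residual of that \emph{same} first factor; here we want to conclude injectivity of the projection to $H_1$, so we feed in $G_1$ as the ``first'' factor — which is exactly the form the lemma is stated in. So there is no real obstacle; the proof is essentially an orchestration of the two-factor lemmas applied $n$ times. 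I would write it as: fix $i$, set $H := \prod_{j \neq i} G_j$, verify hypotheses, apply Lemma~\ref{lem:ResNoeth}, then Lemma~\ref{lem:CompactNormalizer}, then Lemma~\ref{lem:QZ}; conclude by letting $i$ range and using Lemma~\ref{lem:ResProduct}.
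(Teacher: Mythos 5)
Your proof is correct and takes essentially the same approach as the paper: fix $i$, split $G$ as $G_i \times \prod_{j\neq i}G_j$, verify the two-factor hypotheses via Corollary~\ref{cor:CentraDiscRes} and Corollary~\ref{cor:CentLattice}, apply Lemma~\ref{lem:ResNoeth}, Lemma~\ref{lem:CompactNormalizer}, and Lemma~\ref{lem:QZ}, then conclude with Lemma~\ref{lem:ResProduct}. The only cosmetic difference is that you apply the component lemmas directly, whereas the paper routes the compact-normalizer conclusion through the wrapper Proposition~\ref{prop:RF->CompactNorm}, which is itself just a combination of those same lemmas.
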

\begin{proof}
	Fix $i \in \{1, \dots, n\}$. Set $H_1: = G_i$ and $H_2: = \prod_{j \neq i} G_j$. Every open normal subgroup of $H_1$ is of finite index, so $C_{H_1}(\Res(H_1))=\triv$ by   Corollary~\ref{cor:CentraDiscRes}. 
	Therefore $\Gamma \leq H_1 \times H_2$ is a lattice fulfilling all the hypotheses of Proposition~\ref{prop:RF->CompactNorm}. It follows that every compact open subgroup of $H_1 =G_i$ has a compact normalizer in $G_i$. 	 Moreover, the projection of $\Gamma$ to $H_2$ is injective by Lemma~\ref{lem:ResNoeth}, so $\QZ(H_1) = \QZ(G_i) = \triv$ by Lemma~\ref{lem:QZ}. Appealing to Lemma~\ref{lem:ResProduct}, we deduce that $\QZ(G)=\triv$.  
\end{proof}

The following related result describes a similar property under a stronger hypothesis of irreducibility on the lattice $\Gamma$.

\begin{thm}\label{thm:ResNoethMultiple}
	Let $G = G_1 \times \dots \times G_n$ be a product of non-discrete locally compact groups and $\Gamma\leq G$ be a lattice such that for each $i$, the projection of $\Gamma$ to $\prod_{j \neq i} G_j$ has dense image. 

\begin{enumerate}[(i)]
	\item Assume that $\cent_{G_i}(\Res(G_i)) = \triv$ for all $i$. If $\Gamma$ is residually Noetherian, then  the restriction $\pi_i \rest_\Gamma \colon \Gamma \to G_i$ is injective for all $i$. 
	
	\item Assume that $G$ is compactly generated, totally disconnected with a trivial amenable radical. If $\Gamma$ is not cocompact, assume in addition that $G$ has Kazhdan's property (T). Then $\QZ(G) = \triv$ if and only if the projection of $\Gamma$ to $G_i$ is injective for all $i$. 
	
\end{enumerate}	
\end{thm}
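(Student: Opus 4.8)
The plan is to deduce both parts from the two-factor statements already proved --- Lemma~\ref{lem:ResNoeth} for (i) and Lemma~\ref{lem:QZ} for (ii) --- by regrouping the product as $G = G_i \times H_i$ with $H_i := \prod_{j \neq i} G_j$ and translating between the factors $G_1, \dots, G_n$ and the pair $(G_i, H_i)$ via Lemma~\ref{lem:ResProduct}. (I assume $n \geq 2$; otherwise the statement is vacuous.) I would begin with some bookkeeping about finite products: iterating Lemma~\ref{lem:ResProduct} gives $\Res(H_i) = \prod_{j \neq i} \Res(G_j)$, $\QZ(H_i) = \prod_{j \neq i} \QZ(G_j)$ and $\QZ(G) = \prod_i \QZ(G_i)$; for subgroups $N_j \leq G_j$ one has $\cent_{\prod_j G_j}\bigl(\prod_j N_j\bigr) = \prod_j \cent_{G_j}(N_j)$; and the amenable radical of a finite product of $\sigma$-compact locally compact groups is the product of the amenable radicals, so under the hypotheses of (ii) each $G_i$ and each $H_i$ has a trivial amenable radical and is compactly generated, hence $\sigma$-compact. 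I would also record that the density hypothesis is self-strengthening: for any $i$, choosing $j \neq i$ and noting that the coordinate projection $\prod_{k \neq j} G_k \to G_i$ is continuous and surjective, density of the projection of $\Gamma$ to $\prod_{k \neq j} G_k$ forces $\pi_i(\Gamma)$ to be dense in $G_i$.

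For part (i), fix $i$ and view $G = H_i \times G_i$. The projection of $\Gamma$ to $H_i$ is dense, and the first two bookkeeping facts together with the hypothesis $\cent_{G_j}(\Res(G_j)) = \triv$ give $\cent_{H_i}(\Res(H_i)) = \prod_{j \neq i} \cent_{G_j}(\Res(G_j)) = \triv$. Since $\Gamma$ is residually Noetherian, Lemma~\ref{lem:ResNoeth}, applied with $G_1 := H_i$ and $G_2 := G_i$, shows that $\pi_i \rest_\Gamma$ is injective; as $i$ was arbitrary, (i) follows.

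For the ``only if'' half of (ii), assume $\QZ(G) = \triv$; then $\QZ(G_j) = \triv$ for all $j$, hence $\QZ(H_i) = \triv$ for all $i$. Viewing $G = H_i \times G_i$, the observation recorded just before Lemma~\ref{lem:QZ} (applicable since $\Gamma$ projects densely to $H_i$) gives $\Ker(\pi_i \rest_\Gamma) \leq \QZ(H_i) = \triv$, so $\pi_i \rest_\Gamma$ is injective. For the ``if'' half, assume $\pi_j \rest_\Gamma$ is injective for all $j$, fix $i$, and now view $G = G_i \times H_i$. I would verify the hypotheses of Lemma~\ref{lem:QZ} with $G_1 := G_i$, $G_2 := H_i$: the projection of $\Gamma$ to $G_i$ is dense by the self-strengthening observation, $G_i$ has trivial amenable radical and $G$ is totally disconnected by the bookkeeping, and condition (1) or (2) of the lemma holds for $H_i$, since $H_i$ is compactly generated (directly, or as a quotient of the property-(T) group $G$) and $\sigma$-compact with trivial amenable radical, so Corollary~\ref{cor:CentLattice} ensures every (cocompact) lattice in $H_i$ has trivial centralizer. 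Lemma~\ref{lem:QZ} then yields: $\QZ(G_i) = \triv$ if and only if the projection of $\Gamma$ to $H_i$ is injective. That projection is injective: if $\gamma \in \Gamma$ has trivial $H_i$-component, then $\pi_j(\gamma) = 1$ for every $j \neq i$, and picking one such $j$ (here $n \geq 2$ is used) and invoking injectivity of $\pi_j \rest_\Gamma$ forces $\gamma = 1$. Hence $\QZ(G_i) = \triv$ for every $i$, so $\QZ(G) = \prod_i \QZ(G_i) = \triv$, completing (ii).

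The whole argument is bookkeeping once Lemmas~\ref{lem:ResNoeth}, \ref{lem:QZ} and \ref{lem:ResProduct} are in place; the one genuinely delicate point --- and the one I would flag as the main pitfall --- is that $\QZ(G_i)$ is governed by injectivity of the projection of $\Gamma$ onto the \emph{complementary} factor $H_i$ rather than onto $G_i$ itself, so one must pass between ``$\pi_i \rest_\Gamma$ injective for all $i$'' and ``$\pi_{H_i} \rest_\Gamma$ injective for all $i$'', which is precisely where $n \geq 2$ enters.
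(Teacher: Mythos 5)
Your proof is correct and follows essentially the same strategy as the paper: both parts are reduced to the two-factor lemmas (Lemma~\ref{lem:ResNoeth} for (i), Lemma~\ref{lem:QZ} for (ii)) by regrouping $G$ as $\prod_{j\neq i}G_j \times G_i$ and iterating Lemma~\ref{lem:ResProduct} for the bookkeeping. Part (i) is identical to the paper's. The one real divergence is in the ``if'' half of (ii): you apply Lemma~\ref{lem:QZ} with $G_1 := G_i$, $G_2 := H_i$, so you must additionally observe that the density hypothesis (together with $n \geq 2$) forces $\pi_i(\Gamma)$ to be dense in $G_i$, and that injectivity of every $\pi_j\rest_\Gamma$ forces injectivity of the projection to $H_i$; both are correct, and you handle them carefully. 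The paper keeps $G_1 := H_i$, $G_2 := G_i$ throughout, directly inheriting density from the hypothesis, and obtains ``$\QZ(H_i)=\triv$ iff $\pi_i\rest_\Gamma$ injective'' once, after which both implications of (ii) follow from $\QZ(H_i)=\prod_{j\neq i}\QZ(G_j)$ and $\QZ(G)=\prod_j\QZ(G_j)$ (again using $n\geq 2$ to cover every index). The paper's uniform choice of factorization is slightly more economical, but the two arguments are otherwise interchangeable; your remark that $\QZ(G_i)$ is detected by the projection to the \emph{complementary} factor is exactly the point Lemma~\ref{lem:QZ} encapsulates.
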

\begin{proof}
Let $i \in \{1, \dots, n\}$ and set $H: = \prod_{j \neq i} G_j$. 

\medskip \noindent (i). 
By hypothesis, $\Gamma$ is a lattice with dense projections in the product $H \times G_i$. In view of Lemma~\ref{lem:ResProduct}, we additionally have $\cent_H(\Res(H)) = \triv$. Therefore, $\pi_i \rest_\Gamma \colon \Gamma \to G_i$ is injective by Lemma~\ref{lem:ResNoeth}.

\medskip \noindent (ii).
By assumption, $H$ and $G_i$ both have a trivial amenable radical. By   Corollary~\ref{cor:CentLattice}, every lattice in $G_i$ has a trivial centralizer. It follows from Lemma~\ref{lem:QZ} that $\QZ(H) = \triv$ if and only if the projection of $\Gamma$ to $G_i$ is injective. The required assertion now follows from Lemma~\ref{lem:ResProduct}.
\end{proof}

\section{Trees and lattices in products}

The goal of this section is to apply our abstract results on lattices in product groups to the geometric setting of groups acting on trees.  We first identify a local criterion controlling the normalizers of compact open subgroups. 

\subsection{Normalizers of compact open subgroups}

\begin{prop}\label{prop:CompactNorm:1}
	Let $T$ be a locally finite tree with more than two ends and $G \leq \Aut(T)$ be a closed unimodular subgroup acting cocompactly. For each vertex $v \in VT$ and each edge $e \in E(v)$, suppose that the stabilizer $G_{(e)}$ fixes an edge $f \in E(v)$ different from $e$. Then there is an edge $e \in ET$ whose stabilizer $G_{(e)}$ has a non-compact normalizer.
\end{prop}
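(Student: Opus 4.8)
The plan is to exhibit an edge $e$ such that $\mathrm{N}_G(G_{(e)})$ has an infinite orbit on $VT$ and is therefore non-compact, the point being that a closed subgroup of $\Aut(T)$ with an infinite orbit on the discrete vertex set of a locally finite tree cannot be compact. First I would build a geodesic ray along which the edge stabilizers form an ascending chain. Since $G$ is closed and $T$ is locally finite, every pointwise edge stabilizer $G_{(f)}$ is compact open. Starting from an arbitrary edge $f_0$, running from $v_0$ to $v_1$, the hypothesis applied at $v_1$ to the edge $\overline{f_0}\in E(v_1)$ yields an edge $f_1\in E(v_1)$, distinct from $\overline{f_0}$, which is fixed pointwise by $G_{(f_0)}=G_{(\overline{f_0})}$; as $T$ is a tree, $f_1$ joins $v_1$ to a vertex $v_2\neq v_0$, and $G_{(f_0)}\leq G_{(f_1)}$. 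Iterating this (the hypothesis forces every vertex to have valency at least $2$, so the process never stops) produces a geodesic ray $\rho=(v_0,v_1,v_2,\dots)$ with consecutive edges $f_0,f_1,f_2,\dots$ and an ascending chain of compact open subgroups $G_{(f_0)}\leq G_{(f_1)}\leq G_{(f_2)}\leq\cdots$.

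Next I would invoke cocompactness: the $G$-action on the locally finite tree $T$ has only finitely many orbits of edges, so by the pigeonhole principle there is an infinite set $I\subseteq\mathbf{N}$ with all $f_i$, $i\in I$, in a single $G$-orbit. Set $i_0:=\min I$ and $K:=G_{(f_{i_0})}$. For each $j\in I$ with $j>i_0$, pick $g_j\in G$ with $g_jf_{i_0}=f_j$; then $g_jKg_j^{-1}=G_{(f_j)}$, while the chain gives $K\leq G_{(f_j)}$. Here comes the crucial use of unimodularity: $K\leq g_jKg_j^{-1}$ is an inclusion of compact open subgroups of equal Haar measure (conjugation preserves Haar measure on a unimodular group), hence it is an equality. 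Therefore $K=g_jKg_j^{-1}=G_{(f_j)}$, and in particular $g_j\in\mathrm{N}_G(K)$.

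Finally, $g_j$ sends an endpoint of $f_{i_0}$ to an endpoint of $f_j$, so $g_jv_{i_0}\in\{v_j,v_{j+1}\}$ and hence $d(v_{i_0},g_jv_{i_0})\geq j-i_0$. Letting $j$ run through $I$ shows that the orbit $\mathrm{N}_G(K)\cdot v_{i_0}$ is infinite, so $\mathrm{N}_G(K)=\mathrm{N}_G(G_{(e)})$ is non-compact for $e:=f_{i_0}$, which is the claim. I expect the only genuinely delicate point to be the unimodularity step, which promotes a chain of conjugate-comparable compact open subgroups to an equality; the remaining ingredients are a routine blend of the local hypothesis, elementary tree combinatorics, and cocompactness.
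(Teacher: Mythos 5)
Your proof is correct and follows essentially the same route as the paper's: you build a geodesic edge-path with an ascending chain of edge stabilizers, use cocompactness and the pigeonhole principle to find infinitely many in a single $G$-orbit, and then use unimodularity to upgrade the conjugate-and-nested chain of compact open subgroups to an equality, producing normalizing elements that move a base vertex arbitrarily far. Your write-up is slightly more explicit in isolating the element $g_j$ and in spelling out why the normalizer is non-compact, but there is no substantive difference in the argument.
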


\begin{proof}		
	By induction, we build a geodesic edge path $(f_n)_{n\geq 0}$ such that $G_{(f_n)}\leq G_{(f_{n+1})}$. As the base and successor cases are the same, suppose we have built our sequence up to $f_k$. Noting that $G_{(\ol{f}_k)}=G_{(f_k)}$, our hypothesis ensures that there exists $f_{k+1}\in E(t(f_k))$ different from $\bar f_k$ such that $G_{(f_k)}$ fixes $f_{k+1}$.  That is to say, $G_{(f_k)}\leq G_{(f_{k+1})}$. This completes our inductive construction.
	
	Since $G$  acts cocompactly on $T$, the collection of edges $\{f_n\}_{n \geq 0}$ is covered by finitely many edge orbits. In particular, there is an infinite subset $I \subset \Nb$ such that $\{f_n\}_{n \in I}$ is contained in the same orbit. For $m\leq n$ with $m,n\in I$, the compact open subgroups $G_{(e_n)}$ and $ G_{(e_m)}$  are conjugate. Since $G$ is unimodular, we conclude that $G_{(f_n)}=G_{(f_m)}$. For $n \in I$, the normalizer of $G_{(f_n)}$ therefore contains elements mapping $o(f_n)$ arbitrarily far away from itself. The normalizer is thus non-compact.
\end{proof}

The hypothesis of Proposition~\ref{prop:CompactNorm:1} is satisfied in the following  situation. 

\begin{cor}\label{cor:non-compact-norm}
	Let $T$ be a locally finite   tree with more than two ends and $G \leq \Aut(T)$ be a closed unimodular subgroup acting cocompactly on $T$. Suppose that for each vertex $v \in VT$,  the local action of $G_{(v)}$ on $E(v)$ is nilpotent and does not have a unique fixed point. 
	 	Then $G$ has a compact open subgroup with a non-compact normalizer.
\end{cor}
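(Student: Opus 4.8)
The plan is to derive this corollary directly from Proposition~\ref{prop:CompactNorm:1}: I will check that the $G$-action on $T$ satisfies the hypothesis of that proposition, and then read off the conclusion. Concretely, I must show that for every vertex $v \in VT$ and every edge $e \in E(v)$, the stabilizer $G_{(e)}$ fixes some edge $f \in E(v)$ with $f \neq e$. The first step is to pass to the local action. Since $e \in E(v)$, the group $G_{(e)}$ fixes $v$, hence lies in $G_{(v)}$; its image under the quotient map $q \colon G_{(v)} \to F(v) \leq \Sym(E(v))$ is exactly the point stabilizer $F(v)_{(e)}$, because $G_{(e)} = q^{-1}(F(v)_{(e)})$ and $q$ is surjective. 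If $F(v)_{(e)}$ fixes an edge $f \in E(v)\setminus\{e\}$, then every $g \in G_{(e)}$ induces a permutation of $E(v)$ fixing $f$, i.e. $gf = f$. So it suffices to prove that $F(v)_{(e)}$ fixes a point of $E(v)\setminus\{e\}$.

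I would then split into two cases. If $F(v)$ fixes $e$, then $F(v)_{(e)} = F(v)$; since by hypothesis $F(v)$ does not have a \emph{unique} fixed point, it fixes some $f \in E(v)$ with $f \neq e$, and we are done. If $F(v)$ does not fix $e$, then $F(v)_{(e)}$ is a proper subgroup of $F(v)$, and I invoke the following elementary fact: \emph{if a finite nilpotent group $K$ acts on a set $\Delta$ and $\delta \in \Delta$ is not fixed by $K$, then $K_{(\delta)}$ fixes at least two points of the $K$-orbit of $\delta$.} Applying this with $K = F(v)$, $\Delta = E(v)$ and $\delta = e$, the stabilizer $F(v)_{(e)}$ fixes at least two edges in the $F(v)$-orbit of $e$, and at least one of them is different from $e$. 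Observe that the hypothesis that no local action has a unique fixed point is used precisely once, to dispose of the degenerate case where $F(v)$ fixes $e$; in the remaining case nilpotency alone suffices.

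The proof of the displayed fact rests on the normalizer condition in finite nilpotent groups: every proper subgroup $H$ of a finite nilpotent group $K$ is properly contained in its normalizer $\norm_K(H)$ (if $i$ is minimal with $Z_i \not\leq H$ in the upper central series of $K$, then $[Z_i, H] \leq Z_{i-1} \leq H$, so $Z_i$ normalizes $H$ and $H \subsetneq \langle H, Z_i\rangle \leq \norm_K(H)$). Apply this to $H = K_{(\delta)}$, which is proper since $\delta$ is not fixed by $K$, and set $N := \norm_K(K_{(\delta)})$. For every $g \in N$ one has $K_{(g\delta)} = gK_{(\delta)}g^{-1} = K_{(\delta)}$, so $g\delta$ is fixed by $K_{(\delta)}$; hence the $N$-orbit of $\delta$ consists of fixed points of $K_{(\delta)}$, and it has cardinality $[N : N_{(\delta)}] = [N : K_{(\delta)}] \geq 2$. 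This is the only non-routine ingredient in the argument, and it is entirely standard; everything else is the bookkeeping translation between the action on $T$ and the local actions.

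Finally, with the hypothesis of Proposition~\ref{prop:CompactNorm:1} verified — the remaining assumptions there ($T$ a locally finite tree with more than two ends, $G$ closed, unimodular, acting cocompactly) being exactly the standing hypotheses of the corollary — that proposition produces an edge $e \in ET$ whose stabilizer $G_{(e)}$ has a non-compact normalizer in $G$. Since $T$ is locally finite and $G$ is closed in $\Aut(T)$, the subgroup $G_{(e)} = G_{(o(e))} \cap G_{(t(e))}$ is open in the compact open subgroup $G_{(o(e))}$, hence is itself a compact open subgroup of $G$. This is the desired compact open subgroup with non-compact normalizer, completing the proof.
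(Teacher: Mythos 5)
Your proof is correct and follows essentially the same route as the paper's: both reduce to checking the hypothesis of Proposition~\ref{prop:CompactNorm:1} by passing to the local action $F = F(v)$, then split into the case where $e$ is a fixed point of $F$ (handled by the "no unique fixed point" hypothesis) and the case where it is not (handled by the normalizer condition $F_{(e)} \subsetneq \norm_F(F_{(e)})$ in finite nilpotent groups, conjugating $e$ by a normalizing element outside $F_{(e)}$). The only difference is that you spell out the normalizer-condition argument in full and phrase it as a standalone lemma about nilpotent permutation groups, whereas the paper invokes it tersely; the mathematical content is the same.
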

\begin{proof}
	Let $F$ be the natural image of $G_{(v)}$  in  $\Sym(E(v))$, so $F$ is nilpotent by hypothesis. In view of Lemma~\ref{lem:CompactNormalizer}, it suffices to show that for each $e \in E(v)$, there exists $f \in E(v)$ different from $e$ fixed by $F_{(e)}$. If $e$ is not a fixed point of $F$ in $E(v)$, then  $F_{(e)}$ is a proper subgroup of $F$. Since $F$ is nilpotent, it follows that $F_{(e)}$ is properly contained in its normalizer. Taking $g \in F$ normalizing $F_{(e)}$ without fixing $e$, $f:=g(e)$ is a fixed point of $F_{(e)}$ different from $e$. If $e$ is a fixed point of $F$, then by hypothesis there exists $f \in E(v)$ different from $e$ which is also fixed by $F$. The conclusion now follows from  Proposition~\ref{prop:CompactNorm:1}.
\end{proof}

We pause to note a supplementary result in the same vein, which applies in particular to all Burger--Mozes groups $U_c(F)$ with $F$ nilpotent.

\begin{prop}\label{prop:CompactNorm:2}
	Let $T$ be a locally finite   tree with more than two ends and $G \leq \Aut(T)$ be a closed unimodular subgroup acting vertex-transitively on $T$. Suppose that for each vertex $v \in VT$,  the local action of $G_{(v)}$ on $E(v)$ is nilpotent. 	Then $G$ has a compact open subgroup with a non-compact normalizer.
	
\end{prop}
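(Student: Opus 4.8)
The strategy is to deduce Proposition~\ref{prop:CompactNorm:2} from the already-established Proposition~\ref{prop:CompactNorm:1}, exactly as Corollary~\ref{cor:non-compact-norm} does, the only new content being that one can drop the hypothesis ``does not have a unique fixed point'' at the cost of assuming vertex-transitivity. Fix a vertex $v\in VT$, let $F\leq\Sym(E(v))$ be the local action of $G_{(v)}$, and fix an edge $e\in E(v)$; I need to produce an edge $f\in E(v)$ with $f\neq e$ that is fixed by $G_{(e)}$. If $e$ is \emph{not} a global fixed point of $F$ on $E(v)$, then $F_{(e)}$ is a proper subgroup of the nilpotent group $F$, hence is properly contained in its normalizer $\norm_F(F_{(e)})$; any $g\in\norm_F(F_{(e)})\setminus F_{(e)}$ gives $f:=g(e)\neq e$ with $F_{(e)}\leq F_{g(e)}$, so $f$ is fixed by $G_{(e)}$ (this is verbatim the argument in Corollary~\ref{cor:non-compact-norm}).

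The genuinely new case is when $e$ \emph{is} a fixed point of $F$ on $E(v)$ --- this is precisely the situation excluded in Corollary~\ref{cor:non-compact-norm}. Here I would use vertex-transitivity together with unimodularity via Lemma~\ref{lem:LocalAction-UniqueFixedPoint}. Suppose $e$ is the \emph{unique} fixed point of $F$ on $E(v)$ (if it is not the unique one, then there is another fixed point $f\neq e$ of $F$, hence of $G_{(e)}$, and we are done). Since $G$ is vertex-transitive, choose $h\in G$ with $hv=t(e)$; Lemma~\ref{lem:LocalAction-UniqueFixedPoint} then gives $he=\bar e$. Now $G_{(e)}$ fixes $v$ and fixes $e$, hence also fixes $t(e)=hv$ and $\bar e = he$; so $G_{(e)}=h^{-1}G_{(e)}h$ acts on $E(t(e))$ with unique fixed point $\bar e$. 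Consider the vertex $t(e)$ and its edge set $E(t(e))$: the group $G_{(e)}$ fixes $\bar e\in E(t(e))$, and since $|E(t(e))|=|E(v)|\geq 3$, I claim $G_{(e)}$ must fix some further edge of $E(t(e))$ as well. Indeed the local action of $G_{(t(e))}$ at $t(e)$ is conjugate (by $h$) to $F$, hence nilpotent with a unique fixed point $\bar e$; the stabilizer of $e$ in $G_{(t(e))}$ --- which contains $G_{(e)}$ --- is the stabilizer of the unique fixed point, hence equals all of $G_{(t(e))}$ up to the pointwise fixator $G_{(\{v,t(e)\})}=G_{(e)}$, and its image in $\Sym(E(t(e))\setminus\{\bar e\})$ is a transitive nilpotent group, which (being transitive of degree $\geq 2$) has non-trivial point stabilizers contained in a larger subgroup; one then repeats the normalizer-of-proper-subgroup argument \emph{inside} $\Sym(E(t(e))\setminus\{\bar e\})$. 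Concretely: pick an edge $e'\in E(t(e))$, $e'\neq\bar e$; its stabilizer inside the nilpotent transitive group acting on $E(t(e))\setminus\{\bar e\}$ is proper, hence properly normalized, producing $f'\neq e'$ in $E(t(e))$ fixed by that stabilizer --- but I need the stabilizer to be $G_{(e)}$ itself. The clean way around this is: $G_{(e)}$ fixes $\bar e$ and $v$; among the edges of $E(v)$, apply the argument of the first paragraph at the vertex $v$ to the edge $\bar{e}' := h^{-1}(\text{some edge of } E(t(e)))$ --- but this risks circularity.

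The cleanest route, and the one I expect to actually work, is to reduce directly to Proposition~\ref{prop:CompactNorm:1}: I must show that for \emph{every} vertex $w$ and \emph{every} edge $d\in E(w)$, the stabilizer $G_{(d)}$ fixes an edge of $E(w)$ other than $d$. By vertex-transitivity the local action at $w$ is isomorphic to $F$, so it suffices to check this at $v$. For $e\in E(v)$ that is \emph{not} $F$-fixed, the first paragraph handles it. For $e\in E(v)$ that \emph{is} $F$-fixed: if $F$ fixes another edge we are done; otherwise $e$ is the unique $F$-fixed edge, so by Lemma~\ref{lem:LocalAction-UniqueFixedPoint} (using vertex-transitivity and unimodularity) every $h\in G$ with $hv=t(e)$ satisfies $he=\bar e$, whence $G_{(e)}$ also fixes $\bar e\in E(t(e))$; but now I have shown $G_{(e)}$ (which equals $G_{(\bar e)}$) fixes \emph{two} edges $e$ and $\bar e$ at the two vertices $v$ and $t(e)$ --- and since $\bar e\in E(t(e))$ is \emph{not} the unique fixed point would give a contradiction, so $\bar e$ must be handled by noting that in fact $e$ being the unique fixed edge at $v$ forces, via the same lemma applied at $t(e)$, a symmetric conclusion; iterating along the edge $e$, $\bar e$ gives an $F$-fixed structure that I can feed into the geodesic construction of Proposition~\ref{prop:CompactNorm:1} directly. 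In other words, when the $F$-fixed edge is unique, Lemma~\ref{lem:LocalAction-UniqueFixedPoint} \emph{already} produces the inclusion $G_{(f_n)}\leq G_{(f_{n+1})}$ along the (unique) geodesic ray of fixed edges, and the rest of the proof of Proposition~\ref{prop:CompactNorm:1} --- cocompactness forcing two edges in the ray to lie in the same orbit, unimodularity forcing their stabilizers to coincide, and hence a non-compact normalizer --- goes through verbatim.

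\textbf{Main obstacle.} The delicate point is the ``unique fixed edge'' case: one must verify carefully that Lemma~\ref{lem:LocalAction-UniqueFixedPoint} applies at \emph{every} vertex along the ray (which it does, by vertex-transitivity, since the hypothesis of that lemma --- a unique fixed point of the vertex stabilizer on the incident edges --- is preserved), and that this yields an ascending chain of edge-fixators exactly as in the proof of Proposition~\ref{prop:CompactNorm:1}. Once that is granted, the conclusion is immediate from cocompactness and unimodularity, and no new idea beyond the combination of Lemma~\ref{lem:LocalAction-UniqueFixedPoint} with the two cases of the nilpotent local-action argument is needed.
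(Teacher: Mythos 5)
Your case split is the right one, and the non-special case is handled correctly exactly as in Corollary~\ref{cor:non-compact-norm}. The trouble is entirely in the case that the local action $F$ has a unique fixed point, and both of your attempted routes there contain genuine errors.

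First, the attempted reduction to the hypothesis of Proposition~\ref{prop:CompactNorm:1} cannot work. If $e\in E(v)$ is the unique $F$-fixed edge, then \emph{every} element of $G_{(v)}$ already fixes $e$ (its local action lies in $F$, which fixes $c(e)$), so $G_{(e)}=G_{(v)}$. Since $e$ is the unique $F$-fixed edge, $G_{(v)}$ fixes no other edge of $E(v)$. Thus $G_{(e)}$ fixes only $e$ in $E(v)$, and the hypothesis of Proposition~\ref{prop:CompactNorm:1} genuinely fails at the special edge; you cannot invoke that proposition as a black box.

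Second, the backup plan of ``the geodesic ray of fixed edges'' also breaks down, and for a structural reason. You correctly observe (via Lemma~\ref{lem:LocalAction-UniqueFixedPoint}, unimodularity, and vertex-transitivity) that any $h$ with $hv=t(e)$ satisfies $he=\bar e$; unwinding this in terms of the coloring, it says exactly that $\bar e$ is the unique fixed edge at $t(e)$. But that means the unique fixed edge at $t(e)$ points \emph{back} to $v$. In other words the distinguished edges form a $G$-invariant \emph{perfect matching} on $T$, not a ray: there is no geodesic path of length $\geq 2$ consisting of fixed edges. So ``iterating along'' these edges yields nothing, and the chain of fixators along that (two-vertex) path is the constant chain $G_{(v)}=G_{(t(e))}$, giving only the compact setwise stabilizer of one matched pair, not a non-compact normalizer.

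The fix is dual to what you tried: one builds a geodesic that \emph{avoids} the special edges. Using the universality of the Burger--Mozes groups, write $G\leq U_c(F)$ for a regular coloring $c$ with $F$ fixing $0$ as its unique fixed point (and $c(e)=c(\bar e)=0$ by the observation above). For any $f\in E(w)$ with $c(f)\neq 0$, the image $F_{(c(f))}$ of $G_{(f)}$ in $F$ is proper, so by nilpotency some $g\in\norm_F(F_{(c(f))})\setminus F_{(c(f))}$ exists; the new fixed colour $g(c(f))$ is again $\neq 0$ (since $g$ fixes $0$), so $G_{(f)}$ fixes an edge $f'\in E(w)$ with $f'\neq f$ and $f'\neq e_w$. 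One also checks $c(f)\neq 0$ iff $c(\bar f)\neq 0$ (consequence of the matching structure), so the inductive construction of a geodesic $(f_n)$ avoiding the special colour and satisfying $G_{(f_n)}\leq G_{(f_{n+1})}$ goes through; the cocompactness-and-unimodularity endgame of Proposition~\ref{prop:CompactNorm:1} then finishes the proof. This is the step missing from your argument.
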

\begin{proof}
	Let $v \in VT$ and $F \leq \Sym(E(v))$ denote the local action of $G_{(v)}$ at 	$v$. If $F$ does not have a unique fixed point, then we may apply Corollary~\ref{cor:non-compact-norm}, since $G$ is vertex-transitive, and the required conclusion follows. We assume henceforth that $F$ has a unique fixed point, say $e$.

	By Proposition~\ref{prop:BuMo}(ii), there exists a regular, but not necessarily legal, coloring $c$ of $T$ such that  $G$ is contained as a closed subgroup in $U_c(F)$.  Moreover, since $G$ is unimodular and vertex-transitive, we deduce from Lemma~\ref{lem:LocalAction-UniqueFixedPoint} that $c(e) = c(\bar e)$, where $e$ is the unique edge fixed by the local action.
	
	Let now $w \in VT$ be an arbitrary vertex and $e_w \in E(w)$ be the unique edge with $c(e_w)= c(e)$. For any edge $f \in E(w)$ different from $e_w$, the edge-stabilizer $G_{(f)}$ fixes an edge $f' \in E(w)$ which is different from both $f$ and $e_w$. Proceeding as in the proof of Proposition~\ref{prop:CompactNorm:1}, we can construct inductively a geodesic edge path $(f_n)_{n\geq 0}$ such that $G_{(f_n)}\leq G_{(f_{n+1})}$ with $c(f_n) \neq c(e) \neq c(\bar{f_n})$ for all $n$. The end of the proof is identical to that of Proposition~\ref{prop:CompactNorm:1}. 
\end{proof}

\subsection{Application to lattices in products of trees}

We now obtain the following  criterion on the local action ensuring that some  lattices are not residually finite.  

\begin{cor}\label{cor:NonRF}
Let  $T$ be a locally finite leafless tree  such that $\Aut(T)$ acts cocompactly, let   $H$ be a compactly generated totally disconnected locally compact group with a trivial amenable radical, and let  $\Gamma \leq \Aut(T) \times H$ be a cocompact lattice. Assume that at least one of the following conditions is satisfied:
\begin{enumerate}[(1)]
	\item For all $v \in VT$ and $e \in E(v)$, the stabilizer $\Gamma_{(e)}$ fixes an edge $f\in E(v)$ different from $e$. (E.g. the natural image of $\Gamma_{(v)}$ in $\Sym(E(v))$ is nilpotent and without a unique fixed point.) 
	
	\item The $\Gamma$-action on $T$ is vertex-transitive, and for every $v \in VT$, the local action of $\Gamma$ at $v$ is nilpotent. 
\end{enumerate}
If the projection of $\Gamma$ to $H$ is non-discrete, then $\Gamma$ is not residually Noetherian, hence not residually finite. 
\end{cor}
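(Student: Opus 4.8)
The plan is to apply Proposition~\ref{prop:RF->CompactNorm} to $\Gamma$ viewed inside $G_1\times H$, where $\pi_1\colon\Aut(T)\times H\to\Aut(T)$ and $\pi_2\colon\Aut(T)\times H\to H$ are the coordinate projections and $G_1:=\ol{\pi_1(\Gamma)}\leq\Aut(T)$. Thus one must check four things: that $\Gamma$ is a cocompact lattice in $G_1\times H$ with $\ol{\pi_1(\Gamma)}=G_1$; that $\cent_{G_1}(\Res(G_1))=\triv$; that $H$ is compactly generated, totally disconnected, with trivial amenable radical (this is assumed); and that some compact open subgroup of $G_1$ has a non-compact normalizer in $G_1$. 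For the first point, $G_1\times H$ is a closed subgroup of $\Aut(T)\times H$ containing $\Gamma$, so $\Gamma$ is a cocompact lattice in $G_1\times H$, and $\ol{\pi_1(\Gamma)}=G_1$ by construction. Since $\Aut(T)$ acts cocompactly on $T$ with \emph{open} vertex stabilizers, the group $\Aut(T)\times H$ acts on $VT$ with finitely many orbits and open point stabilizers; hence each double coset space $\Gamma\backslash(\Aut(T)\times H)/(\Aut(T)_{(v)}\times H)$ is a discrete quotient of the compact space $\Gamma\backslash(\Aut(T)\times H)$, hence finite. Therefore $\Gamma$, and a fortiori $G_1$, acts cocompactly on $T$, so $G_1$ fixes neither a vertex nor an edge because $T$ is infinite. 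Finally $G_1\times H$ contains a cocompact --- hence finitely generated --- lattice, so it is compactly generated and unimodular, and thus so is $G_1$.

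Next I would show $G_1$ is non-discrete, which is exactly where the hypothesis on $\pi_2(\Gamma)$ is used. If $G_1$ were discrete, then $N:=\ker(\pi_1\rest_\Gamma)=\Gamma\cap(\triv\times H)$ would, by Lemma~\ref{lem:LatticeInNormal} applied to the normal subgroup $\triv\times H$, be a cocompact lattice in $H$, hence finitely generated since $H$ is compactly generated; as $N\normal\Gamma$ and $\pi_2\rest_N$ is injective, the group $\pi_2(\Gamma)$ would normalise the lattice $\pi_2(N)$ of $H$, so $\pi_2(\Gamma)\leq\norm_H(\pi_2(N))$ would be discrete by Corollary~\ref{cor:NormLattice} --- a contradiction. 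Hence $G_1$ is non-discrete; in particular $T$ is not a line and has more than two ends. Now for each vertex $v$ the open subgroup $(G_1)_{(v)}=G_1\cap\Aut(T)_{(v)}$ contains $\pi_1(\Gamma_{(v)})$ densely, so it induces the same finite permutation group on $E(v)$ as $\Gamma_{(v)}$ does; similarly $(G_1)_{(e)}=\ol{\pi_1(\Gamma_{(e)})}$ fixes every edge fixed by $\Gamma_{(e)}$. Thus under~(2) the group $G_1$ is vertex-transitive with nilpotent local action at every vertex, and Proposition~\ref{prop:CompactNorm:2} produces a compact open subgroup of $G_1$ with non-compact normalizer; under~(1) the same conclusion follows from Proposition~\ref{prop:CompactNorm:1}.

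The crux is the identity $\cent_{G_1}(\Res(G_1))=\triv$. First, $\Res(G_1)\neq\triv$: otherwise $G_1$ would be compactly generated and residually discrete, hence would contain a non-trivial compact open normal subgroup $K$ by Proposition~\ref{prop:ResDis}; but then $\mathrm{Fix}_T(K)$ would be a $G_1$-invariant subtree, non-empty and --- since $G_1$ fixes no vertex or edge --- infinite, fixed pointwise by the \emph{open} subgroup $K$, which is impossible. Second, $G_1$ fixes no end of $T$: if it fixed $\xi$, any hyperbolic $g\in G_1$ (which exists as $G_1$ acts cocompactly on the infinite tree $T$) would have its axis ending at $\xi$, so for $v$ on that axis the subgroups $g^n(G_1)_{(v)}g^{-n}=(G_1)_{(g^nv)}$ $(n\in\Zbb)$ would be conjugate, totally ordered by inclusion, and of equal Haar measure by unimodularity, forcing them all to coincide --- whence $(G_1)_{(v)}$ would fix the (infinite) axis pointwise, again impossible for an open subgroup. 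Therefore $G_1$ admits a minimal invariant subtree $T_0$, on which it acts minimally, without fixed end, and --- since $T_0$ a line would give an index-$\le 2$ subgroup of $G_1$ fixing an end of $T$ --- with more than two ends. A short argument from the hypothesis, immediate under~(2) since there $T_0=T$, shows that $G_1$ acts faithfully on $T_0$. Now $\Res(G_1)$ is a non-trivial normal subgroup of $G_1\leq\Aut(T_0)$, so Proposition~\ref{prop:Tits} gives that it has trivial centralizer in $\Aut(T_0)$, hence in $G_1$.

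With all four conditions of Proposition~\ref{prop:RF->CompactNorm} verified for $\Gamma\leq G_1\times H$, together with a compact open subgroup of $G_1$ having a non-compact normalizer, that proposition yields that $\Gamma$ is not residually Noetherian, and in particular not residually finite. I expect the third step to be the real obstacle: essentially all of it is a matter of understanding the closure $G_1=\ol{\pi_1(\Gamma)}$ well enough --- that it is non-discrete, acts cocompactly on $T$ with no fixed vertex, edge or end, and acts faithfully on its minimal subtree --- so that Tits' Proposition~\ref{prop:Tits} becomes applicable; by contrast the reduction to Proposition~\ref{prop:RF->CompactNorm}, the non-discreteness argument, and the transfer of the local hypotheses from $\Gamma$ to $G_1$ are comparatively routine.
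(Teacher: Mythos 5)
Your proof follows essentially the same strategy as the paper's: reduce to Proposition~\ref{prop:RF->CompactNorm} for $\Gamma$ viewed inside $G_1\times H$, obtain non-discreteness of $G_1$ from the hypothesis on $\pi_2(\Gamma)$, get the non-compact normalizer from Propositions~\ref{prop:CompactNorm:1} and~\ref{prop:CompactNorm:2}, and get $\cent_{G_1}(\Res(G_1))=\triv$ from Proposition~\ref{prop:Tits}. Two remarks. First, for non-discreteness of $G_1$ the paper just applies Lemma~\ref{lem:DiscreteProj} (contrapositively), whereas you argue via Corollary~\ref{cor:NormLattice}; both routes work. Second, and more importantly, you have introduced an unnecessary detour and, with it, two small gaps. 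You do not need a separate minimal subtree $T_0$: because $T$ is leafless and $G_1$ acts cocompactly, the $G_1$-action on $T$ is already minimal (a proper $G_1$-invariant subtree $T_0$ would have vertices at every distance $n\geq 1$ from it, by leaflessness, and $d(\cdot,T_0)$ is orbit-invariant, giving infinitely many vertex orbits). This is exactly what the paper uses, and it closes both of your loose ends at once: the faithfulness of $G_1$ on $T_0$ is automatic since $T_0=T$ and $G_1\leq\Aut(T)$; and your claim that an open subgroup $K$ fixing the infinite invariant subtree $\mathrm{Fix}_T(K)$ pointwise is ``impossible'' becomes a genuine contradiction, because minimality forces $\mathrm{Fix}_T(K)=T$ and hence $K=\triv$ (alternatively, a non-trivial compact normal $K$ is amenable, contradicting the non-amenability of non-trivial normal subgroups from Proposition~\ref{prop:Tits}). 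As written, that ``impossible'' is not justified: open subgroups of automorphism groups of trees can perfectly well fix infinite subtrees pointwise in the absence of minimality. With minimality noted up front, your argument becomes both shorter and airtight, and matches the paper's.
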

\begin{proof}
	Let $G_1 \leq \Aut(T) $ denote the closure of the projection of $\Gamma$. The group $G_1$ acts cocompactly on $T$, since $\Gamma$ is cocompact, hence $G_1$ acts minimally. Additionally, $G_1$ is unimodular because $G_1 \times H$ contains a lattice. In particular, $G_1$ does not fix an end of $T$. It follows from Proposition~\ref{prop:Tits} that every non-trivial normal subgroup of $G_1$ has a trivial centralizer and is non-amenable. By Corollary~\ref{cor:CentLattice}, every lattice in $H$ has a trivial centralizer. Since the projection of $\Gamma$ to $H$  is non-discrete by hypotheses, it follows from  Lemma~\ref{lem:DiscreteProj} that $G_1$ is non-discrete.  
	
By Proposition~\ref{prop:CompactNorm:1} or Proposition~\ref{prop:CompactNorm:2}, either of the hypotheses (1) or (2) implies the existence of  a compact open subgroup $K \leq  G_1$ whose normalizer in $G_1$ is non-compact.   Invoking Proposition~\ref{prop:RF->CompactNorm}, we deduce that $\Gamma$ is not residually Noetherian. 
\end{proof}

In the special case where $T$ is the $4$-regular tree,  $H = \Aut(T)$ and the local action of $\Gamma$ on $T$ is $C_2 \times C_2$, we recover \cite[Lemma~9.4]{JH16}.

\begin{proof}[Proof of Corollary~\ref{cor:WiseExample}]
	It follows from \cite[Theorem~5.3]{Wise} that $\Gamma$ contains an element which fixes a vertex in $T_6$ but whose image in $\Aut(T_6)$ generates an infinite group; see \cite[Section II.4]{Wise_PhD} and \cite[Prop.~9]{Rattaggi_AntiTori}. In particular, the projection of $\Gamma$ to $\Aut(T_6)$ is non-discrete. On the other hand, the image of $\Gamma$ in $\Aut(T_4)$ is vertex-transitive, and one verifies that it acts without inversion. Therefore, if the local action of $F$ of $\Gamma$ on $T_4$ is not a two group, it has exactly two orbits of size $1$ and $3$ respectively. This implies that the closure of the image of $\Gamma$ in $\Aut(T_4)$ is a strictly ascending HNN extension, hence it is not unimodular, which is absurd since $\Gamma$ is a lattice. 
	
	An alternative argument consists in computing the local action of $\Gamma$ on 
	$T_4$ directly from the presentation: it is easily verified to be  $C_2 \times C_2$ acting on $4$ points. This has been done by D.~Rattaggi: the Wise lattice is Example 2.36 in \cite{Rattaggi_PhD}, and its local action is recorded in the table on p.~280 in Section~C.5 of loc.~cit.  
	
	The hypotheses of Corollary~\ref{cor:NonRF} are fulfilled, and the conclusion follows. 
\end{proof}


\bibliographystyle{amsplain}
\bibliography{biblio}

\end{document}